\pgfplotsset{width=10cm,compat=1.9}
\theoremstyle{plain}
\newtheorem{theorem}{Theorem}[section]
\newtheorem{lemma}[theorem]{Lemma}
\newtheorem{proposition}[theorem]{Proposition}
\theoremstyle{definition}
\newtheorem{definition}[theorem]{Definition}
\newtheorem{example}[theorem]{Example}
\newtheorem{remark}[theorem]{Remark}
\newtheorem{corollary}{Corollary}[theorem]
\title{Refined Elementary Capacities from Symplectic Field Theory}
\author{Jonathan Michala}
\date{\today}
\begin{document}

\maketitle

\begin{abstract}
    We extend the family of capacities given by McDuff and Siegel in \cite{McDuff-Siegel_unperturbed_curves} by including a constraint $\ell$ on the number of positive asymptotically cylindrical ends of curves showing up in the definition.
    We prove a generalized computation formula for four-dimensional convex toric domains that offers new, sometimes sharp, embedding obstructions in stabilized and unstabilized cases.
    The formula restricts to the McDuff-Siegel capacities for $\ell = \infty$ and to the Gutt-Hutchings capacities for $\ell = 1$.
    To verify the formula, we must prove the existence of certain curves in the convex toric domain $X_\Omega$, and this requires a new method of proof compared to \cite{McDuff-Siegel_unperturbed_curves}.
    We neck-stretch along $\partial X_\Omega$ with curves known to exist in a well-chosen ellipsoid containing $X_\Omega$, and we obtain the desired curves in the bottom level of the resulting psuedoholomorphic building.
\end{abstract}

\section{Introduction}

\subsection{Background}

To get a better understanding of symplectic structure, one often studies embedding problems - for two given symplectic manifolds $X$ and $Y$, does there exist a smooth embedding $X \hookrightarrow Y$ that preserves the symplectic form?
We denote symplectic embeddings by $X \xhookrightarrow{s} Y$.
Such problems have a rich history, starting in 1985 with Gromov's nonsqueezing theorem \cite{gromov1985pseudo}.

We follow Schlenk's survey \cite{Schlenk_symp_embedding_probs_old_new} to write a summary of symplectic embedding results in Tables \ref{tab:related_work} and \ref{tab:related_work_stabilized}, where $X_{\Omega_\smile}$ and $X_{\Omega_\frown}$ denote concave and convex toric domains; where $E,B$, and $P$ denote ellipsoids, balls, and polydisks; and where $\tau$ denotes the golden ratio.
Table \ref{tab:related_work_stabilized} focuses on stabilized embedding problems, where we take the product of a domain with copies of $\mathbb{C}$.
This is a tractable way to find answers to embedding questions in dimension greater than 4, which is difficult in the general setting, and was pioneered by Hind and Kerman in \cite{hind-Kerman2014new}.
When we say that an answer is \textbf{stable}, we mean that the known sharp result in 4 dimensions stabilizes to higher dimensions.

\begin{table}[]
    \centering
    \begin{tabular}{c|cc}
        \toprule
        Domain and Target & Answer \\
        \midrule
        $E(a,1) \xhookrightarrow{s} B^4(A)$ & Fibonacci Staircase & \cite{McDuff_Schlenk_fibonacci}\\
        \\
        $E(a,1) \xhookrightarrow{s} P(A,A)$ & Pell Staircase & \cite{Frenkel_Muller_Pell_stairs}\\
        \\
        $X_{\Omega_\smile} \xhookrightarrow{s} X_{\Omega_\frown}$ & ECH capacities are sharp & \cite{cristofaro2019symplectic}\\
        \\
        $P(a,1) \xhookrightarrow{s} B^4(A)$ & sharp results for $a \in [1,\tfrac{5+\sqrt{7}}{3}]$ & \cite{Hutchings_2016_beyond_ech,christianson2018symplectic,hind2015symplectic}\\
        & Lower bound for $a \in [\tfrac{5+\sqrt{7}}{3},8]$ & \cite{Hutchings_2016_beyond_ech}\\
        & Upper bounds for $a \in [2,10]$ & \cite{Schlenk2005}\\
        \\
        $P(a,1) \xhookrightarrow{s} E(bc,c)$ & sharp results for $a \in [1,2], b\in \mathbb{Z}_{\geq 1}$ & \cite{Hutchings_2016_beyond_ech}\\
        & extension to half integers & \cite{digiosia2022symplectic}\\
        
    \end{tabular}
    \caption{Related work on symplectic embedding problems}
    \label{tab:related_work}
\end{table}

\begin{table}[]
    \centering
    \begin{tabular}{c|cc}
        \toprule
        Domain and Target & Answer \\
        \midrule
        $E(a,1) \times \mathbb{C}^n \xhookrightarrow{s} B^4(A) \times \mathbb{C}^n$ & Fibonacci Staircase on $[1,\tau^4]$ & \cite{cristofaro2018symplectic}\\
        & stable on a sequence $\searrow \tau^4$ & \cite{cristofaro2018ghost}\\
        & stable for all $3k - 1$ & \cite{Mcduff2017ARO}\\
        & now completely answered & \cite{mcduff2024sesquicuspidal}\\
        \\
        $E(a,1) \times \mathbb{C}^n \xhookrightarrow{s} \mu E(b,1) \times \mathbb{C}^n$ & sharp results for $a \geq b + 1 \geq 3$\\
        & $a,b$ integers of opposite parity & \cite{McDuff-Siegel_unperturbed_curves, cristofaro2022higher}\\
        \\
        $E(a,1) \times \mathbb{C}^n \xhookrightarrow{s} \mu P(b,1) \times \mathbb{C}^n$ & sharp results for $a \geq 2b - 1$\\
        & $a$ odd integer & \cite{McDuff-Siegel_unperturbed_curves,cristofaro2022higher}\\
    \end{tabular}
    \caption{Related work on stabilized embedding problems}
    \label{tab:related_work_stabilized}
\end{table}

\subsection{Main Results}

In Section \ref{sec:definition_properties}, we define the family of capacities $\Tilde{\mathfrak g}_k^\ell(X)$ for Liouville domains with nondegenerate contact boundary (which include convex toric domains), for $k,\ell$ positive integers and possibly $\ell = \infty$.
Roughly, for a given almost complex structure $J$, we take the infimal action of a Reeb orbit set $\Gamma$ with appropriate index and number of positive ends such that the moduli space of $J$-holomorphic curves asymptotic to $\Gamma$ is nonempty.
Then we take the supremum of this value over admissible almost complex structures.
We'll see that the definition is only a slight change from the definition of McDuff-Siegel capacities $\Tilde{\mathfrak g}_k^\infty$ and results in a seemingly minor change in the computation on 4-dimensional convex toric domains.
However, this slight change results in new, sometimes sharp obstructions, stronger than those given previously by $\Tilde{\mathfrak g}_k^1$ (agreeing with Gutt-Hutchings capacities) and $\Tilde{\mathfrak g}_k^\infty$, and proving the formula requires the construction of curves unwitnessed by McDuff-Siegel capacities, which is the focus of much of this paper.

To state the computation formula on four-dimensional convex toric domains, we will need the following ``dual norm'' and admissibility criterion.
For $\Omega \subset \mathbb{R}^n$, compact and convex, define $|| \cdot ||_\Omega^* \colon \mathbb{R}^n \rightarrow \mathbb{R}$ by $||v||_\Omega^* = \max \{\langle v,w \rangle \vert w \in \Omega\}$, where $\langle \cdot, \cdot \rangle$ is the standard inner product.
Note that any maximizer $w$ lies on the boundary of $\Omega$, with all of $\Omega$ on one side of the hyperplane through $w$, normal to $v$.
We use the notation $||\cdot||_\Omega^*$ because when the origin is included in the interior of $\Omega$, this is dual to the norm where $\Omega$ is the unit ball.

For a multi-set (i.e. repeats allowed) $P = \{(i_s,j_s)\}_{s=1}^q$ of ordered pairs in $\mathbb{Z}_{\geq 0}^2 \setminus {(0,0)}$, positive integer $k$, and $\ell \in \mathbb{Z}_{> 0} \cup \{\infty\}$, we say that the collection is \textbf{$(k,\ell)$-admissible}, with $P \in \mathcal{P}_{k,\ell}$, if it satisfies the following properties:
\begin{itemize}
    \item $\frac{1}{2}$\textit{index:} $\sum_{s=1}^q(i_s+j_s) + q - 1 = k$.
    \item \textit{weak permissibility:} if $q \geq 2$, then $(i_1,...,i_q) \neq (0,...,0)$ and $(j_1,...,j_q) \neq (0,...,0)$.
    \item \textit{positive ends:} $q \leq \ell$.
\end{itemize}
Note that when $\ell = \infty$, the positive ends criterion is always satisfied, and so these criteria will match those of the McDuff-Siegel capacity computations.
We can now state the main theorem, and we emphasize that the primary contribution of this paper is the proof and application of property \ref{thm:properties_subitem_computation}.
Note that a \textbf{generalized Liouville embedding} is a smooth embedding $i \colon (X,\lambda) \hookrightarrow (X',\lambda')$ between equidimensional Liouville domains such that the closed 1-form $(i^*(\lambda') - \lambda)|_{\partial X}$ is exact.

\begin{theorem}\label{thm:properties}
For positive integer $k$ and for $\ell \in \mathbb{Z}_{> 0} \cup \{\infty\}$, $\Tilde{\mathfrak g}_k^\ell$ satisfies the following properties:
\begin{itemize}
    \item \textbf{Liouville Domains:} Let $X$ and $X'$ be equidimensional Liouville domains.
    Then,
    \begin{enumerate}[label=(\alph*)]
        \item \textit{scaling:} $\Tilde{\mathfrak g}_k^\ell(X,\mu \lambda) = \mu \Tilde{\mathfrak g}_k^\ell(X,\lambda)$ for $\mu \in \mathbb{R}_{\geq 0}$.
        \item \textit{nondecreasing in $k$:} $\Tilde{\mathfrak g}_1^\ell(X) \leq \Tilde{\mathfrak g}_2^\ell(X) \leq \Tilde{\mathfrak g}_3^\ell(X) \leq \cdots$.
        \item \textit{nonincreasing in $\ell$:} $\Tilde{\mathfrak g}_k^1(X) \geq \Tilde{\mathfrak g}_k^2(X) \geq \Tilde{\mathfrak g}_k^3(X) \geq \cdots$.
        \item \textit{monotonicity:} If there is a generalized Liouville embedding from $X$ into $X'$, then $\Tilde{\mathfrak g}_k^\ell(X) \leq \Tilde{\mathfrak g}_k^\ell(X')$.
    \end{enumerate}
    \item \textbf{4-D Convex Toric Domains:} Let $X_\Omega$ be a four-dimensional convex toric domain.
    Then,
    \begin{enumerate}[resume*]
        \item \textit{stabilization:} $\Tilde{\mathfrak g}_k^\ell(X_\Omega \times B^2(c)) = \Tilde{\mathfrak g}_k^\ell(X_\Omega)$ for any $c \geq \Tilde{\mathfrak g}_k^\ell(X_\Omega)$.
        \item\label{thm:properties_subitem_computation} \textit{computation:} $$\Tilde{\mathfrak g}_k^\ell(X_\Omega) = \min_{\{(i_s,j_s)\}_{s=1}^q \in \mathcal{P}_{k,\ell}} \sum_{s=1}^q ||(i_s,j_s)||_\Omega^*.$$
    \end{enumerate}
    
\end{itemize}
\end{theorem}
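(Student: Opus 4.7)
The plan is to address the six properties in order of increasing difficulty. Properties (a)--(d) concern general Liouville domains and should follow from formal manipulations of the definition, while (e) and (f) require the geometric analysis specific to four-dimensional convex toric domains.

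For scaling (a), rescaling the Liouville form rescales the Reeb vector field and hence the action of every Reeb orbit by $\mu$, while the $J$-moduli spaces (after the natural identification of admissible $J$'s) are unchanged, so the infimum-of-action rescales linearly. Monotonicity in $k$ (b) follows from the fact that the $\frac{1}{2}$-index constraint $\sum(i_s+j_s)+q-1 = k$ forces any Reeb orbit set witnessing the capacity at level $k+1$ to have strictly larger total index (and hence, on a nondegenerate contact boundary, strictly larger total action) than at level $k$. Monotonicity in $\ell$ (c) is immediate from the definition, since the constraint $q \leq \ell$ is weaker for larger $\ell$ and the family of candidate $\Gamma$'s grows. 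For embedding monotonicity (d), I would use the standard cobordism-and-neck-stretch argument: given a generalized Liouville embedding $X \hookrightarrow X'$ and an admissible $J$ on $X$, extend to an admissible $J$ on $X'$, stretch along $\partial X$, and use SFT compactness to extract a building whose top-level asymptotics witness the capacity of $X$ from an optimal configuration in $X'$.

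For the lower bound in the computation formula (f), i.e.\ $\Tilde{\mathfrak g}_k^\ell(X_\Omega) \geq \min_{P \in \mathcal{P}_{k,\ell}} \sum ||(i_s,j_s)||_\Omega^*$, I would exhibit a specific admissible toric $J$ on $X_\Omega$ whose Reeb orbits on $\partial X_\Omega$ are indexed by $(i,j) \in \mathbb{Z}_{\geq 0}^2 \setminus \{(0,0)\}$ with action precisely $||(i,j)||_\Omega^*$. The $\frac{1}{2}$-index, weak-permissibility, and positive-ends criteria imposed on any Reeb orbit set $\Gamma$ supporting a $J$-curve of the required type then force $\Gamma$ to correspond to some $P \in \mathcal{P}_{k,\ell}$, yielding the inequality. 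For stabilization (e), I would combine this lower bound with an upper bound obtained by lifting product curves from $X_\Omega$ into $X_\Omega \times B^2(c)$, using the hypothesis $c \geq \Tilde{\mathfrak g}_k^\ell(X_\Omega)$ to ensure that no Reeb orbit on the $B^2(c)$ factor enters the optimal configuration.

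The main obstacle, and the focus of the paper, is the upper bound $\Tilde{\mathfrak g}_k^\ell(X_\Omega) \leq \sum ||(i_s,j_s)||_\Omega^*$ for every $P \in \mathcal{P}_{k,\ell}$. This requires constructing, for every admissible $J$ on $X_\Omega$, a $J$-holomorphic curve with exactly $q = |P| \leq \ell$ positive ends and total action $\sum ||(i_s,j_s)||_\Omega^*$. The $\ell = \infty$ case is covered by \cite{McDuff-Siegel_unperturbed_curves}, but finite $\ell$ demands an \emph{exact} bound on the number of positive ends, which the unperturbed-curves approach there does not provide. Following the strategy outlined in the abstract, I would choose a four-dimensional ellipsoid $E \supset X_\Omega$ calibrated so that $E$ contains curves with exactly $q$ positive ends asymptotic to orbits of the prescribed types and total action tightly matching $\sum ||(i_s,j_s)||_\Omega^*$; then I would neck-stretch along $\partial X_\Omega$ inside the completion of $E$ and apply SFT compactness to obtain a pseudoholomorphic building. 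The hardest technical point will be ensuring that the degeneration does not split a single positive end of the original curve into several positive ends in the bottom level, since this would violate the constraint $q \leq \ell$ and is precisely where the refinement by $\ell$ departs from McDuff--Siegel; controlling it will require careful bookkeeping of Fredholm index and symplectic action across the levels, combined with a combinatorial argument matching the bottom-level positive asymptotics to the target Reeb orbit set exactly.
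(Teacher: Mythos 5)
Your plan for (a), (c), and (d) is on the right track, and you correctly identify the ellipsoid neck-stretch along $\partial X_\Omega$ as the key new ingredient for the upper bound in (f). But there are two genuine gaps. First, your argument for (b) is wrong: you invoke the $\frac{1}{2}$-index constraint $\sum(i_s+j_s)+q-1=k$, but that constraint is part of the combinatorial reformulation specific to four-dimensional convex toric domains in property (f), not part of the definition of $\Tilde{\mathfrak g}_k^\ell$ for general Liouville domains, and even within the toric setting ``larger index'' does not imply ``larger action.'' The correct (and much simpler) argument is a nesting of constraints: any curve satisfying the tangency constraint $\ll\mathcal{T}^{(k+1)}p\gg$ also satisfies $\ll\mathcal{T}^{(k)}p\gg$, so $\mathcal{G}_{J,k+1,\ell}\subseteq\mathcal{G}_{J,k,\ell}$ and the infimum over the smaller set is larger.

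Second, for the upper bound in (f) you are missing the mechanism by which a curve found for one $J$ gives a curve for \emph{all} admissible $J$. The capacity is a supremum over $J$, so constructing the desired curve in one well-chosen complex structure (say, via the ellipsoid neck-stretch) establishes nothing about the supremum; you need a deformation-invariance statement. The paper handles this by proving the relevant formal curve component is \emph{formally perturbation invariant} (Definition \ref{def:formal_perturbation_invariance}, Proposition \ref{prop:formal_perturbation_invariance}), which via Proposition \ref{prop:perturbation_invariance_implies_count_independence} yields a nonvanishing signed count that is independent of $J$, and hence nonemptiness of $\mathcal{M}^J$ for all $J$. Also note that your plan neck-stretches for every minimizer $P$, whereas the paper only needs the new ellipsoid argument for the single problematic case $\Gamma = e_{i,j}\times e_{i,j}$ with $\{(i,j),(i,j)\}$ a minimizer in $\mathcal{P}_{2i+2j+1,2}$ (which defeats obstruction bundle gluing because the two building blocks would have the same image); all other minimizers are obtained by gluing cylinders already known from \cite{McDuff-Siegel_unperturbed_curves}. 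For a general $P$ there is no single ellipsoid whose minimal-action orbit set of the right index matches your target asymptotics, so trying to neck-stretch all cases would run into trouble.
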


\begin{remark}
    Note that $\Tilde{\mathfrak g}_k^\ell$ recovers the Gutt-Hutchings capacity \cite{Gutt-Hutchings} and the McDuff-Siegel capacity \cite{McDuff-Siegel_unperturbed_curves} when computed on four-dimensional convex toric domains:
    $$c_k^{GH}(X_\Omega) = \Tilde{\mathfrak g}_k^1(X_\Omega), \quad\quad \Tilde{\mathfrak g}_k(X_\Omega) = \Tilde{\mathfrak g}_k^\infty(X_\Omega).$$
\end{remark}

\begin{example}
    Before we state the general computation results, we'll start with a specific example where the capacities defined in this paper give sharp obstructions, previously unseen by the McDuff-Siegel or Gutt-Hutchings capacities.
    
    We ask if $P(a,1) \xhookrightarrow{s} X_\Omega$, where $a$ is any positive value, and $\Omega \subset \mathbb{R}^2$ is the convex hull of the points $(0,0)$, $(0,1)$, $(1,1)$, $(2,1/2)$, and $(2,0)$.
    See Figure \ref{fig:intro_sharp_obstruction} for a moment map visual.
    For $a \leq 1$, the embedding exists by inclusion, and for $a>7/4$, the embedding is obstructed by volume.
    For $a = 1.1$, we make the following capacity computations (See Example \ref{ex:sharp_obstruction} for full details).
    The bold red values in the table show that this embedding is obstructed by capacities $\Tilde{\mathfrak g}_k^2$ for $k \geq 5$, yet it is unobstructed by any $\Tilde{\mathfrak g}_k^1$ or $\Tilde{\mathfrak g}_k^\infty$.

    \vspace{5pt}
    \begin{center}
    \renewcommand{\arraystretch}{1.5} % Increases vertical spacing by 50%
    \begin{tabular}{|c|c|c|c|c|c|c|c|c|c|}
        \hline
        \rowcolor{blue!20} $k$ & 1 & 2 & 3 & 4 & 5 & 6 & 7 & 8 & 9\\
        \hline
        \hline
        $\Tilde{\mathfrak g}_k^1(P(1.1,1))$ & 1 & 2 & 3 & 4 & 5 & 6 & 7 & 8 & 9 \\
        $\Tilde{\mathfrak g}_k^1(X_\Omega)$ & 1 & 2 & 3 & 4 & 5 & 6 & 7 & 8 & 9\\
        \hline
        $\Tilde{\mathfrak g}_k^2(P(1.1,1))$ & 1 & 2 & 2.1 & 3.1 & \cellcolor{red!20}\textbf{4.1} & \cellcolor{red!20}\textbf{5.1} & \cellcolor{red!20}\textbf{6.1} & \cellcolor{red!20}\textbf{7.1} & \cellcolor{red!20}\textbf{8.1}\\
        $\Tilde{\mathfrak g}_k^2(X_\Omega)$ & 1 & 2 & 3 & 3.5 & \cellcolor{red!20}\textbf{4} & \cellcolor{red!20}\textbf{5} & \cellcolor{red!20}\textbf{6} & \cellcolor{red!20}\textbf{7} & \cellcolor{red!20}\textbf{8}\\
        \hline
        $\Tilde{\mathfrak g}_k^\infty(P(1.1,1))$ & 1 & 2 & 2.1 & 3.1 & 3.1 & 4.1 & 4.1 & 5.1 & 5.1\\
        $\Tilde{\mathfrak g}_k^\infty(X_\Omega)$ & 1 & 2 & 3 & 3.5 & 4 & 4.5 & 5 & 5.5 & 6\\
        \hline
    \end{tabular}
    \end{center}
    \vspace{5pt}
\end{example}

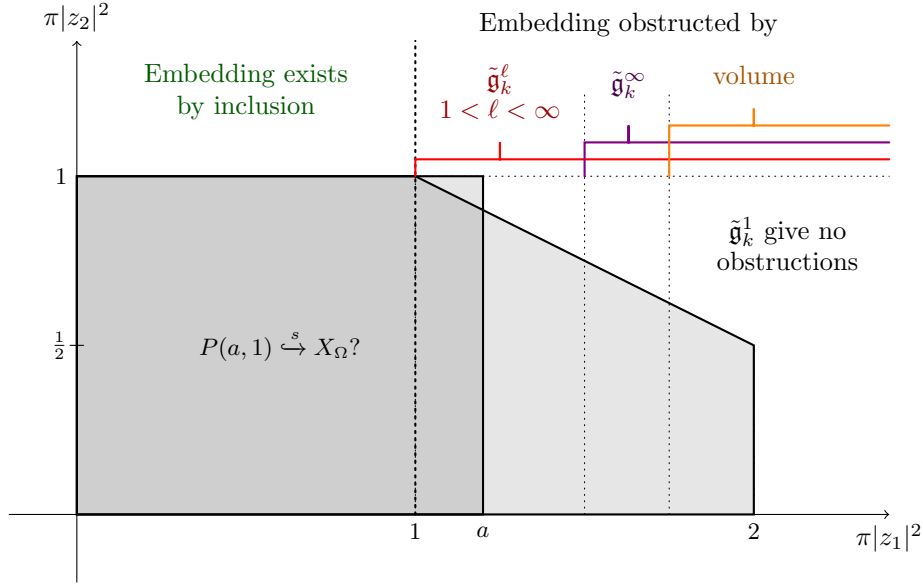
\begin{figure}
    \centering
    \begin{tikzpicture}[scale=4.5,cap=round]
    
        % Axes
        \draw[->] (-0.2,0) -- (2.4,0) node[below] {$\pi|z_1|^2$};
        \draw[->] (0,-0.2) -- (0,1.4) node[above] {$\pi|z_2|^2$};
    
        % Guide lines and labels
        \draw[dotted] (1,0) -- (1,1);
        \node[below] at (1,0) {\small $1$};
    
        \draw[dotted] (0,1) -- (2.4,1);
        \node[left] at (0,1) {\small $1$};
    
        \draw[dotted] (2,0) -- (2,0.5);
        \node[below] at (2,0) {\small $2$};
        \node[left] at (0,0.5) {\small $\frac{1}{2}$};
        \draw (-.02,0.5) -- (.02,0.5);

        \draw[thick, dotted] (1,0) -- (1,1.4);
        \draw[dotted] (1.5,0) -- (1.5,1.25);
        \draw[dotted] (1.75,0) -- (1.75,1.25);

        % Omega polygon outline
        \draw[thick] 
            (0,0) -- (0,1) -- (1,1) -- (2,0.5) -- (2,0) -- cycle;
        \fill[opacity=0.1] 
            (0,0) -- (0,1) -- (1,1) -- (2,0.5) -- (2,0) -- cycle;

        % P(a,1) square outline
        \draw[thick] 
            (0,0) -- (0,1) -- (1.2,1) -- (1.2,0) -- cycle;
        \node at (0.6,0.5) {\small $P(a,1) \xhookrightarrow{s} X_\Omega$?};
        \node[below] at (1.2,-0.01) {\small $a$};
        \fill[opacity=0.1] 
            (0,0) -- (0,1) -- (1.2,1) -- (1.2,0) -- cycle;

        %range markings
        \draw[thick, red]
            (1,1) -- (1,1.05) -- (1.25,1.05) -- (1.25,1.1) -- (1.25,1.05) -- (2.4,1.05);
        \draw[thick, violet]
            (1.5,1) -- (1.5,1.1) -- (1.63,1.1) -- (1.63,1.15) -- (1.63,1.1) -- (2.4,1.1);
        \draw[thick, orange]
            (1.75,1) -- (1.75,1.15) -- (2,1.15) -- (2,1.2) -- (2,1.15) -- (2.4,1.15);

        \node[text=green!35!black, align=center] at (0.5,1.25) {Embedding exists\\by inclusion};
        \node[text=red!60!black, align=center] at (1.25,1.25) {$\Tilde{\mathfrak g}_k^\ell$\\$1 < \ell < \infty$};
        \node[text=violet!55!black, align=center] at (1.63,1.27) {$\Tilde{\mathfrak g}_k^\infty$};
        \node[text=orange!65!black, align=center] at (2,1.3) {volume};
        \node[align=center] at (1.63,1.45) {Embedding obstructed by};
        \node[align=center] at (2.1,0.8) {$\Tilde{\mathfrak g}_k^1$ give no\\obstructions};

    \end{tikzpicture}
    \caption{$\Tilde{\mathfrak g}_k^\ell$ provide sharp obstructions when $1 < \ell < \infty$, showing that $P(a,1)$ does not embed into $X_\Omega$ for any $a > 1$. This is an improvement on the McDuff-Siegel capacities $\Tilde{\mathfrak g}_k^\infty$, which only give obstructions for $a > 3/2$, and on the Gutt-Hutchings capacities $\Tilde{\mathfrak g}_k^1$, which do not give any obstructions for this embedding.}
    \label{fig:intro_sharp_obstruction}
\end{figure}

Now we state the main computation results of Section \ref{sec:computations}, which provide a closed computation formula for a large class of domains, as well as novel sharp obstructions between certain manifolds in this class.

\begin{proposition}[The ball, Proposition \ref{prop:ball}]
    The capacities of the ball are given by
    $$\Tilde{\mathfrak g}_k^\ell(B^4(1)) = \begin{cases}
        \ell - 1 + \lceil\tfrac{k-3(\ell - 1)}{2}\rceil & k > 3(\ell - 1)\\
        1 + i & k = 3i+1 \leq 3(\ell - 1)\\
        1 + i & k = 3i+2 \leq 3(\ell - 1)\\
        2 + i & k = 3i+3 \leq 3(\ell - 1)\\
    \end{cases}$$
\end{proposition}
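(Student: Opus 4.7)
The plan is to apply the computation formula Theorem \ref{thm:properties}\ref{thm:properties_subitem_computation} directly. The ball $B^4(1)$ is the convex toric domain $X_\Omega$ for $\Omega$ the standard $2$-simplex with vertices $(0,0),(1,0),(0,1)$. Since $\langle (i,j),\cdot\rangle$ is linear and attains its maximum over $\Omega$ at a vertex, $||(i,j)||_\Omega^* = \max(i,j)$ for $(i,j) \in \mathbb{Z}_{\geq 0}^2 \setminus \{(0,0)\}$. The theorem then reduces the proposition to the purely combinatorial minimization
$$\Tilde{\mathfrak g}_k^\ell(B^4(1)) = \min_{\{(i_s,j_s)\}_{s=1}^q \in \mathcal{P}_{k,\ell}} \sum_{s=1}^q \max(i_s, j_s).$$

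Next, I would extract a two-sided lower bound for this minimum. The index constraint gives $\sum_s(i_s+j_s) = k-q+1$; combined with $\max(i_s,j_s) \geq (i_s+j_s)/2$ and integrality, this yields $\sum_s \max(i_s,j_s) \geq \lceil (k-q+1)/2 \rceil$. Since each pair satisfies $(i_s,j_s) \neq (0,0)$, also $\max(i_s,j_s) \geq 1$, giving $\sum_s \max \geq q$. So for every admissible $q \in \{1,\dots,\ell\}$,
$$\sum_{s=1}^q \max(i_s,j_s) \ \geq\ \max\!\left(q,\ \left\lceil \tfrac{k-q+1}{2}\right\rceil\right),$$
and the task becomes minimizing this right-hand side over $q$ and exhibiting admissible multisets that realize the minimum.

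The two bounds cross near $q = (k+1)/3$. When $\ell$ is large enough to allow an optimal $q$ in that neighborhood, namely $k \leq 3(\ell-1)$, the minimum is governed by the ceiling bound and splits into three sub-cases according to $k \bmod 3$, yielding $1+i,\,1+i,\,2+i$ for $k = 3i+1,\,3i+2,\,3i+3$ respectively. When $k > 3(\ell-1)$, the constraint $q \leq \ell$ binds: at $q = \ell$ the ceiling bound is the larger of the two, giving $\lceil (k-\ell+1)/2\rceil = (\ell-1) + \lceil (k-3(\ell-1))/2\rceil$, and one checks smaller $q$ only worsens the ceiling. Matching multisets are easy to build from $(1,1)$'s plus a single filler: for instance $i$ copies of $(1,1)$ together with $(1,0)$ for $k = 3i+1$, $i+1$ copies of $(1,1)$ for $k = 3i+2$, $i$ copies of $(1,1)$ together with $(2,1)$ for $k = 3i+3$, and $\ell-1$ copies of $(1,1)$ together with the balanced pair $(\lceil r/2\rceil,\lfloor r/2\rfloor)$ in the large-$k$ regime with $r = k - 3(\ell-1)$. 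Weak permissibility is automatic whenever $q \geq 2$ because each $(1,1)$ contributes to both columns.

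The main obstacle is not depth but bookkeeping: one must carefully verify that the two lower bounds $\max(q,\lceil(k-q+1)/2\rceil)$ are tight for the stated optimal $q$ in every residue class of $k \bmod 3$, and that the ``large $\ell$'' and ``small $\ell$'' regimes glue correctly at the boundary $k = 3(\ell-1)$ vs.\ $k = 3(\ell-1)+1$. Beyond this case-check, no new symplectic input is required—the proposition is a direct corollary of Theorem \ref{thm:properties}.
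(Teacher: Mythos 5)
Your proposal is correct, and it takes a genuinely different route from the paper. The paper treats $B^4(1)$ as $E(1,1)$ and works \emph{downward} from the Gutt--Hutchings single-orbit minimizers $(0,1),(1,1),(1,2),(2,2),\dots$: it introduces three index-preserving, action-nonincreasing ``trade moves'' (split a high-index singleton, slide index between two orbits, convert odd-index singletons), iterates them until a canonical form $\{(\Tilde{\mathfrak g}_2^1)^{\times(\ell-1)},\Tilde{\mathfrak g}^1_{k-3(\ell-1)}\}$ (resp. an explicit multiset of $(1,1)$'s and $(0,1)$'s) is reached, and then reads off the action. You instead prove the formula ``from below'': note $\|(i,j)\|_\Omega^*=\max(i,j)$, combine $\max(i_s,j_s)\ge(i_s+j_s)/2$ with integrality and the index constraint to get $\sum_s\max(i_s,j_s)\ge\lceil(k-q+1)/2\rceil$, combine with $\max(i_s,j_s)\ge 1$ to get $\ge q$, optimize $\max(q,\lceil(k-q+1)/2\rceil)$ over $q\le\ell$, and exhibit realizers. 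Your approach is shorter, more self-contained, and makes the role of the $q\le\ell$ constraint transparent; the paper's move-based approach is more in line with its broader philosophy and transfers directly to $E(2,1)$ and general $E(a,1)$, where the piecewise linearity of $\|(i,j)\|_\Omega^*$ makes a clean convexity bound like yours harder to state. One small plus of your version: your realizer for $k=3i+3$, namely $i$ copies of $(1,1)$ with a $(2,1)$, remains weakly permissible even at $i=0$, whereas the paper's choice $\{(1,1)^i,(0,1)^2\}$ degenerates to $\{(0,1)^2\}$ at $i=0$, which is not weakly permissible; a pair like $\{(1,0),(0,1)\}$ or your $\{(2,1)\}$ is needed there.
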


\begin{proposition}[Polydisks, Proposition \ref{prop:polydisk}]\label{prop:intro-polydisk}
    For positive integers $k,\ell$, we compute $\Tilde{\mathfrak g}_k^\ell$ for the Polydisk $P(a,1)$ with $a > 1$:
    $$\Tilde{\mathfrak g}_k^\ell(P(a,1)) = 
    \begin{cases}
        k, & \ell = 1 \text{ or } k = 1,2\\
        \min\{k,k+a-\ell\}, & \ell \geq 2, \; k \geq 3, \; \frac{k-1}{2} \geq \ell - 1\\
        \min\{k,\lceil\frac{k-1}{2}\rceil + a\}, & \ell \geq 2,\; k \geq 3,\; l-1 \geq \frac{k-1}{2}.
    \end{cases}$$
\end{proposition}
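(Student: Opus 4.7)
The plan is to apply the computation formula in Theorem \ref{thm:properties} to $P(a,1) = X_\Omega$, where $\Omega = [0,a]\times[0,1]$.  Since any inner product $\langle(i,j),(x,y)\rangle$ with $i,j \geq 0$ is maximized on $\Omega$ at the corner $(a,1)$, we have $||(i,j)||_\Omega^* = ai + j$.  Setting $I := \sum_s i_s$ and $J := \sum_s j_s$, the objective becomes $aI + J$, and the index constraint $I + J = k - q + 1$ lets me rewrite it as
$$ \sum_{s=1}^q ||(i_s,j_s)||_\Omega^* = (a-1)\,I + (k - q + 1). $$
Because $a > 1$, for each fixed $q$ I would first minimize $I$ and then maximize $q$.

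The next step is to identify the admissible range of $q$ and the minimum attainable $I$.  For $q = 1$ the sole pair satisfies $i_1 + j_1 = k$, and the minimum of $ai_1 + j_1$ equals $k$, attained at $(0,k)$.  For $q \geq 2$, weak permissibility forces $I \geq 1$, $J \geq 1$, and moreover $I + J \geq q$ since each pair contributes at least one unit; combined with $I + J = k - q + 1$ this yields the necessary bound $q \leq \lfloor(k+1)/2\rfloor$.  Conversely, in this range $I = 1$ is realized by one pair $(1,0)$, another $q - 2$ pairs $(0,1)$, and a final pair $(0, k - 2q + 2)$, producing objective value $a + k - q$.  Hence the minimum over $q \geq 2$ is $a + k - q_{\max}$ with $q_{\max} = \min\bigl(\ell, \lfloor(k+1)/2\rfloor\bigr)$, and the overall answer is $\min\{k,\, a + k - q_{\max}\}$.

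Finally I would assemble the cases.  The edge conditions $\ell = 1$, or $k \in \{1,2\}$ (where $\lfloor(k+1)/2\rfloor \leq 1$ forbids $q \geq 2$), restrict us to $q = 1$ and give $\Tilde{\mathfrak g}_k^\ell(P(a,1)) = k$.  For $\ell \geq 2$ and $k \geq 3$: if $\ell - 1 \leq (k-1)/2$ then $q_{\max} = \ell$, yielding $\min\{k, k + a - \ell\}$; if $\ell - 1 \geq (k-1)/2$ then $q_{\max} = \lfloor(k+1)/2\rfloor$, and the identity $k - \lfloor(k+1)/2\rfloor = \lceil(k-1)/2\rceil$ (checked by separating $k$ even and odd) yields $\min\{k, \lceil(k-1)/2\rceil + a\}$.

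The real content of the proposition lives in Theorem \ref{thm:properties}\ref{thm:properties_subitem_computation}, which the abstract advertises as the technically demanding input.  The remaining work here is purely combinatorial, and its one subtlety is showing that the necessary lower bound $q \leq \lfloor(k+1)/2\rfloor$ on $q$-values admitting $I = 1$ is also sufficient; the explicit multiset in the second paragraph settles this.  With that in hand, the three-case formula falls out of the reduction to $\min\{k,\, a + k - q_{\max}\}$ by tracking which of the two constraints $q \leq \ell$ or $q \leq \lfloor(k+1)/2\rfloor$ is active.
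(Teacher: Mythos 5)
Your proof is correct. It takes essentially the same route as the paper's, with one presentational difference: the paper first establishes a structural lemma (Lemma \ref{lem:contains_P11_forms}) showing that for long domains (those containing the unit square) every minimizer can be put in the canonical form $\{(0,1)^{\times r},(1,j)\}$, and then optimizes over the single parameter $r$; you instead optimize directly over $(q, I)$ using the reformulation $aI + J = (a-1)I + (k-q+1)$, which for the rectangle is a clean and self-contained alternative. Both arguments hinge on the same two facts---weak permissibility forces $I \geq 1$ once $q \geq 2$, and $\sum_s(i_s+j_s) \geq q$ bounds $q$ above by $\lfloor(k+1)/2\rfloor$---and you correctly verify realizability of the optimum with an explicit multiset, so there is no gap. (Your minimizer for $k$ even, $\{(1,0),(0,1)^{q-2},(0,2)\}$, differs from the paper's $\{(0,1)^{(k-2)/2},(1,1)\}$, but both achieve the same action, which is a harmless non-uniqueness.) The only thing worth flagging is that the paper's phrasing that the second form of Lemma \ref{lem:contains_P11_forms} is ``impossible'' for $\ell=1$ or $k\leq 2$ is slightly loose, since $r=0$ still formally fits; your edge-case analysis handles this more carefully by showing $q \geq 2$ is simply infeasible there.
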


\begin{theorem}[Theorem \ref{thm:comp_for_XOmega_unit_square_in_Omega}]
    Let $X_\Omega$ be a 4-dimensional convex toric domain such that the boundary $\partial \Omega$ intersects the $y$-axis at $(0,1)$.
    Assume $\Omega$ contains the unit square $[0,1]\times[0,1]$.
    Then for any positive integers $k$ and $\ell$, and $J$ given by Lemma \ref{lem:define_J_transition_where_trading_away_0,1_viable}, the capacities $\Tilde{\mathfrak g}_k^\ell(X_\Omega)$ are given by
    $$\Tilde{\mathfrak g}_k^\ell =  \begin{cases}
        \min\{k, \ell - 1 + ||(1,k - 2\ell + 1)||_\Omega^*\} & k - 2\ell + 1 \geq J\\
        k & k - 2\ell + 1 < J,\; k - J < 1\\
        \min\{k, \tfrac{k-J-1}{2} + ||(1,J)||_\Omega^*\} & k - 2\ell + 1 < J,\;  k - J > 0, \text{ odd}\\
        \min\{k, \tfrac{k-J-2}{2} + ||(1,J+1)||_\Omega^*\} & k - 2\ell + 1 < J,\; k - J > 0, \text{ even}
    \end{cases}$$
\end{theorem}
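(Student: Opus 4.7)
The plan is to invoke the computation formula of Theorem \ref{thm:properties}(f), reducing the problem to the combinatorial minimization
\begin{equation*}
\Tilde{\mathfrak g}_k^\ell(X_\Omega) = \min_{\{(i_s,j_s)\}_{s=1}^q \in \mathcal{P}_{k,\ell}} \sum_{s=1}^q ||(i_s,j_s)||_\Omega^*,
\end{equation*}
and then to split the argument into an upper bound (explicit admissible configurations) and a matching lower bound. Throughout I write $f(n) := ||(1,n)||_\Omega^*$, and use two standing facts provided by the hypotheses: since $(0,1) \in \partial \Omega$, the maximizer of $jy$ over $\Omega$ is $(0,1)$, so $||(0,j)||_\Omega^* = j$; and since $(1,1) \in \Omega$, one has $||(i,j)||_\Omega^* \geq i + j$ for all $i,j \geq 0$.

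For the upper bound I would write down an explicit admissible configuration in each case. In Case~1 take the saturated configuration $P = \{(1, k - 2\ell + 1)\} \cup \{(0,1)\}^{\ell - 1}$, of length $q = \ell$ and cost $f(k - 2\ell + 1) + \ell - 1$. In Cases~3 and~4 take $P = \{(1, J)\} \cup \{(0,1)\}^{(k - J - 1)/2}$ or $P = \{(1, J+1)\} \cup \{(0,1)\}^{(k - J - 2)/2}$ respectively, and observe that the hypothesis $k - 2\ell + 1 < J$ forces $q < \ell$, so the number-of-ends constraint is slack. In Case~2 fall back to the single-end $\{(0,k)\}$ of cost $k$. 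In each multi-end case the index identity, weak permissibility, and $q \leq \ell$ are routine, and the $\min$ with $k$ in Cases~1, 3, 4 is achieved by the $\{(0,k)\}$ fallback.

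For the lower bound the case $q = 1$ is immediate: any single end $(I, Y)$ must satisfy $I + Y = k$, so cost $\geq I + Y = k$. For $q \geq 2$, weak permissibility forces some end to have $i_s \geq 1$; after relabeling and using monotonicity of $||\cdot||_\Omega^*$ in each coordinate to replace $i_1$ by $1$ on that end, together with $||(i_s, j_s)||_\Omega^* \geq i_s + j_s$ on the remaining ends, I obtain
\begin{equation*}
\sum_{s=1}^q ||(i_s, j_s)||_\Omega^* \;\geq\; f(j_1) + \sum_{s \geq 2}(i_s + j_s) \;=\; (f(j_1) - j_1) + (k - q).
\end{equation*}
Because the maximizer of $x + jy$ over $\Omega$ has its $y$-coordinate tending up to $1$ as $j$ grows, $f(n) - n$ is nonincreasing in $n$, and so this bound is minimized at the largest allowed $j_1$, namely $j_1 = k - 2q + 1$, giving the canonical lower bound $f(k - 2q + 1) + q - 1$.

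It then remains to minimize $q \mapsto f(k - 2q + 1) + q - 1$ over $q \in \{2, \dots, \ell\}$. The one-step discrete criterion $f(n + 2) - f(n) \geq 1$ --- that is, \emph{trading away a $(0,1)$ end in exchange for incrementing $j$ by $2$ is viable} --- is exactly the definition of $J$ provided by Lemma \ref{lem:define_J_transition_where_trading_away_0,1_viable}, with convexity of $f$ making this inequality nondecreasing in $n$. Putting these pieces together yields the three multi-end cases cleanly: the optimum either saturates the $q \leq \ell$ constraint (Case~1), lands at $n = J$ or $J + 1$ selected by the parity of $k - J$ (Cases~3 and~4), or is beaten by the $\{(0,k)\}$ fallback (Case~2, when $k \leq J$). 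The main obstacle I anticipate is handling ends with $i_1 \geq 2$ --- where the monotonicity step $||(i_1, j_1)||_\Omega^* \geq ||(1, j_1)||_\Omega^*$ throws away information --- which I expect to treat by a separate subadditivity argument establishing that such configurations never beat the $i_1 = 1$ canonical bound under the hypothesis $[0,1]^2 \subseteq \Omega$.
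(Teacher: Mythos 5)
Your overall strategy — reduce to the combinatorial formula, prove upper bounds by exhibiting configurations, prove matching lower bounds by a chain of inequalities, then optimize over $q$ — is a genuinely different route from the paper's, which first proves a \emph{normal-form} reduction (Lemma~\ref{lem:contains_P11_forms}, built on the index-preserving moves of Lemma~\ref{lem:index-preserving-move}) showing every minimizer may be taken of the shape $\{(0,k)\}$ or $\{(0,1)^{\times r},(1,j)\}$, and only then runs the $J$-threshold case analysis. Your upper bound, your $q=1$ case, the monotonicity $f(n)-n$ nonincreasing, and the minimization of $q\mapsto f(k-2q+1)+q-1$ against the threshold $J$ are all correct. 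The issue is that the lower bound for $q\ge 2$ is not actually established as written: once you bound $\|(i_1,j_1)\|_\Omega^*\ge f(j_1)$ and substitute the index constraint, you get $f(j_1)+(k-q+1-i_1-j_1)$, and the displayed equality $=(f(j_1)-j_1)+(k-q)$ is only valid when $i_1=1$. For $i_1\ge 2$ the bound is strictly weaker (by $i_1-1$), so the argument does not rule out such configurations undercutting the claimed value; you flag this yourself as an anticipated obstacle, but that makes the proof incomplete rather than complete.

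The gap is real but closable with exactly the tool the paper uses: Lemma~\ref{lem:index-preserving-move}. Since $\Omega$ contains $[0,1]^2$ and meets the $y$-axis at $(0,1)$, the boundary is flat along $y=1$ through $(1,1)$, so the slope $m$ at the $y=x$ intersection is $0$; the move $(i,j)\mapsto(i-1,j+1)$ therefore never increases $\|\cdot\|_\Omega^*$ for $i\ge 1$, and iterating gives $\|(i_1,j_1)\|_\Omega^*\ge \|(1,i_1+j_1-1)\|_\Omega^*=f(n)$ with $n:=i_1+j_1-1$. (Equivalently: for a long domain the maximizer of $x+jy$ always has $x$-coordinate $\ge 1$, whence $\|(i_1,j_1)\|_\Omega^*\ge (i_1-1)+f(j_1)$.) Substituting this stronger bound yields $\sum_s\|(i_s,j_s)\|_\Omega^*\ge f(n)+(k-q-n)$ with $n\le k-2q+1$, which is exactly the $(f-\mathrm{id})$ term you need, and the remainder of your optimization over $q$ goes through unchanged. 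So the proposal is a legitimate alternative proof once this step is supplied; but ``subadditivity'' is not the right name for it — the relevant input is monotonicity along diagonals / the vanishing slope of $\partial\Omega$ at $(1,1)$, which is precisely the content of Lemma~\ref{lem:index-preserving-move}.
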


\begin{theorem}[Theorem \ref{thm:sharp_obstructions}]
    Let $X_\Omega$ be a convex toric domain such that the polytope boundary $\partial \Omega$ contains the vertices $(0,1)$, $(a,1)$, $(b,\tfrac{1}{2}+\epsilon)$, and $(b,0)$, for $a > 1$ and $b > a + 1/2$.
    Then the embedding $P(a',1) \hookrightarrow X_\Omega$ is obstructed for $a' > a$ by $\Tilde{\mathfrak g}_k^\ell$ when $\lceil a\rceil < \ell < \infty$, but not when $\ell = \infty$ or $\ell < a$.
    This obstruction is sharp since the inclusion map works for $a' \leq a$, and this obstruction stabilizes to higher dimensions.
\end{theorem}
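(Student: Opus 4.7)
The argument will combine the computation formula of Theorem~\ref{thm:properties}(f) and Proposition~\ref{prop:intro-polydisk} with monotonicity and stabilization. For the polydisk one has $||(i,j)||_{\Omega'}^* = ia' + j$, and for $X_\Omega$ a vertex-by-vertex check of $\max(ix + jy)$ over the given vertices shows that $||(1,j)||_\Omega^* = a + j$ once $j$ exceeds the threshold $j^* := \lceil (b-a)/(\tfrac12 - \epsilon)\rceil$ at which $(a,1)$ takes over from $(b,\tfrac12+\epsilon)$ and $(b,0)$, while $||(0,1)||_\Omega^* = 1$.

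To exhibit the obstruction for fixed $a' > a$, I would pick an integer $\ell > \max(\lceil a\rceil, a')$ and then $k \geq 2\ell - 1 + j^*$. These choices place us in the regime $(k-1)/2 \geq \ell - 1$ of Proposition~\ref{prop:intro-polydisk} with $a' < \ell$, giving $\Tilde{\mathfrak g}_k^\ell(P(a',1)) = k + a' - \ell$. On the other side, the multiset
\[
P = \{(0,1),(0,1),\ldots,(0,1),(1,\,k-2\ell+1)\} \in \mathcal{P}_{k,\ell}
\]
(with $\ell - 1$ copies of $(0,1)$) has $q = \ell$ and $\sum(i_s+j_s) = k - \ell + 1$, so the half-index, weak-permissibility, and positive-ends criteria are each immediate, and its cost is $(\ell-1)\cdot 1 + (a + k - 2\ell + 1) = k + a - \ell$. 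Hence $\Tilde{\mathfrak g}_k^\ell(X_\Omega) \leq k + a - \ell < k + a' - \ell$, and monotonicity (Theorem~\ref{thm:properties}(d)) obstructs the embedding.

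For $\ell < a$, Proposition~\ref{prop:intro-polydisk} gives $\Tilde{\mathfrak g}_k^\ell(P(a',1)) = \min\{k, k + a' - \ell\} = k$ (since $a' > a > \ell$), matched on the toric side by the singleton $\{(0,k)\} \in \mathcal{P}_{k,\ell}$, so no strict gap can arise. For $\ell = \infty$ the positive-ends constraint is vacuous and the optimal partition for $X_\Omega$ pushes $q$ to its maximum, forcing the use of $(1,0)$-pairs of cost $b$; since $b > a + \tfrac12$ exceeds $a'$ in the regime near $a$, the $X_\Omega$ capacity exceeds the polydisk's and no obstruction results (the example table with $a=1$, $a'=1.1$, $b=2$ illustrates this). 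Sharpness for $a' \leq a$ follows from the inclusion $[0,a']\times[0,1] \subseteq \Omega$, and stabilization to higher dimensions is immediate from Theorem~\ref{thm:properties}(e) applied to the obstructing $\Tilde{\mathfrak g}_k^\ell$, since multiplying both sides by $B^2(c)$ for $c$ larger than every witnessing value leaves those capacities unchanged. The main technical point will be the vertex case analysis for $||(1,j)||_\Omega^*$ around $j^*$ and verifying that no cheaper admissible partition of $X_\Omega$ exists in the non-obstructing regimes.
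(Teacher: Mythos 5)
The obstruction part of your argument is sound and takes a mildly different route from the paper: you exhibit the explicit admissible multiset $\{(0,1)^{\times(\ell-1)},(1,k-2\ell+1)\}$ as an upper bound on $\Tilde{\mathfrak g}_k^\ell(X_\Omega)$ rather than invoking the exact formula from Theorem~\ref{thm:comp_for_XOmega_unit_square_in_Omega}, and you choose $\ell>\max(\lceil a\rceil,a')$ whereas the paper fixes $\ell=\lceil a\rceil+1$ and takes $a'=a+\epsilon$. Both establish $\Tilde{\mathfrak g}_k^\ell(X_\Omega)<\Tilde{\mathfrak g}_k^\ell(P(a',1))$ for suitable $k$, and your upper-bound approach is, if anything, a bit more self-contained. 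The threshold $j^*$ analysis of $||(1,j)||_\Omega^*$ is correct.

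However, the non-obstruction claims have a genuine gap. For $\ell<a$ you assert that $\{(0,k)\}$ ``matches'' the polydisk capacity ``so no strict gap can arise,'' but exhibiting $\{(0,k)\}\in\mathcal{P}_{k,\ell}$ only gives $\Tilde{\mathfrak g}_k^\ell(X_\Omega)\leq k$. Since $\Tilde{\mathfrak g}_k^\ell(P(a',1))=k$, you would still obtain an obstruction if $\Tilde{\mathfrak g}_k^\ell(X_\Omega)<k$; you must in fact establish the lower bound $\Tilde{\mathfrak g}_k^\ell(X_\Omega)\geq k$, i.e.\ that the singleton is a genuine minimizer, which requires ruling out every other $(k,\ell)$-admissible multiset being cheaper. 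The paper handles this with a case split on $k-2\ell+1\lessgtr 0$ using Theorem~\ref{thm:comp_for_XOmega_unit_square_in_Omega}: when $k-2\ell+1\geq 0$ one checks $\ell-1+||(1,k-2\ell+1)||_\Omega^*\geq k+a-\ell>k$ since $\ell<a$, and when $k-2\ell+1<0$ one checks $\tfrac{k-1}{2}+b>k$. Similarly, your $\ell=\infty$ discussion is heuristic (``pushes $q$ to its maximum, forcing $(1,0)$-pairs of cost $b$'') and does not pin down the competing capacities; the paper instead computes $J=0$ for this $\Omega$, applies the explicit formula to get $\Tilde{\mathfrak g}_k^\infty(X_\Omega)=\min\{k,\tfrac{k-1}{2}+b\}$ (for $k$ odd, and the even analogue), and then observes the resulting bound can only obstruct for $a'\geq b-\tfrac12$, which by the hypothesis $b>a+\tfrac12$ leaves an interval $a<a'<b-\tfrac12$ unobstructed. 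You should make both of these steps precise; as written they do not rule out the possibility that the smaller-$\ell$ or $\ell=\infty$ capacities already obstruct.
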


\subsection{Motivation and Remarks}

For further motivation of this specific project, we cite the inspiring question by Helmut Hofer, mentioned in Schlenk's extended survey \cite{Schlenk2018Survey-old-and-new-long}:

\vspace{5pt}
\noindent\textit{Is there a general theory of embedding invariants, built from closed characteristics and $J$-curves between them, that specializes to SH capacities and ECH capacities by selecting two particular subclasses of orbit sets and $J$-curves?}
\vspace{5pt}

\noindent This paper provides a partial answer to this question with a generalization of elementary McDuff-Siegel and Gutt-Hutchings capacities, which would fall under the Symplectic Homology (SH) capacities in question, but this paper doesn't consider ECH capacities.
One could hope to answer this question more fully in future work by defining $\Tilde{\mathfrak g}_k^{\ell,h}$, where $h$ is a further restriction on genus, specializing to all three families in the following way, at least on 4-dimensional convex toric domains:
$$\Tilde{\mathfrak g}_k^{1,0} = c_k^{GH},\quad\quad \Tilde{\mathfrak g}_k^{\infty,0} = c_k^{MS}, \quad\quad \Tilde{\mathfrak g}_k^{\infty,\infty} = c_k^{ECH}.$$
For stabilization and higher-dimensional considerations, ECH capacities do not extend beyond 4 dimensions, but \textit{elementary} ECH capacities can be defined in higher dimensions, albeit behaving differently than the 4-dimensional capacities.

\begin{remark}[Full SFT package and relation to Gutt-Hutchings capacities]
    In \cite{siegel2019higher}, Siegel uses Symplectic Field Theory to create capacities $\mathfrak g_k$ via the chain level filtered $\mathcal{L}_\infty$ structure on linearized contact homology and tangency constraints.
    But this definition requires virtual perturbations of the moduli spaces, and in \cite{McDuff-Siegel_unperturbed_curves}, McDuff and Siegel use an ersatz, elementary definition $\Tilde{\mathfrak g}_k$ involving honest psuedoholomorphic curves to avoid any virtual perturbations.
    Under mild assumptions ($\pi_1(X) = 0, 2c_1(TX) = 0$) the full SFT capacities $\mathfrak g_k$ agree with the Gutt-Hutchings capacities \cite[Theorem 4.36]{pereira2022lagrangian}:
    $$\mathfrak g_k^1(X) = c_k^{GH}(X).$$
\end{remark}

\subsection{Acknowledgments}
The author thanks his thesis advisor Kyler Siegel for his extensive help on the paper, as well as Richard Hind, Dan Cristofaro-Gardiner, Yuan Yao, Morgan Weiler, and Luya Wang for their mentorship and helpful conversations.

\subsection{Paper Outline}
In Section \ref{sec:definition_properties}, we define the capacity and prove most of its properties, following \cite{McDuff-Siegel_unperturbed_curves} closely.
In Section \ref{sec:lower_bound}, we prove a lower bound for the computation formula on four-dimensional convex toric domains, again following \cite{McDuff-Siegel_unperturbed_curves}.
We prove the upper bound over Sections \ref{sec:upper_bound}, \ref{sec:formal_perturbation_invariance}, and \ref{sec:existence_of_curves}, where Section \ref{sec:upper_bound} gives the general argument, Section \ref{sec:formal_perturbation_invariance} proves the formal perturbation invariance of desired curves, and Section \ref{sec:existence_of_curves} proves their existence.
In Section \ref{sec:computations}, we use the formula to compute the entire family of capacities for balls and a certain class of domains between the cube and the cylinder, giving sometimes sharp obstructions to embeddings in unstabilized and stabilized cases.
We conclude with suggestions for future work in Section \ref{sec:future_work}.

\section{Definition and Properties}\label{sec:definition_properties}
Let $X$ be a compact manifold with boundary, equipped with a one form $\lambda$, whose restriction to the boundary $\alpha := \lambda|_{\partial X}$ is a contact form, and whose exterior derivative $\omega := d\lambda$ is a symplectic form on $X$. 
A Liouville vector field $V$ is defined everywhere on $X$ by $\iota_V\omega = \lambda$, which is transverse at the boundaries.
The negative boundary $\partial^- X$ is where $V$ points inward, and the positive boundary $\partial^+ X$ is where $V$ points outward.
This pair $(X,\lambda)$ is a \textbf{Liouville cobordism} from $\partial^- X$ to $\partial^+ X$, and if $\partial^- X = \emptyset$, then $(X,\lambda)$ is called a \textbf{Liouville domain}.
The \textbf{Reeb vector field} $R_\alpha$ is defined along the boundary by $\iota_{R_\alpha}d\alpha = 0$ and $R(\alpha) = 1$, and the \textbf{action} of a Reeb orbit $\mathcal{A}_{\partial X}\gamma$ is the period, $\int_\gamma \alpha$, assuming that $\gamma$ is parametrized so that it's velocity is always equal to $R_\alpha$.
We say a Liouville cobordism has a \textbf{nondegenerate contact boundary} if all of the Reeb orbits on the boundary are nondegenerate.

If $(Y,\alpha)$ is a contact manifold, then the \textbf{symplectization} is $\mathbb{R} \times Y$ with symplectic form $d(e^r\alpha)$.
We denote by $\mathcal{J}(Y)$ the set of \textbf{admissible almost complex structures}, which are compatible with $\omega$ and $r$-translation invariant, which send $\partial_r$ to $R_\alpha$, and which restrict to an almost complex structure on every contact hyperplane.
For a Liouville cobordism $X$, the \textbf{symplectic completion} is
$$\widehat{X}:= (\mathbb{R}_{\geq 0} \times \partial^+X) \cup X \cup (\mathbb{R}_{\leq 0} \times \partial^-X),$$
overlapping at the $0$-slices, with symplectic form extending $\omega$ into the half-symplectizations and restricting to the corresponding symplectization forms on those ends.
We define the set of \textbf{admissible almost complex structures} $\mathcal{J}(X)$ similarly.
And if $D$ is a local symplectic divisor near a point $p$ in the interior of $X$, we define $\mathcal{J}(X;D)$ to be the subset of almost complex structures which are integrable near $p$ and preserve the germ of $D$.

Now, given a Liouville cobordism $X$, divisor $D$ and point $p$ as above, an almost complex structure $J \in \mathcal{J}(X;D)$, collections $\Gamma^+,\Gamma^-$ of Reeb orbits on the boundaries, and an integer $k \geq 1$, we define
$$\overline{\overline{\mathcal{M}}}_X^J(\Gamma^+;\Gamma^-)\ll \mathcal{T}^{(k)}p\gg$$
as in \cite{McDuff-Siegel_unperturbed_curves} to be the compactified moduli space of $J$-holomorphic curves in the symplectic completion of $X$, which are asymptotically cylindrical to $\Gamma^+$ and $\Gamma^-$ and are tangent to $D$ with contact order $k$.
Compactification with a tangency constraint involves cases where $p$ is on a ghost component of a resulting psuedoholomorphic building, and details can be found in \cite[\S 2.2]{McDuff-Siegel_unperturbed_curves}.
For Liouville domains, we use $\Gamma = \Gamma^+$ in the notation and supress $\Gamma^- = \emptyset$.
Figure \ref{fig:moduli} depicts a potential curve in this compactification.

\begin{figure}
    \centering
    \includegraphics[width=0.5\linewidth]{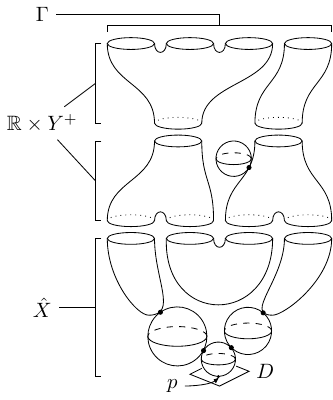}
    \caption{Example of a potential curve in $\overline{\overline{\mathcal{M}}}_X^J(\Gamma)\ll \mathcal{T}^{(k)}p\gg$}
    \label{fig:moduli}
\end{figure}

\begin{definition}
    Let $X$ be a Liouville domain with nondegenerate contact boundary, $D$ a smooth local symplectic divisor passing through a point $p$ in the interior of $X$, $k$ a positive integer, and $\ell \in \mathbb{Z}_{>0} \cup \{\infty\}$.
    We put
    $$\Tilde{\mathfrak{g}}_k^\ell(X) := \sup_{J \in \mathcal{J}(X;D)}\left( \inf_{\Gamma \in \mathcal{G}_{J,k,\ell}} \mathcal{A}_{\partial X}(\Gamma)\right),$$
    where $\mathcal{G}_{J,k,\ell}$ is the set of all tuples $\Gamma = (\gamma_1,...,\gamma_m)$ of Reeb orbits with $m \leq \ell$ such that $\overline{\overline{\mathcal{M}}}_X^J(\Gamma)\ll \mathcal{T}^{(k)}p \gg$ is nonempty.
    Here we put $\mathcal{A}_{\partial X}(\Gamma) := \sum_{i=1}^m \mathcal{A}_{\partial X}(\gamma_i)$.
\end{definition}

\begin{remark}
    Consistent with the notation, $\Tilde{\mathfrak g}_k^\ell$ is independent of the choice of $p \in \text{Int}\; X$ and the local divisor $D$.
    See \cite[Lemma 3.1.3]{McDuff-Siegel_unperturbed_curves}.
    This follows from Moser's trick, giving a symplectomorphism which is the identity near the boundary and creating a bijection between moduli spaces with different divisors.
    Since we fix the boundary, the positive asymptotic behavior of a curve in this bijection (including the number of positive ends and total action) does not change.
\end{remark}

Now we prove all the properties of $\Tilde{\mathfrak g}_k^\ell$ except the computation formula.
These make it a capacity for generalized Liouville embeddings between Liouville domains.
A \textbf{generalized Liouville embedding} is a smooth embedding $i \colon (X,\lambda) \hookrightarrow (X',\lambda')$ between equidimensional Liouville domains such that the closed 1-form $(i^*(\lambda') - \lambda)|_{\partial X}$ is exact.

\subsection{Proofs of Theorem \ref{thm:properties} Properties (a) - (e)}
The following proofs follow from \cite{McDuff-Siegel_unperturbed_curves}, but are included here for completion.

\begin{proof}[Proof of scaling]
    A Reeb orbit $\gamma$ of $X$ has action which scales with the contact form: $\mathcal{A}_{\mu\lambda}(\gamma) = \int_\gamma \mu\lambda = \mu \int_\gamma \lambda = \mu \mathcal{A}_{\lambda}(\gamma)$.
    By definition, $\Tilde{\mathfrak g}_k^\ell(X,\mu\lambda)$ scales in the same way.
\end{proof}

\begin{proof}[Proof of nondecreasing in $k$]
    Let $J$ and $J'$ attain the suprema of $\Tilde{\mathfrak g}_k^\ell(X)$ and $\Tilde{\mathfrak g}_{k+1}^\ell(X)$, respectively.
    Then
    $$\Tilde{\mathfrak g}_k^\ell(X) = \inf_{\Gamma \in \mathcal{G}_{J,k,\ell}} \mathcal{A}(\Gamma) \;\leq\; \inf_{\Gamma \in \mathcal{G}_{J,k+1,\ell}} \mathcal{A}(\Gamma) \;\leq\; \inf_{\Gamma \in \mathcal{G}_{J',k+1,\ell}} \mathcal{A}(\Gamma) = \Tilde{\mathfrak g}_{k+1}^\ell(X).$$
    The first inequality follows because all curves which satisfy $\ll T^{(k+1)}p\gg$ also satisfy $\ll T^{(k)}p\gg$, and the second inequality follows by the supremum in the definition of $\Tilde{\mathfrak g}_{k+1}^\ell(X)$.
\end{proof}

\begin{proof}[Proof of nonincreasing in $\ell$]
    Similarly, if $J$ and $J'$ attain the suprema of $\Tilde{\mathfrak g}_k^\ell(X)$ and $\Tilde{\mathfrak g}_k^{\ell+1}(X)$, respectively, then
    $$\Tilde{\mathfrak g}_k^\ell(X) = \inf_{\Gamma \in \mathcal{G}_{J,k,\ell}} \mathcal{A}(\Gamma) \;\geq\; \inf_{\Gamma \in \mathcal{G}_{J',k,\ell}} \mathcal{A}(\Gamma) \;\geq\; \inf_{\Gamma \in \mathcal{G}_{J',k,\ell+1}} \mathcal{A}(\Gamma) = \Tilde{\mathfrak g}_k^{\ell+1}(X).$$
    Here the first inequality comes from the supremum, and the second inequality follows because all curves with at most $\ell$ positve ends have at most $\ell + 1$ positive ends too.
\end{proof}

\begin{proof}[Proof of monotonicity]
    Let $\iota$ be a generalized Liouville embedding from $X$ into $X'$, both Liouville domains of the same dimesnion.
    Let $p$ be a point in the interior of $X$, with a divisor $D$ near $p$.
    Say $p' := \iota(p)$ and $D' := \iota(D)$.
    Choose an arbitrary admissible almost complex structure for $X$, $J \in \mathcal{J}(X;D)$, and choose any admissible complex structure for $X'$, $J'$, that restricts to $\iota_*J$ on $\iota(X)$.
    Now pick a family $\{J_t'\}_{t \in [0,1)}$ of admissible almost complex structures which realizes neck-stretching along $\partial\iota(X)$.
    Since $\widetilde{g}_k^\ell(X')$ is a supremum over admissible almost complex structures, we see that for all $J_t'$, we have
    $$\inf_{\Gamma \in \mathcal{G}_{J_t',k,\ell}} \mathcal{A}_{\partial X'}(\Gamma) \leq \Tilde{\mathfrak g}_k^\ell(X').$$
    Thus, for all $t \in [0,1)$, there exists a $\Gamma_t \in \mathcal{G}_{J_t',k,\ell}$ such that $\mathcal{A}_{\partial X'}(\Gamma) \leq \Tilde{\mathfrak g}_k^\ell(X')$.
    And since the boundary of $X'$ is nondegenerate, there are only finitely many Reeb orbits with action at most $\Tilde{\mathfrak g}_k^\ell(X')$.
    Hence, with only finitely many plausible orbit sets, there exists an increasing sequence $t_1,t_2,... \in [0,1)$ approaching $1$ such that $\Gamma_{t_i}$ is the same for all $t_i$.
    Then by SFT compactness, we get a psuedoholomorphic building in the compactified moduli space with the same asymptotics, and therefore with combined action at most $\Tilde{\mathfrak g}_k^\ell(X')$.
    Now, we look at this building and focus on the curve component in the $\iota(X)$ level which contains $p'$.
    This curve has asymptotics with action at most $\Tilde{\mathfrak g}_k^\ell(X')$.
    And since $\iota$ is a generalized Liouville embedding, caps cannot exist in the psuedoholomorphic building by Stokes theorem.
    Thus, no genus and no caps implies that the curve component in the bottom level cannot have more positive ends than the top level.
    This implies that the component still has at most $\ell$ positive ends.
    Hence,
    $$\inf_{\Gamma \in \mathcal{G}_{J,k,\ell}} \mathcal{A}_{\partial X} (\Gamma) \leq \Tilde{\mathfrak g}_k^\ell(X'),$$
    and since $J$ was arbitrary, then we conclude that $\Tilde{\mathfrak g}_k^\ell(X) \leq \Tilde{\mathfrak g}_k^\ell(X')$.
\end{proof}

\begin{proof}[Proof of stabilization.]
    This is proved in \cite{McDuff-Siegel_unperturbed_curves} sections 3.6 and 3.7.
    Here we give a summary of those arguments and why they are unaffected by introducing the parameter $\ell$ on positive ends.

    First we prove a lower bound: if $c \geq \Tilde{\mathfrak g}_k^\ell(X)$, then $\Tilde{\mathfrak g}_k^\ell(X \times B^2(c)) \geq \Tilde{\mathfrak g}_k^\ell(X)$.
    Indeed, we take a smoothed subdomain $X \overset{\frown}{\times} B^2(c) \subset X \times B^2(c)$, such that all Reeb orbits with action less than $c - \epsilon$ are entirely contained in $\partial X \times \{0\}$.
    Let $J_X$ be the almost complex structure on $X$ which achieves the supremum in $\Tilde{\mathfrak g}_k^\ell(X)$, and consider an admissible almost complex structure $J$ which restricts to $J_X$ on $\partial X \times \{0\}$.
    If a $J$-holomorphic curve $C$ in the completion of $X \overset{\frown}{\times} B^2(c)$ is asymptotic to any Reeb orbits outside of $\partial X \times \{0\}$, then the action of $C$ is larger than the capacity.
    On the other hand, if $C$ is asymptotic to orbits only in $\partial X \times \{0\}$, and is tangent to $D$ near $p \in \partial X \times \{0\}$, then we can show that the entire curve is contained in $\partial X \times \{0\}$.
    Therefore $C$ corresponds to a curve in $\overline{\overline{\mathcal{M}}}_X^{J_X}(\Gamma)\ll \mathcal{T}^{(k)}p\gg$ and thus has action greater than or equal to the capacity.
    The parameter on positive ends does not affect this argument, since the curves in the stabilization are exactly the curves in $X$, and in particular have the same number of positive ends.

    For the upper bound, we note that 4-dimensional convex toric domains satisfy the following criteria: any simple, index-0 formal curve component $C$ in $X$ satisfying the tangency constraint and having energy equal to the capacity are also formally perturbation invariant and have a regular moduli space with nonzero signed count for some almost complex structures.
    Formal perturbation invariance is defined in section \ref{sec:formal_perturbation_invariance}, and the above criteria is proven in Proposition \ref{prop:formal_perturbation_invariance}.
    Then we consider a formal curve in the smoothed product $X \overset{\frown}{\times} B^2(c)$, $\widetilde{C}$ which corresponds to $C$.
    It can be shown that $\widetilde{C}$ is also formally perturbation invariant and thus, for all relevant $J$'s, the moduli space of $\widetilde{C}$ is also nonempty.
    Hence, when we take the supremum over these $J$'s, the capacity of $X \overset{\frown}{\times} B^2(c)$ is still less than or equal the energy of $C$ which is the capacity of $X$.
    Again, because the correspondence between $C$ and $\widetilde{C}$ is exact, implying they share the same number of positive ends, the parameter $\ell$ does not change the argument.
\end{proof}

The remainder of the paper is devoted to proving the computation formula for 4-dimensional convex toric domains and applying it to specific examples, finding embedding obstructions that are not witnessed by the Gutt-Hutchings or McDuff-Siegel capacities.
First we will prove the lower bound for the capacity, which follows \cite{McDuff-Siegel_unperturbed_curves} without much modification.
Then we will prove upper bound, which requires more work: constructing the curves which realize the new combinatorial optimization in the formula.

\section{Computation Lower Bound}\label{sec:lower_bound}

In this section, we prove that, on four-dimensional convex toric domains, the defined capacity is bounded below by the formula given in Theorem \ref{thm:properties}\ref{thm:properties_subitem_computation}, following \cite{McDuff-Siegel_unperturbed_curves}.
Recall that a collection of ordered pairs $P = \{(i_s,j_s)\}_{s=1}^q$ is a \textbf{$(k,\ell)$-permissible word}, and say $P \in \mathcal{P}_{k,\ell}$ if it's $\frac{1}{2}$-index is equal to $k$, it is weakly permissible, and it has length at most $\ell$.
We make a similar definition where the $\frac{1}{2}$index is required to be at least $k$ rather than equal to $k$; we call this \textbf{$(\geq k,\ell)$-permissibility}, and denote the set of such words by $\mathcal{P}_{\geq k,\ell}$.

\begin{theorem}\label{thm:computation_lower_bound}[Theorem \ref{thm:properties}(f) lower bound]
    For a convex toric domain $X_\Omega$, using the notation from above, we have the following inequality on the capacity:
    $$\sup_{J \in \mathcal{J}(X_\Omega;D)} \left(\inf_{\Gamma \in \mathcal{G}_{J,k,\ell}} \mathcal{A}_{\partial X_\Omega}(\Gamma)\right) \geq \min_{\{(i_s,j_s)\}_{s=1}^q\in\mathcal{P}_{k,\ell}} \left( \sum_{s=1}^q ||(i_s,j_s)||_\Omega^* \right).$$
\end{theorem}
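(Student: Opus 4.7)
The plan is to follow the McDuff--Siegel argument for the lower bound of $\Tilde{\mathfrak g}_k^\infty(X_\Omega)$ essentially verbatim, checking only that the extra constraint $q \leq \ell$ is inherited automatically from the defining moduli spaces, rather than needing any new geometric input.

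First I would replace $X_\Omega$ by a sequence of toric deformations $X_{\Omega_\delta}$ whose boundary is a small smoothing of $\partial X_\Omega$ and whose Reeb dynamics, on the region of action below any fixed threshold, consist of nondegenerate orbits supported over a fixed short-edge approximation. In the standard setup of \cite{McDuff-Siegel_unperturbed_curves}, every such short Reeb orbit is labeled by a lattice vector $(i,j) \in \mathbb{Z}_{\geq 0}^2 \setminus \{(0,0)\}$, with action arbitrarily close to $\|(i,j)\|_\Omega^*$ and Conley--Zehnder index arbitrarily close to $2(i+j)+1$. I would then select $J \in \mathcal{J}(X_{\Omega_\delta};D)$ of the toric form used there, so that moduli spaces are controlled and ECH-style positivity arguments apply.

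Next, for an arbitrary tuple $\Gamma = (\gamma_1,\dots,\gamma_q) \in \mathcal{G}_{J,k,\ell}$ with $\overline{\overline{\mathcal{M}}}_{X_{\Omega_\delta}}^J(\Gamma) \ll \mathcal{T}^{(k)}p\gg$ nonempty, I would extract a curve $C$ from the moduli space and read off three combinatorial facts about the labels $(i_s,j_s)$ of its positive ends. The half-index identity $\sum_s(i_s+j_s) + q - 1 = k$ follows from the Fredholm index formula applied to $C$, the codimension-$2(k-1)$ tangency constraint, and the fact that nonempty moduli spaces have nonnegative expected dimension; here I would use the index computation already carried out in \cite{McDuff-Siegel_unperturbed_curves} for precisely this kind of toric $J$. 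Weak permissibility follows from positivity of intersections between $C$ and the two standard symplectic divisors $\{z_1 = 0\}$, $\{z_2 = 0\}$: if every $i_s$ vanished, $C$ would be disjoint at infinity from $\{z_1 = 0\}$, and the pairing with this divisor would be nonpositive, contradicting that $C$ passes transversely through $p$ (for a divisor $D$ in general position); the symmetric argument handles $(j_s)$. The length condition $q \leq \ell$ is immediate from the definition of $\mathcal{G}_{J,k,\ell}$, which is the only place the new parameter enters — so the new constraint is free.

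Putting these together, for this $J$ we obtain
$$\inf_{\Gamma \in \mathcal{G}_{J,k,\ell}} \mathcal{A}_{\partial X_{\Omega_\delta}}(\Gamma) \;\geq\; \min_{\{(i_s,j_s)\}_{s=1}^q \in \mathcal{P}_{k,\ell}} \sum_{s=1}^q \|(i_s,j_s)\|_{\Omega_\delta}^{*} \;-\; O(\delta),$$
since each orbit's action is within $O(\delta)$ of the dual norm of its label. Passing to the supremum over $J$ and letting $\delta \to 0$ (and invoking continuity of $\|\cdot\|_\Omega^*$ in $\Omega$ with respect to Hausdorff distance) gives the claimed lower bound for $X_\Omega$.

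The step I expect to be the main obstacle is verifying weak permissibility in the presence of the new parameter $\ell$: one needs to know that the positivity-of-intersection argument survives when $\Gamma$ is obtained as the set of positive ends of a single top-level curve in a possibly broken building, rather than of a smooth curve. For $\ell = \infty$ this is handled in \cite{McDuff-Siegel_unperturbed_curves}; the argument should transfer, but I would carefully re-examine it to ensure that after SFT compactness one still extracts a \emph{single} curve component through $p$ whose positive ends realize $\Gamma$ and intersect each axis divisor nontrivially, so that both $(i_1,\dots,i_q) \neq 0$ and $(j_1,\dots,j_q) \neq 0$ persist.
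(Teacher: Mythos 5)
Your plan is close to the paper's, and the observation that the constraint $q \leq \ell$ is inherited for free from the definition of $\mathcal{G}_{J,k,\ell}$ is correct and is exactly the new ingredient the paper uses. But there is a real gap in the index step. You assert that for a curve $C$ extracted from a nonempty moduli space, the half-index \emph{identity} $\sum_s(i_s+j_s)+q-1 = k$ ``follows from... the fact that nonempty moduli spaces have nonnegative expected dimension.'' Nonnegativity of the expected dimension only gives the \emph{inequality}
$$\sum_{s=1}^q(i_s+j_s) + q - 1 \;\geq\; k,$$
i.e. you only obtain that the word of labels lies in $\mathcal{P}_{\geq k,\ell}$, not $\mathcal{P}_{k,\ell}$. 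That is exactly what the paper proves in Proposition \ref{prop:bound_over_P_geq_k}, and it then needs a separate combinatorial argument (Proposition \ref{prop:bound_over_P_k}) showing that
$$\min_{\mathcal{P}_{\geq k,\ell}} \sum_s \|(i_s,j_s)\|_\Omega^* \;\geq\; \min_{\mathcal{P}_{k,\ell}} \sum_s \|(i_s,j_s)\|_\Omega^*,$$
by exhibiting index-lowering moves that do not increase action (and respect weak permissibility and the length bound $q\leq\ell$). Without that second step your final display does not follow.

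A secondary issue: when $C$ is multiply covered, nonnegativity of the index does not come directly from genericity; it is the underlying simple curve $\overline{C}$ that has $\mathrm{ind}(\overline{C})\geq 0$, and you then need the covering-index inequality (the paper's Lemma \ref{lem:tangency_constraint_index_inequality}) to deduce $\mathrm{ind}(C)\geq 0$. Likewise, the weak-permissibility argument the paper uses is via the relative adjunction formula applied to the \emph{simple} curve $\overline{C}$, which gives strong permissibility for $\overline\Gamma$ and hence weak permissibility for $\Gamma$; your positivity-of-intersections sketch is plausible but, as written, you apply it to $C$ directly, and you should check that it is valid for multiple covers. Your flagged concern about extracting a single component from a broken building is less worrying, since the compactified moduli space $\overline{\overline{\mathcal{M}}}$ is built into the capacity's definition and one does not need to pass to a further limit of $J$'s here.
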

We will prove this in two parts, Propositions \ref{prop:bound_over_P_geq_k} and \ref{prop:bound_over_P_k} below.
First, we will prove the bound over $\mathcal{P}_{\geq k,\ell}$.
And it will suffice to do so for a small perturbation of $X_\Omega$ which has simpler Reeb dynamics, called the fully rounded and perturbed domain $\widetilde{X}_\Omega$.
The Reeb orbits up to a certain action in such a domain correspond to the ordered pairs $(i_s,j_s)$ on the right hand side, and their action is close to $||(i_s,j_s)||_\Omega^*$.
To show this initial bound, we will start with a generic $J$ and a $J$-holomorphic curve $C$ that has less energy than the capacity, and we'll prove that it's orbit set satisfies $(\geq k,\ell)$-permissibility.
Second, we will finish the proof by studying the action of orbit sets with respect to the index and showing that the minimizers of the right hand side indeed live in $\mathcal{P}_{k,\ell}$.
Since we start with a $J$-holomorphic curve with at most $\ell$ positive ends, it's orbit set clearly satisfies the combinatorial rule $q \leq \ell$, and the rest of the proof follows as in \cite{McDuff-Siegel_unperturbed_curves}.
We include the modified proofs for completion.

\begin{lemma}\label{lem:big_fully_rounding}
    Let $X_\Omega$ be a 4-dimensional convex toric domain.
    For any small $\epsilon > 0$ and large $K \gg 0$, there exists a \textbf{fully rounded and perturbed} convex toric domain $\widetilde{X}_\Omega$, which is an $\epsilon$-small $C_0$ deformation of $X_\Omega$, such that the Reeb dynamics on the boundary can be described as follows:
    \begin{enumerate}[label=(\alph*)]
        \item All of the orbits with action below $K$ are nondegenerate and either elliptic or positive hyperbolic.
        The elliptic orbits are enumerated by $e_{i,j}$ for $(i,j) \in \mathbb{Z}^2_{\geq 0} \setminus (0,0)$ with action $\widetilde{\mathcal{A}}(e_{i,j})$ close to $||(i,j)||_\Omega^* < K$.
        The hyperbolic orbits are enumerated by $h_{i,j}$ for $(i,j) \in \mathbb{Z}^2_{\geq 1}$ with action $\widetilde{\mathcal{A}}(h_{i,j})$ close to $||(i,j)||_\Omega^* < K$.
        We call these orbits \textbf{acceptable}, compared to the ones whose actions are too large.
        \item Further, let $C$ be a $J$-holomorphic curve which is asymptotically cylindrical to a set of acceptable orbits
        $$\Gamma = \{e_{i_s,j_s}\}_{s=1}^E \cup \{h_{i_t,j_t}\}_{t=1}^H$$
        for some generic $J \in \mathcal{J}(\widetilde{X}_\Omega)$, $E,H \geq 0$, and satisfying a $\ll \mathcal{T}^{(k)} p\gg$ tangency constraint.
        Then 
        $$\emph{ind}(C) = E + H - 2 + \sum_{s=1}^E (2i_s + 2j_s + 1) + \sum_{t=1}^H(2i_t + 2j_t) - 2k$$
        In particular, if the set is elliptic, i.e. $E = q$ and $H = 0$, then the half-index is given by $\frac{1}{2}\text{ind}(C) = \sum (i_s + j_s) + q - 1 - k$.
        \item If $C$ is a somewhere-injective, then $\Gamma$ is strongly permissible (i.e. weakly permissible and excluding words of the form $e_{0,k}$ and $e_{k,0}$ for $k \geq 2$).
        \item The actions of different acceptable orbits are distinct and linearly independent over $\mathbb{Q}$, and the action difference between $e_{i,j}$ and $h_{i,j}$ is very small:
        $$0 < \widetilde{\mathcal{A}}(e_{i,j}) - \widetilde{\mathcal{A}}(h_{i,j}) < |\widetilde{\mathcal{A}}(\Gamma_1) - \widetilde{\mathcal{A}}(\Gamma_2)|,$$
        for all $\Gamma_1, \Gamma_2$ distinct words of acceptable elliptic orbits.
    \end{enumerate}
\end{lemma}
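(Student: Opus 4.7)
The plan is to build $\widetilde X_\Omega$ by the standard two-step procedure—first smoothing the polygonal corners of $\Omega$, then Morse--Bott perturbing the resulting $S^1$-families of Reeb orbits—and then verify (a)--(d) using the well-known Conley--Zehnder spectrum for convex toric boundaries together with a generic choice of perturbation.

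First, I would replace the polygonal boundary of $\Omega$ with a $C^\infty$, strictly convex curve that agrees with $\partial\Omega$ outside small $\epsilon$-neighborhoods of its vertices. The resulting smooth convex toric domain has Morse--Bott Reeb dynamics: its orbits form $S^1$-families $\mathcal{M}_{i,j}$ indexed by the primitive outward-normal directions $(i,j) \in \mathbb{Z}_{\geq 0}^2\setminus\{(0,0)\}$ realized along $\partial\Omega$, with family-action equal to $||(i,j)||_\Omega^*$ up to $O(\epsilon)$. Then I would apply the standard Morse--Bott perturbation using a small Morse function on each $S^1$-family with a single minimum and maximum: the minimum yields the elliptic orbit $e_{i,j}$, the maximum yields the positive hyperbolic orbit $h_{i,j}$, and choosing $\epsilon$ small confines every orbit of action $\leq K$ to this nondegenerate list, establishing (a).

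For (b), I would plug into the punctured Fredholm index formula for a genus-zero curve in a symplectization with only positive punctures,
$$\text{ind}(C) = (n-3)\chi(\dot\Sigma) + 2c_1^\tau(C) + \sum_{\text{pos}}\mu_{CZ}^\tau(\gamma),$$
with $n = 2$ and $\chi(\dot\Sigma) = 2 - E - H$, and then subtract the codimension $2k$ contributed by the $\ll\mathcal{T}^{(k)}p\gg$ tangency constraint in a $4$-dimensional target. The standard Conley--Zehnder indices for convex toric boundaries, $\mu_{CZ}(e_{i,j}) = 2(i+j)+1$ and $\mu_{CZ}(h_{i,j}) = 2(i+j)$ computed in a trivialization extending over a reference spanning disk meeting $D$, together with vanishing of the relative $c_1$ term in this trivialization, give the stated formula; the elliptic special case is arithmetic.

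Property (c) is the standard observation that a somewhere-injective punctured curve cannot have its asymptotic word consist entirely of iterates of a single simple Reeb orbit in a controlled way: $e_{0,k}$ (resp.\ $e_{k,0}$) for $k\geq 2$ is a $k$-fold cover of the simple orbit supported on the $y$-axis (resp.\ $x$-axis) edge, so excluding such words from somewhere-injective asymptotics is forced by the usual covering argument; weak permissibility follows by the same reasoning applied separately to each coordinate axis. Property (d) is a genericity statement: each of the finitely many rational-linear relations on actions that we wish to avoid cuts out a codimension-$\geq 1$ subset of the space of admissible smoothing-plus-Morse-function data, so a standard Baire argument produces a perturbation satisfying all of them simultaneously while still realizing (a)--(c).

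The main obstacle will be making the index computation in (b) line up exactly with the paper's conventions for Conley--Zehnder indices, the relative first Chern number in the chosen trivialization, and the codimension of the tangency constraint; these conventions must be tracked carefully so that the $-2k$ correction absorbs both the codimension of passing through $p$ and the $(k-1)$ orders of tangency along $D$. Everything else closely mirrors the rounding-and-perturbation constructions already used in \cite{McDuff-Siegel_unperturbed_curves} and in the Gutt--Hutchings capacity literature, and so I would verify by citation wherever possible.
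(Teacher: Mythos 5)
Your construction for parts (a), (b), and (d) mirrors the paper's: smooth the corners of $\partial\Omega$, perform the Morse--Bott perturbation following Bourgeois to split each $S^1$-family into an elliptic/positive-hyperbolic pair, and compute the Fredholm index in the trivialization $\tau_{\text{ex}}$ for which $c_1^\tau$ vanishes and $CZ(e_{i,j})=2(i+j)+1$, $CZ(h_{i,j})=2(i+j)$. For (d) you should also note explicitly that the Morse--Bott perturbation size $\delta$ can be taken much smaller than the minimal action gap between distinct acceptable elliptic words, which is what gives the second inequality; a pure Baire/genericity argument only gives $\mathbb{Q}$-linear independence, not the smallness of $\widetilde{\mathcal{A}}(e_{i,j})-\widetilde{\mathcal{A}}(h_{i,j})$.

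The gap is in (c). You assert that a somewhere-injective curve cannot be asymptotic to $e_{0,k}$ or $e_{k,0}$ (for $k\geq 2$) ``by the usual covering argument,'' and that weak permissibility follows ``by the same reasoning applied to each axis.'' This is not correct: somewhere-injectivity is a property of the map, not of the asymptotics, and a somewhere-injective curve can perfectly well have a positive end asymptotic to a multiply-covered orbit, or several ends asymptotic to distinct covers of the same simple orbit. For example a somewhere-injective plane in $\widehat{\widetilde X}_\Omega$ positively asymptotic to $e_{0,2}$ is not ruled out by any covering consideration, and a somewhere-injective pair-of-pants asymptotic to $e_{0,1}\times e_{0,1}$ (which violates weak permissibility) is likewise not excluded just because both ends lie over the $y$-axis edge. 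The actual mechanism, which the paper invokes via \cite[Lemma 4.2.2]{McDuff-Siegel_unperturbed_curves}, is the relative adjunction formula together with a writhe/winding-number estimate: one shows that a somewhere-injective curve asymptotic to a non-strongly-permissible word would have a negative count of singularities or violate the adjunction inequality, which is impossible. Without this ingredient your proof of (c) does not go through; there is no elementary ``covering argument'' substitute for it.
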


\begin{remark}\label{rem:C0_continuity}
    By a standard observation, the capacity $\Tilde{\mathfrak{g}}_k^\ell$ is continuous with respect to $C_0$ deformations of $X$ inside of $\widehat{X}$.
    Also, $\Tilde{\mathfrak g}_k^\ell$ is continuous on convex toric domains, with respect to the Hausdorff distance of their moment images, see \cite[Lemma 2.3]{CCGFHR_sympl_embs_4d_concave_toric_domains}.
    Thus it suffices to consider these fully rounded and perturbed domains.
\end{remark}

\begin{proposition}\label{prop:bound_over_P_geq_k}
    For any four-dimensional convex toric domain $X_\Omega$, we have
    $$\Tilde{\mathfrak g}_k^\ell(X_\Omega) \geq \min_{\{(i_s,j_s)\}_{s=1}^q \in \mathcal{P}_{\geq k,\ell}} \sum_{s=1}^q||(i_s,j_s)||_\Omega^*.$$
\end{proposition}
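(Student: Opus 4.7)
The plan is to follow the argument of \cite{McDuff-Siegel_unperturbed_curves}, observing that the new positive-ends constraint $q \leq \ell$ travels through the proof for free, since it is built directly into the definition of $\mathcal{G}_{J,k,\ell}$ and cannot be violated by any of the standard reductions. By Remark \ref{rem:C0_continuity} it suffices to prove the bound for a fully rounded and perturbed domain $\widetilde{X}_\Omega$ as in Lemma \ref{lem:big_fully_rounding}. Fix a small $\delta > 0$, set $M := \min_{P \in \mathcal{P}_{\geq k,\ell}} \sum_s \|(i_s,j_s)\|_\Omega^*$, and choose the perturbation parameters $\epsilon$ and $K$ so that $K$ dominates every capacity value we need, and so that the action approximations in Lemma \ref{lem:big_fully_rounding}(a) hold to within error much less than $\delta$ on all orbits with action at most $M + 1$.

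Pick a generic $J \in \mathcal{J}(\widetilde{X}_\Omega; D)$. By the definition of $\Tilde{\mathfrak g}_k^\ell$ as an infimum over $\mathcal{G}_{J,k,\ell}$, there exists $\Gamma = (\gamma_1, \ldots, \gamma_q)$ with $q \leq \ell$, action $\mathcal{A}(\Gamma) \leq \Tilde{\mathfrak g}_k^\ell(\widetilde{X}_\Omega) + \delta$, and a nonempty moduli space $\overline{\overline{\mathcal{M}}}^J_{\widetilde{X}_\Omega}(\Gamma) \ll \mathcal{T}^{(k)} p \gg$ containing some curve $C$. Either the proposition already holds trivially or $\mathcal{A}(\Gamma) < K$, so Lemma \ref{lem:big_fully_rounding}(a) forces each $\gamma_s$ to be an acceptable orbit of type $e_{i_s,j_s}$ or $h_{i_s,j_s}$ for some $(i_s, j_s) \in \mathbb{Z}_{\geq 0}^2 \setminus \{(0,0)\}$. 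Applying the multiple-cover reduction of \cite{McDuff-Siegel_unperturbed_curves}, I pass to an underlying somewhere-injective component that still carries the tangency constraint; this operation can only decrease the number of positive ends, so $q \leq \ell$ is preserved, and generic transversality gives $\mathrm{ind}(C) \geq 0$.

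Let $E$ and $H$ denote the numbers of elliptic and hyperbolic asymptotics, so $E + H = q$. Plugging into the formula of Lemma \ref{lem:big_fully_rounding}(b),
$$0 \;\leq\; \mathrm{ind}(C) \;=\; 2\sum_{s=1}^q (i_s + j_s) + q + E - 2 - 2k,$$
which rearranges to $\sum_{s=1}^q (i_s + j_s) + q - 1 \geq k + H/2 \geq k$, i.e., the half-index condition for $(\geq k, \ell)$-permissibility holds with room to spare whenever hyperbolic orbits appear. Weak permissibility is supplied by Lemma \ref{lem:big_fully_rounding}(c), and combined with $q \leq \ell$ this places $\{(i_s,j_s)\}_{s=1}^q$ in $\mathcal{P}_{\geq k, \ell}$. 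Using Lemma \ref{lem:big_fully_rounding}(a) once more to compare actions with the dual norms,
$$M \;\leq\; \sum_{s=1}^q \|(i_s, j_s)\|_\Omega^* \;\leq\; \mathcal{A}(\Gamma) + O(\epsilon) \;\leq\; \Tilde{\mathfrak g}_k^\ell(\widetilde{X}_\Omega) + \delta + O(\epsilon).$$
Letting $\epsilon, \delta \to 0$ and invoking Remark \ref{rem:C0_continuity} to return from $\widetilde{X}_\Omega$ to $X_\Omega$ yields the claimed bound.

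The main obstacle I expect is the multiple-cover reduction: the order-$k$ tangency constraint may be realized only by a multiply covered curve, and one must verify that passing to an underlying somewhere-injective representative preserves both the index bookkeeping and the positive-end enumeration feeding into $(\geq k, \ell)$-permissibility. This is handled in \cite[\S 2.2, \S 4]{McDuff-Siegel_unperturbed_curves} by analyzing how the tangency order partitions across branched covers, and no new work is required to accommodate the $\ell$-constraint because the reduction can only reduce the number of positive ends.
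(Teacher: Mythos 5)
Your proposal follows the same route as the paper: reduce via $C^0$ continuity to the fully rounded domain $\widetilde{X}_\Omega$, take a curve of nearly minimal action, pass through the multiple-cover reduction, read off the half-index and weak permissibility conditions, observe that the $q\le\ell$ constraint is automatically preserved, and let $\epsilon,\delta\to 0$. One point to make precise: generic transversality gives $\mathrm{ind}(\overline{C})\ge 0$ only for the underlying simple curve $\overline{C}$; to conclude $\mathrm{ind}(C)\ge 0$ for the possibly multiply-covered $C$ (which is what your index computation and the half-index inequality use) you need the multiple-cover inequality $\mathrm{ind}(C)\ge n\,\mathrm{ind}(\overline{C})$, which is exactly the content of Lemma \ref{lem:tangency_constraint_index_inequality} and the ``tangency order partitions across branched covers'' point you flag at the end -- this should be invoked explicitly rather than folded into ``generic transversality.''
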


\begin{proof}
    This proof follows \cite[Lemma 4.2.3]{McDuff-Siegel_unperturbed_curves} and is modified only slightly for the filtration on $\ell$ positive ends: we start with a curve in the moduli space with an $\ell$-restriction, so it naturally satisfies the combinatorial condition that $q \leq \ell$.
    
    Now, we use Lemma \ref{lem:big_fully_rounding} and Remark \ref{rem:C0_continuity} to simplify the problem: by $C^0$ continuity of $\tilde{\mathfrak{g}}_k^\ell$, it suffices to prove the analogous claim for the fully rounded and perturbed $\widetilde{X}_\Omega$.
    Recall the definition
    $$\Tilde{\mathfrak{g}}_k^\ell(\widetilde{X}_\Omega) := \sup_{J \in \mathcal{J}(\widetilde{X}_\Omega;D)} \inf_{\Gamma \in \mathcal{G}_{J,k,\ell}} \widetilde{\mathcal{A}}_{\partial \widetilde{X}_\Omega}(\Gamma),$$
    where the infimum is over all tuples $\Gamma = (\gamma_1,...,\gamma_q)$ of Reeb orbits with $q \leq \ell$ such that $\overline{\overline{\mathcal{M}}}_{\widetilde{X}_\Omega}^J(\Gamma)\ll \mathcal{T}^{(k)}p \gg$ is nonempty.
    Let $J \in \mathcal{J}(\widetilde{X}_\Omega;D)$ be generic.
    By definition, 
    $$\tilde{\mathfrak{g}}_k^\ell (\widetilde{X}_\Omega) \geq \inf_{\Gamma \in \mathcal{G}_{J,k,\ell}} \widetilde{\mathcal{A}}_{\partial \widetilde{X}_\Omega}(\Gamma),$$
    so let $C$ be a curve in $\overline{\overline{\mathcal{M}}}_{\widetilde{X}_\Omega}^{J,\ell}(\Gamma)\ll\mathcal{T}^{(k)}p \gg$ for some $\Gamma$ such that $\tilde{\mathfrak{g}}_k^\ell(\widetilde{X}_\Omega) \geq E(C).$
    We want $C$ to correspond to a set $\{(i_s,j_s)\}_{s=1}^q \in \mathcal{P}_{\geq k,\ell}$.
    By Lemma \ref{lem:big_fully_rounding}, we get a corresponding set and immediately see that $q \leq \ell$ by definition of the moduli space.
    It's left to show the index inequality and weak permissibility.

    Now, it may be the case that $C$ is an $n$-fold multiple cover, so let $\overline{C}$ and $\overline{\Gamma}$ be the underlying simple curve and corresponding word of orbits.
    By genericity of $J$, $\text{ind}(\overline{C}) \geq 0$.
    Then by Lemma \ref{lem:tangency_constraint_index_inequality} below we get an index inequality for multiply covered curves: $\text{ind}(C) \geq n\cdot\text{ind}(\overline{C}) \geq 0$.
    Thus, $\sum_{s=1}^q(2i_s + 2j_s + 1) + q - 2 \geq \text{ind}(C) + 2k \geq 2k$.

    Also by the genericity of $J$, $\overline{C}$ is somewhere injective, implying strong permissibility of $\overline{\Gamma}$ by Lemma \ref{lem:big_fully_rounding}. Then $\Gamma$ is weakly permissible, since the only difference between strong and weak permissibility is the presence of multiple covers of $e_{1,0}$ and $e_{0,1}$.
    Thus, $\{(i_s,j_s)\}_{s=1}^q \in \mathcal{P}_{\geq k,\ell}$.
    And, $\widetilde{\mathcal{A}}(\Gamma)$ is $\epsilon$-close to $\sum ||(i_s,j_s)||_\Omega^*$ by Lemma \ref{lem:big_fully_rounding}, where $\epsilon$ depends on the fully rounding procedure.
    Thus, 
    $$\tilde{\mathfrak{g}}_k^\ell (\widetilde{X}_\Omega) \geq E(C) = \widetilde{\mathcal{A}}(\Gamma) \geq \min_{\mathcal{P}_{\geq k,\ell}} \sum ||(i_s,j_s)||_\Omega^* - \ell\epsilon.$$
    Hence, taking $\epsilon$ to $0$ concludes the proof.
\end{proof}

To finish the proof of \ref{thm:computation_lower_bound}, we need to show that minimizers on the combinatorial side of the inequality indeed have index equal to $2k$ rather than index greater than $2k$.
Thus our goal is to prove the following:

\begin{proposition}\label{prop:bound_over_P_k}
    For $k,\ell \geq 1$,
    $$\min_{\mathcal{P}_{\geq k, \ell}} \sum_s||(i_s,j_s)||_\Omega^* \geq \min_{\mathcal{P}_{k, \ell}} \sum_s||(i_s,j_s)||_\Omega^*$$
\end{proposition}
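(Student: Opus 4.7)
The plan is to exhibit, for every $P = \{(i_s,j_s)\}_{s=1}^q \in \mathcal{P}_{\geq k,\ell}$, a word $P' \in \mathcal{P}_{k,\ell}$ with $\sum_s \|(i_s',j_s')\|_\Omega^* \leq \sum_s \|(i_s,j_s)\|_\Omega^*$; this at once yields the desired inequality between minima. The natural induction is on the excess $d := h - k \geq 0$, where $h = \sum_{s=1}^q(i_s+j_s) + q - 1$ is the half-index of $P$. The base case $d=0$ is trivial, so it suffices to find, for any $P$ with $d \geq 1$, a successor $P'' \in \mathcal{P}_{\geq k,\ell}$ whose half-index is $h-1$ and whose sum is no larger; iterating $d$ times produces the desired element of $\mathcal{P}_{k,\ell}$.

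Two decrement moves will cover all cases. When $q \geq 2$, the \emph{combine move} replaces $\{(i_1,j_1),(i_2,j_2),(i_3,j_3),\dots\}$ by $\{(i_1+i_2,\,j_1+j_2),(i_3,j_3),\dots\}$, keeping $\sum_s(i_s+j_s)$ fixed while dropping $q$ by one, so the half-index falls by exactly one. The sum of dual norms cannot grow, by the subadditivity $\|(i_1+i_2,\,j_1+j_2)\|_\Omega^* \leq \|(i_1,j_1)\|_\Omega^* + \|(i_2,j_2)\|_\Omega^*$ which is immediate from the definition as a maximum of a linear functional. When $q = 1$, the \emph{reduce move} replaces $(i_1,j_1)$ by $(i_1-1,j_1)$ or $(i_1,j_1-1)$, choosing whichever coordinate can be decremented without producing $(0,0)$; the inductive hypothesis $d \geq 1$ forces $i_1+j_1 \geq k+1 \geq 2$, so such a choice always exists. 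Monotonicity of $\|\cdot\|_\Omega^*$ on $\mathbb{R}_{\geq 0}^2$, which uses $\Omega \subset \mathbb{R}_{\geq 0}^2$ for a convex toric domain, guarantees the sum does not grow under this move either.

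The one place that needs a brief verification, and the main obstacle in the plan, is checking that the combine move keeps the word in $\mathcal{P}_{\geq k,\ell}$. The length bound $q - 1 \leq \ell$ is immediate, and the combined pair is nonzero because its entries sum to $i_1+i_2+j_1+j_2 \geq 2$. For weak permissibility in the case $q - 1 \geq 2$, if the new $i$-tuple $(i_1+i_2,\,i_3,\dots,i_q)$ were identically zero then $i_1 = i_2 = \cdots = i_q = 0$, contradicting weak permissibility of $P$; the symmetric argument handles the $j$-tuple. With this bookkeeping in hand, the induction closes in at most $d$ steps and the inequality follows.
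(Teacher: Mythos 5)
Your proof is correct, but it takes a genuinely different route from the paper's. The paper argues by first decrementing a coordinate whenever some $i_s>1$ or $j_s>1$ (so that weak permissibility is automatically preserved), and then runs a separate case analysis on the residual words built only from $(0,1)$, $(1,0)$, $(1,1)$, using explicit estimates $||(1,1)||_\Omega^* \leq a+b \leq 2a$ involving the corners $(a,0)$ and $(0,b)$ of $\Omega$. You instead treat every word of length $q\geq 2$ with a single ``combine'' move, merging two tuples into their vector sum; this drops the half-index by exactly one (since $\sum(i_s+j_s)$ is preserved while $q$ drops by one) and cannot increase the action because the support function $\|\cdot\|_\Omega^*$ is subadditive, and the only remaining case $q=1$ is handled by the decrement move, where your bound $i_1+j_1\geq k+1\geq 2$ correctly guarantees a valid coordinate to decrement. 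Your bookkeeping is complete: the merged tuple is nonzero, the length only decreases, and weak permissibility survives because the merged coordinate $i_1+i_2=0$ would force $i_1=i_2=0$ and contradict weak permissibility of the original word. What your version buys is uniformity and economy: everything reduces to one structural lemma (subadditivity of the dual norm), whereas the paper's proof implicitly uses subadditivity only for special pairs and handles the small-coordinate residuals by hand. The paper's version, on the other hand, is phrased in terms of the same index-preserving moves it deploys elsewhere in Section 4, so it fits the paper's broader toolkit; either proof is valid.
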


\begin{proof}
    Let $P = \{(i_s,j_s)\}_{s=1}^q \in \mathcal{P}_{\geq k,\ell}$ be a minimizer of the left hand side.
    If the index of $P$ equals $2k$, then we are done, so assume $P \in \mathcal{P}_{k',\ell}$ for $k' > k$.
    We claim there exists a word $P' \in \mathcal{P}_{k'-1,\ell}$ with action at most the action of $P$.
    If this is true, then we can iterate until we get a word in $\mathcal{P}_{k,\ell}$ with action at most the action of $P$, finishing the proof.

    First, assume $P$ has a point with either $i_s > 1$ or $j_s > 1$.
    If it's $i_s > 1$, take
    $$P' = \{(i_1,j_1),...,(i_s - 1,j_s),...,(i_q,j_q)\}.$$
    Now the $\frac{1}{2}$-index is $k' - 1$, the length is still $q \leq \ell$, and $P'$ is still weakly permissible since $i_s-1 > 0$.
    That is $P' \in \mathcal{P}_{k'-1,\ell}$.
    And the action of $P'$ is at most the action of $P$, since $||(i_s - 1,j_s)||_\Omega^* \leq ||(i_s,j_s)||_\Omega^*$.
    The argument is identical if $j_s > 1$.

    Now, if the first assumption is not true, then $P$ is of the form $$\{(0,1)\}^{\times m_1} \cup \{(1,0)\}^{\times m_2} \cup \{(1,1)\}^{\times m_3},$$
    with $m_1,m_2,m_3 \geq 0$.
    Denoting the corners of $\Omega$ by $(a,0)$ and $(0,b$), without loss of generality taking $a \geq b$, we have
    $$||(0,1)||_\Omega^* = b, \quad ||(1,0)||_\Omega^* = a, \quad a \leq ||(1,1)||_\Omega^* \leq a+b.$$
    If $m_3 > 0$, replace a $(1,1)$ with either $(0,1)$ or $(1,0)$, whichever does not spoil weak permissibility, and we are done.
    Otherwise, if $m_2 > 1$, replace two $(1,0)$'s with a single $(1,1)$. 
    The action trade-off is
    $$||(1,1)||_\Omega^* \leq a+b \leq 2a = 2||(1,0)||_\Omega^*,$$
    and $\frac{1}{2}$-index is $k'-1$, so we are done.
    Otherwise, if $m_2 = 1$, and $m_1 = 0$, then $k' = 1$ and we are already done.
    Otherwise, if $m_2 = 1$, and $m_1 > 0$, then take $P' = \{(0,1)\}^{\times m_1 - 1} \cup \{(1,1)\}$.
    The action trade-off is
    $$||(1,1)||_\Omega^* \leq a + b = ||(1,0)||_\Omega^* + ||(0,1)||_\Omega^*,$$
    and $\frac{1}{2}$-index is $k'-1$, so we are done.
    Lastly, if $m_2 = 0$, then weak permissibility implies that $P = \{(0,1)\}$, with $k' = 1$, so we are done.
\end{proof}

We spend the rest of this section proving Lemma \ref{lem:big_fully_rounding}.

\begin{lemma}\label{lem:fully rounding}
    Given small $\epsilon,\nu > 0$, if $X_\Omega$ is a four-dimensional convex toric domain, then it's moment image $\Omega$ is $\epsilon$-close in the Hausdorff distance to some `fully rounded' domain $\Omega^{FR}$ which satisfies the following properties:
    \begin{enumerate}[label=(\alph*)]
        \item The boundary of $\Omega^{FR}$ is given by the axes and a smooth function $h \colon [0,a] \rightarrow [0,b]$, for some $a,b \in \mathbb{R}_{>0}$, with $h(0) = b$ and $h(a) = 0$.
        \item $h$ is strictly decreasing and strictly concave down.
        \item $0 > h'(0) > -\nu$, and $-1/\nu > h'(a) > -\infty$, so the boundary forms nearly right angles at the axes.
        \item $h'(0)$ and $h'(a)$ are irrational.
    \end{enumerate}
\end{lemma}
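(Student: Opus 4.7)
The plan is to construct $\Omega^{FR}$ by first smoothing the boundary of $\Omega$ and then bending it near the coordinate axes. Since $X_\Omega$ is a four-dimensional convex toric domain, the moment image $\Omega$ is a compact convex subset of $\mathbb{R}^2_{\geq 0}$ whose boundary consists of two segments along the axes together with a concave curve that is the graph of some concave $f\colon [0,a_0]\to [0,b_0]$ with $f(0)=b_0$ and $f(a_0)=0$. My task reduces to exhibiting a smooth strictly concave $h\colon [0,a]\to [0,b]$ satisfying (a)--(d) whose graph is $\epsilon$-close in Hausdorff distance to that of $f$.

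First I would produce a smooth strictly concave preliminary approximation $\tilde h$ to $f$. This follows from a standard mollification (extending $f$ affinely past its endpoints, then convolving with a compactly supported smooth nonnegative kernel of small width), followed by subtracting a small strictly convex bump such as $\delta_1(x-a_0/2)^2$ together with a compensating linear correction so that the endpoint values are unchanged. The resulting $\tilde h$ is smooth, strictly concave, and uniformly close to $f$, already securing (a), (b), and Hausdorff closeness to $f$.

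Second I would modify $\tilde h$ near each endpoint to arrange (c). Pick small $\eta > 0$ and a target slope $s_0\in(-\nu, 0)$. On $[0,\eta]$, replace $\tilde h$ by a strictly concave Hermite-type interpolation connecting $(0, \tilde h(0))$ with slope $s_0$ to $(\eta, \tilde h(\eta))$ with slope $\tilde h'(\eta)$; an explicit cubic works once $\eta$ is small enough. Perform the symmetric modification near the right endpoint, possibly extending the domain slightly to a new right endpoint $a$ so that $h(a)=0$ and $h'(a) < -1/\nu$. Set $b:=h(0)$. Smoothness across each gluing point is arranged by a further localized mollification whose effect on strict concavity is negligible. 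Finally, by density of irrationals in $\mathbb{R}$, arbitrarily small perturbations of $s_0$ and $h'(a)$ within the allowed intervals make these slopes irrational, giving (d).

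The main obstacle is maintaining strict concavity together with $C^\infty$ smoothness under the boundary bending, which requires quantitative control on the second derivative of the interpolation. The key point is that on the interval $[0,\eta]$ the required change of slope from $\tilde h'(\eta)$ to $s_0$ is of size $O(1)$, so a cubic Hermite interpolant has second derivative of magnitude $O(1/\eta)$ with a sign one can choose to be negative; this dominates the bounded second derivative contributed by the cutoff and mollification, so strict concavity is preserved once $\eta$ is small enough. Since the Hausdorff distance cost of each bend is $O(\eta)$, both can be absorbed into the $\epsilon$ budget, and the remaining conditions are routine verifications.
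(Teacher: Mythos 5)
Your proposal takes a genuinely different route from the paper. The paper first constructs a piecewise-linear approximation of $\partial\Omega$ whose extreme edges already have slopes in $(-\nu,0)$ and $(-\infty,-1/\nu)$, and only then smooths; you smooth first and try to correct the endpoint slopes afterward by a localized Hermite bend. The paper's order is the safer one, and here is why: your second step has a gap when $\Omega$ has a flat segment adjacent to the $y$-axis (as it does for a polydisk $P(a,1)$, one of the main examples in the paper).

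Concretely, if $f$ is constant near $x=0$, then $\mathrm{moll}\,f$ is constant there, and after adding your strictly concave correction $\delta_1 x(a_0-x)$ you obtain $\tilde h'(x)=\delta_1(a_0-2x)>0$ for $x<a_0/2$. Now consider the cubic Hermite interpolant on $[0,\eta]$ through $(0,\tilde h(0))$ with slope $s_0$ and $(\eta,\tilde h(\eta))$ with slope $m_1:=\tilde h'(\eta)$. Writing $\bar m:=(\tilde h(\eta)-\tilde h(0))/\eta$, the cubic has $p''(0)\le 0$ iff $s_0\ge(3\bar m-m_1)/2$. Since $\tilde h$ is concave, $\bar m>m_1$, so $(3\bar m-m_1)/2>\bar m>0$; hence any admissible $s_0$ must be positive, and no concave cubic with $s_0\in(-\nu,0)$ exists. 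The obstruction does not disappear by taking $\eta$ small or by replacing the cubic with any other interpolant, because concavity of $h$ on $[0,\eta]$ forces the secant slope $\bar m$ to lie below $h'(0)=s_0<0$, whereas your $\tilde h$ has $\bar m>0$ there. The root cause is that your construction insists on matching the value $\tilde h(0)$ at the left endpoint, so the only way to get $h'(0)<0$ is to also lower (or raise) the value there, or, equivalently, to choose the endpoint slopes before smoothing rather than after. That is exactly what the paper's piecewise-linear step does: it builds the extreme edges of the polygonal approximation with prescribed irrational slopes in the required ranges, after which smoothing (with a strictly concave correction) presents no sign obstruction. Your argument as written works only when $h'(0^+)$ is already strictly negative, so it does not cover general convex toric domains. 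A small repair would be to also perturb the left endpoint value $h(0)$ (mirroring what you already allow at the right endpoint), but as stated the proof has a genuine gap for flat-topped domains.
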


\begin{proof}[Proof \cite{McDuff-Siegel_unperturbed_curves} Section 4.1.]
    Roughly, we first make a piecewise linear approximation inside an $\epsilon/2$ neighborhood of the boundary, such that it satisfies (c) and (d). Then we take a smoothing of this, small enough so that it it fits inside an $\epsilon$ neighborhood of the boundary and satisfies (a) and (b).
    Also, see \cite[\S 2.2]{Gutt-Hutchings}.
\end{proof}

Note that, with the above properties, $\Omega^{FR}$ has a compact and convex symmetrization, and is therefore the moment image of a convex toric domain $X_{\Omega^{FR}}$.
Now we'll take a look at the Reeb dynamics on the boundary.
Let $X_{\Omega^{\text{FR}}}$ be a fully rounded convex toric domain, with boundary data $h,a,b$ and $\nu$ given by Lemma \ref{lem:fully rounding}.
For every $(i,j) \in \mathbb{Z}_{\geq 1}^2$ such that $-\nu > -j/i > -1/\nu$, there is a point $(c_1,c_2)$ on the boundary of $\Omega^{\text{FR}}$ such that $h'(c_1) = -j/i$.
The preimage of the moment map at this point, $\mu^{-1}(c_1,c_2)$, is an $S^1$ family of $\text{gcd}(i,j)$-fold cover Reeb orbits with action $c_1j + c_2i$ in the boundary of $X_{\Omega^{\text{FR}}}.$
The preimages $\mu^{-1}(0,b)$ and $\mu^{-1}(a,0)$ are  nondegenerate elliptic Reeb orbits of period $b$ and $a$, respectively.
Call these orbits $e_{0,1}$ and $e_{1,0}$ respectively.
Moreover, the multiple covers of these orbits are also nondegenerate and elliptic.

Returning to the Reeb orbits away from these corners, we have the following lemma which turns a degenerate family of orbits into a pair of nondegenerate orbits up to a certain action.
See \cite[\S 5.3]{Hutchings_2016_beyond_ech} and \cite[\S 2.2]{Bourgeois_a_Morse-Bott_approach} for proof.
The idea is to perturb the contact form based on a Morse function on the circle of Reeb orbits.
Then the two nondegenerate orbits correspond to the two critical points of this Morse function, and any other Reeb orbits that may arise during the process must have high action.

\begin{lemma}\label{lem:perturbation}
    For any large $K \gg 0$ and small $\delta > 0$, there exists a perturbation of the contact form $\alpha^* = f\alpha$ with $||f-1||_{C^0} < \delta$ such that
    \begin{itemize}
        \item $\alpha^* = \alpha$ near $e_{0,1}$ and $e_{1,0}$.
        \item Any torus $\pi^{-1}(c_1,c_2)$ with $h'(c_1) = -j/i$, whose Reeb orbits have action less than $K$ is replaced by an elliptic orbit $e_{i,j}$ and a positive hyperbolic orbit $h_{i,j}$, both of which are nondegenerate.
        \item The actions of $e_{i,j}$ and $h_{i,j}$ are less than $K$ and are within $\delta$ of the action from the torus orbits.
        \item There are no other Reeb orbits of action less than $K$ after perturbation.
    \end{itemize}
\end{lemma}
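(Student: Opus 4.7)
The plan is to implement the standard Morse-Bott perturbation scheme of Bourgeois, exploiting the fact that away from the corner orbits $e_{0,1}$ and $e_{1,0}$, the Reeb flow on $\partial X_{\Omega^{\text{FR}}}$ is Morse-Bott. Because $\partial \Omega^{\text{FR}}$ is smooth and strictly concave, for each rational slope $-j/i$ in the admissible range $(-1/\nu,-\nu)$ there is a unique point $(c_1,c_2)$ on $\partial \Omega^{\text{FR}}$ with $h'(c_1) = -j/i$, and the preimage $\mu^{-1}(c_1,c_2)$ is a torus foliated by an $S^1$-family of closed Reeb orbits of action $c_1 j + c_2 i$. Since $(c_1,c_2)$ is confined to a bounded set, only finitely many pairs $(i,j)$ produce families of action less than $K$.

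Next, I would pick a perfect Morse function $f_{i,j}\colon S^1 \to \mathbb{R}$ with exactly one maximum and one minimum for each of these finitely many $S^1$-families, and assemble them into a single smooth function $g$ on $\partial X_{\Omega^{\text{FR}}}$ supported in small pairwise disjoint neighborhoods of the relevant tori, with $\|g\|_{C^0} \leq 1$ and identically zero near $e_{0,1}$ and $e_{1,0}$. Setting $\alpha^* := f\alpha$ with $f := 1 + \delta' g$ and $\delta'$ chosen small enough that $\|f-1\|_{C^0} < \delta$, the standard computation of the perturbed Reeb vector field in \cite[\S 2.2]{Bourgeois_a_Morse-Bott_approach} shows that each Morse-Bott torus of action less than $K$ breaks into exactly two nondegenerate closed Reeb orbits of $\alpha^*$, lying over the critical points of $f_{i,j}$, and with actions within $\delta$ of the original torus action. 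The linearization computation from \cite[\S 5.3]{Hutchings_2016_beyond_ech} then identifies one of the two resulting orbits as elliptic and the other as positive hyperbolic, so I label them $e_{i,j}$ and $h_{i,j}$ respectively.

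The main obstacle I expect is the last bullet: ruling out the appearance of any further Reeb orbits of $\alpha^*$ with action less than $K$. Because the perturbation is $C^0$-small and localized, any such new orbit must either lie in the region where $f \equiv 1$, near one of the already perturbed tori with action less than $K$, or inside a tubular neighborhood of a Morse-Bott torus whose unperturbed action exceeds $K$. The first possibility is excluded by the unperturbed Reeb dynamics together with the explicit local normal form, the second by the Morse-Bott analysis above, and the third gives perturbed action at least the original action minus $O(\delta)$, which still exceeds $K$ provided $\delta$ is chosen small relative to the gap between $K$ and the next unperturbed action value. Combining these three cases finishes the proof.
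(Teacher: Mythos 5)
Your proposal follows exactly the approach the paper takes: the paper gives only a one-sentence sketch and defers to \cite[\S 5.3]{Hutchings_2016_beyond_ech} and \cite[\S 2.2]{Bourgeois_a_Morse-Bott_approach}, whereas you unpack the same Morse--Bott perturbation (a perfect Morse function on each $S^1$-family, assembled into $\alpha^* = (1+\delta' g)\alpha$, with the elliptic/hyperbolic identification from Hutchings and the action-gap argument ruling out extraneous short orbits). Your version is a correct, more detailed rendering of the paper's cited argument, not a different route.
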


\begin{proof}[Proof of Lemma \ref{lem:big_fully_rounding}]
    The above lemmas give us the orbits we need and show that their actions are close to $||(i,j)||_\Omega^*$, proving part (a).
    Part (c) is true because somewhere injective curves which are not strongly permissible fail to satisfy the relative adjunction formula, as shown in \cite[Lemma 4.2.2]{McDuff-Siegel_unperturbed_curves}.
    Part (d) is given by \cite[Lemma 4.1.1]{McDuff-Siegel_unperturbed_curves}

    To see part (b), we use a trivialization $\tau_{\text{ex}}$ (see \cite[\S 3.2]{McDuff_Siegel_counting_curves}) for which the Conley Zehnder indices are given by
    $$CZ_{\tau_{\text{ex}}}(e_{i,j}) = 2i + 2j + 1, \quad\quad CZ_{\tau_{\text{ex}}}(h_{i,j}) = 2i + 2j,$$
    and for which the first Chern class vanishes.
    Then for positive asymptotics $\Gamma = \{\gamma_s\}_1^q$ and a $k$-tangency constraint, the index formula gives the desired result:
    \begin{equation*}
    \begin{aligned}
        \text{ind}(C) &= -\chi(C) + 2c_{\tau_{\text{ex}}}(C) + \sum_{s=1}^q CZ_{\tau_{\text{ex}}}(\gamma_s) - 2k\\
        &= q - 2 + \sum_{s=1}^q CZ_{\tau_{\text{ex}}}(\gamma_s) - 2k.
    \end{aligned}
    \end{equation*}
\end{proof}

\section{Computation Upper Bound}\label{sec:upper_bound}

In the following sections, we prove the upper bound for the computation of $\Tilde{\mathfrak g}_k^\ell$ on 4-dimensional convex toric domains.

\begin{theorem}\label{thm:upper_bound}[Theorem \ref{thm:properties}(f) upper bound]
    For any 4-dimensional convex toric domain $X_\Omega$, we have the following bound on the capacity $\Tilde{\mathfrak{g}}_k^\ell$:
    \begin{equation}\label{eq:upper_bound}
        \sup_{J \in \mathcal{J}(X_\Omega;D)} \left(\inf_{\Gamma \in \mathcal{G}_{J,k,\ell}} \mathcal{A}_{\partial X_\Omega}(\Gamma)\right) \leq \min_{\{(i_s,j_s)\}_{s=1}^q \in \mathcal{P}_{k,\ell}} \left( \sum_{s=1}^q ||(i_s,j_s)||_\Omega^*\right).
    \end{equation}
\end{theorem}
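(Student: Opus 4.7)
The plan is to show that for each admissible $J \in \mathcal{J}(X_\Omega;D)$ and each minimizer $P = \{(i_s,j_s)\}_{s=1}^q \in \mathcal{P}_{k,\ell}$ of the right-hand side of \eqref{eq:upper_bound}, there exists a nonempty moduli space $\overline{\overline{\mathcal{M}}}^J_{X_\Omega}(\Gamma)\ll \mathcal{T}^{(k)} p\gg$ whose orbit set $\Gamma$ has at most $\ell$ positive ends and total action essentially equal to $\sum_s ||(i_s,j_s)||_\Omega^*$. Once this is in place, the infimum in the definition of $\Tilde{\mathfrak g}_k^\ell$ is bounded above by $\sum_s ||(i_s,j_s)||_\Omega^*$ for every $J$, and taking the supremum over $J$ and minimum over $P$ yields \eqref{eq:upper_bound}.

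By the $C^0$-continuity in Remark \ref{rem:C0_continuity}, I would pass to the fully rounded and perturbed $\widetilde{X}_\Omega$ and target the orbit set $\Gamma = \{e_{i_s,j_s}\}_{s=1}^q$, whose action $\widetilde{\mathcal{A}}(\Gamma)$ is within $\mathcal{O}(\epsilon)$ of $\sum_s ||(i_s,j_s)||_\Omega^*$ by Lemma \ref{lem:big_fully_rounding}(a). The $\frac{1}{2}$-index condition on $P$ combined with the index formula in Lemma \ref{lem:big_fully_rounding}(b) shows that the corresponding formal curve with a $k$-tangency constraint has index $0$; the length bound $q\leq \ell$ and weak permissibility conditions come directly from $P \in \mathcal{P}_{k,\ell}$. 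Thus the task reduces to producing a single $J$-holomorphic curve with these formal characteristics for every generic admissible $J$.

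I would carry this out in two stages, corresponding to Sections \ref{sec:formal_perturbation_invariance} and \ref{sec:existence_of_curves}. The first stage establishes formal perturbation invariance of the simple, index-$0$, $(k,\ell)$-admissible tangency curves with the prescribed asymptotics: this promotes the existence question from all admissible $J$ to a \emph{single} well-chosen $J$, since a perturbation-invariant algebraic count that is realized once must be nontrivial generically. The second stage then produces a $J$ for which the moduli space is nonempty by the neck-stretching strategy announced in the abstract. Namely, choose an ellipsoid $E \supset X_\Omega$ whose Reeb spectrum is compatible with the word $P$, invoke the known explicit $J$-holomorphic curves in $E$ with the appropriate positive asymptotics and $k$-tangency constraint, and stretch the neck along $\partial X_\Omega$. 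SFT compactness produces a pseudoholomorphic building; the component of the bottom level passing through $p$ and carrying the tangency constraint is the desired curve in the completion of $X_\Omega$.

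I expect the existence step to be the main obstacle, and in particular the combinatorial control of asymptotics after the split. The new $\ell$-filtration demands that the bottom-level component have at most $\ell$ positive ends, whereas a priori the neck-stretching could produce bottom-level curves with more positive ends than the top-level curve (the constraint used in the proof of monotonicity, that caps are ruled out by Stokes, does not by itself bound the number of ends from above in this direction). Controlling this requires a judicious choice of $E$ and matching of its Reeb spectrum to $P$, together with action and index accounting in every level of the limiting building, forcing the splitting on $\partial X_\Omega$ to realize a $(k,\ell)$-admissible word of action at most $\sum_s ||(i_s,j_s)||_\Omega^*$; since $P$ is itself a minimizer over $\mathcal{P}_{k,\ell}$, any feasible output suffices.
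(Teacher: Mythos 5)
Your high-level scaffolding matches the paper: pass to the fully rounded/perturbed $\widetilde{X}_\Omega$ by $C^0$-continuity, target the elliptic word $\Gamma = \{e_{i_s,j_s}\}_s$, verify the index is $0$ via Lemma \ref{lem:big_fully_rounding}(b), establish formal perturbation invariance (Section \ref{sec:formal_perturbation_invariance}) and invoke Proposition \ref{prop:perturbation_invariance_implies_count_independence} so that existence at a single well-chosen $J$ forces nonemptiness for all admissible $J$. That reduction is sound and is exactly how the paper proceeds.

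However, your proposed \emph{existence mechanism} is wrong in scope. You describe neck-stretching from a surrounding ellipsoid as the universal construction for all minimizers $P$. The paper explicitly explains (in the discussion at the end of Section \ref{sec:existence_of_curves}) that this cannot work for an arbitrary word $e_{i_1,j_1} \times \cdots \times e_{i_q,j_q}$: there is no single ellipsoid whose Reeb spectrum can be ``matched'' to a generic multiset of lattice vectors. The choice $E = \lambda E(j', i)$ with $j'$ slightly larger than $j$ in Theorem \ref{thm:existence_of_curves} is tailored precisely so that the cover $\eta_L^i$ of the long orbit has the combinatorics of $(i,j)$ and $\eta_L^i \times \eta_L^i$ is the action minimizer in its index class on $\partial E$; no analogous single choice of $E$ exists for a word with distinct letters. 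The paper's actual existence argument for general minimizers is different: it starts from the $J$-holomorphic cylinders from $e_{i_s,j_s}$ to the skinny-ellipsoid orbit $\eta_{i_s+j_s}$, already constructed in \cite{McDuff-Siegel_unperturbed_curves}, and iteratively applies obstruction bundle gluing to assemble the desired multi-ended curve. The ellipsoid neck-stretching is reserved only for the residual case $\Gamma = e_{i,j}\times e_{i,j}$, where obstruction bundle gluing fails because the two curves being glued would have identical images.

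A secondary point: your worry that the bottom-level curve could acquire more positive ends than the top-level one is already handled by the standard genus-zero plus no-caps argument (as in the proof of monotonicity, and explicitly again in Step 2 of the proof of Theorem \ref{thm:existence_of_curves}), so this is not where the difficulty sits. The genuine difficulty lies in producing the curves in the first place for the new $\ell$-constrained minimizers, and in particular in the identical-ends case, which is what forces the new neck-stretching argument of Section \ref{sec:existence_of_curves}.
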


Again it suffices to prove this bound up to a small $\delta$ for the fully rounded and perturbed domain $\widetilde{X}_\Omega$, where $\delta$ approaches $0$ as the perturbation shrinks.
In the proof, we will start with a combinatorial minimizer from the right hand side, $\{(i_s,j_s)\}_{s=1}^q$, and show that for all $J \in \mathcal{J}(\widetilde{X}_\Omega;D)$ there exists a curve in $\overline{\overline{\mathcal{M}}}_X^J(\Gamma)\ll \mathcal{T}^{(k)}p \gg$ for $\Gamma := e_{i_1,j_1} \times \cdots \times e_{i_q,j_q}$, the corresponding word of elliptic Reeb orbits.
Thus for any $J$, the infimum on the left hand side is less than or equal to the right hand side, up to a small $\delta$ coming from the perturbation which turns degenerate tori into two nondegenerate orbits.
Note that proving the upper bound is the main theoretical contribution of this paper, because new combinatorial minimzers on the right hand side arise from the $\ell$ constraint, thus requiring new arguments for existence in the moduli space.

To make the above argument work for all $J$ we will need the notion of formal curves and formal perturbation invariance to extend our results from generic $J$.
In section \ref{sec:formal_perturbation_invariance}, we define these notions and prove that the orbit sets corresponding to our combinatorial minimizers yield formally perturbation invariant curves.
Then the count invariance in the following proposition shows that existence of our desired curves for generic $J$ implies existence for all $J$.
In Section \ref{sec:existence_of_curves} we prove the existence of curves in the moduli space corresponding to combinatorial minimizers.
In this section, we assume formal perturbation invariance and existence, and prove Theorem \ref{thm:upper_bound} by analyzing the minimization problem.

\begin{proposition}[{\cite[Proposition 2.4.2]{McDuff-Siegel_unperturbed_curves}}]\label{prop:perturbation_invariance_implies_count_independence}
    Let $X$ be a Liouville domain with nondegenerate contact boundary $Y$, and let $C$ be a simple index zero formal curve component $X$ which carries a local tangency constraint $\ll \mathcal{T}^{(k)} p\gg$ and is asymptotic to $\Gamma$.
    Assume that $C$ is formally perturbation invariant with respect to some generic $J_Y \in \mathcal{J}(Y)$.
    Then the associated moduli space $\mathcal{M}_X^{J_X}(\Gamma)\ll \mathcal{T}^{(k)} p\gg$ is regular and finite for generic $J_X \in \mathcal{J}^{J_Y}(X;D)$, and moreover the signed count $\#\mathcal{M}_X^{J_X}(\Gamma)\ll \mathcal{T}^{(k)} p\gg$ is independent of $J_X$ provided that $\mathcal{M}_X^{J_X}(\Gamma)\ll \mathcal{T}^{(k)} p\gg$ is regular.
\end{proposition}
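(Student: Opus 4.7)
The plan is to combine standard transversality for somewhere injective curves with an SFT-style parametric cobordism argument, invoking formal perturbation invariance precisely to control the codimension-one degenerations that would otherwise spoil the count.

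First I would establish regularity and finiteness for a fixed generic $J_X$. Since $C$ is simple, a Dragnev-type transversality theorem---adapted to the Liouville cobordism setting and to the tangency constraint by varying $J_X$ away from $p$ while keeping it integrable near $p$ and preserving $D$---shows that for generic $J_X \in \mathcal{J}^{J_Y}(X;D)$ restricting to $J_Y$ on $Y$, every element of $\mathcal{M}_X^{J_X}(\Gamma)\ll\mathcal{T}^{(k)}p\gg$ is cut out transversally. Because the tangency constraint is already absorbed into the index formula and $\text{ind}(C)=0$, the moduli space is a smooth zero-manifold. Compactness---hence finiteness---then follows from SFT compactness together with the observation that for generic $J_X$ the limit buildings appear only on strictly lower-dimensional strata, with strata containing multiply covered components in the symplectization levels controlled by the formal perturbation invariance hypothesis.

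Second, for $J_X$-independence of the signed count, I would run the usual parametric cobordism. Given two regular choices $J_0, J_1$ with common restriction $J_Y$ on $Y$, pick a generic path $\{J_t\}_{t\in[0,1]}$ between them, fixing the restriction to $Y$. The parametric moduli space $\mathcal{M}^{\{J_t\}} = \bigcup_t \mathcal{M}_X^{J_t}(\Gamma)\ll\mathcal{T}^{(k)}p\gg \times \{t\}$ is a smooth oriented one-manifold whose boundary at $t=0,1$ is $-\mathcal{M}_X^{J_0} \sqcup \mathcal{M}_X^{J_1}$. Its other ends correspond to SFT-style degenerations at interior times, so the difference $\#\mathcal{M}_X^{J_1} - \#\mathcal{M}_X^{J_0}$ equals the signed count of these interior ends.

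The hard part---and where formal perturbation invariance is indispensable---is showing that these interior ends cancel in signed count. Each end is a limit building whose combinatorial type is a formal degeneration of $C$ in the sense of Section \ref{sec:formal_perturbation_invariance}, and formal perturbation invariance asserts precisely that the signed total of all such formal degenerations vanishes. To complete the proof one would match each analytic end of the parametric moduli space to its formal partner via a gluing analysis, verifying that orientations agree and that every formal configuration is accounted for. The main obstacle is this bookkeeping: handling multiply covered components in upper symplectization levels where transversality fails, correctly placing the tangency marker on the bottom level, and accounting for the finitely many non-regular $J_t$. None of this is conceptually novel, but the matching is exactly where the ``formal'' side of the hypothesis is converted into an analytic conclusion about honest moduli spaces.
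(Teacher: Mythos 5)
Your proposal follows the same route the paper intends: the paper does not prove this statement itself but cites \cite[Proposition 2.4.2]{McDuff-Siegel_unperturbed_curves} verbatim, remarking only that the $\ell$-parameter is invisible once $\Gamma$ is fixed, and your sketch (Dragnev-type transversality for the simple constrained curve, SFT compactness for finiteness, parametric cobordism with formal perturbation invariance policing the interior ends) is an accurate outline of that cited argument.

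One phrasing to tighten: formal perturbation invariance does \emph{not} ``assert precisely that the signed total of all such formal degenerations vanishes.'' What Definition \ref{def:formal_perturbation_invariance} actually supplies is (i) a combinatorial constraint that the only stable formal degenerations satisfying (A1)--(A2) are of the trivial type (B1) or the two-level type (B2), and (ii) within (B2), that the symplectization-level moduli space satisfies $\#\mathcal{M}_Y^{J_Y}(C_Y)/\mathbb{R}=0$. The vanishing of the boundary contributions of the parametric $1$-manifold is then a \emph{consequence} of gluing applied to the (B2) configurations, each index-$(-1)$ bottom component pairing with a zero-signed-count family of top cylinders; it is not itself part of the hypothesis. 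As phrased, your summary risks reading the desired conclusion back into the definition. The rest of your outline---including the acknowledgment that the real work is the gluing/orientation bookkeeping matching formal configurations to analytic ends---captures the structure of the McDuff-Siegel argument correctly.
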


\begin{remark}
    Note that since the moduli space has a fixed $\Gamma$, there is no dependence on the number of positive ends, so the proof of this proposition follows \cite{McDuff-Siegel_unperturbed_curves} exactly as written.
\end{remark}

Now, let's analyze the optimization problem on the right hand side.

\begin{lemma} \label{lem:index-preserving-move}
    Let $X_\Omega$ be a 4-dimensional convex toric domain, with $\partial \Omega$ given by a piece-wise smooth function $h \colon [0,x] \rightarrow [0,1]$, with a finite number of non-smooth vertices (fully rounded domains satisfy this condition).
    Assume the intersection of $h$ with the line $y=x$ is not a vertex of $h$, and denote the slope of $h$ at this point by $m$.
    For $(i,j) \in \mathbb{Z}_{\geq 1}^2$, the following index-preserving moves which weakly decrease action are possible:
    \begin{itemize}
        \item If $-(i-1)/(j+1) \leq m$, then $||(i-1,j+1)||_\Omega^* \leq ||(i,j)||_\Omega^*$.
        \item If $-(i+1)/(j-1) \geq m$, then $||(i+1,j-1)||_\Omega^* \leq ||(i,j)||_\Omega^*$.
    \end{itemize}
    In particular, the minimum norm for a given index $k$,
    $$\min\{||(i,j)||_\Omega^* \;\vert\; i + j = k\},$$
    is minimized at $(i,j)$ if $-i/j = m$, or if there is no such $i,j$, then it is minimized at one of $(i,j), (i-1,j+1)$ where $-i/j < m < -(i-1)/(j+1)$.
\end{lemma}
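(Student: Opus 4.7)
The plan is to pass between the two representations of $||\cdot||_\Omega^*$: as the support function $\max_{w \in \Omega} \langle v, w\rangle$, and as $i c_1 + j c_2$ where $(c_1, c_2)$ is a maximizer on $\partial \Omega$ for the direction $v = (i,j)$. Because $h$ is (piecewise smoothly) concave, such a maximizer lies either at the smooth point of the graph where $h'(c_1) = -i/j$, or at a vertex whose one-sided slopes straddle $-i/j$. The slope function along $\partial \Omega$ is monotone non-increasing in $c_1$, which gives the crucial geometric translation: a boundary point lies on or below the diagonal $y = x$ if and only if the slope of $h$ there is at most $m$.

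The first inequality then falls out of a one-line support-function estimate. Take $(c_1, c_2)$ to be a maximizer for $(i-1, j+1)$, so $||(i-1, j+1)||_\Omega^* = (i-1)c_1 + (j+1)c_2$. The hypothesis $-(i-1)/(j+1) \leq m$, combined with the translation above, forces $c_2 \leq c_1$. Since $(c_1, c_2) \in \Omega$ is an admissible (if suboptimal) candidate for the direction $(i,j)$, we also have $||(i,j)||_\Omega^* \geq i c_1 + j c_2$. Subtracting gives
\[
||(i,j)||_\Omega^* - ||(i-1, j+1)||_\Omega^* \;\geq\; c_1 - c_2 \;\geq\; 0.
\]
The second inequality is the mirror case: use a maximizer for $(i+1, j-1)$ and the hypothesis $-(i+1)/(j-1) \geq m$ to deduce $c_2 \geq c_1$, and the analogous subtraction yields $||(i,j)||_\Omega^* \geq ||(i+1, j-1)||_\Omega^*$.

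For the minimization claim, I would iterate these moves. Both operations preserve the sum $i + j$, hence the name ``index-preserving''. Given any $(i', j') \in \mathbb{Z}_{\geq 1}^2$ with $i' + j' = k$, compare $-i'/j'$ with $m$: if $-i'/j' < m$, the first move raises the ratio strictly toward $m$ and weakly decreases action, provided $-(i'-1)/(j'+1) \leq m$; symmetrically if $-i'/j' > m$. After at most $k$ such steps the ratio either hits $m$ exactly at an integer pair (yielding the first case of the minimization statement) or straddles $m$ in the sense $-i/j < m < -(i-1)/(j+1)$ (the second case), and any further step would strictly overshoot. The main subtlety I anticipate is handling the piecewise smoothness of $h$ at vertices: the assumption that the diagonal intersection is \emph{not} a vertex fixes $m$ unambiguously, and the translation ``slope at $c_1$ versus $m$'' $\iff$ ``$c_1$ versus $c_1^{\mathrm{diag}}$'' continues to hold once the slope at a vertex is interpreted as the interval between its one-sided derivatives.
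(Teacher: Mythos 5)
Your proof is correct, and it takes a genuinely cleaner route than the paper's. The paper introduces two maximizers --- $(a,b)$ for the direction $(i,j)$ and $(a',b')$ for $(i-1,j+1)$ --- and must then compare them, which forces a case split ($a'\neq a$ versus $a'=a$) and a several-line algebraic manipulation using $\frac{b'-b}{a'-a}\leq m$. Your observation is that only the maximizer $(c_1,c_2)$ for the new direction $(i-1,j+1)$ is needed: because $(c_1,c_2)\in\Omega$, it is admissible for the support function in the old direction, so $||(i,j)||_\Omega^*\geq ic_1+jc_2$, and subtracting the exact value $||(i-1,j+1)||_\Omega^*=(i-1)c_1+(j+1)c_2$ leaves exactly $c_1-c_2$. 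The entire geometric content of the hypothesis is then packaged into the single monotonicity fact that $c_2\leq c_1$ precisely when the contact slope is at most $m$. This avoids the case distinctions altogether and also handles the boundary moves (from $(k,0)$ or $(0,k)$) without extra care, since the computation never divides by $a'-a$. One small point worth making explicit in a final write-up: when $-(i-1)/(j+1)=m$ exactly, the maximizer may not be unique (e.g. if $\partial\Omega$ has a flat segment through the diagonal), so one should say ``choose a maximizer with $c_2\leq c_1$'' rather than ``the maximizer satisfies $c_2\leq c_1$''; since all maximizers give the same value of the support function, this is harmless. The iterative argument for the minimization claim is the same in spirit as the paper's and is handled correctly.
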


\begin{proof}
    First, we will assume $-(i-1)/(j+1) \leq m$, implying that $-i/j < m$.
    Choose $(a,b)$ and $(a',b')$ to be maximizers for the inner products of $(i,j)$ and $(i-1,j+1)$, i.e.
    $$||(i,j)||_\Omega^* = \langle (i,j), (a,b)\rangle = ai + bj,$$
    $$||(i-1,j+1)||_\Omega^* = \langle (i-1,j+1), (a',b')\rangle = a'(i-1) + b'(j+1).$$

    Now, assume $a' \neq a$, and $b' \neq b$.
    By the slope assumptions, and the fact that $\Omega$ is convex, we must have that $(a,b)$ and $(a',b')$ are below the $y = x$ line, implying $a > b$, $a' \geq b'$, $a' < a$, and $b' > b$.
    
    By the inner-product-maximizing assumption on $(a',b')$, we have
    \begin{equation*}
    \begin{aligned}
        a(i-1) + b(j+1) &\leq a'(i-1) + b'(j+1)\\
        (b-b')(j+1) & \leq (a'-a)(i-1)\\
        (b'-b)(j+1) & \geq (a'-a)(-(i-1))\\
        \frac{b'-b}{a'-a} &\leq \frac{-(i-1)}{j+1} \leq m\\
    \end{aligned}
    \end{equation*}
    Putting everything together, we see that
    \begin{equation*}
    \begin{aligned}
        ||(i-1,j+1)||_\Omega^* - ||(i,j)||_\Omega^* & = a'(i-1) + b'(j+1) -ai - bj\\
        &= (a'-a)i + (b'-b)j + (b'-a')\\
        &\leq (a'-a)i + (b'-b)j\\
        &= (a'-a)i + (a'-a)\frac{b'-b}{a'-a}j\\
        &\leq (a'-a)i + (a'-a)mj\\
        &= (a'-a)(i+mj)\\
        &< 0.
    \end{aligned}
    \end{equation*}
    
    Now, if $a' = a$, then $(i,j) = (i,0)$, and $(a',b')$ must be at a vertex on a vertical segment, so we can choose $(a,b)$ to be at that same vertex.
    Similar reasoning holds when $b' = b$.
    So in this case where $(a,b) = (a',b')$, we have
    $$||(i-1,j+1)||_\Omega^* - ||(i,j)||_\Omega^* = a(i-1) + b(j+1) -ai -bj = b-a \leq 0.$$
    The argument is symmetric when $-(i+1)/(j-1) \geq m$, completing the proof.
\end{proof}

\begin{lemma}\label{lem:upper_bound_rhs_minimizers}
    Given $k$ and $\ell$, the minimum
    $$\min_{\{(i_s,j_s)\}_{s=1}^q \in \mathcal{P}_{k,\ell}} \left( \sum_{s=1}^q ||(i_s,j_s)||_\Omega^*\right)$$
    is achieved by a set $\{(i_s,j_s)\}_{s=1}^q$ such that all but possibly one of the tuples $(i_s,j_s)$ are individual minimizers of
    $$\min_{i+j = k_s}||(i,j)||_\Omega^*,$$
    for $k_s = i_s + j_s$ and the possible extra tuple is of the form $(1,j)$ for some $j \geq 0$, which may be needed to preserve weak permissibility.
\end{lemma}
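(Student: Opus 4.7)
The plan is to start with any minimizer $P = \{(i_s,j_s)\}_{s=1}^q \in \mathcal{P}_{k,\ell}$ of $\sum_s ||(i_s,j_s)||_\Omega^*$ and normalize each tuple toward an individual minimum-norm representative at the same index $k_s := i_s + j_s$, using the index-preserving moves $(i,j) \mapsto (i \pm 1, j \mp 1)$ from Lemma \ref{lem:index-preserving-move}. For each $s$, let $M_s$ denote an individual minimizer of $\min\{||(i,j)||_\Omega^* : i + j = k_s\}$, whose location Lemma \ref{lem:index-preserving-move} characterizes via the slope $m$ of $\partial\Omega$ at its intersection with the line $y = x$.

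The key observation is that each elementary move preserves $i + j$, so it preserves the half-index $\sum_s(i_s+j_s) + q - 1 = k$ and the length $q$ (hence the constraint $q \leq \ell$), and by Lemma \ref{lem:index-preserving-move} it weakly decreases the total dual norm whenever it pushes a tuple toward its $M_s$. Iterating, I would try to replace every $(i_s, j_s)$ by its $M_s$, which would yield a word of smaller-or-equal action inside $\mathcal{P}_{k,\ell}$, provided weak permissibility survives.

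The main obstacle, and the reason for the single exceptional tuple, is precisely weak permissibility: if all $M_s = (0, k_s)$ then replacing every tuple by its $M_s$ makes all $i$-coordinates vanish, and symmetrically if all $M_s = (k_s, 0)$. In the former case (which occurs when $m$ is close to $0$), I would halt the march of one chosen tuple one step shy of its minimizer, leaving $(1, k_s - 1)$ in place of $(0, k_s)$. Again by Lemma \ref{lem:index-preserving-move}, this is the cheapest option with $i + j = k_s$ and $i \geq 1$, and it has the prescribed form $(1, j)$ with $j = k_s - 1 \geq 0$; any other tuple can still be replaced by its $M_s$. The case where all $M_s = (k_s, 0)$ is handled identically after swapping the roles of $i$ and $j$.

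Finally I would verify that the resulting multiset still lies in $\mathcal{P}_{k,\ell}$ (the half-index sum and $q$ are unchanged, and weak permissibility is now intact) with total action no greater than that of $P$, so it is itself a minimizer and has the claimed structure. The real content has already been packed into Lemma \ref{lem:index-preserving-move}; the present lemma is a clean bookkeeping reduction, and the main subtlety to watch is simply that one exchange suffices to restore weak permissibility whenever it is violated.
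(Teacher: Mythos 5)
Your proposal takes the same route as the paper: normalize each tuple to an individual norm-minimizer for its index via Lemma~\ref{lem:index-preserving-move}, then repair weak permissibility (when it is lost because every minimizer landed on one axis) by retaining a single tuple of the form $(1,j)$. One point you leave implicit that the paper makes explicit, and that is genuinely needed for the argument to close: when you ``halt the march of one chosen tuple one step shy,'' you must select a tuple $t$ whose \emph{original} entry satisfied $i_t\geq 1$, so that its march actually passes through $(1,k_t-1)$ and halting there is dominated by the original action; a tuple that started at $(0,k_s)$ never marches and pushing it outward to $(1,k_s-1)$ could increase action. Such a choice is available precisely because the original word was weakly permissible (for $q\geq 2$ not all $i_s$ can vanish), and you should also record the trivial $q=1$ case where no repair is needed. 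With that clarification your argument is complete and matches the paper's two-step substitution proof.
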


\begin{proof}
    Let $\{(i_s,j_s)\}_{s=1}^q$ be any $(k,\ell)$-permissible set, which we recall satisfies
    \begin{itemize}
        \item $\left(\sum_{s=1}^1i_s+j_s\right) + q - 1 = k$
        \item $q \geq 2 \Rightarrow (i_1,...,i_q) \neq (0,...,0)$ and $(j_1,...,j_q) \neq (0,...,0)$.
        \item $q \leq \ell$.
    \end{itemize} 
    Note that if $q = 1$, we are done, so assume $q > 1$.
    We proceed by making two substitutions.
    After both steps we will have a set which is of the desired form and of weakly less action than the original.
    First, if needed, replace any $(i_s,j_s)$ with the minimizer of 
    $$\min_{i+j = k_s} ||(i,j)||_\Omega^*.$$
    This looks like moving all the points diagonally to some ``minimizing region'' in $\mathbb{Z}_{\geq 0}^2$ made of ordered pairs which straddle the slope $m$ from Lemma \ref{lem:index-preserving-move}.
    If the first step was utilized and we moved all points $(i_s,j_s)$ onto the, say vertical, axis and spoiled weak permissibility, then the second step is to substitute some $(0,i_t + j_t)$ for $(1,i_t + j_t - 1)$, such that the original $i_t \neq 0$
    (One of the original $i_s$ must have originally been nonzero, otherwise the original set would not have been weakly permissible).
    After this substitution, we again have a $(k,\ell)$-permissible set.
    A symmetric argument holds for the horizontal axis.

    Now, we wish to show that after these two steps, we have not increased action.
    If only the first step was utilized, then we did not increase the action, because none of the substitutions in the first step did so.
    Thus our only concern is when the step 2 is utilized.
    Let $\{(i_s,j_s)\}_1^q$ be the original set and $\{(0,i_s+j_s)\}_1^q$ the set after step 1.
    Then the resulting set after step 2 looks like
    $$\{(0,i_1+j_1),...,(1,i_t+j_t-1),...,(0,i_q+j_q)\}$$
    for some $t \in \{1,...,q\}$.
    We have
    $$\sum||(i_s,j_s)||_\Omega^* \geq ||(0,i_1+j_1)||_\Omega^* + \cdots ||(i_t,j_t)||_\Omega^* + \cdots + ||(0,i_q+j_q)||_\Omega^*,$$
    so it suffices to show that $||(i_t,j_t)||_\Omega^* \geq ||(1,i_t+j_t-1)||_\Omega^*$.
    Above in step 2, we took $i_t \neq 0$.
    Then, this follows by Lemma \ref{lem:index-preserving-move}, which says that action weakly increases as you move away from the minimizing line.
    Indeed, $(1,i_t+j_t-1)$ is as close we can get to the minimizing line without being on it, so it has weakly less action then the original $(i_t,j_t)$.
\end{proof}

We can now prove the Theorem.

\begin{proof}[Proof of Theorem \ref{thm:upper_bound}]

    Let $P = \{(i_s,j_s)\}_{s=1}^q \in \mathcal{P}_{k,\ell}$ be a minimizer of the right hand side of (\ref{eq:upper_bound}), whose form is given above by Lemma \ref{lem:upper_bound_rhs_minimizers}.
    Let $\Gamma = \{e_{i_s,j_s}\}_{s=1}^q$ be the set of elliptic orbits on the boundary of $\widetilde{X}_\Omega$. 
    By definition of $\Tilde{\mathfrak g}_k^\ell$, for all $J \in \mathcal{J}(\widetilde{X}_\Omega ; D)$, we must find a $J$-holomorphic curve $C$ asymptotic to the word of orbits represented by $P$, satisfying the tangency constraint $\ll \mathcal{T}^{(k)}p\gg$.
    That is, we must prove that $\mathcal{M}_{\widetilde{X}_\Omega}^J(\Gamma)\ll\mathcal{T}^{(k)}p\gg$ is nonempty for all $J \in \mathcal{J}^{J_{\partial \widetilde{X}_\Omega}}(\widetilde{X}_\Omega)$.
    To do this, we prove in Section \ref{sec:formal_perturbation_invariance} that the \emph{formal curve component} $C$ in $\widetilde{X}_\Omega$ with tangency constraint $\ll \mathcal{T}^{(k)}p\gg$ and positive asymptotics $\Gamma$ of index $2k$ is \emph{formally perturbation invariant} with respect to some generic $J_{\partial \widetilde{X}_\Omega} \in J(\partial \widetilde{X}_\Omega)$.
    Then the moduli space of interest is regular and has invariant signed count by Proposition \ref{prop:perturbation_invariance_implies_count_independence}.
    We will show here that the moduli space for the generic $J_{\partial \widetilde{X}_\Omega}$ is nonempty by explicitly constructing curves.
    Automatic transversality shows that these curves only count positively, so the signed count will be nonzero, and the moduli space must be nonempty for all $J$, as desired.
    
    By Section 2.2 of \cite{McDuff-Siegel_unperturbed_curves}, we may replace this tangency constraint with a skinny ellipsoid constraint, considering the moduli space of curves with positive ends asymptotic to $\Gamma$, and a negative end asymptotic to $\eta_k$, the $k$-fold cover of the short orbit of a sufficiently skinny, sufficiently small ellipsoid, $E_{\text{sk}}$, embedded in $\Tilde{X}_{\Omega}$.
    By \cite[Lemma 5.4.3]{McDuff-Siegel_unperturbed_curves}, for $i = 1$ or $j=1$ or both, there exist $J$-holomorphic cylinders positively asymptotic to $e_{i,j}$ and negatively asymptotic to $\eta_{i+j}$ in $\Tilde{X}_\Omega \setminus E_{\text{sk}}$.
    In the proof of \cite[Proposition 5.6.1]{McDuff-Siegel_unperturbed_curves}, there is an argument for the existence of $J$-holomorphic cylinders from $e_{i_s,j_s}$ to $\eta_{i_s+j_s}$ if $(i_s,j_s)$ is a minimizer of $||(i,j)||_\Omega^*$, subject to $i+j=k_s$, for $k_s := i_s + j_s$ as in the above lemma.
    Now, we wish to iteratively use obstruction bundle gluing, as in \cite[Lemma 5.4.2]{McDuff-Siegel_unperturbed_curves}, to create the desired curve $C$, positively asymptotic to $\{e_{i_s,j_s}\}_{s=1}^q$, and negatively asymptotic to $\eta_k$, where $k = \frac{1}{2}\text{ind}(\Gamma) = \left(\sum_{s=1}^q i_s+j_s\right) + q-1$.
    
    The obstruction bundle gluing lemma assumes that we are given two simple immersed index-0 $J$-holomorphic curves $C_1$ and $C_2$ with \emph{distinct} images in $\widetilde{X}_\Omega \setminus E_{\text{sk}}$, such that each $C_i$ is positively asymptotic to a word $\Gamma_i$ of Reeb orbits, and negatively asymptotic to $\eta_{k_i}$ for some $k_i$.
    Then there is a generic $J'$ agreeing with $J$ on the boundary such that there exists a simple immersed index-0 $J'$-holomorphic curve in $\widetilde{X}_\Omega \setminus E_{\text{sk}}$, positively asymptotic to $\Gamma_1 \cup \Gamma_2$, and negatively asymptotic to $\eta_{k_1+k_2+1}$.
    
    So if we want to construct $C$, asymptotic to $\{e_{i_s,j_s}\}_{s=1}^q$ and negatively asymptotic to $\eta_k$, we can do so using this lemma, as long as $C$ does not have identical positive ends, i.e. of the form $e_{i,j} \times \cdots \times e_{i,j}$ for some $i,j$ (note that $i$ and $j$ are both nonzero here by weak permissibility of $\Gamma$).
    For in this case, the images of any $C_1$ and $C_2$ we would like to start with are not distinct, so the Lemma does not apply.
    Otherwise, we can always use obstruction bundle gluing to construct a pair of pants first, and then add the rest of the positive ends iteratively, with the negative end updating as desired.
    Note here that we start with the assumption that $\{(i,j),...,(i,j)\}$ is a minimizer rather than $\{(i,j),(i,j)$, but it follows that the latter must also be a minimizer, by the contrapositive statement and action considerations.

    We deal with the issue of identical positive ends in section \ref{sec:existence_of_curves} and show that they do exist.
    Thus, for any combinatorial minimizer of (\ref{eq:upper_bound}), we've shown existence of the corresponding $J$-holomorphic curve for any $J \in \mathcal{J}(\widetilde{X}_\Omega;D)$, forcing the desired bound on the capacity.
\end{proof}

\section{Formal Perturbation Invariance}\label{sec:formal_perturbation_invariance}

In this section, we define formal curves and formal perturbation invariance, and we prove that a formal curve of $\widetilde{X}_\Omega$ which is asymptotic to the elliptic orbit set corresponding to the combinatorial minimizer in the computation formula is formally perturbation invariant.
This gives us the invariant signed count to make the arguments above work for all admissible almost complex structures, rather than just generic ones.
This section differs from the analogous \cite[\S 2.3, 2.4, 5.1]{McDuff-Siegel_unperturbed_curves} because we have more combinatorial minimizers to check.
In particular, we could have multiple covers that are not of the form $e_{0,k}$ or $e_{k,0}$.

First we restate the relevant definitions from \cite[\S 2.3]{McDuff-Siegel_unperturbed_curves}.
In brief, formal curves only record the data of asymptotics, homology class, and tangency via the index, and formal perturbation invariance of a curve means that when this curve breaks into a psuedoholomorphic building under certain assumptions, it can only break in a simple way with regular components and zero signed count.
Then we will do index calculations for formal covers and prove lemmas that will be needed to rule out more complicated breaking and show regularity and zero signed count for the buildings that do occur.

\subsection{Formal Curves and Formal Perturbation Invariance}

\begin{definition}
    A \textbf{formal curve component} $C$ in a compact symplectic cobordism $(X,\omega)$ is a triple $(\Gamma^+,\Gamma^-,A)$, where
    \begin{itemize}
        \item $\Gamma^+ = (\gamma_1^+,...,\gamma_a^+)$ and $\Gamma^- = (\gamma_1^-,...,\gamma_b^-)$ are tuples of Reeb orbits in $\partial^+ X$ and $\partial^- X$, respectively.
        \item $A \in H_2(X,\Gamma^+ \cup \Gamma^-)$ is a homology class such that the energy $E_X(C) := E_X(A) = \int_A \omega$ is nonnegative.
    \end{itemize}
    Similarly, a formal curve component $C$ in a strict contact manifold $(Y,\alpha)$ is a triple $(\Gamma^+,\Gamma^-,A)$, where $\Gamma^+,\Gamma^-$ are tuples of Reeb orbits in $Y$ and $A \in H_2(Y, \Gamma^+ \cup \Gamma^-)$ is a homology class, and we requeire the energy $E_Y(C) := E_Y(A) = \int_Ad\alpha$ to be nonnegative.
    As a shorthand, we will denote $\mathcal{M}_X^J(C) := \mathcal{M}_{X,A}^J(\Gamma^+;\Gamma^-)$, for $C$ a formal curve with the data of $\Gamma^+,\Gamma^-,A$, and we will use similar notation for all the other kinds of moduli spaces discussed.
\end{definition}

\begin{definition}
    A \textbf{connected formal nodal curve} $C$ in $X$ (resp. $Y$) consists of a tree $T$ and a formal curve component $C_v$ in $X$ (resp. $Y$) for each vertex $v$ of $T$.
    More generally, we drop the ``connected'' condition by allowing $T$ to be a forest.
    We will say that $C$ is \textbf{stable} if, for each nonconstant component $C_v$, the number of punctures plus the number of edges connected to $v$ is at least three.
\end{definition}

\begin{definition}
    A \textbf{formal building} in $X$ consists of:
    \begin{itemize}
        \item formal nodal curves $C_1,...,C_a$ in $\partial^+X$ for some $a \in \mathbb{Z}_{\geq 0}$,
        \item a formal nodal curve $C_0$ in $X$
        \item formal nodal curves $C_{-1},...,C_{-b}$ in $\partial^-X$ for some $b \in \mathbb{Z}_{\geq 0}$,
    \end{itemize}
    such that the tuple of positive Reeb orbits for $C_i$ coincides with the tuple of negative Reeb orbits for $C_{i+1}$ for $i = -b,..., a - 1$.
    We also assume that the graph given naturally by concatenating the forest of each level is acyclic.

    Similarly, a formal building in $Y$ consists of formal nodal curves $C_1, . . . , C_a$ in $Y$ for some $a \in \mathbb{Z}_{\geq 1}$, that the tuple of positive Reeb orbits for $C_i$ coincides with the tuple of negative Reeb orbits for $C_{i+1}$ for $i = 1, . . . , a - 1$, and such that the underlying graph is acyclic.
    We say that a formal building is \textbf{stable} if each constituent formal nodal curve is stable, and no level is a union of trivial cylinders.
\end{definition}

We denote the formal analogue of $\overline{\overline{\mathcal{M}}}_{X,A}(\Gamma^+;\Gamma^-)\ll\mathcal{T}^{(m)}p\gg$ by $$\overline{\overline{\mathcal{F}}}_{X,A}(\Gamma^+;\Gamma^-)\ll\mathcal{T}^{(m)}p\gg,$$ where the formal tangency constraint is just the additional data of the index, which can be calculated using $\Gamma^+,\Gamma^-,A$, and then decreased by $2n-4+2m$.

\begin{definition}
    Let $X$ be a symplectic filling, and let $\Gamma = (\gamma_1,...,\gamma_a)$ and $\overline{\Gamma} = (\overline{\gamma}_1,...,\overline{\gamma}_{\overline{a}})$ be tuples of Reeb orbits in $Y := \partial X$.
    Let $C = (\Gamma, \emptyset, A)$ and $\overline{C} = (\overline{\Gamma}, \emptyset, \overline{A})$ be formal curve components in $X$, satsifying constraints $\ll\mathcal{T}^{(m)}p\gg$ and $\ll\mathcal{T}^{(\overline{m})}p\gg$ respectively.
    We say that $C$ is a $\kappa$-fold \textbf{formal cover} of $\overline{C}$ if there exists
    \begin{itemize}
        \item a sphere $\Sigma$ with marked points $(z_0,...,z_a)$,
        \item a sphere $\overline\Sigma$ with marked points $(\overline{z}_0,...,\overline{z}_{\overline{a}})$, and
        \item a $\kappa$-fold branched cover $\pi \colon \Sigma \rightarrow \overline{\Sigma}$.
    \end{itemize}
    such that
    \begin{itemize}
        \item $\pi^{-1}(\{\overline{z}_1,...,\overline{z}_{\overline{a}}\}) = \{z_1,...,z_a\}$,
        \item $\pi(z_0) = \overline{z}_0$,
        \item for each $i = 1,...,a$, $\gamma_i$ is the $\kappa_i$-fold cover of $\overline{\gamma}_j$, where $j$ is such that $\pi(z_i) = \overline{z}_j$ and $\kappa_i$ is the ramification order of $\pi$ at $z_i$, and
        \item we have $\kappa\overline{m} \geq m$, where $\kappa$ is the ramification order of $\pi$ at $z_0$.
    \end{itemize}
    A formal curve component is \textbf{simple} if it cannot be written as a nontrivial formal cover of any other formal curve component.
\end{definition}

Now we may state the definition of formal perturbation invariance.

\begin{definition}[{\cite[Definition 2.4.1]{McDuff-Siegel_unperturbed_curves}} ]\label{def:formal_perturbation_invariance}
    Let $X$ be a Liouville domain with nondegenerate contact boundary $Y$, and let $C$ be an index zero simple formal curve component in $X$ with positive asymptotics $\Gamma = (\gamma_1,...,\gamma_a)$, homology class $A \in H_2(X,\Gamma)$, and carrying a constraint $\ll T^{(m)}p\gg$ for some $m \in \mathbb{Z}_{\geq 1}$.
    We say that $C$ is \textbf{formally perturbation invariant} if there exists a generic $J_Y \in \mathcal{J}(Y)$ such that the following holds.
    Suppose that $C' \in \overline{\overline{\mathcal{F}}}_{X,A}(\Gamma)\ll T^{(m)}p\gg$ is any stable formal building satisfying
    \begin{enumerate}
        \item[(A1)] Each nonconstant component of $C'$ in $X$ is a formal cover of some formal curve component $\overline{C}'$ with $\text{ind}(\overline{C}') \geq -1$.
        \item[(A2)] Each nonconstant component of $C'$ in $Y$ is a formal cover of some formal curve component $\overline{C}'$ which is either trivial or else satisfies $\text{ind}(\overline{C}') \geq 1$.
    \end{enumerate}
    Then either:
    \begin{enumerate}
        \item[(B1)] $C'$ consists of a single component, i.e $C' = C$, or else
        \item[(B2)] $C'$ is a two-level building, with bottom level in $X$ consisting of a single component $C_X$ which is simple with index $-1$, and with top level in $Y$ represented by a union of some trivial cylinders with a simple index $1$ component $C_Y$ in $\mathbb{R}\times Y$; moreover, we require that $\mathcal{M}_Y^{J_Y}(C_Y)$ is regular and satisfies $\# \mathcal{M}_Y^{J_Y}(C_Y) / \mathbb{R} = 0.$
    \end{enumerate}
    More generally, if $C$ is any formal curve component in $X$, we say that it is formally perturbation invariant if it is a formal cover of an index zero simple formal curve component $\overline{C}$ which is formally perturbation invariant as above.
\end{definition}

\subsection{Index Computations for Formal Covers}
\begin{lemma}\label{lem:formal_cover_index}
    Let $C$ be a genus $0$ formal curve component in a completion or a symplectization, which is a $\kappa$-fold formal cover of a genus $0$ formal curve $\overline{C}$.
    Then
    $$\emph{ind}(C) = \kappa\emph{ind}(\overline{C}) + (2\kappa - 2) - 2(\kappa \overline{E}_+ - E_+) - (\kappa\overline{H}_+ - H_+) - (\kappa\overline{H}_- - H_-),$$
    where $\overline{E}_+,E_+$ are the numbers of positive ends asymptotic to elliptic Reeb orbits in $\overline{C},C$, respectively; and similarly, $\overline{H}_+,H_+,\overline{H}_-,H_-$ are the numbers of positive and negative hyperbolic orbits.
\end{lemma}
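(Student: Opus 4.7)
The plan is to apply the standard four-dimensional Fredholm index formula
$$\text{ind}(C) = -\chi(\Sigma) + 2c_\tau(C) + \sum_{s} CZ_\tau(\gamma_s^+) - \sum_{t} CZ_\tau(\gamma_t^-)$$
to both $C$ and $\overline{C}$ and to track how each term transforms under the $\kappa$-fold branched cover $\pi\colon \Sigma \to \overline{\Sigma}$, using the pulled-back trivialization on $C$. Since Chern numbers multiply under branched covers, $c_\tau(C) = \kappa\,c_\tau(\overline{C})$, so this term contributes nothing to the index difference $\text{ind}(C) - \kappa\,\text{ind}(\overline{C})$, and the bookkeeping reduces to the Euler characteristic and Conley--Zehnder sums.

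For the Euler characteristic I would apply Riemann--Hurwitz to the compactified cover $\hat{\pi}\colon\hat{\Sigma} \to \hat{\overline{\Sigma}}$ of closed genus-$0$ surfaces, yielding total ramification $R = 2\kappa - 2 = B_0 + B_+ + B_-$, where $B_+$, $B_-$, and $B_0$ collect the ramification lying over positive punctures, negative punctures, and interior points of $\hat{\overline{\Sigma}}$, respectively. Since the puncture counts transform as $a = \kappa\overline{a} - B_+$ and $b = \kappa\overline{b} - B_-$, a short calculation gives
$$-\chi(\Sigma) + \kappa\chi(\overline{\Sigma}) \;=\; 2\kappa - 2 - B_+ - B_-.$$

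For the Conley--Zehnder contribution I would use the standard identity that the $\kappa_{i,j}$-fold cover of an elliptic orbit satisfies $CZ(\gamma_i^+) = \kappa_{i,j}\,CZ(\overline{\gamma}_j^+) - (\kappa_{i,j}-1)$ while the cover of a (positive or negative) hyperbolic orbit satisfies $CZ(\gamma_i^+) = \kappa_{i,j}\,CZ(\overline{\gamma}_j^+)$; in the setting of this paper both are immediate one-line checks from the $\tau_{\text{ex}}$ formulas $CZ(e_{i,j}) = 2i+2j+1$ and $CZ(h_{i,j}) = 2i+2j$ of Lemma \ref{lem:big_fully_rounding}. Summing the preimages over each $\overline{z}_j$ therefore produces a correction of $-B_{+,j}$ when $\overline{\gamma}_j^+$ is elliptic and $0$ when hyperbolic, so
$$\sum_s CZ(\gamma_s^+) - \kappa \sum_j CZ(\overline{\gamma}_j^+) \;=\; -(\kappa\overline{E}_+ - E_+),$$
and the symmetric computation on negative ends contributes $+(\kappa\overline{E}_- - E_-)$ thanks to the opposite sign in the index formula.

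Finally I would split $B_\pm = (\kappa\overline{E}_\pm - E_\pm) + (\kappa\overline{H}_\pm - H_\pm)$ and add the three contributions; the two $(\kappa\overline{E}_- - E_-)$ pieces cancel (a dimension-four feature), leaving exactly the claimed identity. The whole argument is essentially bookkeeping combining Riemann--Hurwitz with the CZ-under-cover identities, and the only substantive input is the elliptic identity $CZ(\overline{\gamma}^m) = m\,CZ(\overline{\gamma}) - (m-1)$; the ``main obstacle'' here is really just making sure the elliptic correction of $-(\kappa_{i,j}-1)$ at each preimage assembles correctly with the $-B_+$ coming from Riemann--Hurwitz to produce the factor of $2$ in front of $(\kappa\overline{E}_+ - E_+)$.
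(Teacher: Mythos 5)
Your proof is correct and follows essentially the same route as the paper's: both apply the Fredholm index formula to $C$ and $\overline{C}$, use the $\tau_{\text{ex}}$ Conley--Zehnder identities $CZ(\kappa e_{i,j}) = \kappa\,CZ(e_{i,j}) - (\kappa-1)$ and $CZ(\kappa h_{i,j}) = \kappa\,CZ(h_{i,j})$, track the Euler characteristic via the puncture counts of the genus-zero cover, and combine. Your bookkeeping in terms of $B_\pm$ and the observation that the $(\kappa\overline{E}_- - E_-)$ terms cancel while the $(\kappa\overline{E}_+ - E_+)$ terms double is a slightly tidier way to organize the same computation; the appeal to Riemann--Hurwitz in the Euler characteristic step is inessential (your own displayed calculation only uses $\chi = 2 - \#\text{punctures}$), but that does not affect correctness.
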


\begin{proof}
    For $\overline{C}$, denote the orbit of an asymptotic end by $\overline{\gamma}_{\pm,e/h,i}$ depending on whether it's a positive or negative end, and whether it's elliptic or hyperbolic.
    Let's write down the index of $\overline{C}$, where we leave out $c_\tau$ since it is zero in our case, where $\tau := \tau_{\text{ex}}$ as before.
    \begin{equation*}
    \begin{aligned}
        \text{ind}(\overline{C}) &= -\chi(\overline{C}) + \sum_{i =1}^{\overline{E}_+}CZ_\tau(\overline{\gamma}_{+,e,i}) + \sum_{i =1}^{\overline{H}_+}CZ_\tau(\overline{\gamma}_{+,h,i}) \\
        &\quad \quad - \sum_{i =1}^{\overline{E}_-}CZ_\tau(\overline{\gamma}_{-,e,i}) - \sum_{i =1}^{\overline{H}_-}CZ_\tau(\overline{\gamma}_{-,h,i})
    \end{aligned}
    \end{equation*}
    Recall that a formal cover consists of a branched cover of spheres with marked points $\pi \colon \Sigma \mapsto \overline{\Sigma}$.
    Now, for $C$, denote the orbit of an asymptotic end by $\gamma_{\pm,e/h,i,j}$, to indicate that its corresponding marked point gets sent to the marked point of $\overline{\gamma}_{\pm,e/h,i}$ by $\pi$.
    Let $\kappa_{\pm,e/h,i,j}$ be the order of this cover of orbits, which is the ramification order of $\pi$ at the marked point in $\Sigma$.
    Let $K_{\pm,e/h,i}$ denote the number of marked points in $\Sigma$ being sent to the marked point corresponding to $\overline{\gamma}_{\pm,e/h,i}$ in $\overline{\Sigma}$.
    Then, since the preimage of the marked points in $\overline{\Sigma}$ is exactly the set of marked points in $\Sigma$, we have
    $$\sum_{j = 1}^{K_{\pm,e/h,i}} \kappa_{\pm,e/h,i,j} = \kappa,$$
    for every possible tuple $\pm,e/h,i$.
    Now, we write down the index of $C$, where we denote the $\kappa$ fold cover of an orbit $\overline{\gamma}$ by $\kappa\overline{\gamma}$.
    \begin{equation*}
    \begin{aligned}
        \text{ind}(C) &= -\chi(C) \\
        & \quad + \sum_{i =1}^{\overline{E}_+} \sum_{j = 1}^{K_{+,e,i}} CZ_\tau(\kappa_{+,e,i,j}\overline{\gamma}_{+,e,i}) + \sum_{i =1}^{\overline{H}_+} \sum_{j = 1}^{K_{+,h,i}} CZ_\tau(\kappa_{+,h,i,j}\overline{\gamma}_{+,h,i})\\
        & \quad - \sum_{i =1}^{\overline{E}_-} \sum_{j = 1}^{K_{-,e,i}} CZ_\tau(\kappa_{-,e,i,j}\overline{\gamma}_{-,e,i}) - \sum_{i =1}^{\overline{H}_-} \sum_{j = 1}^{K_{-,h,i}} CZ_\tau(\kappa_{-,h,i,j}\overline{\gamma}_{-,h,i})
    \end{aligned}
    \end{equation*}
    We can figure out the Conley-Zehnder indices of covers of elliptic and hyperbolic orbits in terms of the indices of their underlying orbits.
    \begin{equation*}
    \begin{aligned}
        CZ_\tau(\kappa e_{x,y}) &= 2\kappa x + 2\kappa y + 1 = \kappa (CZ_\tau(e_{x,y}) - 1) + 1\\
        CZ_\tau(\kappa h_{x,y}) &= 2\kappa x + 2\kappa y = \kappa CZ_\tau(h_{x,y})\\
    \end{aligned}
    \end{equation*}
    Thus, we can simplify the index of $C$.
    \begin{equation*}
    \begin{aligned}
        \text{ind}(C) &= -\chi(C)\\
        & \quad + \sum_{i =1}^{\overline{E}_+} \left[\kappa(CZ_\tau(\overline{\gamma}_{+,e,i}) - 1) + K_{+,e,i}\right] + \sum_{i =1}^{\overline{H}_+} \kappa CZ_\tau(\overline{\gamma}_{+,h,i})\\
        & \quad - \sum_{i =1}^{\overline{E}_-} \left[\kappa(CZ_\tau(\overline{\gamma}_{-,e,i}) - 1) + K_{-,e,i}\right] - \sum_{i =1}^{\overline{H}_-} \kappa CZ_\tau(\overline{\gamma}_{-,h,i})\\
    \end{aligned}
    \end{equation*}
    To further simplify, let's examine the Euler characteristic, and denote $E_+$, $E_-$, $H_+$, $H_-$ to be the numbers of elliptic/hyperbolic, positive/negative ends of $C$.
    For punctured spheres, the Euler characteristic is just $2$ minus the number of punctures, so we can check that
    \begin{equation*}
    \begin{aligned}
        \chi(C) &= \kappa\chi(\overline{C}) - (2\kappa - 2) + (\kappa \overline{E}_+ - E_+) + (\kappa \overline{H}_+ - H_+)\\
        &\quad + (\kappa \overline{E}_- - E_-) + (\kappa \overline{H}_- - H_-).
    \end{aligned}
    \end{equation*}
    Then plugging in for $\chi$ and simplifying the sum terms, we get
    \begin{equation*}
    \begin{aligned}
        \text{ind}(C) &= -[\kappa\chi(\overline{C}) - (2\kappa - 2) + (\kappa \overline{E}_+ - E_+) + (\kappa \overline{H}_+ - H_+)\\
        &\quad\quad\quad + (\kappa \overline{E}_- - E_-) + (\kappa \overline{H}_- - H_-)]\\
        &\quad + \kappa \Bigg[\sum_{i =1}^{\overline{E}_+} CZ_\tau(\overline{\gamma}_{+,e,i}) + \sum_{i =1}^{\overline{H}_+} CZ_\tau(\overline{\gamma}_{+,h,i})\\
        &\quad\quad\quad - \sum_{i =1}^{\overline{E}_-} CZ_\tau(\overline{\gamma}_{-,e,i}) - \sum_{i =1}^{\overline{H}_-} CZ_\tau(\overline{\gamma}_{-,h,i})\Bigg]\\
        &\quad - (\kappa \overline{E}_+ - E_+) + (\kappa\overline{E}_- - E_-)\\
        &= \kappa \text{ind}(\overline{C}) + (2\kappa - 2) - 2(\kappa \overline{E}_+ - E_+) - (\kappa\overline{H}_+ - H_+) - (\kappa\overline{H}_- - H_-)
    \end{aligned}
    \end{equation*}
\end{proof}

\begin{lemma}\label{lem:RH_bound_cover_index}
    Let $C$ be a genus $0$ formal curve component in a completion or a symplectization, which is a $\kappa$-fold formal cover of a genus $0$ formal curve $\overline{C}$.
    Then
    $$\emph{ind}(C) \geq \kappa\emph{ind}\overline{C} - (\kappa\overline{E}_+ - E_+) + (\kappa\overline{E}_- - E_-).$$ 
\end{lemma}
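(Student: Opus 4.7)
The plan is to deduce Lemma \ref{lem:RH_bound_cover_index} directly from Lemma \ref{lem:formal_cover_index} by comparing the two inequalities and reducing to a Riemann--Hurwitz count, which explains the ``RH'' in the name of the lemma. Subtracting the desired inequality from the exact formula of Lemma \ref{lem:formal_cover_index}, one sees that what must be proven is equivalent to
\begin{equation*}
(\kappa\overline{E}_+ - E_+) + (\kappa\overline{E}_- - E_-) + (\kappa\overline{H}_+ - H_+) + (\kappa\overline{H}_- - H_-) \;\leq\; 2\kappa - 2.
\end{equation*}
The left-hand side has a transparent meaning: writing $\overline{N} := \overline{E}_+ + \overline{E}_- + \overline{H}_+ + \overline{H}_-$ and $N := E_+ + E_- + H_+ + H_-$ for the total number of punctures of $\overline{C}$ and $C$, the inequality is just $\kappa\overline{N} - N \leq 2\kappa - 2$.

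The key step is to recognize this as the Riemann--Hurwitz formula for the $\kappa$-fold branched cover $\pi \colon \Sigma \to \overline{\Sigma}$ of genus-zero surfaces from the definition of formal cover. Since $\chi(\Sigma) = \chi(\overline{\Sigma}) = 2$, Riemann--Hurwitz gives
\begin{equation*}
2\kappa - 2 \;=\; \sum_{z \in \Sigma}(\kappa_z - 1),
\end{equation*}
where $\kappa_z$ is the local ramification order of $\pi$ at $z$. For each asymptotic marked point $\overline{z}$ of $\overline{\Sigma}$, the contribution to this sum from the preimages $z_1,\dots,z_K$ of $\overline{z}$ is $\sum_{j=1}^K (\kappa_{z_j} - 1) = \kappa - K$, using that the preimages are exactly the marked points of $\Sigma$ mapped to $\overline{z}$ and that their ramification orders sum to $\kappa$. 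Summing over the $\overline{N}$ asymptotic marked points of $\overline{\Sigma}$ yields $\kappa\overline{N} - N$.

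The remaining branch points of $\pi$ (in particular the tangency marked point $z_0$ and any interior ramification points) contribute $\sum_{z}(\kappa_z - 1) \geq 0$ to the total ramification. Therefore
\begin{equation*}
\kappa\overline{N} - N \;\leq\; 2\kappa - 2,
\end{equation*}
which is exactly the inequality that was needed. No step here is a serious obstacle; the only mild subtlety is keeping track that $z_0$ may or may not coincide with an asymptotic marked point, but in either case its contribution to the Riemann--Hurwitz sum is nonnegative and so only strengthens the inequality.
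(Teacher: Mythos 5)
Your proposal is correct and takes essentially the same approach as the paper: both derive the inequality by combining the exact index formula of Lemma \ref{lem:formal_cover_index} with the Riemann--Hurwitz formula for the branched cover $\Sigma \to \overline{\Sigma}$, observing that the ramification over the asymptotic marked points accounts for $\kappa\overline{N}-N$ and the remaining (interior) ramification is nonnegative. Your reorganization (subtracting the claimed bound from the exact formula to isolate the clean inequality $\kappa\overline{N}-N\leq 2\kappa-2$) is a matter of presentation, not a different argument.
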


\begin{proof}
    Recall that this cover comes from a $\kappa$-fold cover of spheres $\Sigma \rightarrow \overline{\Sigma}$, with marked points corresponding to asymptotic ends of $C$.
    Denote by $r_p$ the ramification order of a point $p$ in the sphere $\Sigma$, and let $\mathcal{R}$ be the set of all ramification points, while $\mathcal{R}_{\text{int}}$ denotes the set of ramification points other than the marked points (i.e. those in the ``interior" of $C$).
    By the Riemann Hurwitz formula, we have
    $$\sum_{p\in \mathcal{R}}(r_p - 1) = 2\kappa - 2.$$
    We see that $\kappa\overline{E}_+ - E_+ = \sum_p (r_p - 1)$, where the sum is taken over positive elliptic ends.
    This holds similarly for $E_-,H_{\pm}$.
    Thus,
    $$(\kappa\overline{E}_+ - E_+) + (\kappa\overline{H}_+ - H_+) + (\kappa\overline{H}_- - H_-) = (2\kappa - 2) - (\kappa\overline{E}_- - E_-) - \sum_{p \in \mathcal{R}_{\text{int}}} (r_p - 1). $$
    Then we can plug in to our index formula from Lemma \ref{lem:formal_cover_index} to get
    \begin{equation*}
    \begin{aligned}
    \text{ind}(C) &= \kappa\text{ind}(\overline{C}) - (\kappa\overline{E}_+ - E_+) + (\kappa\overline{E}_- - E_-) + \sum_{p \in \mathcal{R}_{\text{int}}} (r_p - 1)\\
    &\geq \kappa\text{ind}(\overline{C}) - (\kappa\overline{E}_+ - E_+) + (\kappa\overline{E}_- - E_-).
    \end{aligned}
    \end{equation*}
\end{proof}

\begin{lemma}\label{lem:tangency_constraint_index_inequality}
    Now, let $C$ be a $\kappa$-fold formal cover of $\overline{C}$ in $\widetilde{X}_\Omega$, but there is a $\ll \mathcal{T}^{(m)}p\gg$ tangency constraint on $C$ and a $\ll \mathcal{T}^{(\overline{m})}p\gg$ tangency constraint on $\overline{C}$.
    Let $B_{\pm,e/h}$ denote the excess branching for positive/negative elliptic/hyperbolic orbits (e.g. $B_{+,e} = \kappa \overline{E}_+ - E_+$).
    Then
    $$\emph{ind}(C) \geq \kappa \emph{ind}(\overline{C}) + \emph{max}\{(2\kappa - 2) - 2B_{+,e} - B_{+,h}, \quad B_{+,h}\}.$$
    In particular, since $B_{+,h} \geq 0$, we have $\text{ind}(C) \geq \kappa \text{ind}(\overline{C})$.
\end{lemma}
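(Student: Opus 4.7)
The plan is to combine the exact index identity for formal covers (Lemma \ref{lem:formal_cover_index}) with the tangency adjustment and the fact that $\widetilde{X}_\Omega$ is a Liouville domain. Since a $\ll\mathcal{T}^{(m)}p\gg$ constraint drops the index by $2m$ in dimension four, and since $\widetilde{X}_\Omega$ has no negative boundary (so both $\overline{C}$ and $C$ are cap-free, giving $B_{-,e}=B_{-,h}=0$), Lemma \ref{lem:formal_cover_index} collapses to the clean identity
$$\text{ind}(C) - \kappa\,\text{ind}(\overline{C}) \;=\; 2(\kappa\overline{m} - m) + (2\kappa-2) - 2B_{+,e} - B_{+,h}.$$
The formal-cover hypothesis $r_0\overline{m} \geq m$, where $r_0$ denotes the ramification order of $\pi$ at the tangency marked point $z_0$, immediately yields $2(\kappa\overline{m}-m) \geq 0$.

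For the first branch of the maximum I would simply drop this nonnegative term, obtaining $\text{ind}(C) - \kappa\,\text{ind}(\overline{C}) \geq (2\kappa-2) - 2B_{+,e} - B_{+,h}$.

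The second branch is the real content. Here I would apply Riemann--Hurwitz to $\pi\colon \Sigma \to \overline{\Sigma}$, separating the ramification contribution at $z_0$ from those at the asymptotic marked points and from unmarked branch points:
$$(r_0 - 1) + B_{+,e} + B_{+,h} + B_{\text{int}} \;=\; 2\kappa - 2,$$
where $B_{\text{int}} \geq 0$ accounts for interior unmarked branch points. This gives $B_{+,e}+B_{+,h} \leq 2\kappa - 1 - r_0$. On the other hand, assuming the tangency constraint on $\overline{C}$ is nontrivial ($\overline{m}\geq 1$), the cover condition $r_0\overline{m} \geq m$ yields $\kappa\overline{m} - m \geq (\kappa-r_0)\overline{m} \geq \kappa - r_0$. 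Chaining these,
$$(\kappa\overline{m} - m) + (\kappa - 1) \;\geq\; (\kappa - r_0) + (\kappa-1) \;=\; 2\kappa - 1 - r_0 \;\geq\; B_{+,e} + B_{+,h},$$
which rearranges to exactly $\text{ind}(C) - \kappa\,\text{ind}(\overline{C}) \geq B_{+,h}$. The ``in particular'' claim then follows because $B_{+,h} \geq 0$ forces the maximum to be nonnegative.

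The main obstacle is purely combinatorial bookkeeping: Riemann--Hurwitz must be applied with the tangency marked point $z_0$ counted separately from the asymptotic marked points $z_1,\ldots,z_a$, and the tangency savings $\kappa\overline{m}-m$ must be paired precisely against the ``missing'' ramification $\kappa - r_0$ at $z_0$. Once this pairing is set up, both branches of the maximum fall out of a single identity plus one application of Riemann--Hurwitz, with no further input beyond the fact that $\widetilde{X}_\Omega$ is a filling (so there are no negative ends to track).
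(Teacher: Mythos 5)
Your proposal is correct and follows essentially the same argument as the paper's proof: start from the formal-cover index identity of Lemma \ref{lem:formal_cover_index}, add the tangency correction $2(\kappa\overline{m}-m)$, drop that nonnegative term for the first branch of the max, and for the second branch combine Riemann--Hurwitz with the ramification condition at the tangency marked point and the bound $\overline{m}\geq 1$. The paper writes the excess branching at $z_0$ as $B = r_0 - 1$ and chains the inequalities in a slightly different order, but the content (in particular the key estimate $\kappa\overline{m}-m \geq (\kappa - r_0)\overline{m} \geq \kappa - r_0$, which tacitly uses $r_0 \leq \kappa$) is identical to yours.
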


\begin{proof}
    First, by the index formula in Lemma \ref{lem:formal_cover_index}, and since the index is decreased by twice the value of the tangency constraint, we get
    \begin{equation*}
    \begin{aligned}
        \text{ind}(C) &= \kappa\text{ind}(\overline{C}) + (2\kappa - 2) -2B_{+,e} - B_{+,h} + 2(\kappa \overline{m} - m)\\
        & \geq \kappa\text{ind}(\overline{C}) + (2\kappa - 2) -2B_{+,e} - B_{+,h},
    \end{aligned}
    \end{equation*}
    since by the definition of formal cover, $\kappa\overline{m} \geq m$.
    Note too, that we are talking about a curve in $\widetilde{X}_\Omega$ rather than a curve component in a building, so there are no negative ends.

    For the other part of the innequality, we will use Riemann Hurwitz, which says that $2\kappa - 2 \geq B_{+,e} + B_{+,h} + B$, where $B$ is the excess branching at the point $p$ in $C$ where the tangency constraint is satisfied.
    In other words, the ramification order of the cover at $p$ is $B + 1$.
    By the ramification of formal covers, we must have $(B+1)\overline{m} \geq m$.
    Also using $\overline{m}\geq 1$, we get
    \begin{equation*}
    \begin{aligned}
        \text{ind}(C) &= \kappa\text{ind}(\overline{C}) + (2\kappa - 2) -2B_{+,e} - B_{+,h} + 2(\kappa \overline{m} - m)\\
        & \geq \kappa\text{ind}(\overline{C}) + (2\kappa - 2) -2B_{+,e} - B_{+,h} + 2(\kappa - (B+1))\\
        & = \kappa\text{ind}(\overline{C}) + 2[(2\kappa - 2) -B_{+,e} - B] - B_{+,h}\\
        & \geq \kappa\text{ind}(\overline{C}) + B_{+,h}.
    \end{aligned}
    \end{equation*}
\end{proof}

\subsection{Combinatorial Minimizers are Formally Perturbation Invariant}

\begin{lemma}\label{lem:A1_implies_ind_C_geq_0}
    Let $C$ be a genus zero formal building in $\widetilde{X}_\Omega$ and symplectization levels $\mathbb{R} \times \partial\widetilde{X}_\Omega$, with one unmatched negative end.
    Assume that every component of $C$ in a symplectization level formally covers a simple component which is either a trivial cylinder or has index no less than $1$ (this is condition (A1) in the definition of formal perturbation invariance).
    Then, $\text{ind}(C) \geq 0$, with equality if and only if $C$ is made up of trivial cylinders.
\end{lemma}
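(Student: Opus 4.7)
The plan is to expand $\text{ind}(C) = \sum_v \text{ind}(C_v)$ over all components of the formal building and bound each summand. For each symplectization-level component $C_v$, realized as a $\kappa_v$-fold formal cover of a simple $\overline{C}_v$, Lemma~\ref{lem:formal_cover_index} together with Riemann--Hurwitz (exactly as in the derivation of Lemma~\ref{lem:RH_bound_cover_index}) gives the identity
\[
\text{ind}(C_v) \;=\; \kappa_v\,\text{ind}(\overline{C}_v) - B_{+,e}^v + B_{-,e}^v + B_{\text{int}}^v,
\]
where $B_{\pm,e}^v = \kappa_v\overline{E}_\pm^v - E_\pm^v$ and $B_{\text{int}}^v$ is the interior branching of the cover $\pi_v$.

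I would then case-split on the hypothesis for $\overline{C}_v$. If $\overline{C}_v$ is a trivial cylinder, then $\overline{E}_+^v = \overline{E}_-^v = 1$, so $B_{+,e}^v = \kappa_v - E_+^v \leq \kappa_v - 1$; substituting and using $B_{-,e}^v, B_{\text{int}}^v \geq 0$ yields $\text{ind}(C_v) \geq 0$, with equality forcing $E_+^v = 1$ and $B_{\text{int}}^v = 0$ (so that Riemann--Hurwitz simultaneously forces $E_-^v = 1$), meaning $C_v$ is itself a trivial cylinder over the iterated orbit. If instead $\text{ind}(\overline{C}_v) \geq 1$, then for components with $\overline{E}_+^v \leq 1$ the leading $\kappa_v$ term directly absorbs the positive-elliptic defect $B_{+,e}^v \leq (\kappa_v - 1)\overline{E}_+^v$, giving $\text{ind}(C_v) \geq 1$. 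For $\overline{E}_+^v \geq 2$ the per-component bound is delicate and must be made global: here I would sum the Riemann--Hurwitz identities across the whole tree of the building, exploiting that each matched interface pairs a positive-elliptic branching of a lower component with a negative-elliptic branching of the upper one, and so the potentially negative $-B_{+,e}^v$ contributions telescope against the positive $B_{-,e}^{v'}$ contributions from the level above.

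For the $X$-level component, I would invoke the tangency-free analogue of Lemma~\ref{lem:tangency_constraint_index_inequality}, obtaining $\text{ind}(C_X) \geq \kappa_X\,\text{ind}(\overline{C}_X)$; combined with the implicit condition (A1) from Definition~\ref{def:formal_perturbation_invariance} (that $\text{ind}(\overline{C}_X) \geq -1$), any $-1$ deficit at the $X$ level is absorbed by the strict slack of at least $1$ from any nontrivial symplectization contribution, or ruled out by the structural constraint of a single unmatched negative end at the bottom. Summing all bounds yields $\text{ind}(C) \geq 0$. For the equality assertion, tracing through forces $\text{ind}(\overline{C}_v) = 0$, $B_{\text{int}}^v = 0$, and $E_\pm^v = 1$ in every symplectization component, so each such $C_v$ is a trivial cylinder over its iterated orbit, and the analogous equality in the $X$ level reduces $C$ to a union of trivial cylinders as claimed.

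The main obstacle is the global branching bookkeeping in the nontrivial symplectization case with $\overline{E}_+^v \geq 2$: ensuring that the positive-elliptic defects at one level cancel precisely against the negative-elliptic defects at the adjacent level when summed across the tree, so that no hidden deficit survives. This requires carefully pairing up the matched ends and summing Riemann--Hurwitz globally rather than per component.
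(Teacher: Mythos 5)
Your proposal takes a genuinely different route from the paper and, as written, has several gaps that I do not think are repairable without importing essentially the paper's own structure.

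The central problem is that the per-component bound you are aiming for is false. Take $\overline{C}_v$ an elliptic trivial cylinder and $C_v$ a $\kappa_v$-fold cover with $E_+^v = 1$ and $E_-^v = \kappa_v \geq 2$ (no interior branching): plugging into Lemma~\ref{lem:formal_cover_index} gives $\text{ind}(C_v) = 2E_+^v - 2 = 0$, yet $C_v$ is a pair-of-pants-type cover, not a trivial cylinder. So $\text{ind}(C_v) \geq 0$ with equality iff trivial fails component-by-component, and your equality analysis for the trivial-cylinder case is not correct. Relatedly, your derivation of $\text{ind}(C_v) \geq 0$ in that case has an arithmetic gap: from $\text{ind}(C_v) = -B_{+,e}^v + B_{-,e}^v + B_{\text{int}}^v$, the bounds $B_{+,e}^v \leq \kappa_v - 1$ and $B_{-,e}^v, B_{\text{int}}^v \geq 0$ only give $\text{ind}(C_v) \geq 1 - \kappa_v$. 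You must use Riemann--Hurwitz as an \emph{equality} ($B_{+,e}^v + B_{-,e}^v + B_{\text{int}}^v = 2\kappa_v - 2$) to rewrite $\text{ind}(C_v) = 2E_+^v - 2$. You also do not treat the hyperbolic trivial cylinder, for which $\overline{E}_+^v = \overline{E}_-^v = 0$ and the formula is different.

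Your proposed fix for $\overline{E}_+^v \geq 2$ --- a ``telescoping'' of branching defects across matched interfaces --- is not sound. The ramification order of $C_v$'s cover at a positive puncture and that of $C_{v'}$'s cover at the matched negative puncture are determined by two unrelated branched covers of two unrelated simple curves; there is no identity pairing $B_{+,e}^v$ against $B_{-,e}^{v'}$, and you correctly flag this as unresolved. Moreover, the ``one unmatched negative end'' hypothesis, which is where the building's tree structure enters, is not used anywhere in your argument. The paper's proof is built precisely around that hypothesis: it splits the components into those with negative ends ($C_1,\dots,C_s$, with $b_1,\dots,b_s$ negative ends each) and those without, proves via Lemma~\ref{lem:num_components_no_neg_ends} (induction using genus zero and the single unmatched end) that there are at least $\sum_i(b_i - 1)$ components with no negative end, uses that those have index $\geq 2$, and then reduces to showing $\text{ind}(C_i) + 2(b_i - 1) \geq 0$ with equality only for trivial cylinders. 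None of this is present in your plan, and without some equivalent global accounting the lemma does not follow.

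Finally, there is no $\widetilde{X}_\Omega$-level component in the building this lemma concerns: the building lives entirely in symplectization levels (it is applied in Proposition~\ref{prop:formal_perturbation_invariance} to the sub-buildings sitting above the main component), so the last paragraph about the ``$X$-level component'' and the absorption of a $-1$ deficit is not applicable.
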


\begin{proof}
    We will split the components of $C$ into those which have a negative end, and those which do not.
    Let $C_1,...,C_s$ be the components with at least one negative end, and let $b_1,...,b_s$ be the number of negative ends on each component, respectively.
    We show below in Lemma \ref{lem:num_components_no_neg_ends} that there are at least $\sum_1^s (b_i-1)$ components with no negative ends.
    The argument is by induction on the number of components and uses the absence of genus to decompose the building into disjoint sub-buildings in the inductive step.
    
    Now, because the index of a curve with no negative end is at least $2$, we see that
    $$\text{ind}(C) \geq \sum_{i=1}^s \left[\text{ind}(C_i) + 2(b_i - 1)\right].$$
    We will show for each $C_i$ that $\text{ind}(C_i) + 2(b_i - 1) \geq 0$, with equality only when $C_i$ is a trivial cylinder, which completes the proof.
    Indeed, let $C_i$ be a $\kappa$-fold cover of some underlying simple component $\overline{C}_i$.
    By assumption, either $\overline{C}_i$ is a trivial cylinder or has $\text{ind}(\overline{C}_i) \geq 1$.
    By Lemma \ref{lem:RH_bound_cover_index}, 
    \begin{equation*}
    \begin{aligned}
        \text{ind}(C_i) + 2(b-1) &\geq \kappa\text{ind}(\overline{C}_i) - (\kappa \overline{E}_+ - E_+) + (\kappa\overline{E}_- - E_-) + 2(b-1)\\
        & = \kappa\text{ind}(\overline{C}_i) - (\kappa \overline{E}_+ - E_+) + \kappa\overline{E}_- + E_- + 2H_- - 2
    \end{aligned}
    \end{equation*}
    where $E_{\pm},H_{\pm}$ are the numbers of positive/negative ends asymptotic to elliptic or hyperbolic Reeb orbits in $C_i$, and $\overline{E}_{\pm},\overline{H}_{\pm}$ are defined similarly for $\overline{C}_i$.
    First, if $\overline{C}_i$ is a trivial cylinder with elliptic ends, then $\text{ind}(\overline{C}_i) = 0, \overline{E}_+ = \overline{E}_- = 1,$ and $H_- = 0$, giving
    $$\text{ind}(C_i) + 2(b-1) \geq \kappa (0) - \kappa + E_+ + \kappa + E_- + 2(0) - 2 \geq 0,$$
    with equality only when $E_+ = E_- = 1$, i.e. when $C_i$ is a trivial cylinder.
    Similarly, if $\overline{C}_i$ is a trivial cylinder with hyperbolic ends, then
    $$\text{ind}(C_i) = \kappa (0) + (2\kappa - 2) - (\kappa - H_+) - (\kappa - H_-) = H_+ + H_- - 2,$$
    $$\Rightarrow\;\text{ind}(C_i) + 2(b_i - 1) = H_+ + H_- - 2 + 2(H_- - 1) = H_+ + 3H_- - 4 \geq 0,$$
    with equality only when $H_+ = H_- = 1$.
    Secondly, we consider the case when $\text{ind}(\overline{C}_i) \geq 1$.
    Here we will use the fact from Rieman Hurwitz that $\kappa\overline{E}_+ - E_+ \leq 2\kappa - 2.$
    This gives
    \begin{equation*}
    \begin{aligned}
        \text{ind}(C_i) + 2(b-1) &\geq \kappa\text{ind}(\overline{C}_i) - (\kappa \overline{E}_+ - E_+) + \kappa\overline{E}_- + E_- + 2H_- - 2\\
        &\geq \kappa - (2\kappa - 2) + \kappa\overline{E}_- + E_- + 2H_- - 2\\
        & = \kappa(\overline{E}_- - 1) + E_- + 2H_-
    \end{aligned}
    \end{equation*}
    Hence, this quantity is positive if $\overline{E}_- \geq 1$.
    Otherwise, there are no elliptic negative ends.
    Then we can assume there is at least one hyperbolic negative end, for if not, then our index is positive anyway.
    And we condition on the ramification of the positive elliptic ends as follows.
    If $\kappa\overline{E}_+ - E_+ \leq \kappa - 1$, then
    $$\text{ind}(C_i) + 2(b-1) \geq \kappa - (\kappa - 1) + 2H_- - 2 \geq 1.$$
    Now we check when $\kappa\overline{E}_+ - E_+ > \kappa - 1$.
    Again by Riemann Hurwitz we get the following useful innequality:
    $$H_- \geq (\kappa\overline{E}_+ - E_+) + (\kappa\overline{H}_+ - H_+) + \kappa\overline{H}_- - (2\kappa - 2)$$
    This implies that
    \begin{equation*}
    \begin{aligned}
        \text{ind}(C_i) + 2(b - 1) &\geq \kappa\text{ind}(\overline{C}_i) + (\kappa\overline{E}_+ - E_+) \\
        &\quad \quad + 2( (\kappa\overline{H}_+ - H_+) + \kappa\overline{H}_- - (2\kappa - 2)) - 2\\
        &\geq \kappa + (\kappa - 1) + 2(\kappa\overline{H}_+ - H_+) + 2\kappa - 4 \kappa  + 2\\
        & \geq 1
    \end{aligned}
    \end{equation*}
\end{proof}

\begin{lemma}\label{lem:num_components_no_neg_ends}
    Let $C$ be a building as in Lemma \ref{lem:A1_implies_ind_C_geq_0}, with $C_1,...,C_s$ the components with at least one negative end, and $b_1,...,b_s$ the number of negative ends on those components.
    Then there are at least $\sum_1^s (b_i - 1)$ components with no negative ends.
\end{lemma}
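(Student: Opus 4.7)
The plan is to convert the statement into a pure graph-theoretic counting, using that a genus zero formal building has an acyclic underlying graph.

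First I would associate to $C$ the underlying graph $G$ whose vertices are all the curve components across all levels, and whose edges are of two types: (i) node edges, coming from the tree structure within each level's formal nodal curve, and (ii) matching edges, one for each pair consisting of a negative end of a component in level $i+1$ matched with the corresponding positive end of a component in level $i$. By the genus zero assumption, together with the clause in the definition of a formal building requiring that the graph obtained by concatenating the forests of each level is acyclic, $G$ is a forest. Hence, writing $N$ for the total number of components and $E$ for the total number of edges of $G$, we have $N \geq E + 1$.

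Next I would count edges. Let $B := \sum_{i=1}^s b_i$ be the total number of negative ends across all components of $C$; by hypothesis exactly one of these is unmatched, so the number of matching edges is $B - 1$. Writing $\nu \geq 0$ for the number of node edges, $E = B - 1 + \nu$, so the forest bound gives
\begin{equation*}
N \;\geq\; (B - 1 + \nu) + 1 \;=\; B + \nu \;\geq\; B.
\end{equation*}

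Finally, the number of components with no negative ends is $N - s$, and
\begin{equation*}
N - s \;\geq\; B - s \;=\; \sum_{i=1}^s (b_i - 1),
\end{equation*}
which is the desired inequality. There is no real obstacle here beyond carefully unpacking the definition of the building's graph and verifying that each matched Reeb orbit contributes exactly one edge; the argument is a one-line application of the forest inequality $V \geq E + 1$, and it indeed proceeds in the spirit of the inductive sketch given after Lemma \ref{lem:A1_implies_ind_C_geq_0}, since removing a leaf of $G$ and its incident edge is exactly the induction step corresponding to pruning a component with a single matched end.
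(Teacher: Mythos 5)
Your proof is correct, and it takes a genuinely different route from the paper's. The paper argues by induction on the number of components: it selects a top-level component $C_1$ with unmatched positive ends, observes that genus zero forces the $b_1$ sub-buildings hanging below its negative ends to be disjoint, applies the inductive hypothesis to each (after a bookkeeping adjustment to reintroduce an unmatched negative end), and sums the resulting counts. Your proof instead assembles the entire building into a single graph $G$ with curve components as vertices and matching/node edges, and applies the forest inequality $N \ge E + 1$ once. What each buys: the paper's induction makes visible how genus zero is used (to split the sub-buildings into disjoint pieces to which the hypothesis can be applied), whereas your Euler-characteristic count is shorter, avoids the slightly delicate inductive bookkeeping (the paper must perturb the hypothesis by artificially adding an unmatched negative end to the sub-buildings $D_2, \dots, D_{b_1}$), and also dispenses with any connectedness hypothesis for free, since a forest with $c \ge 1$ components still satisfies $N = E + c \ge E + 1$. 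One small point worth stating explicitly in your write-up: the paper's ``genus zero'' for the building is precisely the statement that the graph you call $G$ is a forest (this is also built into the definition of a formal building, which requires the concatenated graph to be acyclic), so the two hypotheses you invoke are really the same one.
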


\begin{proof}
    We will prove this by induction on the number of components.
    If there is only one component, then it must have exactly one negative end, in which case $b_1 = 1$ and there are $b_1 - 1 = 0$ components with no negative ends.
    For the induction, consider a building as above with $s$ components.
    Without loss of generality, take $C_1$ to be a component with no matched positive ends.
    Then $b_1 - 1$ of these negative ends are matched with a building which has no unmatched negative ends, and the last end is either unmatched itself or matched with a building with one unmatched negative end.
    And the genus being zero ensures that these buildings are disjoint.
    Call these buildings $D_1,...,D_{b_1}$, and take $D_1$ to be the building with an unmatched negative end.
    Denote by $D_{i,j}$ the components in building $D_i$ with negative ends, and let $d_{i,j}$ be the number of negative ends of $D_{i,j}$.
    Note that the set of all $d_{i,j}$ is exactly $b_2,...,b_s$.

    Now, by induction the number of components with no negative ends in $D_1$ is at least $\sum_j (d_{1,j} - 1)$.
    Similarly, by induction the number of components with no negative ends in $D_i$ for $i = 2,...,b_1$ is at least $\sum_j (d_{i,j} - 1) + 1$.
    Indeed, this is because induction applies if you give one of $D_i$'s components with no negative end an unmatched negative end (which changes the count by 1).
    So, the total number of components with no negative end must be at least
    \begin{equation*}
    \begin{aligned}
        &\sum_j (d_{1,j} - 1) + \sum_{i=2}^{b_1} \left(\sum_j (d_{i,j} - 1) + 1\right)\\
        =& \sum_{i=2}^s (b_i - 1) + \sum_{i=2}^{b_1} 1\\
        =& \sum_{i=1}^s (b_i - 1).
    \end{aligned}
    \end{equation*}
    
\end{proof}

\begin{remark}
    In the following arguments, we denote the formal words of Reeb orbits in a fully rounded and perturbed convex toric domain $\widetilde{X}_\Omega$ by $w$, rather than $\Gamma$, to emphasize that in this setting $w$ is a formal word composed of letters of the form $e_{i,j}$ or $h_{i,j}$, corresponding to the specific elliptic and hyperbolic orbits found after perturbation.
\end{remark}

To clarify notation, for a collection of Reeb orbits $\Gamma = \{e_{i_s,j_s}\}_s \cup \{h_{i_t,j_t}\}_t$ in a fully rounded and perturbed convex toric domain $\widetilde{X}_\Omega$, we write $P(\Gamma)$ for the corresponding collection of ordered pairs $\{(i_s,j_s)\}_s \cup \{(i_t,j_t)\}_t$.
Recall that a word of ordered pairs $P$ is a member of $\mathcal{P}_{k,\ell}$ if $P$ is weakly permissible, has index $2k$, and has at most $\ell$ ordered pairs.
We will also say that $P$ is a $\kappa$-fold cover of $\overline{P}$ if there exists a mapping from $P$ to $\overline{P}$ such that each element $(\overline{i},\overline{j}) \in \overline{P}$ is mapped to by elements in $P$ whose sum is $(\kappa\overline{i},\kappa\overline{j})$.
For example $\{(2,0),(0,1),(0,1)\}$ is a $2$-fold cover of $\{(1,0),(0,1)\}$ by this definition.
This is compatible in the sense that if $C$ is a $\kappa$-fold cover of $\overline{C}$, with positive asymptotics $\Gamma$ and $\overline{\Gamma}$ respectively, then $P := P(\Gamma)$ is a $\kappa$-fold cover of $\overline{P} := P(\overline{\Gamma})$.
Note also that the action scales: $||(\kappa\overline{i},\kappa\overline{j})||_\Omega^* = \kappa||(\overline{i},\overline{j})||_\Omega^*$.

\begin{lemma}\label{lem:w_min_implies_simple_w_min}
    Fix $k,\ell$ and a fully rounded convex toric domain $\widetilde{X}_\Omega$.
    Let $C$ be a $\kappa$-fold formal cover of simple curve $\overline{C}$, where $C$ is asymptotic to a word of Reeb orbits
    $$\Gamma = e_{i_1,j_1} \times \cdots \times e_{i_q,j_q}$$
    of index $2k$, and $\overline{C}$ is asymptotic to a word of Reeb orbits
    $$\overline{\Gamma} = e_{\overline{i}_1,\overline{j}_1} \times \cdots \times e_{\overline{i}_{\overline{q}},\overline{j}_{\overline{q}}}.$$
    of index $2\overline{k}$.
    Also let $C$ satisfy a $\ll T^{(k)} p\gg$ tangency constraint, and let $\overline{C}$ satisfy a $\ll T^{(\overline{k})} p\gg$ tangency constraint.
    If $P := P(\Gamma)$ minimizes action over $\mathcal{P}_{k,\ell}$, then $\overline{P} := P(\overline{\Gamma})$ minimizes action over $\mathcal{P}_{\overline{k},\overline{q}}$.
\end{lemma}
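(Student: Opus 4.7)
The plan is to argue by contradiction: if $\overline{P}$ were not a minimizer over $\mathcal{P}_{\overline{k}, \overline{q}}$, then there would exist $\overline{P}' \in \mathcal{P}_{\overline{k}, \overline{q}}$ of strictly smaller action, and from $\overline{P}'$ I would build a word $P' \in \mathcal{P}_{k, \ell}$ of strictly smaller action than $P$, contradicting minimality of $P$. Before that, I first verify that $\overline{P}$ itself lies in $\mathcal{P}_{\overline{k}, \overline{q}}$: the half-index equals $\overline{k}$ since all positive ends of $\overline{C}$ are elliptic (Lemma \ref{lem:big_fully_rounding}(b)); the length is $\overline{q}$; and weak permissibility of $\overline{P}$ transfers from that of $P$, since each element of $P$ covering $(\overline{i}_s,\overline{j}_s)$ has the form $(\kappa_{s,j}\overline{i}_s, \kappa_{s,j}\overline{j}_s)$, so an all-zero $i$-column in $\overline{P}$ with $\overline{q} \geq 2$ would force the analogous column in $P$ to be all zero with $q \geq \overline{q} \geq 2$, violating weak permissibility of $P$.

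Next I extract two numerical identities from the index computations. Combining $k \geq \kappa\overline{k}$ (from Lemma \ref{lem:tangency_constraint_index_inequality} applied to $\text{ind}(C) = 2k$ and $\text{ind}(\overline{C}) = 2\overline{k}$) with the reverse $\kappa\overline{k} \geq k$ (from the formal cover requirement $\kappa\overline{m} \geq m$ with $m = k$, $\overline{m} = \overline{k}$) yields the exact equality $k = \kappa\overline{k}$. Substituting this into the equational version of the formula appearing at the start of the proof of Lemma \ref{lem:tangency_constraint_index_inequality}, with $B_{+,h} = 0$ (all ends elliptic) and no negative ends, then gives $q = \kappa(\overline{q} - 1) + 1$. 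I also record that the action of $P$ equals $\kappa$ times the action of $\overline{P}$, since each cover group contributes $\sum_j \kappa_{s,j}||(\overline{i}_s,\overline{j}_s)||_\Omega^* = \kappa||(\overline{i}_s,\overline{j}_s)||_\Omega^*$ by positive homogeneity of $||\cdot||_\Omega^*$.

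Given $\overline{P}' = \{(\overline{i}'_s, \overline{j}'_s)\}_{s=1}^{\overline{q}'}$ of strictly smaller action than $\overline{P}$, I construct
$$P' := \{(\kappa\overline{i}'_1, \kappa\overline{j}'_1)\} \cup \bigcup_{s=2}^{\overline{q}'} \{(\overline{i}'_s, \overline{j}'_s)\}^{\times \kappa},$$
i.e.\ a single $\kappa$-scaled copy of the first element together with $\kappa$ unscaled copies of each of the remaining $\overline{q}' - 1$ elements. A direct computation gives half-index $\kappa(\overline{k}-\overline{q}'+1) + \kappa(\overline{q}'-1) = \kappa\overline{k} = k$, length $1 + \kappa(\overline{q}'-1) \leq 1 + \kappa(\overline{q}-1) = q \leq \ell$ (where the second identity from the previous step is essential), weak permissibility inherited from $\overline{P}'$ when $\overline{q}' \geq 2$ and automatic when $\overline{q}' = 1$ since then $|P'| = 1$, and action $\kappa \cdot (\text{action of }\overline{P}') < \kappa \cdot (\text{action of }\overline{P}) = \text{action of }P$, the desired contradiction with minimality of $P$ over $\mathcal{P}_{k,\ell}$.

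The main obstacle is the precise calibration of $P'$: the uniform choice of $\overline{q}'$ merged copies gives half-index $k - (\kappa-1)(\overline{q}'-1) < k$, the uniform choice of $\kappa\overline{q}'$ unmerged copies gives half-index $k + \kappa - 1 > k$, and a naive $\kappa$-fold replication risks violating the length bound $\ell$. The mixed ``one merged, rest unmerged'' construction above is specifically designed to land in $\mathcal{P}_{k,\ell}$, and the length bound uses the identity $q = \kappa(\overline{q}-1)+1$ in an essential way.
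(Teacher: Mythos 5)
Your overall strategy—build a $\kappa$-fold cover $P'$ of a hypothetically better $\overline{P}'$ and contradict minimality of $P$—matches the paper's, but you take a different route in one crucial place, and that route contains a genuine gap.

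The gap is in your derivation of $k \geq \kappa\overline{k}$. You invoke Lemma \ref{lem:tangency_constraint_index_inequality} ``applied to $\text{ind}(C) = 2k$ and $\text{ind}(\overline{C}) = 2\overline{k}$.'' But in that lemma $\text{ind}(C)$ denotes the Fredholm index of $C$ \emph{with} its tangency constraint, and under the hypotheses of the present statement both $C$ and $\overline{C}$ are index zero: $\Gamma$ has word-index $2k$ and $C$ carries $\ll T^{(k)}p\gg$, so $\text{ind}(C) = 2k - 2k = 0$, and likewise $\text{ind}(\overline{C}) = 0$. Lemma \ref{lem:tangency_constraint_index_inequality} then reads $0 \geq \kappa\cdot 0$, which is vacuous and tells you nothing about $k$ versus $\kappa\overline{k}$. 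The quantities $2k$ and $2\overline{k}$ are the indices of the \emph{words}, i.e.\ the Fredholm indices without tangency, and those are not what Lemma \ref{lem:tangency_constraint_index_inequality} refers to. Consequently your key identity $k = \kappa\overline{k}$, and the identity $q = \kappa(\overline{q}-1)+1$ you derive from it, are not established by the argument you give. (They do happen to be true here, but you need a different and more delicate argument: combine the formal-cover requirement $e_{z_0}\overline{k}\geq k$ where $e_{z_0}$ is the ramification order at the constrained marked point, Riemann--Hurwitz for the cover of punctured spheres, the index-zero assumption for both curves, and—for the degenerate case $\overline{k}=1$—weak permissibility of $P$ to rule out multi-element covers of an axis orbit. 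This forces $e_{z_0}=\kappa$ and interior ramification zero, from which both identities follow.)

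This matters because your construction of $P'$ leans on both identities in an essential way: the half-index of $P'$ equals $\kappa\overline{k}$, and you need this to equal $k$; and the length bound $1 + \kappa(\overline{q}'-1) \leq q$ uses $q = \kappa(\overline{q}-1)+1$ exactly. Without the identities the construction does not visibly land in $\mathcal{P}_{k,\ell}$. The paper sidesteps this entirely by choosing $P'$ to be a $\kappa$-fold cover of $\overline{P}'$ with \emph{as many ends as possible, up to $q$}; it then shows only $k' \geq k$ (using $\alpha = \kappa\overline{\alpha}$ and $q \leq \kappa\overline{q}$, which is just the non-negativity of ramification at the punctures), places $P'$ in $\mathcal{P}_{\geq k, \ell}$, and appeals to Proposition \ref{prop:bound_over_P_k} to transfer the contradiction back to $\mathcal{P}_{k,\ell}$. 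That approach needs no exact identity and is more robust. If you want to keep your version, you must prove $k = \kappa\overline{k}$ and $q = \kappa(\overline{q}-1)+1$ properly; alternatively, adopt the paper's ``maximal number of ends up to $q$, then cite Proposition \ref{prop:bound_over_P_k}'' device.

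A minor point in your favor: you handle the weak-permissibility of $P'$ carefully (merging the first element so that the $\overline{q}'=1$ case is automatic), which the paper's totally-unramified case glosses over.
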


\begin{proof}
    Assume there is some other word $\overline{P}'$ of index $2\overline{k}$ and length $\overline{q}' \leq \overline{q}$ with action less than the action of $\overline{P}$.
    We will create $P'$, a $\kappa$-fold cover of $\overline{P}'$, with as many positive ends as we can have, up to $q$.
    We will see that $P'$ has less action than $P$, but just as high an index, a contradiction.

    Indeed, lets examine the index calculation for covers.
    We have
    $$k = \sum_1^q (i_s + j_s) + q - 1 \quad \text{and} \quad \overline{k} = \sum_1^{\overline{q}}(\overline{i}_s + \overline{j}_s) + \overline{q} - 1.$$
    Let $\alpha$ and $\overline{\alpha}$ be the sums in these formulas; then $\alpha = \kappa\overline{\alpha}$.
    This is because at each branch point corresponding to an underlying orbit, there are some number of orbits in the cover, and their multiple-cover orders partition $\kappa$.
    I.e.
    $$\alpha = \sum_1^q (i_s + j_s) = \sum_1^q \kappa_s(\overline{i}_s + \overline{j}_s) = \kappa \sum_1^{\overline{q}} (\overline{i}_s + \overline{j}_s) = \kappa\overline{\alpha},$$
    where the $\kappa_s$ are the orbit multiple-cover orders.

    Now, the index of $\overline{P}'$ is
    $$\sum_1^{\overline{q}'}(\overline{i}_s' + \overline{j}_s') + \overline{q}' - 1 = \overline{k}.$$
    Letting $\overline{\alpha}'$ be the sum, we see that $\overline{\alpha}' = \overline{\alpha} + (\overline{q} - \overline{q}')$, by substituting for $\overline{k}$.
    Then we create a formal cover of $\overline{P}'$, called $P'$, and we want as many ends as we can have without going over $q$.
    Thus, $P'$ will either have length $q$ or be totally unramified, i.e. each underlying orbit will have $\kappa$ separate orbits mapping to it in the cover.

    Let's compute the index $k'$ of $P'$.
    In the first case, where the length of $P'$ is $q$, we have
    $$k' = \sum_1^q (i_s' + j_s') + q - 1 = \kappa \overline{\alpha}' + q - 1 \geq \kappa\overline{\alpha} + q - 1 = k.$$
    And in the second case, where $P'$ is totally unramified, we have
    $$k' = \kappa\overline{\alpha} + \kappa(\overline{q} - \overline{q}') + \kappa\overline{q}' - 1 = \kappa\overline{\alpha} + \kappa\overline{q} - 1 \geq \kappa\overline{\alpha} + q - 1 = k.$$
    In both cases, $k' \geq k$, but the action of $P'$ is $\kappa$ times the action of $\overline{P}'$, and so it is less than the action of $P$, contradicting the minimality of $P$ over $\mathcal{P}_{\geq k,\ell}$, which is implied by its minimality over $\mathcal{P}_{k,\ell}$ and Proposition \ref{prop:bound_over_P_k}.
    Hence the result holds.
    
\end{proof}

\begin{lemma}\label{lem:MB_two_cylinders}
    Let $\widetilde{X}_\Omega$ be a fully rounded and perturbed 4-dimensional convex toric domain, $k$ a positive integer, and $i,j$ relatively prime.
    Then there exists a generic $J$ in $\mathcal{J}(\widetilde{X}_\Omega)$ such that the only $J$-holomorphic curves asymptotic positively to $e_{ki,kj}$ and negatively to $h_{ki,kj}$ are the $k$-fold covers of the two simple cylinders along the gradient flow trajectories from the Morse-Bott perturbation, and such that these cylinders are regular and count with opposite sign.
\end{lemma}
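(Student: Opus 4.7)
The plan is to analyze the moduli space directly via the Morse--Bott perturbation framework of \cite{Bourgeois_a_Morse-Bott_approach}. First I would compute the Fredholm index: for a cylinder $C$ positively asymptotic to $e_{ki,kj}$ and negatively asymptotic to $h_{ki,kj}$, using $\chi(C)=0$, vanishing $c_{\tau_{\text{ex}}}$, and the Conley--Zehnder indices $2ki+2kj+1$ and $2ki+2kj$ from Lemma \ref{lem:big_fully_rounding}(b), we get $\text{ind}(C) = 1$, so the unparametrized moduli space is zero-dimensional, as desired.

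Next I would identify the simple cylinders. Before perturbation the Reeb orbits at slope $-j/i$ form a Morse--Bott $S^1$-family of simple orbits, and the perturbation of $\alpha$ by a Morse function $f:S^1 \to \mathbb{R}$ with a single max and single min produces $e_{i,j}$ and $h_{i,j}$ respectively. The Morse--Bott correspondence states that for $J$ obtained from a small compatible perturbation of a Morse--Bott $J_0$, the simple $J$-holomorphic cylinders in the symplectization from $e_{i,j}$ to $h_{i,j}$ are in bijection with negative gradient flow lines of $f$ from max to min; since $S^1$ has exactly two such arcs, we obtain exactly two simple cylinders $C_+$ and $C_-$. Their index-$1$ regularity follows from automatic transversality in 4-dimensional symplectizations (Wendl), and the standard Morse--Bott orientation analysis assigns the two arcs opposite signs, so $C_+$ and $C_-$ are counted with opposite sign.

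Finally I would rule out any further cylinders in $\mathcal{M}^J(e_{ki,kj};h_{ki,kj})$. Let $C$ there be a $\kappa$-fold cover of a simple $\overline{C}$, with branched cover $\pi:\Sigma \to \overline{\Sigma}$. A Riemann--Hurwitz count, using that $C$ has exactly two punctures (one positive and one negative) and that the cover cannot swap signs, forces $\overline{\Sigma}$ to be a cylinder with $\pi$ fully ramified of order $\kappa$ at both punctures. Hence $\overline{C}$ is a simple cylinder from $e_{k'i,k'j}$ to $h_{k'i,k'j}$ with $k' := k/\kappa$. The main obstacle is ruling out such exotic simple cylinders for $k' > 1$; I expect to handle this by applying the Morse--Bott correspondence to the $k'$-iterate family, whose circle of orbits is still the same underlying $S^1$ (traversed $k'$ times). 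Under this correspondence every $J$-holomorphic cylinder between $e_{k'i,k'j}$ and $h_{k'i,k'j}$ is automatically the $k'$-fold cover of one of $C_\pm$ arising from a gradient trajectory of $f$, and so cannot be simple unless $k' = 1$. This forces $\kappa = k$ and $\overline{C} \in \{C_+,C_-\}$, completing the proof.
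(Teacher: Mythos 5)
Your overall framework matches the paper's: both rely on a Morse--Bott perturbation argument, and your index computation, the identification of the two simple cylinders with the two gradient arcs on $S^1$, the automatic transversality and orientation reasoning, and the Riemann--Hurwitz reduction to a possible ``exotic'' simple cylinder covering $e_{k'i,k'j} \to h_{k'i,k'j}$ are all correct and parallel the paper.

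The gap is at the final and decisive step, which you yourself flag as ``the main obstacle.'' Ruling out exotic simple cylinders by ``applying the Morse--Bott correspondence to the $k'$-iterate family'' is not something you can simply invoke: for multiply covered orbits the bijection between holomorphic cylinders and gradient flow lines is precisely the nontrivial content that needs proof. It requires a gluing argument where one must check that the linearized Cauchy--Riemann operator at the multiply covered gradient-flow cylinder is still index $1$ and surjective, so that the Morse--Bott degeneration/gluing correspondence continues to be a bijection; this is exactly what the paper's cited reference (Colin--Yao, with Yao's appendix) supplies. Absent that, your statement ``every $J$-holomorphic cylinder between $e_{k'i,k'j}$ and $h_{k'i,k'j}$ is automatically the $k'$-fold cover of one of $C_\pm$'' is essentially the lemma itself for parameter $k'$, so as written the argument is circular. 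The paper also explicitly warns why the seemingly easier alternative --- ruling out exotic cylinders via Siefring intersection theory and the relative adjunction formula --- does not go through: the eigenspaces of the asymptotic operator at a fixed winding number have multiplicity $2$, so the writhe bound alone cannot exclude a simple curve with the same linking data as the multiple cover. Your plan does not engage with either of these subtleties, so the final step needs to be replaced with an actual gluing/uniqueness argument in the multiply covered setting (or an appeal to the specific result in the cited reference).
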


\begin{proof}
    This is true by a Morse-Bott gluing argument which can be found in \cite{colin2023yao_appendix}: see Theorem 4.4.3 and the appendix, written jointly with Yuan Yao.
    The argument uses the fact that as we take our perturbation smaller and smaller, a $J$-holomorphic curve that is positively asymptotic to $e_{ki,kj}$ and negatively asymptotic to $h_{ki,kj}$ must approach one of the two gradient flow trajectories along the Morse-Bott Torus.
    Then by uniqueness of gluing, there can only be one sequence, up to translation.
    Thus the only possible sequences consist of multiple covers of the simple curves asymptotic to $e_{i,j}$ and $h_{i,j}$.
    The gluing argument holds in this multiply covered case because the linearized Cauchy-Riemann operator is still index-1 and surjective.
    For more details and arguments involving several-level cascades, see \cite{yao2022cascadesjholomorphiccurves}.

    A simpler argument based on intersection theory and the relative adjunction formula may rule out extraneous curves, but the initial writhe bound is not enough, since such curves could have trivial linking with the multiple covers even though they differ in their eigenstate decomposition via Siefring \cite{Siefring_intersection_theory_punctured}.
    The issue is that the space of eigenstates for a given winding number is 2.    
\end{proof}

\begin{proposition}\label{prop:formal_perturbation_invariance}
    Let $\widetilde{X}_\Omega$ be a fully rounded and perturbed convex toric domain.
    For $k,\ell \geq 1$, let $P_{\emph{min}}$ be an action minimizer over $\mathcal{P}_{k,\ell}$.
    Then, the formal curve component $C$ in $\widetilde{X}_\Omega$ positively asymptotic to the word $\Gamma_{\emph{min}}$ of elliptic Reeb orbits corresponding to $P_{\emph{min}}$ is formally perturbation invariant, with respect to generic $J_{\partial\widetilde{X}_\Omega}$ as in Lemma \ref{lem:MB_two_cylinders}.
\end{proposition}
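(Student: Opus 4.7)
The plan is to show that any stable formal building $C' \in \overline{\overline{\mathcal{F}}}_{\widetilde{X}_\Omega, A}(\Gamma_{\min})\ll T^{(k)} p\gg$ satisfying the hypotheses (A1), (A2) of Definition \ref{def:formal_perturbation_invariance} must be either the single component $C$ itself (case (B1)) or the specific two-level configuration (B2). Let $C_0$ denote the bottom-level component in $\widetilde{X}_\Omega$ carrying the tangency point $p$. My first step is pure index bookkeeping: apply Lemma \ref{lem:A1_implies_ind_C_geq_0} to the subbuilding obtained by taking all symplectization levels together with any bottom-level components disjoint from $C_0$. That subbuilding has nonnegative index, with equality iff it consists of trivial cylinders (and no extra bottom components). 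Since $\operatorname{ind}(C') = \operatorname{ind}(C) = 0$ additively, this forces $\operatorname{ind}(C_0) \leq 0$, with equality precisely when $C'$ is $C_0$ with only trivial cylinders on top.

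The second step combines action conservation with the lower-bound Proposition \ref{prop:bound_over_P_geq_k}. Writing $\Gamma_0$ for the positive asymptotics of $C_0$ and $\bar\Gamma_0$ for those of the simple curve $\bar C_0$ that $C_0$ covers, energy accounting gives $\widetilde{\mathcal{A}}(\Gamma_0) = \widetilde{\mathcal{A}}(\Gamma_{\min}) - \sum (\text{symplectization energies})$, with the sum nonnegative and strictly positive for nontrivial symplectization components. The word $P(\Gamma_0)$ is weakly permissible of half-index $\geq k$, and by Lemma \ref{lem:w_min_implies_simple_w_min}, the underlying $P(\bar\Gamma_0)$ must be an action minimizer over the appropriate class $\mathcal{P}_{\bar k, \bar q}$. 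The action-spacing estimate in Lemma \ref{lem:big_fully_rounding}(d) then bounds how large the deficit $\widetilde{\mathcal{A}}(\Gamma_{\min}) - \widetilde{\mathcal{A}}(\Gamma_0)$ can be while still respecting minimality, tightly constraining any nontrivial symplectization component.

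The third step is a case split on the covering multiplicity $\kappa$ of $C_0$ over $\bar C_0$. If $\kappa = 1$ and $\operatorname{ind}(C_0) = 0$, then the symplectization part is entirely trivial cylinders and $C_0$ has positive asymptotics $\Gamma_{\min}$, giving (B1). If $\kappa = 1$ and $\operatorname{ind}(C_0) = -1$, the symplectization part has index exactly $+1$; action conservation together with Lemma \ref{lem:big_fully_rounding}(d) forces the unique nontrivial component $C_Y$ to be a simple index-$1$ cylinder from some $e_{i_t, j_t}$ to $h_{i_t, j_t}$, with all other symplectization components trivial cylinders. Lemma \ref{lem:MB_two_cylinders} then verifies that $\mathcal{M}_Y^{J_Y}(C_Y)$ is regular and has signed count zero via the two cancelling Morse--Bott gradient trajectories, yielding (B2). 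For $\kappa > 1$, Lemma \ref{lem:tangency_constraint_index_inequality} gives $\operatorname{ind}(C_0) \geq \kappa \operatorname{ind}(\bar C_0)$; combined with $\operatorname{ind}(C_0) \leq 0$ and $\operatorname{ind}(\bar C_0) \geq -1$ from (A1), plus the $\kappa$-fold scaling of action, one shows that the resulting word $P(\Gamma_0)$ either violates minimality of $P_{\min}$ over $\mathcal{P}_{k,\ell}$ or else reduces back to a case already handled.

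The main obstacle is precisely this multiply-covered case, which is where the present argument departs from \cite[\S 5.1]{McDuff-Siegel_unperturbed_curves}. In the McDuff--Siegel setting the minimizers $P_{\min}$ are forced to have the form $\{(0,k)\}$ or $\{(k,0)\}$, so covers of $\bar C_0$ are very restricted; here, with the $\ell$-constraint, $P_{\min}$ may involve several nontrivial pairs, and one must rule out multiply covered bottom components whose underlying simple orbit sets are themselves genuine combinatorial minimizers of $\mathcal{P}_{\bar k, \bar q}$. Dispatching this requires simultaneously pushing the index inequality from Lemma \ref{lem:tangency_constraint_index_inequality} (with its excess branching terms) against the sharp action spacing from Lemma \ref{lem:big_fully_rounding}(d), and it is the new combinatorial content of this section.
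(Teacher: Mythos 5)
Your outline correctly identifies the main ingredients (Lemmas \ref{lem:A1_implies_ind_C_geq_0}, \ref{lem:big_fully_rounding}(d), \ref{lem:w_min_implies_simple_w_min}, \ref{lem:MB_two_cylinders}, \ref{lem:tangency_constraint_index_inequality}) and the broad structure of decomposing $C'$ into a main tangency-carrying component $C_0$ and symplectization pieces. However, there are two genuine gaps.

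First, you skip the initial reduction demanded by Definition \ref{def:formal_perturbation_invariance}: if the formal curve $C$ corresponding to $P_{\min}$ is not itself simple, formal perturbation invariance means its \emph{simple} underlying curve $\overline{C}$ is formally perturbation invariant in the strict sense, so one must first pass to $\overline{C}$ with asymptotics $\overline{\Gamma}_{\min}$, index $2\overline{k}$, and use Lemma \ref{lem:w_min_implies_simple_w_min} to see that the corresponding word minimizes action over $\mathcal{P}_{\overline{k},\overline{q}}$. Your plan works directly with $\Gamma_{\min}$ rather than $\overline{\Gamma}_{\min}$, and this reduction is not merely cosmetic: it is precisely what makes the $\kappa > 1$ case of the bottom component easy to dispatch.

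Second, and consequently, your handling of $\kappa > 1$ is off-track. You propose to play the index inequality of Lemma \ref{lem:tangency_constraint_index_inequality} against the action spacing of Lemma \ref{lem:big_fully_rounding}(d) and call this the ``new combinatorial content.'' The paper's proof does not do this and does not need to. Once one has reduced to simple $\overline{C}$, the point is immediate: the sub-buildings above $C_0$ are forced (by the index analysis) to be columns of cylinders, so the positive asymptotics of $C_0$ are $\overline{\Gamma}_{\min}$ with some $e_{i,j}$ replaced by the corresponding $h_{i,j}$. If $C_0$ were a nontrivial $\kappa$-fold formal cover, then replacing the $h$'s back by $e$'s would exhibit $\overline{C}$ itself as a nontrivial formal cover, contradicting simplicity. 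No action-versus-index balancing is required, and in fact it is not clear your proposed balancing argument closes: a $\kappa$-fold cover of an action minimizer is, by Lemma \ref{lem:w_min_implies_simple_w_min}, exactly the sort of configuration that is compatible with minimality of $P_{\min}$, so minimality alone will not rule it out. A smaller but related slip is in Step 1: Lemma \ref{lem:A1_implies_ind_C_geq_0} is stated for buildings with a single unmatched negative end, so it should be applied to each sub-building $C_1,\dots,C_r$ above $C_0$ individually (as the paper does), not to the union with $C_0$ removed, which has $r$ unmatched negative ends. You also omit the parity observation that if $\overline{C}_0$ has only elliptic positive ends then $\operatorname{ind}(\overline{C}_0)$ is even and hence $\geq 0$, which is needed to rule out an all-elliptic $C_0$ with a nontrivial symplectization level.
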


\begin{proof}
    By the definition of formal perturbation invariance, we must show that $C$ is the formal cover of an index-0 simple formal curve $\overline{C}$ that is formally perturbation invariant with respect to $J_{\partial\widetilde{X}_\Omega}$.
    If $C$ is not already simple, then it is a formal cover of a simple $\overline{C}$, whose positive asymptotics we call $\overline{\Gamma}_{\text{min}}$ with index $2\overline{k}$ and length $\overline{q}$, and we impose a $\overline{k}$ tangency constraint to make $\overline{C}$ index-0.
    We see by Lemma \ref{lem:w_min_implies_simple_w_min}, that the corresponding $\overline{P}_{\text{min}} := P(\overline{\Gamma}_{\text{min}})$ minimizes action over $\mathcal{P}_{\overline{k},\overline{q}}$.
    To show perturbation invariance of $\overline{C}$, we suppose that $C' \in \overline{\overline{\mathcal{F}}}_{X,A}(\Gamma)\ll T^{(\overline{k})}\overline{p}\gg$ is a stable formal building satisfying:
    \begin{enumerate}
        \item[(A1)] each nontrivial curve component in the completion of $\widetilde{X}_\Omega$ is the formal cover of an underlying simple component with index no less than $-1$, and
        \item[(A2)] each nontrivial curve component in a symplectization level is the formal cover of an underlying simple component which is either trivial or has positive index.
    \end{enumerate}
    And we must prove either
    \begin{enumerate}
        \item[(B1)] $C'$ consists only of the curve component in the completion, i.e. $C' = \overline{C}$, or
        \item[(B2)] $C'$ has a simple main component in the completion, $C_0$, with index $-1$.
        There is one symplectization level containing a low energy cylinder $C_Y$ of index $1$, and the rest of the components are trivial cylinders.
        Moreover, $\mathcal{M}(C_Y)$ is regular and $\#\mathcal{M}(C_Y)/\mathbb{R} = 0$.
    \end{enumerate}
    We can decompose the building into the following sub-buildings: $C_0$ for the main component satisfying the tangency constraint in the bottom level completion, and $C_1,...,C_r$ for the sub-buildings that match up with each of $C_0$'s $r$ positive ends.
    Note that $C_i$ must only match up with $C_0$ at one negative end, for otherwise $C'$ would have genus.
    Thus, Lemma \ref{lem:A1_implies_ind_C_geq_0} and condition (A1) imply that for each $i$, $\text{ind}(C_i) \geq 0$ with equality only if $C_i$ is made of trivial cylinders.
    Note that if there is only one level, then (B1) would hold, and we'd be done.

    Now, if $\text{ind}(C_i) = 1$, then $C_i$ must consist of a low-energy cylinder (potentially a cover) and trivial cylinders in other levels.
    Indeed, since the index is odd and the top ends are elliptic, we must have that the negative end is hyperbolic, say $h_{i,j}$.
    Then, since the index is $1$, there is exactly one nontrivial component, which has index $1$.
    This is because each nontrivial component has positive index by Lemma \ref{lem:A1_implies_ind_C_geq_0}.
    Then $\overline{\Gamma}_{\text{min}}$ contains the positive ends of this component. 
    Assume for contradiction that this is not the set $\{e_{i,j}\}$.
    Let $\widetilde{\Gamma}_{\text{min}}$ be the word which replaces those positive ends with $e_{i,j}$.
    Note that this is a strict decrease in $\widetilde{\mathcal{A}}$ by Lemma \ref{lem:big_fully_rounding}(d), which describes low-energy cylinders.
    But this is a contradiction, since $\overline{\Gamma}_{\text{min}}$ minimizes $\widetilde{\mathcal{A}}$ with respect to words of no greater length, and $\text{len}(\widetilde{\Gamma}_{\text{min}}) \leq \text{len}(\overline{\Gamma}_{\text{min}})$.
    Thus, the nontrivial component in this subbuilding is a low-energy cylinder from $e_{i,j}$ to $h_{i,j}$.

    Now, let's take a look at the main component $C_0$, which is a $\kappa$-fold formal cover of a simple component $\overline{C}_0$, which has $\text{ind}(\overline{C}_0) \geq -1$.
    As before, let $H_+$ and $\overline{H}_+$ be the number of hyperbolic positive ends in $C_0$ and $\overline{C}_0$ respectively.
    If $\overline{H}_+ = 0$, then $\overline{C}_0$ must have elliptic ends and therefore $\text{ind}(\overline{C}_0) \geq 0$ for parity reasons.
    Then since the index of $C'$ is zero, this forces the index of all components to be 0, contradicting the stability of the symplectization levels.

    So assume $\overline{H}_+ \geq 1$.
    If there is no ramification at the positive ends, i.e. $B_{+,e} = B_{+,h} = 0$, then by Lemma \ref{lem:tangency_constraint_index_inequality}, we have $\text{ind}(C_0) \geq \kappa \text{ind}(\overline{C}_0) + 2\kappa - 2$.
    No ramification also implies that $H_+ = \kappa \overline{H}_+ \geq \kappa$.
    Hence,
    $$0 = \text{ind}(C_0) + \sum_i \text{ind}(C_i) \geq \kappa \text{ind}(\overline{C}_0) + 2\kappa - 2 + \kappa \geq 2\kappa - 2,$$
    implying that $\kappa$ must be $1$.
    Thus $\text{ind}(C_0) \geq -1$, and $\sum_i \text{ind}(C_i) \geq 1$, so the only way this sums to $0$ is if $C'$ satisfies (B2).
    Note that lemma \ref{lem:MB_two_cylinders} gives us the regularity and 0 signed count of the moduli space of the low-energy cylinder required by (B2).

    Lastly, consider when there is ramification at some of the positive ends.
    If there is ramifcation at a positive hyperbolic end, then the sub-building above it must be a column of cylinders, one of which is a low energy cylinder, with the rest trivial.
    Otherwise, its index would be greater than 1, in which case $\sum_i \text{ind}(C_i) \geq H_+ + 1$, implying
    $$0 = \text{ind}(C_0) + \sum_i \text{ind}(C_i) \geq \kappa \text{ind}(\overline{C}_0) + B_{+,h} + H_+ + 1 \geq -\kappa + \kappa \overline{H}_+ + 1 \geq 1,$$
    a contradiction.
    Here, the first inequality follows Lemma \ref{lem:tangency_constraint_index_inequality}, which utilizes Riemann-Hurwitz.
    The same calculation shows that if we have ramification on the elliptic ends, we must have only trivial cylinders above it, for otherwise $\sum_i \text{ind}(C_i) \geq H_+ + 2$.
    So the breaking picture looks like a main component in the completion of index $-\kappa$ and cylinders above this with $\kappa$ of them low-energy, and the rest trivial.
    
    We would like to avoid $\kappa > 1$ because that would imply the main component is not regular, however this will not be an issue.
    If $\kappa > 1$, then the main component $C_0$ is a nontrivial formal cover by assumption (A1), thus implying that the original curve we're considering, $\overline{C}$, is also a formal cover.
    This is because the word $\overline{\Gamma}_{\text{min}}$ is the word of positive orbits of $C_0$, except just replacing any $h_{i,j}$ with an $e_{i,j}$.
    But this contradicts our assumption that $\overline{C}$ is simple, so $\kappa$ must be $1$.
    Thus, in these cases we see breaking of the form (B2), and the low energy cylinder is regular by an argument below, and it has zero signed count by Lemma \ref{lem:MB_two_cylinders}, completing the proof.

    To see regularity, by Riemann-Hurwitz, a multiple cover of a low energy cylinder has no branching on the interior.
    Indeed, if we have a cylinder from $h_{\kappa_0i,\kappa_0j}$ to $e_{\kappa_0i,\kappa_0j}$, then total branching of the corresponding sphere mapping is given by $(\kappa_0 - 1) + (\kappa_0 - 1) = 2\kappa_0 - 2$.
    Thus any more branching would violate Riemann-Hurwitz.

    Then by Wendl \cite{Wendl_automatic_transversality}, these unbranched multiple covers are still regular.
    Indeed, $u$ is regular if
    $$\text{ind}(u;\textbf{c}) > 2g + \#\Gamma_0(\textbf{c}) + \#\pi_0(\partial\Sigma) - 2 + 2Z(du),$$
    where $\textbf{c}$ specifies the asymptotics of $u$, $g$ is the genus of $\Sigma$, $\#\Gamma_0$ is the number of ends with even Conley-Zehnder index, $\#\pi_0(\partial\Sigma)$ is the number of connected components in the boundary of $\Sigma$, which in our case is $0$, and $Z(du)$ depends on the orders of critical points and is therefore $0$ in unbranched multiple covers.
    Thus, since we have $\#\Gamma_0(\textbf{c}) = 1$ for a multiple cover of a low energy cylinder, it is regular.
    
\end{proof}

\section{Existence of Curves}\label{sec:existence_of_curves}
In this section we show the existence of curves needed in the proof of Theorem \ref{thm:upper_bound}.
Recall from the proof that we start with a combinatorial minimizer and show the existence of a curve with corresponding asymptotics.
With the refinement on number of positive ends, there are more combinatorial minimizers and therefore more curves which require proof of existence.
What's left to show is the existence of curves positively asymptotic to $e_{i,j}\times e_{i,j}$ in the case that $\{(i,j),(i,j)\}$ is a minimizer in $\mathcal{P}_{2i+2j+1,2}$.
Note that we must find this curve for all $J \in \mathcal{J}(\widetilde{X}_\Omega; D)$, but relying on the formal perturbation invariance of $e_{i,j} \times e_{i,j}$, it suffices to prove existence for a specific $J$.
The strategy is to embed $\widetilde{X}_\Omega$ into an ellipsoid, initialize a curve with well-chosen asymptotics in the ellipsoid, stretch the neck along the boundary $\partial \widetilde{X}_\Omega$, and show that the bottom level of the resulting psuedoholomorphic building contains the desired curve.
We start with a lemma about the actions of orbits on the ellipsoid, followed by the neck-stretching result.

\begin{lemma}\label{lem:ellipsoid_min_action_orbits}
    Let $i,j$ be positive integers with $j > i$.
    Consider the ellipsoid $E(j', i)$ where $j'$ is slightly greater than $j$ so that $j'$ and $i$ are rationally independent.
    Let $\eta_S$ be the short Reeb orbit of action $i$ and let $\eta_L$ be the long Reeb orbit of action $j'$.
    Then $\eta_L^i \times \eta_L^i$ has Fredholm index $2(1 + 2i + 2j)$ and has minimal action among orbit sets of the same index and at most two orbits.
\end{lemma}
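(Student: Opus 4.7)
The proof has two ingredients: an index computation and an action comparison.

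First, I would compute the Fredholm index of $\eta_L^i \times \eta_L^i$ from the standard formula for Conley--Zehnder indices of iterates on an ellipsoid. Using the trivialization induced from $\mathbb{C}^2$, one has $CZ_\tau(\eta_L^k) = 2\lfloor kj'/i\rfloor + 2k + 1$. Plugging in $k = i$ and using $\lfloor j'\rfloor = j$ (where $j' = j + \epsilon$ with $\epsilon$ small) gives $CZ_\tau(\eta_L^i) = 2j + 2i + 1$, so the total for the two-orbit set is $2(1 + 2i + 2j)$, as claimed.

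For minimality, I would enumerate all orbit sets with at most two orbits and total CZ equal to $N := 2(1 + 2i + 2j)$, and compare their actions against $2ij'$. Since the CZ index of every iterate of $\eta_S$ or $\eta_L$ is odd (both formulas end in $+1$), a single-orbit set cannot reach the even value $N$, so exactly two orbits are required. This leaves three subcases: two short orbits $(\eta_S^a, \eta_S^b)$, mixed $(\eta_S^a, \eta_L^b)$, or two long orbits $(\eta_L^a, \eta_L^b)$.

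The algebraic heart of the argument is to rewrite each floor as value minus fractional part and solve the CZ-sum constraint for the action. This gives, uniformly across all three subcases,
\begin{equation*}
\text{action}(\Gamma) \;=\; 2ij' \;+\; \frac{ij'}{i+j'}\bigl(\delta_1 + \delta_2 - 2\epsilon\bigr),
\end{equation*}
where $\delta_1, \delta_2 \in [0,1)$ are the two relevant fractional parts. So $\text{action}(\Gamma) \geq 2ij'$ is equivalent to $\delta_1 + \delta_2 \geq 2\epsilon$. In the $(LL)$ subcase, feasibility of the CZ constraint forces $a + b \geq 2i$, with equality giving exactly $\delta_1 + \delta_2 = 2\epsilon$ and action $= 2ij'$, and $(a,b) = (i,i)$ is such a solution. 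In the $(SL)$ subcase, $\{ai/j'\}$ is bounded below by a constant depending on $j$ alone: either close to $1$ when $j \mid ai$, or else $\{ai/j\} \geq 1/j$ which perturbs only slightly for small $\epsilon$; each regime comfortably exceeds $2\epsilon$. In the $(SS)$ subcase, the strict bound $\lfloor ai/j'\rfloor + \lfloor bi/j'\rfloor < (a+b)i/j'$ combined with integrality forces $a + b \geq 2j + 1$, giving action $\geq (2j+1)i > 2ij'$ whenever $\epsilon < 1/2$.

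The main obstacle is tracking fractional parts in the $(LL)$ subcase carefully enough to account for potential ties when $\gcd(i,j) > 1$: there one can have additional pairs $(a,b) \neq (i,i)$ with $a + b = 2i$ that also satisfy $\delta_1 + \delta_2 = 2\epsilon$, producing ties with $\eta_L^i \times \eta_L^i$ but never strict improvements. The entire argument hinges on $\epsilon$ being chosen small enough (roughly $\epsilon < \min\{1/2,\, 1/(4j)\}$) to separate the various fractional-part regimes, which is implicit in the phrase ``slightly greater than $j$.''
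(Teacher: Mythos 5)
Your index computation is correct, but there is a genuine gap in the minimality argument: you dismiss single-orbit competitors by noting that Conley--Zehnder indices on the ellipsoid are always odd while $N = 2(1 + 2i + 2j)$ is even. This conflates the Fredholm index of an orbit set with the sum of its $CZ$ indices. For a genus-zero curve with $k$ positive punctures and no negative ends, one has $\text{ind} = (k - 2) + \sum CZ$; for $k = 2$ these coincide, but for $k = 1$ they differ by one. A single orbit with $CZ = N + 1 = 4i + 4j + 3$, which is odd and hence not forbidden by parity, therefore has Fredholm index exactly $N$. Concretely, $\eta_S^{2j+1}$ has $CZ = 2\lfloor (2j+1)i/j'\rfloor + 2(2j+1) + 1 = 4i + 4j + 3$ and must be checked. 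Its action $(2j+1)i = 2ij + i$ does exceed $2ij' = 2ij + 2i\epsilon$ once $\epsilon < 1/2$, so the conclusion survives, but the step is missing from your proof; the paper carries out exactly this check, identifying this competitor as $\mathfrak{o}_{2i+2j+1} = \eta_S^{2j+1}$.

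Beyond this gap your argument is a correct and genuinely different route from the paper's. You split by orbit type $(SS)$, $(SL)$, $(LL)$ and reduce everything to a single fractional-part identity; the paper instead labels the Reeb orbits $\mathfrak{o}_1, \mathfrak{o}_2, \ldots$ by increasing action, uses the fact that on a two-dimensional ellipsoid $CZ(\mathfrak{o}_m) = 2m + 1$, deduces that the two-orbit competitors of index $N$ are precisely $\mathfrak{o}_{i+j-m} \times \mathfrak{o}_{i+j+m}$ for $1 \leq m \leq i+j-1$, and bounds their actions via the order-reversing involution $\mathcal{A} \mapsto ij - \mathcal{A}$ on the degenerate $E(j,i)$ action spectrum. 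A small point in your favor: the paper asserts that the competing actions are strictly greater than $2ij'$, yet when $\gcd(i,j) > 1$ genuine ties occur (e.g.\ $i = 2$, $j = 4$: $\eta_L \times \eta_L^3$ also has action $2ij' = 16 + 4\epsilon$), which your $(LL)$ fractional-part analysis identifies and handles correctly, whereas the paper's wording elides it.
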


\begin{proof}
    First, we compute the Fredholm index of $\eta_L^i \times \eta_L^i$.
    For this we need to order the orbits by action, labeling them $\mathfrak{o}_1,\mathfrak{o}_2,...$.
    We see the following actions $\mathcal{A}(\eta_L^i) = j'i$ and $\mathcal{A}(\eta_S^j) = ij$, and since $j'$ is only slightly larger than $j$, we can say that the orbits with less action are exactly
    $$\eta_S^1,...,\eta_S^{j-1},\eta_L^1,...,\eta_L^{i-1}.$$
    Hence
    $$\eta_S^j = \mathfrak{o}_{i+j-1}, \quad\quad \eta_L^i = \mathfrak{o}_{i+j}.$$
    Now, the Fredholm index corresponding to a given orbit set is given by
    \begin{equation*}
    \begin{aligned}
        \text{ind}(\mathfrak{o}_{i_1} \times \cdots \times \mathfrak{o}_{i_k}) &= -(2-k) + \sum_{s=1}^k CZ(\mathfrak{o}_{i_s})\\
        &= -2 + k + \sum_{s=1}^k (1 + 2i_s)\\
        &= 2 \left(k - 1 + \sum_{s=1}^k i_s\right)
    \end{aligned}
    \end{equation*}
    Thus,
    $$\text{ind}(\eta_L^i \times \eta_L^i) = \text{ind}(\mathfrak{o}_{i+j} \times \mathfrak{o}_{i+j}) = 2(1 + 2i + 2j),$$
    $$\mathcal{A}(\eta_L^i \times \eta_L^i) = 2ij'.$$
    So we need to check the actions of the following pairs:
    $$\mathfrak{o}_{i+j-1} \times \mathfrak{o}_{i+j+1}, \cdots, \mathfrak{o}_1 \times \mathfrak{o}_{2i+2j-1},$$
    since they have the same Fredholm index.
    To compare actions, we denote $\mathcal{A}_m := \mathcal{A}(\mathfrak{o}_m)$ and set $\mathcal{A}_0 = 0$, and we will show that the following is true within small $\epsilon$, for $m = 1,...,i+j-1$:
    $$\mathcal{A}(\mathfrak{o}_{i+j-m} \times \mathfrak{o}_{i+j+m}) = 2ij + \mathcal{A}_m - \mathcal{A}_{m-1}.$$
    Since these actions are all greater than $2ij'$, the result follows after we see that
    $$\mathcal{A}_{2i+2j+1} = 2ij + \mathcal{A}_1,$$
    since $\mathfrak{o}_{2i+2j+1} = \eta_S^{2j+1}$.

    To see the claim, let's return to the situation where $\mathcal{A}(\eta_L) = j$ without loss of generality, since $j'-j$ is so small that it doesn't change the order of actions we care about.
    In this case, the actions $\mathcal{A}_1,...,\mathcal{A}_{i+j-2}$ are given by the unordered set
    $$\{i,2i,...,(j-1)i, j,2j,...,(i-1)j\}.$$
    And we can apply a function on this set $(ij - \cdot)$, sending the above to
    $$\{(j-1)i,...,2i,i,(i-1)j,...,2j,j\}.$$
    This is an order-reversing involution, which means that for $m = 2,...,i+j-1$, we have
    $$\mathcal{A}_{i+j-m} = ij - \mathcal{A}_{m-1}.$$
    Simpler analysis on $\mathcal{A}_{i+j+1},\cdots\mathcal{A}_{2i+2j-2}$ shows that
    $$\mathcal{A}_{i+j+m} = ij + \mathcal{A}_m,$$
    finishing the claim.
\end{proof}

\subsection{Standard Complex Structure}
Let $E(\vec{a})$ be an $n$-dimensional ellipsoid with rationally independent parameters $\vec{a} = (a_1,...,a_n) \in \mathbb{R}^n_{> 0}$, and denote by $\widehat{E}(\vec a)$ its completion.
Let $\mathcal{J}(E(\vec a))$ denote the space of compatible almost complex structures on $E$, and let $\mathcal{J}(\partial E(\vec a))$ denote the space of SFT admissible almost complex structures on the symplectization $\mathbb{R}\times \partial E(\vec a)$.
Denote by $\mathcal{J}_{\text{tame}}(E(\vec a))$ the set of almost complex structures $J$ on $\widehat{E}(\vec a)$ such that
\begin{itemize}
    \item $J$ is tamed by the symplectic form on $E(\vec a)$
    \item $J$ agrees with the restriction of an element of $\mathcal{J}(\partial E(\vec a)$ on the cylindrical end.
\end{itemize}
Note that $\mathcal{J}_{\text{tame}}(E(\vec a))$ is a space of SFT admissible almost complex structures.
In $\mathbb{C}^n$ consider the standard complex structure $i$.

\begin{proposition}[\cite{mikhalkin_siegel} Proposition 4.3.3. (1)]\label{prop:ellip_diffeo_Cn_ac_structure}
    There is a diffeomorphism $Q^+_{\vec a} \colon \widehat{E}(\vec a) \rightarrow \mathbb{C}^n$ such that $(Q^+_{\vec a})^*i \in \mathcal{J}_{\text{tame}}(E(\vec a))$.
\end{proposition}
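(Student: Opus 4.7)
The plan is to construct $Q^+_{\vec a}$ explicitly in two pieces: as the standard inclusion $E(\vec a) \hookrightarrow \mathbb{C}^n$ on the bounded part of $\widehat{E}(\vec a)$, and as the flow of a carefully chosen $i$-preserving vector field on the cylindrical end. On $E(\vec a)$, the standard $i$ is $\omega_{\mathrm{std}}$-compatible, so the pullback is automatically tame. On the cylindrical end $[0,\infty) \times \partial E(\vec a)$, set $Q^+_{\vec a}(t, p) := (e^{2\pi t/a_1}p_1, \ldots, e^{2\pi t/a_n}p_n)$, the time-$t$ flow of the vector field $X := \sum_j \tfrac{2\pi}{a_j}(x_j \partial_{x_j} + y_j \partial_{y_j}) = -iR_\alpha$ on $\mathbb{C}^n$. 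This vector field is transverse to $\partial E(\vec a)$ and $\mathbb{C}$-linear (hence $i$-preserving), and its forward orbits through $\partial E(\vec a)$ sweep out $\mathbb{C}^n \setminus \mathrm{int}(E(\vec a))$ bijectively, as verified by the intermediate value theorem applied to $t \mapsto \sum \pi |q_j|^2 e^{-4\pi t/a_j}/a_j$.

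Next, I would verify SFT-admissibility of $(Q^+_{\vec a})^* i$ on the cylindrical end. The $\mathbb{R}$-translation invariance follows from the complex-linearity of $\phi^X_t$. The Reeb condition $(Q^+_{\vec a})^* i(\partial_t) = R_\alpha$ reduces to the bracket identity $[X, R_\alpha] = 0$, since $(Q^+_{\vec a})_* \partial_t = X = -iR_\alpha$ and vanishing of this bracket guarantees that the flow of $X$ preserves the vector field $R_\alpha$. The bracket identity can be verified directly by expanding in the basis $\{x_j \partial_{x_j}, y_j \partial_{y_j}, x_j \partial_{y_j}, y_j \partial_{x_j}\}$, reflecting the fact that $X$ and $R_\alpha$ are generated by the commuting diagonal $\mathbb{R}_+$- and $S^1$-actions on $\mathbb{C}^n$.

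The main obstacle is to arrange preservation of the contact distribution $\xi = \ker \alpha$ together with compatibility of the restriction with $d\alpha$. In general, $\xi_p$ is not an $i$-invariant subspace of $T_p \mathbb{C}^n$ at a generic point of $\partial E(\vec a)$ unless $\vec a$ is a multiple of $(1,\ldots,1)$ (the round-ball case), so the naive construction above must be further modified: one post-composes the map on the cylindrical end with a smoothly varying fiberwise bundle automorphism that rotates each $\xi_{\phi^X_t(p)}$ onto a nearby $i$-invariant subspace, interpolated back to the identity near $t = 0$ so as not to disturb the inclusion on $E(\vec a)$. Existence of such an automorphism follows from the contractibility of the space of $d\alpha$-tame complex structures on the symplectic vector bundle $(\xi, d\alpha)$. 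Tameness on the bounded part persists under this final adjustment because it is an open condition, and the resulting diffeomorphism yields $(Q^+_{\vec a})^* i \in \mathcal{J}_{\mathrm{tame}}(E(\vec a))$ as required.
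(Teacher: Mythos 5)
The paper does not prove this proposition; it is imported as a citation of Mikhalkin--Siegel, so there is no in-house argument to compare against, but on its own merits your proposal has a genuine gap---one that you flag yourself but do not close. Your first two paragraphs are fine: the weighted $\mathbb{C}$-linear flow $\phi^X_t$ of $X = -iR_\alpha$ does give a bijection from the end onto $\mathbb{C}^n \setminus \mathrm{int}\,E(\vec a)$, and the $r$-invariance of $(Q^+_{\vec a})^* i$ and the condition $(Q^+_{\vec a})^* i(\partial_t) = R_\alpha$ follow from $\mathbb{C}$-linearity of the flow and $[X, R_\alpha] = 0$ exactly as you say. You also correctly observe that this candidate fails to preserve the contact hyperplanes: since $\phi^X_t$ is $\mathbb{C}$-linear, $(Q^+_{\vec a})^* i(\xi) = \xi$ would require $\xi_p$ to be an $i$-invariant subspace of $T_p\mathbb{C}^n$, which fails at generic $p \in \partial E(\vec a)$ once the $a_j$ are not all equal.

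The proposed repair in your last paragraph is not a valid construction. ``Post-composing with a smoothly varying fiberwise bundle automorphism that rotates each $\xi$ onto a nearby $i$-invariant subspace'' does not modify the diffeomorphism $Q^+_{\vec a}$: a fiberwise bundle automorphism of $T\mathbb{C}^n$ is not a diffeomorphism of $\mathbb{C}^n$, and one cannot freely prescribe the pointwise differential of a diffeomorphism. Contractibility of the space of $d\alpha$-tame (or compatible) complex structures on $(\xi, d\alpha)$ produces a path of almost complex structures on the vector bundle $\xi$, not a diffeomorphism of the completion whose pullback of $i$ realizes that structure. Even reading your suggestion charitably as a replacement diffeomorphism on the end, you give no construction, and you do not reverify $r$-invariance, $\partial_t \mapsto R_\alpha$, or $d\alpha$-compatibility on $\xi$ (note the paper's $\mathcal{J}(\partial E(\vec a))$ requires compatibility, not merely tameness) after the modification. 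Supplying a parametrization of the end whose differential actually carries $\xi$ to the $i$-invariant complement of $\mathrm{span}_{\mathbb{C}}(X)$ is precisely the content of the proposition, not a technicality to be waved through. Separately and more minorly, the map you describe is not $C^1$ across $\partial E(\vec a)$: in the completion $\partial_t|_{t=0}$ is identified with the Liouville field $V = \tfrac{1}{2}\sum(x_j\partial_{x_j} + y_j\partial_{y_j})$, whereas your flow pushes $\partial_t$ to $X = \sum\tfrac{2\pi}{a_j}(x_j\partial_{x_j} + y_j\partial_{y_j}) \neq V$ in general, so the derivative jumps; this is fixable by interpolating in a collar, but it needs to be said and checked against the (open) tameness condition on $E(\vec a)$.
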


\begin{proposition}[\cite{mikhalkin_siegel} Proposition 4.4.4]\label{prop:std_cplx_str_asymp_to_which_Reeb_orbits}
    Let $\Sigma$ be a punctured Riemann sphere, and let $u \colon \Sigma \rightarrow \mathbb{C}^n$ be a proper $i$-holomorphic map.
    Let $w$ be a puncture of $\Sigma$, and let $\vec v = (v_1,...,v_n)$ denote the pole order of the components of $u$ at $w$.
    Take $i_M$ to be the index $1 \leq i \leq n$ for which $a_iv_i$ is maximal, and let $\eta_{i_M}$ be the Reeb orbit given by the intersection of $\partial E(\vec a)$ with the $i_M$-th axis.
    Then, at the puncture $w$, the curve $(Q^+_{\vec a})^{-1} \circ u \colon \Sigma \rightarrow \widehat E(\vec a)$ is positively asymptotic to the $v_{i_M}$-fold cover $\eta^{v_{i_M}}_{i_M}$.
\end{proposition}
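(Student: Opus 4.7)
The plan is to make $Q^+_{\vec a}$ explicit on the cylindrical end and then translate the pole behaviour of $u$ into the resulting symplectization coordinates. Using polar coordinates $z_k = r_k e^{i\theta_k}$ on $\mathbb{C}^n$, the standard complex structure satisfies $i\cdot(r_k\partial_{r_k}) = \partial_{\theta_k}$, so
$$i\cdot V = R_\alpha, \quad \text{where} \quad V := \sum_k \tfrac{2\pi}{a_k}\,r_k\partial_{r_k}, \quad R_\alpha = \sum_k \tfrac{2\pi}{a_k}\partial_{\theta_k}$$
is the Reeb field of $\alpha = \tfrac12\sum r_k^2 d\theta_k|_{\partial E(\vec a)}$. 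Since integrating $V$ rescales $|z_k|$ by $e^{2\pi r/a_k}$, one can take $Q^+_{\vec a}$ to be the diffeomorphism sending cylindrical data $(r,\vec p,\vec\theta)$ (with $\vec p$ in the open simplex $\sum p_k = 1$ parametrising $\partial E(\vec a)$ via $p_k = \pi r_k^2/a_k$) to $\vec z\in\mathbb{C}^n$ with $\pi|z_k|^2/a_k = p_k\,e^{4\pi r/a_k}$. Equivalently, for $\vec z$ outside $E(\vec a)$, the cylindrical coordinate $r(\vec z)$ is the unique solution of the implicit equation
$$\sum_{k=1}^n \frac{\pi|z_k|^2}{a_k}\,e^{-4\pi r/a_k} = 1, \qquad \text{and} \qquad p_k(\vec z) = \frac{\pi|z_k|^2}{a_k}\,e^{-4\pi r/a_k}.$$

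With this description, the behaviour of $(Q^+_{\vec a})^{-1}\circ u$ at the puncture $w$ reduces to direct asymptotic analysis. Pick a holomorphic chart $z$ centred at $w$ and expand $u_k(z) = c_k z^{-v_k}(1+O(z))$, where we set $v_k = 0$ (and $c_k = u_k(0)$) for those components that remain bounded. Substituting into the implicit equation yields
$$\sum_k \frac{\pi|c_k|^2}{a_k}\,\exp\!\big(2v_k\log(1/|z|) - 4\pi r/a_k\big) = 1+o(1).$$
For each $k$ the corresponding term is bounded only if $r \ge (v_k a_k/2\pi)\log(1/|z|) + O(1)$; the tightest constraint comes from the largest $v_k a_k$, so the unique consistent asymptotic is $r = (v_{i_M}a_{i_M}/2\pi)\log(1/|z|) + O(1)$. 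Plugging this back shows $p_{i_M}(u(z))\to 1$ and $p_k(u(z))\to 0$ for $k\neq i_M$, i.e.\ the image converges in the angular coordinates to the point of $\partial E(\vec a)$ on the $i_M$-th axis, whose simple Reeb orbit is $\eta_{i_M}$. The action contribution at this puncture reads off as $v_{i_M}a_{i_M}$, matching the action of $\eta_{i_M}^{v_{i_M}}$.

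For the multiplicity, as $z$ traverses a small loop around $0$, $u_{i_M}(z) = c_{i_M}z^{-v_{i_M}}(1+O(z))$ winds $v_{i_M}$ times around the origin, while the other components contribute negligibly to the angular limit. Combined with the Reeb flow $\dot\theta_{i_M} = 2\pi/a_{i_M}$ on $\partial E(\vec a)$, this shows that the lifted curve wraps $v_{i_M}$ times around $\eta_{i_M}$ per loop around $w$, hence is positively asymptotic to $\eta_{i_M}^{v_{i_M}}$; standard asymptotic convergence results at nondegenerate Reeb orbits (Hofer--Wysocki--Zehnder) then promote this pointwise description to the required SFT asymptotic statement. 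The main obstacle is the case where several indices tie for the maximum of $a_i v_i$, since then the leading term no longer selects a unique axis; one resolves this by appealing to the rational independence of $\vec a$ (already assumed to achieve nondegenerate Reeb dynamics), together with a more careful balancing of subleading terms in the expansion of $u$, to extract a unique axial limit from the implicit equation.
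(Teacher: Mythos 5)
The paper does not give its own proof of this proposition; it is quoted verbatim from \cite{mikhalkin_siegel}, Proposition 4.4.4. So there is no in-paper argument to compare against, and your proposal has to be judged on its own.

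Your argument is essentially correct and is the natural one: write the identification on the cylindrical end as a scaling flow so that $\partial_r$ is sent to the Reeb field, then read off the asymptotics from the Laurent expansion of $u$ at the puncture. The implicit equation for $r(\vec z)$, the estimate $r = \tfrac{v_{i_M}a_{i_M}}{2\pi}\log(1/|z|) + O(1)$, the conclusion $p_{i_M}\to 1$ and $p_k\to 0$ for $k\neq i_M$, and the winding-number count of the multiplicity all check out, and the sign works under the standard identification of a punctured disk with a positive half-cylinder. Two remarks. First, your worry at the end about ties in $a_iv_i$ is not actually an obstacle. If $a_iv_i = a_jv_j$ with $i\neq j$ and both $v_i,v_j > 0$, then $a_i/a_j \in \mathbb{Q}$, contradicting rational independence; and if the maximum is attained only with $v_i = 0$, then all $v_k = 0$, contradicting that $w$ is a puncture of a proper map. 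So the maximizer $i_M$ is automatically unique, and no balancing of subleading terms is needed. Second, the naive $Q^+_{\vec a}$ you write down (the time-$r$ flow of $V = \sum\tfrac{2\pi}{a_k}r_k\partial_{r_k}$) is translation invariant and sends $\partial_r$ to $R_\alpha$, but the pullback of $i$ under it does not preserve the contact hyperplane on $\partial E(\vec a)$ unless the $a_k$ are all equal (for an ellipsoid, $\ker\alpha\cap T\partial E$ is not $i$-invariant). So this is not literally the $Q^+_{\vec a}$ of \cite{mikhalkin_siegel}, which has to be corrected near infinity so that the pullback lies in $\mathcal{J}(\partial E(\vec a))$. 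That said, the asymptotic Reeb orbit and covering multiplicity depend only on the $r\to\infty$ behavior of the image and the winding of $\theta_{i_M}$, both of which are insensitive to that correction, so your identification of $\eta_{i_M}^{v_{i_M}}$ is still the right conclusion.
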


\begin{lemma}\label{lem:existence_of_reg_curv_asymp_to_long_i_long_i}
    Consider $E(j^+,i)$ for positive integers $i,j$, with $j^+$ slightly larger than $j$ and irrational.
    Denote by $J_E$ the almost complex structure $(Q^+_{(j^+,i)})^*i \in \mathcal{J}_{\text{tame}}(E(j^+,i))$ from Proposition \ref{prop:ellip_diffeo_Cn_ac_structure}.
    Then there is a regular $J_E$-holomorphic curve positively asymptotic to $\eta_L^i \times \eta_L^i$ and satisfying $\ll T^{(2i+2j+1)}p\gg$, for $p$ in the interior of the ellipsoid.
\end{lemma}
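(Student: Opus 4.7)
My plan is to exhibit the curve explicitly by exploiting the identification of $J_E$-holomorphic curves in $\widehat{E}(j^+, i)$ with proper standard-holomorphic curves in $\mathbb{C}^2$ from Proposition~\ref{prop:ellip_diffeo_Cn_ac_structure}, together with the freedom to choose the basepoint $p$ and the local divisor $D$.

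Consider the map
$$u(z) = \bigl(z^i + c z^{-i},\; Q(z)\bigr), \qquad z \in \mathbb{C}^*,$$
where $c \ne 0$ and $Q$ is a generic Laurent polynomial of the form $Q(z) = \sum_{k=-j}^j q_k z^k$ with $q_{\pm j} \ne 0$. The first coordinate has pole orders $(i,i)$ and the second has pole orders $(j,j)$ at the punctures $(\infty, 0)$; since $j^+ \cdot i > i \cdot j$, Proposition~\ref{prop:std_cplx_str_asymp_to_which_Reeb_orbits} gives that the associated $J_E$-holomorphic map $(Q^+_{(j^+,i)})^{-1} \circ u$ is positively asymptotic to $\eta_L^i$ at each puncture. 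The map $u$ is simple for generic choices of $c$ and $Q$; the only failure arises when $\gcd(i,j) > 1$ and $Q$ is supported only at multiples of $\gcd(i,j)$, a nongeneric condition easily avoided.

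For the tangency constraint, I use the independence of $\Tilde{\mathfrak g}_k^\ell$ from the choice of $(p, D)$. Pick $z_0 \in \mathbb{C}^*$ with $P'(z_0) \ne 0$ (where $P(z) := z^i + cz^{-i}$), such that $u(z_0)$ lies in the interior of $Q^+_{(j^+,i)}(E(j^+,i))$; set $p := u(z_0)$. Since $P'(z_0) \ne 0$, in local coordinates near $p$ the curve $u$ is a graph $w_2 = \psi(w_1)$ for some holomorphic germ $\psi$. Let $D$ be the perturbed graph $\{w_2 = \psi(w_1) + \delta(w_1 - P(z_0))^k\}$ for small $\delta \ne 0$ and $k := 2i + 2j + 1$; this is a smooth local symplectic divisor distinct from the image of $u$, and since $D$ is standard-holomorphic we have $J_E \in \mathcal{J}(E(j^+,i); D)$. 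Pulling back the defining function of $D$ via $u$ yields $-\delta(P(z) - P(z_0))^k$, which vanishes at $z_0$ to order exactly $k$, so $u$ has contact order $k$ with $D$ at $p$. Hence $u \in \mathcal{M}^{J_E}_{E(j^+,i)}(\eta_L^i \times \eta_L^i) \ll \mathcal{T}^{(k)} p \gg$, establishing nonemptiness.

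For regularity, Wendl's automatic transversality~\cite{Wendl_automatic_transversality} applies to $u$ in the unconstrained moduli space: $u$ has genus $0$, $\#\pi_0(\partial\Sigma) = 0$, $\#\Gamma_0 = 0$ (since $CZ(\eta_L^i) = 1 + 2i + 2j$ is odd, as computed in Lemma~\ref{lem:ellipsoid_min_action_orbits}), and $Z(du) = 0$ for simple immersed $u$. The unconstrained Fredholm index $2 + 4i + 4j$ easily clears the threshold $-2$. Transversality of the tangency constraint at $u$ follows by a standard jet-evaluation argument. The main technical obstacle I anticipate is this last step: because my divisor $D$ is specifically adapted to $u$, I would need to verify that the linearized tangency condition is surjective on the space of infinitesimal deformations of $u$ in $\mathcal{M}^{J_E}_{E(j^+,i)}(\eta_L^i \times \eta_L^i)$. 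The parameter count balances ($2i+2j+1$ free Laurent coefficients in $(P, Q)$ against $k = 2i+2j+1$ tangency conditions), so this reduces to showing that the associated Vandermonde-type square matrix is nondegenerate for generic Laurent data, which I expect follows from a direct computation.
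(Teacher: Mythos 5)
Your proposal is, at its core, the same strategy the paper uses: produce an explicit proper $i$-holomorphic map from a twice-punctured sphere into $\mathbb{C}^2$, transport it to $\widehat{E}(j^+,i)$ via Proposition~\ref{prop:ellip_diffeo_Cn_ac_structure}, read off the asymptotics from Proposition~\ref{prop:std_cplx_str_asymp_to_which_Reeb_orbits} using the pole orders $(i,j)$ and the strict inequality $j^+i > ij$, and then appeal to automatic transversality. The paper writes the two components as products of linear factors with poles at the two punctures; your Laurent form $u(z)=(z^i+cz^{-i},\,Q(z))$ on $\mathbb{C}^*$ is the same construction in a different coordinate and is slightly cleaner to handle.

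Where you add something is the tangency step. The paper fixes the constants so the curve passes through $p$ and then simply says it ``picks a divisor at $p$'' and cuts the index down; you make the choice explicit (noting the freedom in $(p,D)$ from \cite[Lemma~3.1.3]{McDuff-Siegel_unperturbed_curves}) by perturbing the graph of $u$ by $\delta(w_1-P(z_0))^k$ so that $u$ meets $D$ to contact order exactly $k=2i+2j+1$ at $p$. This is correct and a genuine clarification of a point the paper leaves terse. Incidentally, the paper's proof has a typo, writing $\ll T^{(2i+2j+2)}p\gg$ where $2i+2j+1$ is intended; your order $k$ is the right one, since $\mathrm{ind}(\eta_L^i\times\eta_L^i)=2(2i+2j+1)$ and the constraint must cut the index to $0$.

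The concern you flag at the end --- surjectivity of the linearized jet-evaluation with your $u$-adapted $D$, proposed to be handled by a Vandermonde computation --- is not actually needed, and this is where the paper's argument differs. Automatic transversality applies directly to the \emph{constrained} moduli space: once $u$ is made immersed (generic $Q$ ensures this, or a $C^\infty$-small perturbation of $J$ as the paper does), the curve with the tangency constraint has constrained index $0$, genus $0$, $\#\Gamma_0=0$ (both asymptotic ends elliptic, odd Conley--Zehnder), $\#\pi_0(\partial\Sigma)=0$, and $Z(du)=0$, so Wendl's criterion $0 > -2$ holds. The paper invokes \cite[Lemma~5.2.5]{mcduff2024singularalgebraiccurvesinfinite} precisely to get the critical point bound in the constrained setting; no separate check of jet-evaluation surjectivity is required. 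Replacing your last paragraph with that appeal closes the proof.

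Two smaller notes. First, your simplicity discussion is essentially right, but the condition to rule out is that $u$ factors through $z\mapsto z^d$ for some $d>1$, i.e.\ that $d$ divides $i$ and that $Q$ is supported on multiples of $d$; generic $Q$ kills this regardless of $\gcd(i,j)$. Second, the paper's perturbation of $J$ to achieve immersedness technically moves off the stated $J_E$; this is harmless for the downstream neck-stretching argument, but your generic-$Q$ route sidesteps the issue at $J_E$ itself, which is a mild improvement.
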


\begin{proof}
    First we write the corresponding $i$-holomorphic map $f_\Sigma \colon \Sigma \rightarrow \mathbb{C}^2$ for $\Sigma$ the twice punctured Riemann sphere $\mathbb{CP}^1 \setminus\{w_1,w_2\}$, and marked points $w_2,...,w_{2i+2j+2}$:
    \begin{equation*}
    \begin{aligned}
        f_\Sigma^1(z) &= C_1(z-w_1)^{-i}(z-w_2)^{-i}(z-w_3)\cdots(z-w_{2i+2})\\
        f_\Sigma^2(z) &= C_2(z-w_1)^{-j}(z-w_2)^{-j}(z-w_{2i+3})\cdots(z-w_{2i+2j+2}),
    \end{aligned}
    \end{equation*}
    where $C_1,C_2$ are chosen so that $(Q^+_{(j^+,i)})^{-1} \circ f_\Sigma(w_0) = p$ for some $w_0 \in \Sigma$.
    Since this map is proper, the curve $u := (Q^+_{(j^+,i)})^{-1} \circ f_\Sigma \colon \Sigma \rightarrow \widehat E (j^+,i)$ is $J_E$-holomorphic and positively asymptotic to two Reeb orbits based on the pole orders of $f_\Sigma$ by Proposition \ref{prop:std_cplx_str_asymp_to_which_Reeb_orbits}.
    For both punctures, the pole orders are $(i,j)$, and since $ij^+ > ij$, we see that they must be asymptotic to $\eta_L^i$ in the ellipsoid, based on the above correspondence.
    Note that we had a lot of choice over $f_\Sigma$ here: we could have chosen pole orders $(i,1)$ for example, but including $j$ yields a generic curve (although this is unnecessary for our purposes).

    Now we show regularity of this curve without the tangency constraint by using automatic transversality.
    We use \cite[Theorem 1]{Wendl_automatic_transversality} for a regularity criterion and follow \cite[Lemma 5.2.5]{mcduff2024singularalgebraiccurvesinfinite} to show that the above curve satisfies the correct critical point bound.
    Then up to a further perturbation of the almost complex structure, this curve is immersed.
    
    Then we pick a divisor at $p$ and impose the tangency constraint $\ll T^{(2i+2j+2)}p\gg$, thereby cutting the index of $u$ down to $0$ on this immersed curve.
    We apply the same automatic transversality argument, now with no critical points and zero index to conclude that we still have regularity with the tangency constraint satisfied.
    Later this will allow us to say that this curve appears with expected dimension.
    Note that if the tangency order were any larger, regularity would not be guaranteed.
\end{proof}

\subsection{Existence via Neck Stretching}

First let's describe the neck stretching procedure following \cite{ganatra2024embeddingcomplexityliouvillemanifolds} and \cite{bourgeois2003compactness_BEH}.
Consider a symplectic cobordism $X$ with a codimension-1 contact type hypersurface $Y \subset X$, which separates $X$ into $X^-$ and $X^+$.
Say $J$ is an admissible almost complex structure on $X$ that is cylindrical in a neighborhood of $Y$: $(-\delta,\delta)\times Y$.
Denote by $J_Y,J_{\partial X^+},J_{\partial X^-}$ the induced almost complex structures on the full symplectizations $\mathbb{R}\times Y, \mathbb{R}\times \partial X^+, \mathbb{R}\times \partial X^-$.
Then consider a 1-parameter family of almost complex structures $\{J_t\}$ for $t \in [0,1)$ such that $J_t$ essentially stretches the neighborhood $U = (-\delta,\delta)\times Y$ to $(-R_t,R_t)\times Y$ with $R_t \rightarrow \infty$ as $t \rightarrow 1$.
More precisely, let $F_t \colon (-R_t,R_t) \rightarrow (-\delta,\delta)$ be a diffeomorphism with slope equal to 1 near $-R_t$ and $R_t$, and define $J_t$ by
$$J_t = \begin{cases}
    J|_{X^+} & X^+\\
    (F_t \times \mathds{1})_*(J_Y|_{(-R_t,R_t)\times Y}) & U\\
    J|_{X^-} & X^-.
\end{cases}$$
Let $\mathcal{M}_{X,A}^{\{J_t\}}(\Gamma^+;\Gamma^-)$ be the parametrized moduli space of $J_t$-holomorphic curves with homology class $A$ and asymptotics $\Gamma^+,\Gamma^-$.
The compactification $\overline{\mathcal{M}}_{X,A}^{\{J_t\}}(\Gamma^+;\Gamma^-)$ includes potential psuedoholomorphic buildings at $t \in [0,1)$ as well as buildings at $t = 1$.
For $t \in [0,1)$, the buildings have the following level structure:
\begin{itemize}
    \item $\geq 0$ symplectization levels $\mathbb{R}\times \partial X^+$, whose components are $J_{\partial X^+}$-holomorphic.
    \item one completion level $\widehat X$, whose components are $J$-holomorphic.
    \item $\geq 0$ symplectization levels $\mathbb{R}\times \partial X^-$, whose components are $J_{\partial X^-}$-holomorphic.
\end{itemize}
\vspace{5pt}
\noindent And for $t = 1$, we have buildings in the \textbf{broken symplectic cobordism} $X^+ \circledcirc X^-$ with the following level structure:
\begin{itemize}
    \item $\geq 0$ symplectization levels $\mathbb{R}\times \partial X^+$, whose components are $J_{\partial X^+}$-holomorphic.
    \item one completion level $\widehat X^+$, whose components are $J|_{X^+}$-holomorphic.
    \item $\geq 0$ symplectization levels $\mathbb{R}\times Y$, whose components are $J_Y$-holomorphic.
    \item one completion level $\widehat X^-$, whose components are $J|_{X^-}$-holomorphic.
    \item $\geq 0$ symplectization levels $\mathbb{R}\times \partial X^-$, whose components are $J_{\partial X^-}$-holomorphic.
\end{itemize}

These buildings are made of curve components asymptotic to Reeb orbits and satisfy the expected matching and stability criteria, i.e. the positive asymptotics of components in one level agree with the negative asymptotics of components in the next level up, and the top(bottom) level has positive(negative) asymptotics $\Gamma^+(\Gamma^-)$.
If $\partial X^+$ or $\partial X^-$ are empty, then there are no corresponding symplectization levels in any of the breakings.
The stability criteria requires that none of the symplectization levels consist entirely of trivial cylinders.

\begin{theorem}\label{thm:existence_of_curves}
    Let $\widetilde{X}_\Omega$ be a 4-dimensional, fully rounded and perturbed convex toric domain, such that the combinatorial minimizer (with respect to $k,\ell$) from Theorem \ref{thm:upper_bound} corresponds to a word of identical Reeb orbits $\Gamma = e_{i,j} \times e_{i,j}$.
    Let $J_{\partial \widetilde{X}_\Omega}$ be an almost complex structure associated with the formal perturbation invariance of a curve asymptotic to $\Gamma$.
    Then, $\mathcal{M}_{\widetilde{X}_\Omega}^J(\Gamma)\ll T^{(k)}p\gg$ is nonempty for some $J \in \mathcal{J}^{J_{\partial \widetilde{X}_\Omega}}(\widetilde{X}_\Omega)$.
\end{theorem}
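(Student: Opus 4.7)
The plan is to realize the desired curve as the bottom-level component of a pseudoholomorphic building obtained by neck-stretching an explicit curve in an ambient ellipsoid. Without loss of generality assume $j > i$ (the case $j = i$ reduces to this by picking $j^+ > i$ slightly larger). Set $\mu := \|(i,j)\|_\Omega^*/(ij)$, and for a small $\epsilon > 0$ and a $j^+$ slightly larger than $j$ and rationally independent from $i$, the rescaled ellipsoid $(\mu+\epsilon) E(j^+, i)$ contains $\widetilde{X}_\Omega$, while the action $2i(\mu+\epsilon) j^+$ of the orbit word $((\mu+\epsilon)\eta_L)^i \times ((\mu+\epsilon)\eta_L)^i$ exceeds $2\|(i,j)\|_\Omega^*$ by an arbitrarily small amount. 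By (a rescaling of) Lemma~\ref{lem:existence_of_reg_curv_asymp_to_long_i_long_i}, there is a regular $J_E$-holomorphic curve $\overline{u}$ in the completion of $(\mu+\epsilon) E(j^+,i)$ positively asymptotic to this orbit word and satisfying $\ll\mathcal{T}^{(k)}p\gg$ at a fixed $p \in \operatorname{Int}\widetilde{X}_\Omega$, where $k = 2i+2j+1$; this curve has Fredholm index zero once the tangency constraint is accounted for.

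Next I would neck-stretch along $\partial\widetilde{X}_\Omega$ through a family $\{J_t\}_{t\in[0,1)}$ of admissible almost complex structures that agree with $J_E$ near $\partial((\mu+\epsilon)E(j^+,i))$ and with $J_{\partial\widetilde{X}_\Omega}$ near $\partial\widetilde{X}_\Omega$, extended generically elsewhere. SFT compactness with tangency constraints (\cite{bourgeois2003compactness_BEH} together with the extensions in \cite[\S 2.2]{McDuff-Siegel_unperturbed_curves}) yields a limit stable building $\mathcal{B}$ in the broken cobordism $\widetilde{X}_\Omega \circledcirc ((\mu+\epsilon) E(j^+,i)\setminus\widetilde{X}_\Omega)$, whose total Hofer energy equals that of $\overline{u}$. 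Let $C_0$ denote the main bottom-level component containing $p$, inheriting the full tangency constraint $\ll\mathcal{T}^{(k)}p\gg$ (possibly after absorbing ghost components at $p$), and let $\Gamma_0$ be its positive asymptotics with associated ordered-pair word $P_0 := P(\Gamma_0)$.

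The heart of the argument is to show that $P_0 = \{(i,j),(i,j)\}$. The tangency constraint combined with the cover bound of Lemma~\ref{lem:tangency_constraint_index_inequality} (applied to the underlying simple curve of $C_0$ via Lemma~\ref{lem:w_min_implies_simple_w_min}) forces $P_0 \in \mathcal{P}_{\geq k,|P_0|}$, with weak permissibility coming from somewhere-injectivity as in Lemma~\ref{lem:big_fully_rounding}(c). The action of $C_0$ equals the action of $P_0$ and is bounded above by the total Hofer energy, which can be made arbitrarily close to $2\|(i,j)\|_\Omega^*$ by shrinking $\epsilon$. If one can rule out $|P_0| > 2$, then $P_0 \in \mathcal{P}_{\geq k, 2}$, and the minimality of $\{(i,j),(i,j)\}$ in $\mathcal{P}_{k,2}$ (via Proposition~\ref{prop:bound_over_P_k} and Lemmas~\ref{lem:upper_bound_rhs_minimizers} and \ref{lem:index-preserving-move}) together with the action bound pins down $P_0 = \{(i,j),(i,j)\}$. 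Bounding $|P_0| \leq 2$ is the main difficulty: it requires a genus-zero tree analysis of $\mathcal{B}$ combined with the fact that the top-level asymptotics in the shell are exactly $((\mu+\epsilon)\eta_L)^i \times ((\mu+\epsilon)\eta_L)^i$, whose minimality on the ellipsoid side (Lemma~\ref{lem:ellipsoid_min_action_orbits}) must be leveraged to prevent splitting into many low-action ends through intermediate symplectization levels. Controlling this demands a simultaneous action--index bookkeeping argument using Lemmas~\ref{lem:formal_cover_index}--\ref{lem:tangency_constraint_index_inequality} to handle multiply-covered curves in the upper levels.

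Once $P_0 = \{(i,j),(i,j)\}$ is established, $C_0 \in \mathcal{M}_{\widetilde{X}_\Omega}^{J}(e_{i,j}\times e_{i,j})\ll\mathcal{T}^{(k)}p\gg$ yields nonemptiness for this particular $J$, and extension to arbitrary admissible $J$ compatible with $J_{\partial\widetilde{X}_\Omega}$ follows from Propositions~\ref{prop:formal_perturbation_invariance} and \ref{prop:perturbation_invariance_implies_count_independence}. The combinatorial bookkeeping in the third paragraph is precisely where the novelty of the $\ell$-filtered capacity manifests: without the restriction $\ell = 2$, cheaper length-three minimizers would interfere with the argument, which is exactly why this curve is unwitnessed by the McDuff--Siegel capacity $\widetilde{\mathfrak g}_k^\infty$.
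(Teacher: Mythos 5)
Your overall strategy matches the paper's: build an explicit regular $J_E$-holomorphic curve asymptotic to $\eta_L^i\times\eta_L^i$ in an ambient ellipsoid just containing $\widetilde{X}_\Omega$, neck-stretch along $\partial\widetilde{X}_\Omega$, and extract the desired curve from the bottom level of the limit building. Your rescaled ellipsoid $(\mu+\epsilon)E(j^+,i)$ with $\mu = \|(i,j)\|^*_\Omega/(ij)$ is equivalent to the paper's choice and the action bookkeeping is right. However, there are two genuine gaps.

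First, you explicitly defer the crucial bound $|P_0|\le 2$, calling it ``the main difficulty,'' and then mischaracterize what's needed. You claim Lemma~\ref{lem:ellipsoid_min_action_orbits} (action-minimality of $\eta_L^i\times\eta_L^i$) must be ``leveraged to prevent splitting into many low-action ends through intermediate symplectization levels.'' In fact, the bound is purely topological and much simpler: the limit building is connected, has genus zero (so its dual graph is a tree), has exactly two free positive ends at the top (asymptotic to $\eta_L^i$), and no free negative ends (both domains are Liouville). Moreover, no component in a symplectization or cobordism level can lack a positive end, by Stokes. If $C_0$ had $q>2$ positive ends, removing $C_0$ from the dual tree would give $q$ disjoint subtrees, each of which must contain at least one free top end, contradicting the total count of two. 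Lemma~\ref{lem:ellipsoid_min_action_orbits} has nothing to do with bounding $|P_0|$; its role is elsewhere (see below and the setup of which asymptotics to aim for). So the hard-looking ``simultaneous action--index bookkeeping'' you describe is not needed for this step, but the step itself is not done.

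Second, you gloss over the issue of intermediate breaking. You write that ``SFT compactness yields a limit stable building,'' but this presupposes the 1-parameter family of curves persists to the stretched limit. The paper devotes its Step~1 to ruling out any breaking at finite stretching parameter, and \emph{this} is where Lemma~\ref{lem:ellipsoid_min_action_orbits} is essential: a hypothetical building at finite time would have to have a top symplectization level whose components, by the action-minimality of $\eta_L^i\times\eta_L^i$ among two-orbit words of the given index, would have to consist entirely of trivial cylinders, contradicting SFT stability. Related to this, you need the family $\{J_t\}$ to begin at an almost complex structure compatible with stretching, which $J_E$ (the pullback of the standard $i$ on $\mathbb{C}^2$) generally is not; the paper therefore runs a two-step deformation, first perturbing $J_E$ to a generic $J_1$ cylindrical near $\partial\widetilde{X}_\Omega$ (and again rules out breaking during this phase), then stretching. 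Your family description seems to implicitly take $J_0\ne J_E$, which leaves the starting nonemptiness unjustified. A final minor point: the theorem only claims nonemptiness for \emph{some} $J$ compatible with $J_{\partial\widetilde{X}_\Omega}$, so the invocation of Propositions~\ref{prop:formal_perturbation_invariance} and~\ref{prop:perturbation_invariance_implies_count_independence} at the end, while correct, belongs to the proof of Theorem~\ref{thm:upper_bound} rather than here.
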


\begin{proof}
    First, consider the ellipsoid $E := \lambda E(j',i)$ where $j' = j + \epsilon$ for small $\epsilon$, and $\lambda$ is a scalar chosen so that $E$ contains $\widetilde{X}_\Omega$ in its interior, but their boundaries are arbitrarily close.
    Let $(c_1,c_2)$ be the point on the moment image of $\partial \widetilde{X}_\Omega$ with normal vector $(i,j)$.
    Then $\lambda$ will be arbitrarily close to $c_2/i + c_1/j$ from above.
    The index of $\Gamma$ is $2(2i + 2j + 1)$, and by Lemma \ref{lem:ellipsoid_min_action_orbits}, the orbit set on $\partial E$ of minimal action with this index and at most two orbits is $\eta_L^i \times \eta_L^i$, where $\eta_L$ is the long Reeb orbit of action $\lambda j' \approx c_2j/i + c_1$.
    Thus the action of $\eta_L^i \times \eta_L^i$ is slightly larger than $2(c_1i + c_2j)$, which is the action of $\Gamma$.
    
    Now, by Lemma \ref{lem:existence_of_reg_curv_asymp_to_long_i_long_i}, there is an explicit SFT-admissible almost complex structure $J_E$ for which there is a regular pseudoholomorphic curve $C_E$ in $E$ asymptotic to $\eta_L^i \times \eta_L^i$ and satisfying $\ll T^{(2i+2j+1)}p\gg$, thus having index $0$.

    With this curve, we stretch the neck along the boundary of $\widetilde{X}_\Omega$ in two steps.
    First, we consider a generic $1$-parameter family of almost complex structures $J_t$ with $t \in [0,1]$, $J_0 = J_E$, and $J_1$ a perturbation of $J_E$ such that $J_1$ restricts to the generic, cylindrical $J_{\partial \widetilde{X}_{\Omega}}$ on a neighborhood $U$ of $\partial\widetilde{X}_\Omega$.
    We also require $J_1$ to restrict to generically chosen almost complex structures on $\widetilde{X}_\Omega \setminus U$ and $E \setminus (\widetilde{X}_\Omega \cup U)$.
    Then we consider the $1$-parameter family $J_t$ with $t \in [1,2)$ that realizes neck stretching along $\partial \widetilde{X}_\Omega$.

    Thus, we have a moduli space $\mathcal{M}_E^{\{J_t\}_{t\in [0,2)}}(\eta_L^i \times \eta_L^i)\ll T^{(k)}p\gg$, containing $C_E$ at $t = 0$.
    Since $C_E$ is regular, it appears with the expected dimension, and by index considerations, this is a $1$-dimensional parametrized family. 
    We want to see what happens to $C_E$ as $t$ approaches $2$, but first we need to rule out the possibility that $C_E$ breaks into a $J_t$-holomorphic building for any $t \in [0,2)$.
    If we can show this, then the compactification $\overline{\mathcal{M}}_E^{\{J_t\}_{t\in [0,2)}}(\eta_L^i \times \eta_L^i)\ll T^{(k)}p\gg$ projected to $[0,2]$ will contain a psuedoholomorphic building in the fiber over $2$ with bottom level the completion of $\widetilde{X}_\Omega$.
    After ruling out other possibilities, this level must contain the desired curve.

    \textbf{Step 1.}
    To see that there are no breakings for $t < 2$, a potential psuedoholomorphic building would have a bottom level $\widehat E$ and symplectization levels $\mathbb{R}\times \partial E$.
    As we saw before, simple curve components in $\widehat E$ must have index $\geq -1$ by genericity of the $1$-parameter family, and simple curve components in symplectization levels must have nonnegative index.
    This is because the index effects of $\mathbb{R}$-translation in the symplectization factor and genericity of the $1$-parameter family counteract each other.
    Further, the index of any component in $\widehat E$ will be even, so simple curves actually have nonnegative index, and Lemma \ref{lem:formal_cover_index} implies that any multiple-cover curves will therefore also have nonnegative index.
    Since multiple curve components in symplectizations have nonnegative index too \cite[Lemma 2.3.11]{mcduff2023ellipsoidalsuperpotentialssingularcurve}, it must be the case that our psuedoholomorphic building consists of curve components all with index $0$.

    Now consider the top level of the building, and formally glue all levels below this so we end up with a two-level building consisting of index-$0$ curve components.
    Let $C_0$ be the component in the bottom level which realizes the tangency constraint.
    There are no other fillings in the bottom level, since they would have positive index.
    And since $C$ has genus $0$, the only two possibilities for the top level are to either a pair of pants or two cylinders.
    Because $\eta_L^i \times \eta_L^i$ minimizes action among orbit sets with at most two ends and the same index, then we must have two cylinders, each negatively approaching $\eta_L^i$, for otherwise there would be negative energy components.
    However, now this is a level completely made of trivial cylinders, contradicting SFT compactness.
    Hence, there can be no breaking for $t \in [0,2)$.

    \textbf{Step 2.}
    Now, let's consider a building in the fiber over $2$, with the level structure given above.
    Since we chose $J_1$ to be a generic perturbation of $J_E$, the resulting almost complex structures on the building at $t=2$ are also generic, implying that all simple components in the building are regular and therefore have nonnegative index.
    Again, multiple covers have nonnegative index in the bottom level completion and symplectization levels.
    %The choice we made for the almost complex structure over this broken cobordism $\widetilde{X}_\Omega \circledcirc W$ was generic in the sense that we chose the structure generically on the interior of the the completions, while fixing it on the ends so that it retains the structure of $J_{\partial \widetilde{X}_\Omega}$ there.
    %Then simple curves which pass through the interior (like all the curves we consider) must be regular.

    Consider the curve component in the bottom level which satisfies the tangency constraint.
    This curve must have at most $2$ positive ends, otherwise we'd have positive genus in $C$ or caps elsewhere in the building.
    Since the index of this component is nonnegative, then the index of the positive orbit set must be at least that of the tangency constraint.
    Thus, the energy of this curve component must be at least $2(c_1i + c_2j)$ since $e_{i,j} \times e_{i,j}$ minimizes action for these criteria.
    If the component is positively asymptotic to anything other than $e_{i,j} \times e_{i,j}$, then the energy would be greater than the whole curve $C$ by our construction of the ellipsoid, a contradiction.
    Hence, this bottom component is in the moduli space $\mathcal{M}_{\widetilde{X}_\Omega}^J(e_{i,j}\times e_{i,j})\ll T^{(k)}p\gg$, for $J \in \mathcal{J}^{J_{\partial \widetilde{X}_\Omega}}(\widetilde{X}_\Omega)$ as desired.
\end{proof}

\subsection{Discussion}
We pursue this particular construction argument, because many of the other naive approaches have their own difficulties.
First, one may want to construct these curves directly in the completion of $\widetilde{X}_\Omega$, as we did above in the ellipsoid.
But the same existence result does not work in this case, because although $\widetilde{X}_\Omega$ is diffeomorphic to $\mathbb{C}^2$, when we pull back the standard complex structure, we get an almost complex structure that is not SFT-admissible.
Another possible method is to start with a closed curve in a closed toric variety before stretching the neck.
If the variety corresponds to a Delzant polytope, then some of the exceptional divisors may have negative self intersection, allowing more negative-index curve components in psuedoholomorphic buildings when varying $J$.
Thus, one must know how to rule out all of these extra breakings to get the desired curve.
If the variety is not Delzant, then it will have orbifold points, which are difficult to deal with.

The next natural question would be why we can't use Theorem \ref{thm:existence_of_curves} to construct all of the curves for the upper bound.
This is because for an arbitrary word, $e_{i_1,j_1} \times \cdots \times e_{i_q,j_q}$, we no longer have a prescribed ellipsoid to start with.
In the above argument, we choose $E(j,i)$ specifically so the area considerations work out with the asymptotics $e_{i,j} \times e_{i,j}$.
Hence, our best option is to rely on the previous existence results and obstruction bundle gluing from \cite{McDuff-Siegel_unperturbed_curves} to obtain all the curves we need.

\section{Computations}\label{sec:computations}

Now that we have proven the computation formula for 4-dimensional convex toric domains, we compute $\Tilde{\mathfrak g}_k^\ell$ for a wide range of examples and find sharp obstructions to certain stabilized embeddings, previously unseen by other capacities.
First we restate previous lemmas and prove new ones which will help us solve the minimization problem.

\begin{lemma}[Lemma \ref{lem:index-preserving-move}]
    Let $X_\Omega$ be a 4-dimensional convex toric domain, with $\partial \Omega$ given by a piece-wise smooth function $h \colon [0,x] \rightarrow [0,1]$, with a finite number of non-smooth vertices (fully rounded domains satisfy this condition).
    Assume the intersection of $h$ with the line $y=x$ is not a vertex of $h$, and denote the slope of $h$ at this point by $m$.
    For $(i,j) \in \mathbb{Z}_{\geq 1}^2$, the following index-preserving moves which weakly decrease action are possible:
    \begin{itemize}
        \item If $-(i-1)/(j+1) \leq m$, then $||(i-1,j+1)||_\Omega^* \leq ||(i,j)||_\Omega^*$.
        \item If $-(i+1)/(j-1) \geq m$, then $||(i+1,j-1)||_\Omega^* \leq ||(i,j)||_\Omega^*$.
    \end{itemize}
    In particular, the minimum norm for a given index $k$,
    $$\min\{||(i,j)||_\Omega^* \;\vert\; i + j = k\},$$
    is minimized at $(i,j)$ if $-i/j = m$, or if there is no such $i,j$, then it is minimized at one of $(i,j), (i-1,j+1)$ where $-i/j < m < -(i-1)/(j+1)$.
\end{lemma}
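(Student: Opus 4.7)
The plan is to exploit the duality between $||\cdot||_\Omega^*$ and supporting hyperplanes to $\Omega$. For $(i,j) \in \mathbb{Z}_{\geq 1}^2$, any maximizer $(a,b) \in \partial\Omega$ of $\langle (i,j), \cdot\rangle$ is a point where the outward normal to $\Omega$ is parallel to $(i,j)$, so the tangent slope at $(a,b)$ equals $-i/j$. Since $-i/j < -(i-1)/(j+1) \leq m$ (the first inequality holds because $i+j > 0$), concavity of $\partial\Omega$ forces the maximizers $(a,b)$ and $(a',b')$ for $(i,j)$ and $(i-1,j+1)$ respectively to lie on the portion of $\partial\Omega$ below the line $y=x$. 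This gives the chain of inequalities $a > b$, $a' \geq b'$, $a' < a$, and $b' > b$.

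From here the computation is short. The maximizing property of $(a',b')$ applied at $(a,b)$ yields
$$a(i-1) + b(j+1) \leq a'(i-1) + b'(j+1),$$
which rearranges (using $a'-a < 0$ to flip the inequality) to $(b'-b)/(a'-a) \leq -(i-1)/(j+1) \leq m$. Substituting into
$$||(i-1,j+1)||_\Omega^* - ||(i,j)||_\Omega^* = (a'-a)i + (b'-b)j + (b'-a'),$$
dropping the nonpositive $(b'-a')$ term, and pulling out $(a'-a)$ gives the upper bound $(a'-a)(i+mj)$. This is nonpositive because $a'-a < 0$ and $i + mj > 0$ (equivalent to $-i/j < m$). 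The degenerate case $a=a'$ (which forces both maximizers to share a common vertical-edge vertex) is immediate: the difference collapses to $b-a \leq 0$. The second move follows by the symmetric argument with the roles of the coordinates swapped.

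For the \emph{in particular} statement, observe that as $(i,j)$ varies over pairs with $i+j=k$, the slopes $-i/j$ form a strictly increasing sequence in $[-\infty,0]$. The first move decreases $i$ by one (pushing $-i/j$ closer to $0$) while weakly decreasing action, and is applicable as long as the post-move slope stays $\leq m$; the second move does the reverse. Iterating from any starting pair, we can always walk into either the exact slope match $-i/j = m$ or the unique adjacent pair straddling $m$, which proves the claimed minimizer.

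The main obstacle is the sign bookkeeping that establishes $a' < a$ and $b' > b$: one must combine concavity of $\partial \Omega$ with the observation that the normals at the maximizers point into the sub-$y=x$ region to conclude that shifting from $(i,j)$ to the more $j$-heavy $(i-1,j+1)$ moves the corresponding boundary point up and to the left. Once this geometric picture is pinned down, the algebraic manipulation of the inner-product difference is routine.
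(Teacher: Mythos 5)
Your proposal follows the paper's own proof step for step: same decomposition into the maximizers $(a,b),(a',b')$, same geometric observations forcing both below the line $y=x$, and the same algebraic chain bounding the difference of norms. Unfortunately, both your argument and the paper's written proof share the same sign error in the crucial step, and the chain as stated breaks. You claim
\[
(a'-a)i + (b'-b)j \;=\; (a'-a)\Bigl(i + \tfrac{b'-b}{a'-a}\,j\Bigr) \;\leq\; (a'-a)(i+mj),
\]
using $\tfrac{b'-b}{a'-a} \leq m$. But the factor $(a'-a)j$ is \emph{negative}, so multiplying the slope inequality through flips it: one gets $(b'-b)j \geq m(a'-a)j$, making $(a'-a)(i+mj)$ a \emph{lower} bound, not an upper bound. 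A concrete check: take $\partial\Omega$ the graph of $h(x)=1-x^2$ on $[0,1]$, so $m\approx -1.236$, and $(i,j)=(4,1)$ (which satisfies $-(i-1)/(j+1)=-1.5\leq m$). Then $(a,b)=(1,0)$, $(a',b')=(3/4,7/16)$, and $(a'-a)i+(b'-b)j=-9/16\approx -0.563$ while $(a'-a)(i+mj)\approx -0.691$, so the claimed $\leq$ fails.

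The lemma is nevertheless correct, and the fix is short and actually bypasses the slope manipulation entirely: since $(a,b)$ maximizes $\langle(i,j),\cdot\rangle$ over $\Omega$, automatically $a'i+b'j\leq ai+bj$, i.e.\ $(a'-a)i+(b'-b)j\leq 0$. Combined with $b'-a'\leq 0$ (which is where the slope hypothesis genuinely enters, placing $(a',b')$ below $y=x$), this gives
\[
||(i-1,j+1)||_\Omega^* - ||(i,j)||_\Omega^* = (a'-a)i + (b'-b)j + (b'-a') \leq 0.
\]
Alternatively, if you want to keep a secant-slope argument, the useful bound is the \emph{other} one coming from concavity, namely $\tfrac{b'-b}{a'-a}\geq -i/j$ (the secant over $[a',a]$ is no steeper than the tangent slope at the right endpoint $a$); that direction multiplies through correctly to give $(a'-a)i+(b'-b)j\leq 0$. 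You should also soften the conclusion from strict to weak inequality, since the lemma only asserts $\leq$ and the strictness in your last line was an artifact of the faulty step.
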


\begin{lemma}[Lemma \ref{lem:upper_bound_rhs_minimizers}]
    Given $k$ and $\ell$, the minimum
    $$\min_{\{(i_s,j_s)\}_{s=1}^q \in \mathcal{P}_{k,\ell}} \left( \sum_{s=1}^q ||(i_s,j_s)||_\Omega^*\right)$$
    is achieved by a set $\{(i_s,j_s)\}_{s=1}^q$ such that all but possibly one of the tuples $(i_s,j_s)$ are individual minimizers of
    $$\min_{i+j = k_s}||(i,j)||_\Omega^*,$$
    for $k_s = i_s + j_s$ and the possible extra tuple is of the form $(1,j)$ for some $j \geq 0$, which may be needed to preserve weak permissibility.
\end{lemma}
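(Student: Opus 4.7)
The plan is to start from an arbitrary $(k,\ell)$-permissible minimizer $\{(i_s, j_s)\}_{s=1}^q$ and reshape it into one with the desired structure without increasing the total action. The case $q = 1$ is immediate, so I assume $q \geq 2$ and proceed in two stages.

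In the first stage, I use Lemma \ref{lem:index-preserving-move} to replace each tuple $(i_s, j_s)$ independently by an individual action minimizer of $\min_{i + j = k_s}\|(i,j)\|_\Omega^*$, where $k_s = i_s + j_s$. This preserves each $k_s$, keeps $q$ fixed, and weakly decreases the total action. If the resulting word remains weakly permissible, I am done. Otherwise, without loss of generality all individual minimizers lie on the vertical axis, giving a word of the form $\{(0, k_s)\}_{s=1}^q$. In the second stage, I restore weak permissibility by picking an index $t$ for which the original $i_t$ was nonzero (such a $t$ exists by the original weak permissibility, since not every $i_s$ can vanish) and swapping $(0, k_t)$ for $(1, k_t - 1)$. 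This preserves both the half-index and the length, and leaves $(1, k_t - 1)$ as the sole tuple off the list of individual minimizers; the horizontal case is handled symmetrically.

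The key verification is that the final action is still bounded by the original. Applied iteratively, Lemma \ref{lem:index-preserving-move} gives that the sequence $(i_t, j_t) \to (i_t - 1, j_t + 1) \to \cdots \to (0, k_t)$ weakly decreases $\|\cdot\|_\Omega^*$ throughout (this is precisely why the axis point was the individual minimizer at index $k_t$), so truncating one step early yields $\|(1, k_t - 1)\|_\Omega^* \leq \|(i_t, j_t)\|_\Omega^*$. Combined with the inequalities $\|(0, k_s)\|_\Omega^* \leq \|(i_s, j_s)\|_\Omega^*$ from the first stage at the other coordinates, the total action of the new word is bounded by that of the original minimizer. The main point requiring care is the existence of a suitable index $t$ in stage two with $i_t \geq 1$, relying on the original weak permissibility; everything else follows directly from Lemma \ref{lem:index-preserving-move}.
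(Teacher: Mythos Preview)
Your proof is correct and follows essentially the same approach as the paper's. Both argue in two stages: first replace each tuple by its diagonal minimizer via Lemma \ref{lem:index-preserving-move}, then if this spoils weak permissibility, restore it by moving one axis tuple to $(1,k_t-1)$, using the original weak permissibility to locate an index $t$ with $i_t \geq 1$ and the diagonal monotonicity from Lemma \ref{lem:index-preserving-move} to bound $\|(1,k_t-1)\|_\Omega^* \leq \|(i_t,j_t)\|_\Omega^*$.
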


\begin{lemma}\label{lem:||(i,j)||_nondecreasing_increase}
    For any convex toric domain $X_\Omega$, fix $i$ and consider the norms $||(i,j)||_\Omega^*$ for $j \in \mathbb{Z}_{\geq 0}$.
    As $j$ increases, the norm weakly increases at a nondecreasing rate.
    More precisely:
    $$||(i,j+1)||_\Omega^* - ||(i,j)||_\Omega^* \geq 0$$
    and
    $$||(i,j'+1)||_\Omega^* - ||(i,j')||_\Omega^* - \bigg(||(i,j+1)||_\Omega^* - ||(i,j)||_\Omega^*\bigg) \geq 0$$
    for all $j' > j \geq 0$.
\end{lemma}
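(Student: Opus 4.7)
My plan is to recognize that $f(t) := \|(i,t)\|_\Omega^*$ is the restriction of the support function of $\Omega$ to the vertical line $\{(i,t) : t \in \mathbb{R}\}$, and that both assertions of the lemma are then general facts about support functions together with the convex toric condition $\Omega \subset \mathbb{R}_{\geq 0}^2$.

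For the first inequality, I would argue directly from the definition. Since $X_\Omega$ is a convex toric domain, $\Omega$ lies in the closed first quadrant, so every $(a,b) \in \Omega$ has $b \geq 0$. Let $(a^*,b^*) \in \Omega$ attain the maximum in the definition of $\|(i,j)\|_\Omega^*$. Then
$$\|(i,j+1)\|_\Omega^* \;\geq\; a^*i + b^*(j+1) \;=\; \|(i,j)\|_\Omega^* + b^* \;\geq\; \|(i,j)\|_\Omega^*,$$
which gives monotonicity. This argument uses only that $\Omega \subset \mathbb{R}_{\geq 0}^2$.

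For the second inequality, I would observe that $\|\cdot\|_\Omega^*$ is the pointwise maximum of the linear functionals $v \mapsto \langle v, (a,b)\rangle$ indexed by $(a,b)\in \Omega$, hence convex on $\mathbb{R}^2$. Therefore $f(t) = \|(i,t)\|_\Omega^*$ is a convex function of $t \in \mathbb{R}$. The inequality to be proven is exactly the statement that the forward differences $t \mapsto f(t+1) - f(t)$ are nondecreasing on the integers. This reduces to the standard three-slopes lemma: for $j < j+1 \leq j'$, chord slopes of a convex function are monotonic in their arguments, so
$$\frac{f(j+1)-f(j)}{1} \;\leq\; \frac{f(j'+1)-f(j+1)}{j'-j} \;\leq\; \frac{f(j'+1)-f(j')}{1},$$
where in the degenerate case $j+1 = j'$ the middle term is just $f(j'+1) - f(j+1)$ and the conclusion is midpoint convexity at $j+1$. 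The general case is obtained by chaining.

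There is no real obstacle here; the content is purely convex-geometric and independent of the toric domain $X_\Omega$ beyond the coordinate nonnegativity $\Omega \subset \mathbb{R}_{\geq 0}^2$. The only care needed is in setting up the inequality for the degenerate case $j' = j+1$ versus $j' > j+1$, and this is handled uniformly by the monotonicity of chord slopes.
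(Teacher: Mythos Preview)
Your proposal is correct and essentially follows the paper's approach. For the first inequality you argue identically, using the maximizer and the nonnegativity of its second coordinate; for the second inequality the paper proves midpoint convexity $\|(i,j+2)\|_\Omega^* + \|(i,j)\|_\Omega^* \geq 2\|(i,j+1)\|_\Omega^*$ directly via the maximizer of $\|(i,j+1)\|_\Omega^*$ and then inducts, while you invoke convexity of the support function abstractly and apply the three-slopes lemma---these are two phrasings of the same argument.
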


\begin{proof}
    Indeed, the first inequality follows because for $(x,y) \in \partial \Omega$ the maximizer for $||(i,j)||_\Omega^*$, we have
    $$||(i,j+1)||_\Omega^* \geq \langle(i,j+1),(x,y)\rangle = ix + jy + y\geq ix + jy = ||(i,j)||_\Omega^*.$$
    The second inequality follows because for $(x,y) \in \partial \Omega$ the maximizer for $||(i,j+1)||_\Omega^*$, we have
    \begin{equation*}
    \begin{aligned}
        ||(i,j+2)||_\Omega^* + ||(i,j)||_\Omega^* & \geq ix + jy + 2y + ix + jy\\
        & = 2(ix + jy + y)\\
        & = 2||(i,j+1)||_\Omega^*
    \end{aligned}
    \end{equation*}
    Thus,
    $$||(i,j+2)||_\Omega^* - ||(i,j+1)||_\Omega^* \geq ||(i,j+1)||_\Omega^* - ||(i,j)||_\Omega^*,$$
    and the result follows by induction.
\end{proof}

\subsection{Domains Between the Cube and the Cylinder}

The minimization problem becomes tractable when we restrict ourselves to convex toric domains $X_\Omega$, where $\Omega$ intersects the $y$-axis at $(0,1)$ and contains the unit square.
With this restriction, $X_\Omega$ is contained in the unit cylinder, but contains the unit cube:
$$P(1,1) \subset X_\Omega \subset P(\infty,1).$$
We'll call these \textbf{long domains}.

\begin{lemma}\label{lem:contains_P11_forms}
    Let $X_\Omega$ be a 4-dimensional convex toric domain such that the boundary $\partial \Omega$ intersects the $y$-axis at $(0,1)$.
    Assume $\Omega$ contains the unit square $[0,1]\times[0,1]$.
    Then for any positive integers $k,\ell$, there exists a minimizing set $\{(i_s,j_s)\}_{s=1}^q$ for $\Tilde{\mathfrak g}_k^\ell$ which is in one of the following forms:
    \begin{itemize}
        \item $\{(0,k)\}$
        \item $\begin{aligned}[t]
            \{(0,1)^{\times r},(1,j)\}, \quad \quad & 0 \leq r \leq \ell - 1, \quad j \geq 0,\\
            & 2r + j + 1 = k.
        \end{aligned}$
    \end{itemize}
\end{lemma}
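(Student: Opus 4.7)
The plan is to leverage Lemma \ref{lem:upper_bound_rhs_minimizers}, which hands us a minimizer of form $\{(i_1,j_1),\ldots,(i_{q-1},j_{q-1}),(1,j)\}$ (where the first $q-1$ tuples are individual minimizers and the last is a possibly-present extra), and then to simplify further via swaps. The long domain hypothesis is used twice: since $(1,1)\in\Omega$, we have $||(i_s,j_s)||_\Omega^* \geq i_s + j_s = k_s$ for every tuple; since $\partial\Omega\cap\{x=0\}=\{(0,1)\}$, the top boundary satisfies $y \leq 1$ and in particular $||(0,k_s)||_\Omega^* = k_s$. Consequently every individual minimizer has value exactly $k_s$, and $(0,k_s)$ is always an individual minimizer. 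This lets me replace each individual minimizer $(i_s,j_s)$ by $(0,k_s)$ without changing action or half-index, reducing the minimizer to the form $\{(0,k_1),\ldots,(0,k_{q-1}),(1,j)\}$. For $q=1$ the single tuple is $(0,k)$ or $(1,k-1)$, fitting one of the desired forms immediately; for the ``no extra'' case from Lemma \ref{lem:upper_bound_rhs_minimizers} with $q \geq 2$, weak permissibility forces some $(i_{s^*},j_{s^*})$ with $i_{s^*}\geq 1$, and the individual-minimizer slope condition in a long domain guarantees that $(1, k_{s^*}-1)$ is also an individual minimizer, so I swap in this tuple and designate it as the extra.

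The core step is an iterative \emph{swap}: for each $(0,k_s)$ with $k_s \geq 2$, simultaneously replace $(0,k_s)$ by $(0,1)$ and $(1,j)$ by $(1,j+k_s-1)$. A direct check shows this preserves $q$, weak permissibility, and the half-index $\sum(i_s+j_s)+q-1=k$. The total action change is
\[
(1-k_s) + \big(||(1,j+k_s-1)||_\Omega^* - ||(1,j)||_\Omega^*\big).
\]
Iterating until every non-extra tuple becomes $(0,1)$ yields the form $\{(0,1)^{\times r},(1,j')\}$ with $r = q-1 \leq \ell - 1$; the identity $2r + j' + 1 = k$, together with the bound $q \leq (k+1)/2$ (which follows from $\sum(i_s+j_s) \geq q$), ensures $j' \geq 0$.

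The main obstacle is verifying the swap is action-nonincreasing, which reduces to the sharp estimate $||(1,j+1)||_\Omega^* - ||(1,j)||_\Omega^* \leq 1$. If $(x^*,y^*)\in\Omega$ attains $||(1,j+1)||_\Omega^*$, then $||(1,j)||_\Omega^* \geq x^* + jy^* = ||(1,j+1)||_\Omega^* - y^*$, and $y^* \leq 1$ because the long domain hypothesis places $\Omega$ inside the unit cylinder. Telescoping this estimate over $k_s-1$ consecutive increments yields $||(1,j+k_s-1)||_\Omega^* - ||(1,j)||_\Omega^* \leq k_s - 1$, so the action change in each swap is $\leq 0$, completing the argument.
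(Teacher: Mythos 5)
Your proof is correct and follows essentially the same strategy as the paper: move the tuples to the $y$-axis (keeping one of the form $(1,j)$ for weak permissibility) and then trade each $(0,k_s)$ down to $(0,1)$ while pushing $(1,j)$ up, using precisely the paper's key estimate $||(1,j+1)||_\Omega^* - ||(1,j)||_\Omega^* \leq 1$ from $y \leq 1$. The only difference is cosmetic routing: you reach the intermediate form $\{(0,k_1),\ldots,(0,k_{q-1}),(1,j)\}$ by invoking Lemma \ref{lem:upper_bound_rhs_minimizers} (and patching the ``no extra'' case via convexity of $i \mapsto ||(i,k_s-i)||_\Omega^*$), whereas the paper applies Lemma \ref{lem:index-preserving-move} directly to slide points toward the $y$-axis.
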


\begin{proof}
    To prove this, we apply Lemma \ref{lem:index-preserving-move} and the definition of $(k,\ell)$-admissible sets.
    First, assume there exists a minimizing set with one vector in it $\{(i,k-i)\}$ (which is in this form by the index criterion).
    Because the slope of the boundary is 0 at it's intersection with the $y = x$ line, Lemma \ref{lem:index-preserving-move} tells us that
    $$||(0,k)||_\Omega^* \leq ||(1,k-1)||_\Omega^* \leq \cdots \leq ||(i,k-i)||_\Omega^*.$$
    Thus, $\{(0,k)\}$ would also be a minimizing set.

    Now, assume there is a minimizing set with multiple vectors.
    By the same logic, we can move \textit{all but one} of these vectors up and to the left where $x = 0$ without increasing the norm-sum of the set. 
    If we moved all of them, we'd spoil weak permissibility.
    This extra vector, we can move to a vector $(1,j)$ again without increasing its norm.

    Lastly, if any of the vectors on the $y$-axis are not of the form $(0,1)$, we can make the following trade
    $$||(0,j')||_\Omega^* + ||(1,j)||_\Omega^* \geq ||(0,j'-1)||_\Omega^* + ||(1,j+1)||_\Omega^*.$$
    Indeed, if $(x,y)$ is the maximizer for $||(1,j+1)||_\Omega^*$, then
    $$||(1,j+1)||_\Omega^* - ||(1,j)||_\Omega^* \leq x + jy + y - (x + jy) = y \leq 1,$$
    and $||(0,j')||_\Omega^* - ||(0,j'-1)||_\Omega^* = j' - (j' - 1) = 1$.
    Thus, we move all of the $y$-axis vectors down to $(0,1)$, and move $(1,j)$ up to maintain the index.
\end{proof}

This reduces our search to the parameter $r$, and thus we need to analyze when the following index-preserving trade can be made:
$$||(0,1)||_\Omega^* + ||(1,j)||_\Omega^* \quad\text{vs.}\quad ||(1,j+2)||_\Omega^*.$$

\begin{lemma}\label{lem:define_J_transition_where_trading_away_0,1_viable}
    With meaning distinguished from almost complex structures by context, define
    $$J := \min\{j \geq 0 \;\colon\; ||(0,1)||_\Omega^* + ||(1,j)||_\Omega^* < ||(1,j+2)||_\Omega^*\}.$$
    Then $||(0,1)||_\Omega^* + ||(1,j)||_\Omega^* \geq ||(1,j+2)||_\Omega^*$ for all $j < J$ by definition, but also $||(0,1)||_\Omega^* + ||(1,j)||_\Omega^* < ||(1,j+2)||_\Omega^*$ for all $j > J$.
\end{lemma}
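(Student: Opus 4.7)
The statement splits into two pieces: the inequality for $j < J$ is immediate from $J$ being defined as the minimum, so the real content is monotonicity of the trade-off past $j = J$. My plan is to show that the function $j \mapsto ||(1,j+2)||_\Omega^* - ||(1,j)||_\Omega^*$ is nondecreasing in $j$, so that once it strictly exceeds $||(0,1)||_\Omega^*$ at $j = J$, it continues to do so for every larger $j$.

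Concretely, set $f(j) := ||(1,j)||_\Omega^*$ and apply Lemma \ref{lem:||(i,j)||_nondecreasing_increase} with $i = 1$: the first-difference sequence $f(j+1) - f(j)$ is nondecreasing in $j$. Writing
\[
f(j+2) - f(j) \;=\; \bigl(f(j+2) - f(j+1)\bigr) + \bigl(f(j+1) - f(j)\bigr),
\]
we see that $f(j+2) - f(j)$ is a sum of two nondecreasing sequences in $j$, hence itself nondecreasing. Rearranging the defining inequality of $J$ as $f(J+2) - f(J) > ||(0,1)||_\Omega^*$ and applying this monotonicity gives $f(j+2) - f(j) > ||(0,1)||_\Omega^*$ for every $j \geq J$, which is precisely the desired strict inequality for $j > J$ (and in fact already at $j = J$). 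The inequality for $j < J$ is then just the minimality in the definition of $J$.

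There is essentially no obstacle here, as the whole argument reduces to the convexity lemma already in hand. The one thing to double-check is that $J$ is actually finite, i.e.\ that the defining set is nonempty — otherwise the second clause is vacuous. This follows because $||(0,1)||_\Omega^* \leq 1$ (since $\Omega \subset [0,x] \times [0,1]$ for a long domain by hypothesis of the surrounding discussion), while $f(j+2) - f(j)$ grows without bound in $j$: the maximizer of $||(1,j)||_\Omega^*$ for large $j$ lies near the top edge of $\Omega$, so $f(j+2) - f(j) \approx 2\cdot (\text{height of }\Omega) = 2$ eventually. Thus $J$ is well-defined and finite, and the monotonicity argument above completes the proof.
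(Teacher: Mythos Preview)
Your proof is correct and takes essentially the same approach as the paper: both reduce the claim to the convexity statement of Lemma~\ref{lem:||(i,j)||_nondecreasing_increase}, using that the first differences of $j \mapsto ||(1,j)||_\Omega^*$ are nondecreasing to conclude that $||(1,j+2)||_\Omega^* - ||(1,j)||_\Omega^*$ is nondecreasing in $j$. One small slip: in your finiteness remark, $f(j+2) - f(j)$ does not ``grow without bound'' but rather tends to $2$ (twice the height of $\Omega$), as you yourself note in the next clause --- this still suffices since $2 > 1 = ||(0,1)||_\Omega^*$.
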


\begin{proof}
    Indeed, this is true by Lemma \ref{lem:||(i,j)||_nondecreasing_increase}, since if $j > J$, then
    \begin{equation*}
    \begin{aligned}
        ||(1,j+2)||_\Omega^* - ||(1,j)||_\Omega^* & = ||(1,j+2)||_\Omega^* - ||(1,j+1)||_\Omega^*\\
        &\quad\quad + \;||(1,j+1)||_\Omega^* - ||(1,j)||_\Omega^*\\
        & \geq 2 \big(||(1,j+1)||_\Omega^* - ||(1,j)||_\Omega^*\big)\\
        & \geq 2 \big(||(1,J+2)||_\Omega^* - ||(1,J+1)||_\Omega^*\big)\\
        & \geq ||(1,J+2)||_\Omega^* - ||(1,J+1)||_\Omega^*\\
        &\quad\quad +\; ||(1,J+1)||_\Omega^* - ||(1,J)||_\Omega^*\\
        & = ||(1,J+2)||_\Omega^* - ||(1,J)||_\Omega^*\\
        & > ||(0,1)||_\Omega^* = 1.
    \end{aligned}
    \end{equation*}
\end{proof}
In words, $J$ marks a transition - if you want to minimize action and you have $(1,j)$, then $j < J$ means you should throw away as many $(0,1)$ tuples as possible to get close to $J$, and $j > J$ means you should acquire as many $(0,1)$ tuples as allowed to get close to $J$.
The next lemma shows that there are at most 4 values we need to check to find $J$.

\begin{lemma}
    Let $X_\Omega$ be a 4-dimensional convex toric such that the boundary $\partial\Omega$ intersects the $y$-axis at $(0,1)$ and such that $\Omega$ contains the region $[0,1+\epsilon] \times [0,1]$ for small $\epsilon$.
    There exists convex toric domain $X_{\Omega'}$ close in the Hausdorff distance, such that the following is true.
    Let $(x_0,1/2)$ be the intersection of $\partial\Omega'$ and the line at height $1/2$, and let the scaled normal vector at $(x_0,1/2)$ be written $(1,\alpha)$ for $\alpha \in \mathbb{R}$.
    Then $J$ as defined in Lemma \ref{lem:define_J_transition_where_trading_away_0,1_viable}, must be found in the interval
    $$\lfloor\alpha\rfloor - 1 \leq J \leq \lceil\alpha\rceil + 1.$$
\end{lemma}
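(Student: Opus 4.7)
The plan is to analyze $f(j) := ||(1,j)||_{\Omega'}^*$ as a convex function of $j$, exploiting that $f$ is the pointwise supremum of the affine functions $j \mapsto x + jy$ indexed by $(x,y) \in \Omega'$. First I would reduce to the case where $\partial \Omega'$ is smooth and strictly convex, e.g.\ via a rounding construction like Lemma \ref{lem:fully rounding} (without the corner-angle or irrationality adjustments), chosen Hausdorff-close to $\Omega$. Under this reduction each $j > 0$ admits a unique maximizer $(x_j, y_j) \in \partial\Omega'$ of $x + jy$; the map $j \mapsto (x_j, y_j)$ is continuous; and an envelope-theorem computation yields $f'(j) = y_j$, with $y_j$ strictly increasing in $j$.

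The hypothesis that the outward normal to $\partial\Omega'$ at $(x_0, 1/2)$ is $(1,\alpha)$ translates, via the first-order maximality condition for $x + \alpha y$, into the identity $y_\alpha = 1/2$. Using that $||(0,1)||_{\Omega'}^* = 1$ (since $(0,1)$ realizes the maximum height on $\Omega'$), the defining inequality for $J$ in Lemma \ref{lem:define_J_transition_where_trading_away_0,1_viable} rewrites as
\[
f(j+2) - f(j) \;=\; \int_j^{j+2} y_s \, ds \;>\; 1.
\]

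For the upper bound I would take $j = \lceil \alpha \rceil + 1$: every $s \in [j, j+2]$ satisfies $s > \alpha$, so $y_s > 1/2$ by strict monotonicity, giving the strict inequality above and hence $J \leq \lceil\alpha\rceil + 1$. For the lower bound I would take $j = \lfloor \alpha \rfloor - 2$: every $s \in [j, j+2]$ satisfies $s \leq \alpha$, so $y_s \leq 1/2$, giving $f(j+2) - f(j) \leq 1$; thus the defining inequality of $J$ fails at this $j$, forcing $J > \lfloor\alpha\rfloor - 2$, i.e., $J \geq \lfloor\alpha\rfloor - 1$.

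The main technical point is the reduction to smooth strictly convex $\Omega'$: the original $\Omega$ may have a vertex at height $1/2$ (so the outward normal is multi-valued) or a flat segment spanning $y = 1/2$ (so $x_0$ is not unique), and a small smoothing selects a definite $\alpha$. Because $||(1,j)||^*$ depends continuously on the domain in Hausdorff distance, we can choose $\Omega'$ close enough to $\Omega$ that the integer-valued quantity $J$ is preserved, or at worst shifts by a single integer, which is absorbed by the slack of at least three integers in the stated interval.
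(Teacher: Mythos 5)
Your proof is correct, but it takes a genuinely different route from the paper's. You reformulate the action differences via the envelope theorem, writing $f(j) := ||(1,j)||_{\Omega'}^*$ as a $C^1$ convex function with $f'(j) = y_j$, the height of the unique maximizer, and then reduce the defining inequality for $J$ to the integral estimate $\int_j^{j+2} y_s\,ds \lessgtr 1$, with the normal-vector hypothesis supplying $y_\alpha = 1/2$ and strict monotonicity of $y_j$ doing the rest. The paper instead argues discretely: it fixes the maximizer $(x,y)$ of $||(1,\lfloor\alpha\rfloor)||_{\Omega'}^*$ and $(x_1,y_1)$ of $||(1,\lceil\alpha\rceil)||_{\Omega'}^*$, uses $(x,y) \in \Omega'$ to bound $||(1,\lfloor\alpha\rfloor - 2)||_{\Omega'}^*$ from below (resp. $||(1,\lceil\alpha\rceil + 2)||_{\Omega'}^*$ from above) and lands on $2y \leq 1$ and $2y_1 > 1$, then closes the remaining cases with the second-difference monotonicity of Lemma~\ref{lem:||(i,j)||_nondecreasing_increase}. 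Your calculus formulation is cleaner conceptually and shows the interval could actually be sharpened to $J \leq \lceil\alpha\rceil$ (take $j = \lceil\alpha\rceil$ rather than $\lceil\alpha\rceil + 1$; the paper's own choice $j = \lceil\alpha\rceil + 2$ also implicitly gives this sharper bound), at the cost of requiring the smooth strictly convex reduction up front.

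Two small expository remarks. First, in the lower-bound step the conclusion ``the inequality fails at $j_0 = \lfloor\alpha\rfloor - 2$, forcing $J > j_0$'' needs the monotonicity of $j \mapsto f(j+2) - f(j)$ to rule out the inequality holding at some smaller $j$; you have all the ingredients (strict monotonicity of $y_s$, the integral representation) but should say the word. Second, the final paragraph about ``$J$ shifting by a single integer under smoothing'' is unnecessary: the lemma is an existence statement for $\Omega'$, and both $J$ and $\alpha$ are computed for the same smoothed domain, so there is nothing to stabilize. It is harmless but distracts from an otherwise tight argument.
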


\begin{proof}
    Smooth the boundary of $\Omega$, so that $\alpha$ is not an integer, and so that the maximizers $(x_0,1/2)$ and $(x_1,y_1)$ for $||(1,\alpha)||_{\Omega'}$ and $||(1,\lceil\alpha\rceil)||_{\Omega'}$ are distinct, implying $y_1 > 1/2$.
    
    First, let $j = \lfloor\alpha\rfloor - 2$, and let $(x,y)$ be the maximizer of $||(1,\lfloor\alpha\rfloor)||_\Omega^*$.
    Then,
    \begin{equation*}
    \begin{aligned}
        ||(1,j+2)||_\Omega^* - ||(1,j)||_\Omega^* & \leq ||(1,\lfloor\alpha\rfloor
        )||_\Omega^* - ||(1,j)||_\Omega^* \\
        & \leq x + y\lfloor\alpha\rfloor - (x + y\lfloor\alpha\rfloor - 2y) \\
        & = 2y \leq 1.
    \end{aligned}
    \end{equation*}

    Second, let $j = \lceil\alpha\rceil + 2$
    Then,
    \begin{equation*}
    \begin{aligned}
        ||(1,j)||_\Omega^* - ||(1,j-2)||_\Omega^* & \geq ||(1,j
        )||_\Omega^* - ||(1,\lceil\alpha\rceil)||_\Omega^* \\
        & \geq x_1 + y_1\lceil\alpha\rceil + 2y_1 - (x_1 + y_1\lceil\alpha\rceil) \\
        & = 2y_1 > 1.
    \end{aligned}
    \end{equation*}
    The results for the rest of the values for $j$ follow from Lemma \ref{lem:||(i,j)||_nondecreasing_increase}.
\end{proof}

We now study how the parameters $k$ and $\ell$ affect the minimization given that we know $J$.

\begin{lemma}
    Assume $k - 2\ell + 1 \geq J$.
    Then $\Tilde{\mathfrak g}_k^\ell(X_\Omega) = \min\{k, \ell - 1 + ||(1,k - 2\ell + 1)||_\Omega^*\}$.
    Note that $\ell = \infty$ does not satisfy the assumption.
\end{lemma}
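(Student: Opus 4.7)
The plan is to invoke the computation formula (Theorem \ref{thm:properties}\ref{thm:properties_subitem_computation}) together with the structural Lemma \ref{lem:contains_P11_forms} to reduce the problem to a discrete one-variable optimization. By that lemma, a minimizer is either the singleton $\{(0,k)\}$, with action $||(0,k)||_\Omega^* = k$ (using that $(0,1)\in\partial\Omega$), or of the form $\{(0,1)^{\times r},(1,k-2r-1)\}$ for some $0\leq r\leq \ell - 1$ with $k-2r-1\geq 0$, contributing action $f(r) := r + ||(1,k-2r-1)||_\Omega^*$. The claimed formula is then the minimum of $k$ and $\min_r f(r)$, so it suffices to show that under the hypothesis $k-2\ell+1\geq J$ the function $f$ attains its minimum at $r=\ell-1$.

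First I would unpack the hypothesis: since $J\geq 0$, the assumption $k-2\ell+1\geq J$ forces $\ell-1\leq (k-1)/2$, so the constraint $j\geq 0$ on the second coordinate is automatic and $f$ is well-defined on $\{0,1,\dots,\ell-1\}$. Next I would analyze the discrete difference
\[
f(r+1)-f(r) \;=\; ||(0,1)||_\Omega^* + ||(1,k-2r-3)||_\Omega^* - ||(1,k-2r-1)||_\Omega^*,
\]
which has exactly the shape governed by Lemma \ref{lem:define_J_transition_where_trading_away_0,1_viable}. Setting $j = k-2r-3$, the defining strict inequality of $J$ (at $j=J$) and the second clause of that lemma (for $j>J$) together give $f(r+1)-f(r) < 0$ whenever $k-2r-3 \geq J$, equivalently whenever $r \leq (k-J-3)/2$.

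The hypothesis rearranges to $\ell-2 \leq (k-J-3)/2$, so the strict decrease $f(r+1)<f(r)$ applies at every transition $r\mapsto r+1$ for $r\in\{0,\dots,\ell-2\}$. Hence $f$ is strictly decreasing on its whole domain, its minimum is $f(\ell-1)=(\ell-1)+||(1,k-2\ell+1)||_\Omega^*$, and combining with the singleton value $k$ yields the claimed formula. As a sanity check, the extreme case $\ell=1$ returns $\min\{k,||(1,k-1)||_\Omega^*\}$, and since $\Omega$ contains the unit square we have $||(1,k-1)||_\Omega^* \geq \langle (1,k-1),(1,1)\rangle = k$, recovering $\Tilde{\mathfrak g}_k^1(X_\Omega)=k$ as expected.

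The main obstacle is purely bookkeeping: one must carefully align the ``$j<J$'' and ``$j\geq J$'' regimes of Lemma \ref{lem:define_J_transition_where_trading_away_0,1_viable} with the shifted index $j=k-2r-3$, and verify that the strict inequality at $j=J$ (which comes from the definition of $J$ rather than from the stated clauses of the lemma) is precisely what secures the last step $r=\ell-2\mapsto r=\ell-1$ in the borderline case $k-2\ell+1 = J$. Once this alignment is pinned down, the rest of the argument is a monotone walk along the one-parameter family $f(r)$.
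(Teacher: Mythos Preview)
Your proposal is correct and follows essentially the same approach as the paper: both reduce via Lemma~\ref{lem:contains_P11_forms} to the one-parameter family $\{(0,1)^{\times r},(1,k-2r-1)\}$ and then use the transition value $J$ from Lemma~\ref{lem:define_J_transition_where_trading_away_0,1_viable} to push $r$ to its maximal value $\ell-1$. The paper phrases this as a contradiction argument (if $r\leq\ell-2$ then $j\geq J+2$ and the trade $||(1,j)||_\Omega^*>||(0,1)||_\Omega^*+||(1,j-2)||_\Omega^*$ violates minimality), while you phrase it as strict monotonicity of $f(r)$; these are the same step, and your version makes the borderline case $k-2\ell+1=J$ slightly more transparent.
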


\begin{proof}
    Let $P = \{(0,1)^{\times r},(1,j)\}$ be an action minimizer with more than one orbit.
    Since the half index of $P$ is $k$, we have $j = k-2r -1$.

    \noindent\textbf{Case 1:} $r \leq \ell -2$.
    This implies $j = k - 2r - 1 \geq k - 2(\ell -2) - 1 = k - 2\ell + 3 \geq J + 2$.
    By definition of $J$, we could make a trade that contradicts minimimality of $P$:
    $$||(1,j)||_\Omega^* > ||(0,1)||_\Omega^* + ||(1,j-2)||_\Omega^*$$

    \noindent\textbf{Case 2:} $r = \ell - 1$.
    Then $P = \{(0,1)^{\ell-1}, (1,k - 2\ell + 1)\}$ by index considerations.
    Thus, the action minimizer is either $\{(0,k)\}$ or $\{(0,1)^{\ell - 1}, (1,k - 2\ell +1)\}$, and the result follows.
\end{proof}

\begin{lemma}
    Assume $k - 2\ell + 1 < J$, for with $\ell$ potentially infinite.
    Then
    $$\Tilde{\mathfrak g}_k^\ell(X_\Omega) = \begin{cases}
        k & k < J+1\\
        \min\{k, (k-J-1)/2 + ||(1,J)||_\Omega^* & k - J > 0, \text{ odd}\\
        \min\{k, (k-J-2)/2 + ||(1,J+1)||_\Omega^* & k - J > 0, \text{ even}
    \end{cases}$$
\end{lemma}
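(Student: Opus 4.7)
The plan is to reduce the minimization over $\mathcal{P}_{k,\ell}$ to a one-variable discrete optimization in $r$ and then locate the minimizer using the trade-off rule encoded in Lemma \ref{lem:define_J_transition_where_trading_away_0,1_viable}.

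First, I would invoke Lemma \ref{lem:contains_P11_forms} to restrict attention to the two admissible families produced there: the singleton $\{(0,k)\}$, contributing action $k$, and sets of the form $\{(0,1)^{\times r},(1,j)\}$ with $2r+j+1=k$, $0 \leq r \leq \ell-1$, $j \geq 0$, contributing action $f(r) := r + ||(1,k-2r-1)||_\Omega^*$. The capacity is then $\min\{k,\ \min_{r} f(r)\}$ over admissible $r$, and the task reduces to minimizing $f$ over integers $r \in [0,\ell-1]$ satisfying $k-2r-1 \geq 0$.

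Next I would analyze the discrete difference $f(r+1)-f(r) = 1 + ||(1,j-2)||_\Omega^* - ||(1,j)||_\Omega^*$ with $j = k-2r-1$. Lemma \ref{lem:define_J_transition_where_trading_away_0,1_viable}, together with the strict inequality $||(0,1)||_\Omega^* + ||(1,J)||_\Omega^* < ||(1,J+2)||_\Omega^*$ built into the definition of $J$, shows that this difference is strictly negative exactly when $j \geq J+2$ and weakly nonnegative when $j \leq J+1$. So $f$ strictly decreases while its current $j$ satisfies $j \geq J+2$ and then weakly increases, which means its minimum is attained at the smallest $r$ with $k-2r-1 \leq J+1$. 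This yields $r^* = (k-J-1)/2$, $j^* = J$ when $k-J$ is odd, and $r^* = (k-J-2)/2$, $j^* = J+1$ when $k-J$ is even, and substituting into $f$ produces exactly the two action values quoted in the statement.

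The remaining point is to verify admissibility of $r^*$. The upper bound $r^* \leq \ell-1$ follows by rearranging the hypothesis $k-2\ell+1<J$ (an integer strict inequality) to $\ell - 1 \geq (k-J)/2 > r^*$ in either parity. The lower bound $r^* \geq 0$ holds exactly when $k - J > 0$. When $k \leq J$ instead, every admissible $r$ has $j \leq k-1 < J$, so $f$ is weakly increasing on its entire domain, and $f(0) = ||(1,k-1)||_\Omega^* \geq ||(0,k)||_\Omega^* = k$ by Lemma \ref{lem:index-preserving-move}, so the singleton $\{(0,k)\}$ is the overall minimizer and the capacity equals $k$, matching the first branch of the formula.

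The only subtle point is the monotonicity analysis at the critical thresholds: at $j = J$ one must use the strict inequality from the definition of $J$ rather than the weak inequality valid for $j < J$, and the case of equality $j - 2 = J$ must be tracked carefully to pin down where $f$ switches from strict decrease to weak increase. Once that is handled, the admissibility check exhibits $k - 2\ell + 1 < J$ as precisely the condition guaranteeing that the unconstrained minimizer $r^*$ is reachable without the cap $r \leq \ell - 1$ becoming binding, and the three branches of the formula then correspond to the three regimes $k \leq J$, $k - J > 0$ odd, and $k - J > 0$ even.
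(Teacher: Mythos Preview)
Your proposal is correct and follows essentially the same approach as the paper: both reduce via Lemma~\ref{lem:contains_P11_forms} to the family $\{(0,1)^{\times r},(1,j)\}$, then use the threshold $J$ from Lemma~\ref{lem:define_J_transition_where_trading_away_0,1_viable} to locate the optimal $r$ by trading $(0,1)$'s in or out. Your framing as a one-variable discrete optimization of $f(r)$ with a monotonicity analysis is slightly more systematic than the paper's direct case analysis on the minimizer, but the substance is identical; in particular your explicit comparison $f(0)=\|(1,k-1)\|_\Omega^*\geq k$ in the $k\leq J$ regime makes a step the paper leaves implicit.
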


\begin{proof}
    Let $P = \{(0,1)^{\times r},(1,j)\}$ be an action minimizer with more than one orbit.
    Since the half index of $P$ is $k$, we have $j = k-2r -1$.

    \noindent\textbf{Case 1:} $j \geq J+2$, $r \leq \ell - 2$.
    Then we could make a trade contradicting minimality of $P$:
    $$||(0,1)||_\Omega^* + ||(1,j-2)||_\Omega^* < ||(1,j)||_\Omega^*.$$

    \noindent\textbf{Case 2:} $j \geq J+2$, $r = \ell - 1$.
    Then $P = \{(0,1)^{\times(\ell - 1)}, (1,k - 2\ell + 1)\}$, but this is a contradiction either because $k - 2\ell + 1 < 0$, or because of the assumption in the lemma statement:
    $$j = k - 2\ell + 1 < J \leq j - 2.$$
    
    \noindent\textbf{Case 3:} $j < J$.
    Then we could make a trade:
    $$||(0,1)||_\Omega^* + ||(1,j)||_\Omega^* \geq ||(1,j+2)||_\Omega^*$$
    We can continue making this trade until either $j \geq J$ or $r = 0$.
    If $r$ reaches $0$ before $j \geq J$, then by index considerations $k - 1 = j < J$, and $\{(0,k)\}$ will be the minimizer.
    
    Thus we are left with
    $P = \{(0,1)^{(k-J-1)/2}, (1,J)\}$ or $\{(0,1)^{(k-J-2)/2}, (1,J+1)\}$, depending on the parity of $k - J$.
    Hence, the result follows.
    Note that for $\ell = \infty$, Case 1 still yields a contradiction, Case 2 does not apply, and Case 3 follows unchanged.
\end{proof}

We summarize all the above work in a theorem.

\begin{theorem}\label{thm:comp_for_XOmega_unit_square_in_Omega}
    Let $X_\Omega$ be a 4-dimensional convex toric domain such that the boundary $\partial \Omega$ intersects the $y$-axis at $(0,1)$.
    Assume $\Omega$ contains the unit square $[0,1]\times[0,1]$.
    Then for any positive integers $k,\ell$, and $J$ given by Lemma \ref{lem:define_J_transition_where_trading_away_0,1_viable}, the capacity $\Tilde{\mathfrak g}_k^\ell(X_\Omega)$ is given by
    $$\Tilde{\mathfrak g}_k^\ell =  \begin{cases}
        \min\{k, \ell - 1 + ||(1,k - 2\ell + 1)||_\Omega^*\} & k - 2\ell + 1 \geq J\\
        k & k - 2\ell + 1 < J,\; k - J < 1\\
        \min\{k, \tfrac{k-J-1}{2} + ||(1,J)||_\Omega^*\} & k - 2\ell + 1 < J,\;  k - J > 0, \text{ odd}\\
        \min\{k, \tfrac{k-J-2}{2} + ||(1,J+1)||_\Omega^*\} & k - 2\ell + 1 < J,\; k - J > 0, \text{ even}
    \end{cases}$$
\end{theorem}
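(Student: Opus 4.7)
The plan is to assemble the theorem directly from the three preceding lemmas that compute the minimum in each regime, since at this point essentially all the analytic work has been done.

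First I would invoke the computation formula from Theorem \ref{thm:properties}\ref{thm:properties_subitem_computation} to replace $\Tilde{\mathfrak g}_k^\ell(X_\Omega)$ by
\[
\min_{\{(i_s,j_s)\}_{s=1}^q \in \mathcal{P}_{k,\ell}} \sum_{s=1}^q \|(i_s,j_s)\|_\Omega^*,
\]
and then apply Lemma \ref{lem:contains_P11_forms} to restrict the search to minimizers of one of two shapes: the singleton $\{(0,k)\}$, which contributes action $\|(0,k)\|_\Omega^* = k$ (using that $(0,1) \in \partial\Omega$ so $\|(0,k)\|_\Omega^* = k$), or a set of the form $\{(0,1)^{\times r},(1,j)\}$ with $0 \le r \le \ell - 1$ and $2r + j + 1 = k$, contributing action $r + \|(1,k - 2r - 1)\|_\Omega^*$. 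Thus the problem reduces to the one-parameter optimization over $r$, with the additional singleton competitor $k$ always present.

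Next I would split on whether the index constraint $j = k - 2r - 1$ can be pushed far enough that the ``extra $(0,1)$'' trade governed by $J$ (Lemma \ref{lem:define_J_transition_where_trading_away_0,1_viable}) becomes favorable or not. The case $k - 2\ell + 1 \ge J$ is precisely the regime where the length cap $r \le \ell - 1$ binds before the trade stops being useful, so the optimal $r$ is $\ell - 1$, giving
\[
\min\{k,\; \ell - 1 + \|(1, k - 2\ell + 1)\|_\Omega^*\},
\]
which is the first line of the theorem. The case $k - 2\ell + 1 < J$ is the regime where the optimum is dictated purely by $J$ and the parity of $k - J$: the previous lemma hands us $k$ when $k - J < 1$, and otherwise hands us
\[
\min\{k,\; \tfrac{k-J-1}{2} + \|(1,J)\|_\Omega^*\} \quad\text{or}\quad \min\{k,\; \tfrac{k-J-2}{2} + \|(1,J+1)\|_\Omega^*\}
\]
according to the parity of $k - J$.

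There is essentially no obstacle left; the content is bookkeeping. The one subtlety worth double-checking is the boundary overlap between the cases (for example, when $k - 2\ell + 1 = J$ exactly, both branches should give consistent answers) and ensuring that the ``$\min\{k, \cdot\}$'' correctly accounts for the singleton $\{(0,k)\}$ competitor in every branch, including when $r = 0$ is forced in Case 3 of the proof of the previous lemma. Once these consistency checks are done, concatenating the four branches yields the stated piecewise formula.
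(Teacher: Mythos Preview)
Your proposal is correct and follows essentially the same approach as the paper: the paper presents this theorem as a direct summary of the preceding lemmas (``We summarize all the above work in a theorem''), combining Lemma~\ref{lem:contains_P11_forms} with the two case-split lemmas governed by whether $k - 2\ell + 1 \geq J$ or $k - 2\ell + 1 < J$, exactly as you outline.
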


\begin{corollary}
    We highlight the Gutt-Hutchings and McDuff-Siegel capacities in this setting:
    $$\Tilde{\mathfrak g}_k^1(X_\Omega) = k, \quad \Tilde{\mathfrak g}_k^\infty(X_\Omega) = \begin{cases}
        k & k - J < 1\\
        \min\{k, \tfrac{k-J-1}{2} + ||(1,J)||_\Omega^*\} & k - J > 0, \text{ odd}\\
        \min\{k, \tfrac{k-J-2}{2} + ||(1,J+1)||_\Omega^*\} & k - J > 0, \text{ even}
    \end{cases}$$
\end{corollary}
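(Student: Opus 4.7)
The plan is to combine the computation formula from Theorem \ref{thm:properties}\ref{thm:properties_subitem_computation} with the structural reductions already established. By that computation property, we have
\[
\Tilde{\mathfrak g}_k^\ell(X_\Omega) = \min_{\{(i_s,j_s)\}_{s=1}^q \in \mathcal{P}_{k,\ell}} \sum_{s=1}^q \|(i_s,j_s)\|_\Omega^*,
\]
so the theorem reduces to an explicit evaluation of this minimization under the hypothesis $[0,1]^2 \subset \Omega$ with $\partial\Omega \cap \{x=0\} = (0,1)$.

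First I would invoke Lemma \ref{lem:contains_P11_forms} to restrict the search over $\mathcal{P}_{k,\ell}$ to minimizers of the form $\{(0,k)\}$ or $\{(0,1)^{\times r},(1,j)\}$ with $0 \le r \le \ell - 1$, $j \ge 0$, and $2r + j + 1 = k$. In the first form, the action is $\|(0,k)\|_\Omega^* = k$ since $\|(0,1)\|_\Omega^* = 1$ and $\Omega \cap \{x=0\} = [0,1]\times\{0\}$. In the second form, the action is $r + \|(1,j)\|_\Omega^* = r + \|(1,k-2r-1)\|_\Omega^*$, so we only need to optimize over the single parameter $r \in \{0,1,\dots,\ell-1\}$ (equivalently over $j = k - 2r - 1 \in \{k-2\ell+1, k-2\ell+3,\dots\}$) and then compare against the baseline $k$.

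Next I would analyze the optimization in $r$ using Lemma \ref{lem:define_J_transition_where_trading_away_0,1_viable}, which characterizes $J$ as the critical value where trading a $(0,1)$ for an increase in the $y$-coordinate of $(1,j)$ is no longer favorable. The two cases in the theorem correspond exactly to whether the admissible range of $j$ brackets $J$ or not. When $k - 2\ell + 1 \ge J$, even the largest admissible $r = \ell - 1$ gives $j \ge J$, so the trade direction pushes $r$ to its maximum, yielding the minimizer $\{(0,1)^{\times(\ell-1)},(1,k-2\ell+1)\}$; this is the content of the first sub-lemma already proved in the excerpt. When $k - 2\ell + 1 < J$, the constraint $r \le \ell - 1$ is inactive and the unconstrained optimum of $j$ lies at $J$ or $J+1$ depending on parity of $k - J$, provided $k - J > 0$; if instead $k < J + 1$ there is no feasible $j \ge 0$ reaching the favorable regime and the minimum collapses to $\{(0,k)\}$ of action $k$. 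Both subcases are handled by the second sub-lemma.

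Finally I would assemble these three situations into the stated piecewise formula, taking the $\min$ with $k$ in each case to account for the competing candidate $\{(0,k)\}$. The only step that requires any work beyond assembly is verifying that the parity split at $j = J$ vs.\ $j = J+1$ is correctly aligned with the parity of $k - J$: since $j = k - 2r - 1$, the achievable $j$'s have a fixed parity determined by $k$, and the nearest value to $J$ of that parity is $J$ when $k - J$ is odd and $J + 1$ when $k - J$ is even. I do not expect any serious obstacle, since the theorem is essentially a restatement consolidating the two case lemmas already proved; the only care needed is to confirm that the edge case $k - J = 0$ (for which neither odd nor even branch applies) is subsumed by the $k - J < 1$ branch, which holds since $k - J$ is then $\le 0$.
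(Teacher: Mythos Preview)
Your proposal re-derives the general piecewise formula of Theorem~\ref{thm:comp_for_XOmega_unit_square_in_Omega} for arbitrary $\ell$, but the statement to be proven is its corollary, which specializes that formula to the two extreme values $\ell = 1$ and $\ell = \infty$. The $\ell = \infty$ specialization is immediate from what you wrote (the condition $k - 2\ell + 1 \geq J$ is vacuous), so your argument does cover that half.

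The gap is the $\ell = 1$ half. The corollary asserts $\Tilde{\mathfrak g}_k^1(X_\Omega) = k$ outright, whereas your general argument, specialized to $\ell = 1$, only yields $\min\{k, \|(1,k-1)\|_\Omega^*\}$ in the case $k - 1 \geq J$. To collapse this minimum to $k$ you must check $\|(1,k-1)\|_\Omega^* \geq k$, which follows from the hypothesis $[0,1]^2 \subset \Omega$ via $\|(1,k-1)\|_\Omega^* \geq \langle (1,k-1),(1,1)\rangle = k$. This one-line inequality is precisely the content of the paper's proof of the corollary, and it is the substantive step your proposal omits entirely: nowhere do you single out $\ell = 1$ or argue that the second candidate is never smaller than $k$. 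Everything else you wrote is correct but constitutes a proof of the preceding theorem rather than of its corollary.
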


\begin{proof}
    Indeed, for the Gutt-Hutchings capacity, we see that
    $$\ell - 1 + ||(1,k-2\ell+1)||_\Omega^* = ||(1,k-1)||_\Omega^* \geq k,$$
    and if $k - 2\ell + 1 = k - 1 < J$, then $k - J < 1$, so the capacity is $k$.
    This agrees with computing the Gutt-Hutchings capacity directly in such a setting.
\end{proof}

\subsection{Polydisks}
Here we compute the capacity for the polydisk $P(a,1)$, first directly, then using Theorem \ref{thm:comp_for_XOmega_unit_square_in_Omega}.

\begin{proposition}[Polydisks]\label{prop:polydisk}
    For positive integers $k,\ell$, we compute $\Tilde{\mathfrak g}_k^\ell$ for the Polydisk $P(a,1)$ with $a > 1$:
    $$\Tilde{\mathfrak g}_k^\ell(P(a,1)) = 
    \begin{cases}
        k, & \ell = 1 \text{ or } k = 1,2\\
        \min\{k,k+a-\ell\}, & \ell \geq 2, \; k \geq 3, \; \frac{k-1}{2} \geq \ell - 1\\
        \min\{k,\lceil\frac{k-1}{2}\rceil + a\}, & \ell \geq 2,\; k \geq 3,\; l-1 \geq \frac{k-1}{2}.
    \end{cases}$$
\end{proposition}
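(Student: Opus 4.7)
The plan is to recognize that $P(a,1)$ with $a>1$ is a long domain in the sense of the previous subsection (its moment image $\Omega = [0,a]\times[0,1]$ contains the unit square and meets the $y$-axis at $(0,1)$), and then apply Theorem \ref{thm:comp_for_XOmega_unit_square_in_Omega} directly. The dual-norm calculations are trivial here: since $\Omega$ is a rectangle with corner $(a,1)$, the maximizer of the inner product against any $(i,j)\in\mathbb{Z}_{\geq 1}^2$ is $(a,1)$, giving $\|(1,j)\|_\Omega^* = a+j$ for $j\geq 0$ and $\|(0,j)\|_\Omega^* = j$.

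Next I would compute the transition value $J$ from Lemma \ref{lem:define_J_transition_where_trading_away_0,1_viable}. Since $\|(0,1)\|_\Omega^* + \|(1,j)\|_\Omega^* = 1 + a + j$ and $\|(1,j+2)\|_\Omega^* = a + j + 2$, the defining inequality $1+a+j < a+j+2$ holds for every $j\geq 0$, so $J = 0$. With $J=0$, the four cases of Theorem \ref{thm:comp_for_XOmega_unit_square_in_Omega} collapse: the regime $k-2\ell+1 \geq J$ becomes $\tfrac{k-1}{2} \geq \ell-1$, yielding
\[
\Tilde{\mathfrak g}_k^\ell = \min\bigl\{k,\; \ell-1 + \|(1,k-2\ell+1)\|_\Omega^*\bigr\} = \min\{k,\; k+a-\ell\},
\]
which is exactly the second line of the proposition. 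The complementary regime $\ell-1 > \tfrac{k-1}{2}$ splits by parity of $k$: for $k$ odd we get $\min\{k,\tfrac{k-1}{2}+\|(1,0)\|_\Omega^*\} = \min\{k,\tfrac{k-1}{2}+a\}$, and for $k$ even we get $\min\{k,\tfrac{k-2}{2}+\|(1,1)\|_\Omega^*\} = \min\{k,\tfrac{k}{2}+a\}$. Both are uniformly written as $\min\{k,\lceil\tfrac{k-1}{2}\rceil + a\}$, matching the third line.

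It remains to verify the degenerate cases $\ell = 1$ and $k\in\{1,2\}$ in the first line of the statement. When $\ell=1$ we automatically have $\tfrac{k-1}{2}\geq\ell-1=0$, so the formula from the second regime applies and gives $\min\{k,k+a-1\} = k$ since $a>1$. When $k=1$ the third regime (with $k$ odd) applies for $\ell\geq 2$ and gives $\min\{1,a\}=1$; when $k=2$ the third regime (with $k$ even) gives $\min\{2,a+1\}=2$. Finally, I would note that at the boundary $\ell-1=\tfrac{k-1}{2}$ (necessarily with $k$ odd) the two formulas $k+a-\ell$ and $\lceil\tfrac{k-1}{2}\rceil+a$ coincide, so the case split in the proposition is consistent.

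The main obstacle here is essentially bookkeeping: there is no deep dynamical input, since all of the analytic and combinatorial content is already packaged into Theorem \ref{thm:comp_for_XOmega_unit_square_in_Omega}. What one must be careful about is the ceiling function and the parity split, and in particular checking that the cases stated in the proposition (with the non-strict inequality $\ell - 1 \geq \tfrac{k-1}{2}$) agree on the overlap with the previous case, which the computation above confirms.
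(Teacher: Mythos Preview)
Your proposal is correct and coincides almost verbatim with the second of the two proofs the paper gives for this proposition: the paper also observes $J=0$ for $P(a,1)$ and then runs through the four cases of Theorem~\ref{thm:comp_for_XOmega_unit_square_in_Omega}, checking $\ell=1$, $k\in\{1,2\}$, $\tfrac{k-1}{2}\geq\ell-1$, and $\tfrac{k-1}{2}\leq\ell-1$ in turn. The paper's \emph{first} proof is slightly more direct, bypassing Theorem~\ref{thm:comp_for_XOmega_unit_square_in_Omega} and instead using Lemma~\ref{lem:contains_P11_forms} to reduce the minimization to sets of the form $\{(0,1)^{\times r},(1,j)\}$, then optimizing over $r$ using the explicit trade $\|(1,j)\|_\Omega^*=a+j>a+j-1=\|(1,j-2)\|_\Omega^*+\|(0,1)\|_\Omega^*$; your route via the general theorem is cleaner bookkeeping but relies on more machinery.
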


\begin{proof}
    First, if $\ell = 1$ or if $k = 1,2$, then the second form of Lemma \ref{lem:contains_P11_forms} is impossible, so $\Tilde{\mathfrak g}_k^\ell = k$ for these values of $\ell$ and $k$.

    For other values of $\ell$ and $k$, the second form comes into play: $\{(0,1)^{\times r},(1,j)\}$.
    If we consider the maximum value possible for $r$, we have the $r \leq \ell - 1$ bound, but we also have a bound 
    $$r = \frac{k - j - 1}{2} \leq \frac{k-1}{2}.$$
    This is where the inequalities in the formula arise.

    Now, to reduce our candidate minimizers, we can decrease $j$ and increase $r$.
    Indeed, computing norms with respect to the polydisk $P(1,a)$ gives
    $$||(1,j)||_\Omega^* = a + j > a + j - 1 = ||(1,j-2)||_\Omega^* + ||(0,1)||_\Omega^*.$$
    Thus, the minimizer is of the form
    $$\{(0,1)^{\times r}, (1,j)\}\quad r = \min\left\{\ell - 1, \left\lfloor \frac{k-1}{2}\right\rfloor\right\}, \quad j \geq 0, \quad 2r + j + 1 = k$$
    whose norm sum is
    $$r||(0,1)||_\Omega^* + ||(1,j)||_\Omega^* = r + a + j.$$

    So, if $\frac{k-1}{2} \geq \ell - 1$, then $r = \ell - 1$, implying $j = k - 2\ell + 1$, which gives a norm sum of
    $$r + a + j = k + a - \ell.$$
    Now, assume $\ell - 1 \geq \frac{k-1}{2}$.
    If $k$ is odd, then $r  = \frac{k-1}{2}$, implying $j = 0$, and therefore the norm sum is $\frac{k-1}{2} + a$.
    If $k$ is even, then $r = \frac{k-2}{2}$, implying that $j = 1$, and therefore the norm sum is $\frac{k}{2} + a$.
    Thus, we can write the norm sum for all $k$ as 
    $$\left\lceil \frac{k-1}{2} \right\rceil + a = \begin{cases}
        \frac{k-1}{2} + a & k \text{ odd}\\
        \frac{k}{2} + a & k \text{ even}
    \end{cases},$$
    completing the formula.
\end{proof}

\begin{remark}
    We recover the McDuff-Siegel capacity when $\ell = \infty$ \cite[Theorem 1.3.4]{McDuff-Siegel_unperturbed_curves}:
    $$\Tilde{\mathfrak g}_k^\infty(P(a,1)) = \min\{k,a + \lceil\tfrac{k-1}{2}\rceil\}.$$
\end{remark}

\begin{proof}[Proof that Theorem \ref{thm:comp_for_XOmega_unit_square_in_Omega} implies Proposition \ref{prop:polydisk}]
    Indeed, for $P(a,1)$, we find $J = 0$ because
    $$||(0,1)||_\Omega^* + ||(1,0)||_\Omega^* = 1 + a < a + 2 = ||(1,2)||_\Omega^*.$$
    \textbf{Case 1:} $\ell = 1$.
    Then $k - 2\ell + 1 = k - 1 \geq 0$, so
    \begin{equation*}
    \begin{aligned}
        \Tilde{\mathfrak g}_k^\ell &= \min\{k,\ell - 1 + ||(1,k-2\ell+1)||_\Omega^*\}\\
        & = \min\{k,||(1,k-1)||_\Omega^*\}\\
        & = k,
    \end{aligned}
    \end{equation*}
    since $||(1,k-1)||_\Omega^* \geq \langle(1,k-1),(1,1)\rangle = k$.
    
    \noindent\textbf{Case 2:} $k = 1,2$.
    If $k - 2\ell + 1 \geq J$, then $2 \ell \leq k + 1 \leq 3$, implying $\ell = 1$, and thus Case 1 applies.
    Otherwise, if $k - 2\ell + 1 < J$, we have for $k = 1$
    $$\Tilde{\mathfrak g}_k^\ell = \min\{k,(k - J - 1)/2 + ||(1,0)||_\Omega^*\} = \min\{1,||(1,0)||_\Omega^*\} = 1.$$
    Similarly, for $k = 2$, we have
    $$\Tilde{\mathfrak g}_k^\ell = \min\{k,(k - J - 2)/2 + ||(1,1)||_\Omega^*\} = \min\{2,||(1,1)||_\Omega^*\} = 2.$$

    \noindent\textbf{Case 3:} $\ell \geq 2, k \geq 3, \tfrac{k-1}{2} \geq \ell -1$.
    Then $k - 2\ell + 1 \geq 0 = J$, so
    \begin{equation*}
    \begin{aligned}
        \Tilde{\mathfrak g}_k^\ell &= \min\{k,\ell - 1 + ||(1,k-2\ell+1)||_\Omega^*\}\\
        & = \min\{k,\ell - 1 + a + k - 2\ell + 1\}\\
        & = \min\{k, k + a - \ell\}.
    \end{aligned}
    \end{equation*}

    \noindent\textbf{Case 4:} $\ell \geq 2, k \geq 3, \tfrac{k-3}{2} \leq \ell -2$.
    Then $k - 2\ell + 1 \leq 0$, so first consider $k - 2\ell + 1 = 0$ (implying $k$ odd).
    Then
    $$\Tilde{\mathfrak g}_k^\ell = \min\{k, \ell - 2 + ||(1,0)||_\Omega^*\} = \min\{k, \tfrac{k+1}{2} - 1 + a\} = \min\{k, \tfrac{k-1}{2}+a\}.$$
    Now, let $k - 2\ell + 1 < 0$, and see that for $k$ odd, we have
    $$\Tilde{\mathfrak g}_k^\ell = \min\{k,\tfrac{k-1}{2} + a\}$$
    and for $k$ even, we have
    $$\Tilde{\mathfrak g}_k^\ell = \min\{k,\tfrac{k}{2} + a\}$$
    giving the desired result.
\end{proof}

\subsection{New Obstructions}

Here we describe specific examples where $\Tilde{\mathfrak g}_k^\ell$ sees an obstruction for $1 < \ell < \infty$, but the capacity fails to witness this for $\ell = 1,\infty$.
In the following sections, we will focus on improving upon the volume obstruction and generalizing to a larger class of domains with sharp obstructions.

\begin{example}\label{ex:new_obstruction}
    Consider the polytope $\Omega \subset \mathbb{R}^2$ given by the vertices $(0,0)$, $(0,1)$, $(\frac{42}{28},1)$, $(\frac{51}{28},\frac{25}{28})$, $(2,0)$.
    And consider the slightly larger polytope $\hat{\Omega} \subset \mathbb{R}^2$ given by the vertices $(0,0)$,  $(0,1)$, $(\frac{48}{28},1)$, $(\frac{51}{28},\frac{25}{28})$, $(2,0)$.
    Denote their corresponding convex toric domains by $X_\Omega$ and $X_{\hat{\Omega}}$.
    We wish to compute $\Tilde{\mathfrak g}_k^\ell$ for these domains to see when we can distinguish the two domains.

Both domains contain the polydisk $P(1,1)$, so Lemma \ref{lem:contains_P11_forms} applies.
For the Gutt-Hutchings capacity $\Tilde{\mathfrak g}_k^1$, the only possible minimizing collection is $\{(0,k)\}$, which has the same norm for both domains, giving
$$\Tilde{\mathfrak g}_k^1(X_\Omega) = \Tilde{\mathfrak g}_k^1(X_{\hat{\Omega}}) = k.$$
Now for the computation of the general capacity $\Tilde{\mathfrak g}_k^\ell$, we first compute norms of interest.
For the inner domain $\Omega$, we have
$$\begin{cases}
    ||(0,y)||^*_\Omega = \langle (0,y), (0,1)\rangle = y & \text{ for all } y\\
    ||(1,y')||^*_\Omega = \langle (1,y'), (\frac{42}{28},1)\rangle = \frac{42}{28} + y' & \text{ for } y' \geq 3\\
    ||(1,y')||^*_\Omega = \langle (1,y'), (\frac{51}{28},\frac{25}{28})\rangle = \frac{51}{28} + y'\frac{25}{28} & \text{ for } y' = 1,2\\
    ||(1,0)||^*_\Omega = \langle (1,0), (2,0)\rangle = 2
\end{cases}$$
and for the outer domain $\hat{\Omega}$, we have
$$\begin{cases}
    ||(0,y)||^*_{\hat{\Omega}} = \langle (0,y), (0,1)\rangle = y & \text{ for all } y\\
    ||(1,y')||^*_{\hat{\Omega}} = \langle (1,y'), (\frac{48}{28},1)\rangle = \frac{48}{28} + y' & \text{ for } y' \geq 1\\
    ||(1,0)||^*_{\hat{\Omega}} = \langle (1,0), (0,2)\rangle = 2
\end{cases}$$
By Lemma \ref{lem:contains_P11_forms}, these are the only ordered pairs used in the computation of $\Tilde{\mathfrak g}_k^\ell$, and we note that the only points at which the norms differ between $\Omega$ and $\hat{\Omega}$ are of the form $(1,y')$ for $y' \geq 2$.

But assume such a point is in the minimizing set for the McDuff-Siegel capacity of the inner domain $\Tilde{\mathfrak g}_k^\infty(X_\Omega)$.
Since we can have unlimited positive ends, we can  make trade offs of the following form (which preserve index and do not spoil weak permissibility).
For $y' \geq 3$, we have
$$||(1,y')||^*_\Omega = \frac{42}{28} + y'  > 1 + y' = ||(1,0)||^*_\Omega + ||(0,y'-1)||^*_\Omega,$$
and for $y' = 2$, we have
$$||(1,2)||^*_\Omega = \frac{51}{28} + 2\cdot\frac{25}{28} = \frac{101}{28} > 3 = ||(1,0)||^*_\Omega + ||(0,1)||^*_\Omega.$$
The same is true for the outer domain $\Tilde{\mathfrak g}_k^\infty(X_{\hat{\Omega}})$: for $y' \geq 2$, we have
$$||(1,y')||^*_{\hat{\Omega}} = \frac{48}{28} + y' > 1 + y' = ||(1,0)||^*_{\hat{\Omega}} + ||(0,y'-1)||^*_{\hat{\Omega}}$$
Hence, the only points we need to consider for $\Tilde{\mathfrak g}_k^\infty$ are of the form $(0,y)$, $(1,0)$, or $(1,1)$, and the norms of these points agree on $\Omega$ and $\hat{\Omega}$.
Thus, $$\Tilde{\mathfrak g}_k^\infty(X_\Omega) = \Tilde{\mathfrak g}_k^\infty(X_{\hat{\Omega}})$$
for all $k$.

Now, let us compute $\Tilde{\mathfrak g}_5^2$ of $X_\Omega$ and $X_{\hat{\Omega}}$ and see that they differ.
By Lemma \ref{lem:contains_P11_forms}, we only have to check the norm sums of the following collections:

\vspace{5pt}
\begin{center}
\begin{tabular}{|c|c|c|}
    \hline
    Orbit Set & $\sum||\cdot||_\Omega^*$ & $\sum||\cdot ||_{\widehat \Omega}^*$\\
    \hline
    $\{(0,5)\}$ & $5$ & $5$\\
    $\{(0,1),(1,2)\}$ & $129/28 \approx 4.6$ & $132/28 \approx 4.7$\\
    $\{(0,2),(1,1)\}$ & $132/28$ & $132/28$\\
    $\{(0,3),(1,0)\}$ & $5$ & $5$\\
    \hline
\end{tabular}
\end{center}
\vspace{5pt}

\noindent Hence we see the difference
$$\Tilde{\mathfrak g}_5^2(X_\Omega) = 129/28 < 132/28 = \Tilde{\mathfrak g}_5^2(X_{\hat{\Omega}}).$$
Note that $\Tilde{\mathfrak g}_5^\infty$ does not see this difference because it allows point sets with 3 positive ends and therefore trades away $\{(1,2),(0,1)\}$ for $\{(0,1)^{\times 2}, (1,0)\}$.

Then by the monotonicity axiom, there does not exist a generalized Liouville embedding from $X_{\hat{\Omega}} \hookrightarrow X_\Omega$.
We already knew that by the volume obstruction, but by the stabaliztion axiom we see that
$$\Tilde{\mathfrak g}_5^2(X_\Omega \times B^2(c)) = 129/28 < 132/28 = \Tilde{\mathfrak g}_5^2(X_{\hat{\Omega}} \times B^2(c))$$
for any $c \geq 132/28$.
Thus, there does not exist a stabilized embedding
$$X_{\hat{\Omega}} \times \mathbb{C}^n \hookrightarrow X_\Omega \times \mathbb{C}^n$$
either, which is an obstruction not previously exhibited by any other capacity.

\end{example}

\begin{example}\label{ex:new_obstruction_long}
    Now consider the elongated version of the above example, where $\Omega$ is given by vertices $(0,0), (0,1), (\frac{70}{28},1), (\frac{79}{28},\frac{25}{28}),(3,0)$, and $\widehat \Omega$ is given by $(0,0), (0,1), (\frac{76}{28},1), (\frac{79}{28},\frac{25}{28}),(3,0)$.

    Similar to the above analysis, $\Tilde{\mathfrak g}_k^1$ and $\Tilde{\mathfrak g}_k^\infty$ do not distinguish $X_\Omega$ and $X_{\widehat\Omega}$ for any $k$.
    But now, $\Tilde{\mathfrak g}_k^2$ also does not distinguish these two domains because the orbit set $\{(0,k)\}$ has less action than all of the two-orbit sets.
    However, when we take $\ell = 3$, we get that
    $$\Tilde{\mathfrak g}_7^3(X_\Omega) = 185/28 < 188/28 = \Tilde{\mathfrak g}_7^3(X_{\widehat\Omega}).$$
    In general, if we want to see obstructions in wider domains like this, then we need to take $\ell$ large enough, but not to $\infty$.
    See Figure \ref{fig:bumped_out_obstruction} for a general version of this example, scaled by a length factor $a$.
\end{example}

\begin{figure}
    \centering
    \includegraphics[width=1\linewidth]{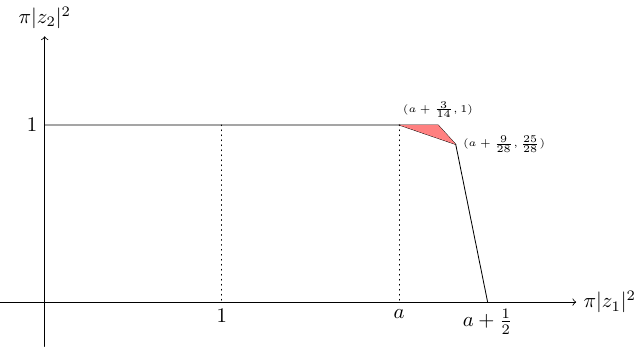}
    \caption{See Examples \ref{ex:new_obstruction} and \ref{ex:new_obstruction_long}: $\Tilde{\mathfrak g}_k^\ell$ for $1 < \ell < \infty$ sees the obstruction from embedding the larger domain into the smaller one, as would ECH or volume, but this obstruction stabilizes unlike ECH and volume.}
    \label{fig:bumped_out_obstruction}
\end{figure}

\begin{example}\label{ex:new_obstruction_polydisk_into_5-gon}
    Here we give another example where $\Tilde{\mathfrak g}_k^\ell$ provides a better obstruction for $1 < \ell < \infty$, but also better than the volume obstruction.
    Consider $\Omega$ given by vertices $(0,0)$, $(0,1)$, $(1,1)$, $(2,1/2)$, $(2,0)$.
    Define
    $$L := \max\{\lambda \mid P(\lambda2,\lambda) \hookrightarrow X_\Omega\}$$
    We ask what upper bounds we can achieve on $L$ through obstructions.
    \begin{itemize}
        \item $\Tilde{\mathfrak g}_k^1 \Rightarrow L \leq 1$. Volume $\Rightarrow L \leq 7/8$. $\Tilde{\mathfrak g}_k^\infty \Rightarrow L \leq 6/7$.
        \item But $\Tilde{\mathfrak g}_5^2(\tfrac{4}{5}P(2,1)) = 4 = \Tilde{\mathfrak g}_5^2(X_\Omega)$, implying that $L \leq 4/5$.
    \end{itemize}

    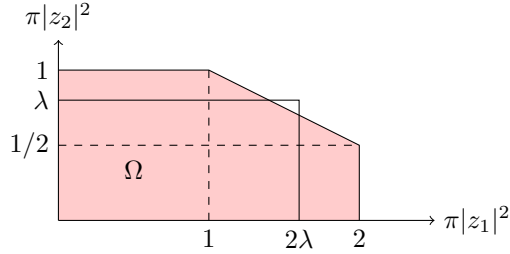
\begin{figure}
        \centering
        \begin{tikzpicture}[]
        
            %Ellipsoid E(a,1)
            \begin{scope}[scale=2]
            \fill [color=red!20!] (0,0) -- (0,1) -- (1,1) -- (2,1/2) -- (2,0);
            \draw [->] (0,0) -- (2.5,0) node [right] {$\pi|z_1|^2$};
            \draw [->] (0,0) -- (0,1.2) node [above] {$\pi|z_2|^2$};
            \draw (0,1) node[left] {$1$} -- (1,1) -- (2,1/2) -- (2,0) node[below] {$2$};
            \draw [dashed] (0,1/2) node[left] {$1/2$} -- (2,1/2);
            \draw [dashed] (1,1) -- (1,0) node[below] {$1$};
            \draw (1/2,1/3) node {$\Omega$};
        
            \draw (0,4/5) node[left] {$\lambda$} -- (8/5,4/5) -- (8/5,0) node[below] {$2\lambda$};
            \end{scope}
        
            %---------------------------------------------------
        
        \end{tikzpicture}
        \caption{$\Tilde{\mathfrak g}_k^2$ gives better obstruction than $\Tilde{\mathfrak g}_k^1$, $\Tilde{\mathfrak g}_k^\infty$, and volume in Example \ref{ex:new_obstruction_polydisk_into_5-gon}}
        \label{fig:new_obstruction_polydisk_into_5-gon}
    \end{figure}
\end{example}

\subsection{Sharp Obstructions}
The refined capacities give sharp obstructions between long domains, which were previously unseen by the Gutt-Hutchings or McDuff-Siegel capacity sequences.

\begin{example}\label{ex:sharp_obstruction}
    Consider $\Omega$ the moment polytope with vertices $(0,0)$, $(0,1)$, $(1,1)$, $(2,1/2)$, and $(2,0)$; see Figure \ref{fig:sharp_obstruction}.
    We try to find the longest polydisk $P(a,1)$ that embeds into $X_\Omega$.
    We know the Gutt-Hutchings capacities for both:
    $$\Tilde{\mathfrak g}_k^1(X_\Omega) = \Tilde{\mathfrak g}_k^1(P(a,1)) = k,$$
    for all $k$.
    For $\ell=\infty$, we apply Theorem \ref{thm:comp_for_XOmega_unit_square_in_Omega} by computing $J = 1$ and getting
    \begin{equation*}
    \begin{aligned}
        \Tilde{\mathfrak g}_k^\infty(X_\Omega) &= \begin{cases}
        1 & k=1\\
        \min\{k, \tfrac{k-2}{2} + ||(1,1)||_\Omega^*\} & k = 2,4,6,...\\
        \min\{k, \tfrac{k-3}{2} + ||(1,2)||_\Omega^*\} & k = 3,5,7,...
    \end{cases}\\
    & = \min\{k, \tfrac{k+3}{2}\}
    \end{aligned}
    \end{equation*}
    Recall that for the polydisk, we have
    $$\Tilde{\mathfrak g}_k^\infty(P(a,1)) = \min\{k,a + \lceil\tfrac{k-1}{2}\rceil\}.$$
    Thus, if $P(a,1) \hookrightarrow X_\Omega$, then
    $$\min\{k,a + \lceil\tfrac{k-1}{2}\rceil\} \leq \min\{k, \tfrac{k+3}{2}\}$$
    for all $k$.
    Thus it must be the case that $a + \lceil\tfrac{k-1}{2}\rceil \leq \tfrac{k+3}{2}$ for all $k \geq 4$.
    This tells us that $a \leq 3/2$.

    Now let's see how $\Tilde{\mathfrak g}_k^\ell$ does for $1 < \ell < \infty$.
    Fix $\ell$ and choose $k$ so that $k - 2\ell + 1 \geq 1$.
    Then by Theorem \ref{thm:comp_for_XOmega_unit_square_in_Omega},
    $$\Tilde{\mathfrak g}_k^\ell(X_\Omega) = k - \ell + 1, \quad\text{and} \quad\Tilde{\mathfrak g}_k^\ell(P(1+\epsilon,1)) = \min\{k,k+1 + \epsilon - \ell\}$$
    Hence,
    $$\Tilde{\mathfrak g}_k^\ell(P(1+\epsilon,1)) > \Tilde{\mathfrak g}_k^\ell(X_\Omega),$$
    giving an obstruction for any $a > 1$.
    Thus, the inclusion is the best embedding we can make.
\end{example}

\begin{figure}
    \centering
    \begin{tikzpicture}[scale=3,cap=round]
    
        % Axes
        \draw[->] (-0.2,0) -- (2.3,0) node[right] {$\pi|z_1|^2$};
        \draw[->] (0,-0.2) -- (0,1.3) node[above] {$\pi|z_2|^2$};
    
        % P(1,1) square outline
        \draw[thick] 
            (0,0) -- (0,1) -- (1,1) -- (1,0) -- cycle;
        \node at (0.5,0.5) {\small $P(1,1)$};
    
        % Omega polygon outline
        \draw[thick] 
            (0,0) -- (0,1) -- (1,1) -- (2,0.5) -- (2,0) -- cycle;
        \node at (1.4,0.4) {\small $\Omega$};
    
        % Guide lines and labels
        \draw[dotted] (1,0) -- (1,1);
        \node[below] at (1,0) {\small $1$};
    
        \draw[dotted] (0,1) -- (2.05,1);
        \node[left] at (0,1) {\small $1$};
    
        \draw[dotted] (2,0) -- (2,0.5);
        \node[below] at (2,0) {\small $2$};
        \node[right] at (2,0.5) {\small $\frac{1}{2}$};
    
    \end{tikzpicture}
    \caption{$\Tilde{\mathfrak g}_k^\ell$ provides sharp obstructions when $1 < \ell < \infty$, showing that $P(a,1)$ does not embed into $X_\Omega$ for any $a > 1$.}
    \label{fig:sharp_obstruction}
\end{figure}
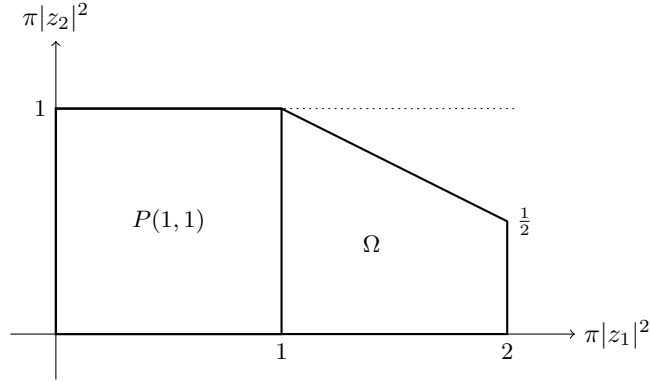

\begin{remark}
    Note that the vertical segment from $(2,0)$ to $(2,1/2)$ is necessary here to get a new obstruction, for if we just had a quadrilateral with an edge from $(1,1)$ to $(2,0)$, then $\Tilde{\mathfrak g}_k^\infty$ would give a sharp obstruction as well.
\end{remark}

\begin{remark}
    The Gutt-Hutchings capacity is known for giving sharp obstructions for the cube capacity when the target is a convex toric domain \cite{Gutt-Hutchings}, but this does not apply here, because we are considering sharpness with respect to only the $a$ factor, rather than scaling up both factors in the cube.
    Indeed, $\Tilde{\mathfrak g}_k^1$ obstructs the embedding of $P(1+\epsilon,1+\epsilon)$ into $X_\Omega$.
\end{remark}

\begin{theorem}\label{thm:sharp_obstructions}
    Let $X_\Omega$ be a convex toric domain such that the polytope boundary $\partial \Omega$ contains the vertices $(0,1)$, $(a,1)$, $(b,\tfrac{1}{2}+\epsilon)$, and $(b,0)$, for $a > 1$ and $b > a + 1/2$.
    Assume further that $a$ is the maximum value for which $(x,1)$ is contained in $\Omega$.
    Then the embedding $P(a',1) \hookrightarrow X_\Omega$ is obstructed for $a' > a$ by $\Tilde{\mathfrak g}_k^\ell$ when $\lceil a\rceil < \ell < \infty$, but not when $\ell = \infty$ or $\ell < a$.
    This obstruction is sharp since the inclusion map works for $a' \leq a$, and this obstruction stabilizes to higher dimensions.
\end{theorem}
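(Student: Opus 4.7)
The plan is to apply the computation formula of Theorem \ref{thm:comp_for_XOmega_unit_square_in_Omega} to $X_\Omega$ and Proposition \ref{prop:polydisk} to $P(a',1)$, produce a strict gap between the two capacities for a suitable $k$, and invoke monotonicity.

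Since $a>1$ and $b>1$, convexity forces $\Omega$ to contain the unit square, so Theorem \ref{thm:comp_for_XOmega_unit_square_in_Omega} applies. The first sub-step is to pin down the transition parameter $J$ of Lemma \ref{lem:define_J_transition_where_trading_away_0,1_viable}. Evaluating norms at vertices gives $||(0,1)||_\Omega^*=1$, $||(1,0)||_\Omega^*=b$, and $||(1,2)||_\Omega^*\geq\langle(1,2),(b,\tfrac{1}{2}+\epsilon)\rangle=b+1+2\epsilon$, so
\[
1+||(1,0)||_\Omega^* \;=\; b+1 \;<\; b+1+2\epsilon \;\leq\; ||(1,2)||_\Omega^*,
\]
and therefore $J=0$. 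The strictness of this inequality is essential and is precisely what the hypothesis $\epsilon>0$ buys us: with $\epsilon=0$ the above becomes an equality and $J\geq 1$, which is why the refined obstruction is invisible to $\tilde{\mathfrak g}_k^\infty$.

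Next, fix $\lceil a\rceil<\ell<\infty$ and choose $k$ large enough that $k-2\ell+1$ exceeds the threshold $2(b-a)/(1-2\epsilon)$ past which the vertex $(a,1)$ becomes the maximizer for $||(1,\cdot)||_\Omega^*$. Then $||(1,k-2\ell+1)||_\Omega^*=a+k-2\ell+1$, and since $k-2\ell+1\geq 0=J$, the first case of Theorem \ref{thm:comp_for_XOmega_unit_square_in_Omega} combined with $a<\ell$ yields
\[
\tilde{\mathfrak g}_k^\ell(X_\Omega) \;=\; \min\{k,\,k+a-\ell\} \;=\; k+a-\ell.
\]
For the same $k,\ell$ the hypothesis $(k-1)/2\geq\ell-1$ holds, so the middle case of Proposition \ref{prop:polydisk} gives $\tilde{\mathfrak g}_k^\ell(P(a',1))=\min\{k,\,k+a'-\ell\}$, which strictly exceeds $k+a-\ell$ for any $a'>a$. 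Monotonicity (Theorem \ref{thm:properties}(d)) then rules out the embedding, sharpness follows from the obvious inclusion $P(a,1)\hookrightarrow X_\Omega$ (the moment image $[0,a]\times[0,1]$ sits inside $\Omega$), and stabilization to higher dimensions is an immediate consequence of the stabilization axiom Theorem \ref{thm:properties}(e), which preserves both capacity values upon taking products with $B^2(c)$ for $c$ sufficiently large.

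For the non-obstruction assertions, I will apply the complementary cases of the same formulas. When $\ell=\infty$, Theorem \ref{thm:comp_for_XOmega_unit_square_in_Omega} gives $\tilde{\mathfrak g}_k^\infty(X_\Omega)$ of the shape $\lceil(k-1)/2\rceil + b$ (up to $O(\epsilon)$), to be compared against $\tilde{\mathfrak g}_k^\infty(P(a',1))=\lceil(k-1)/2\rceil+a'$, so no obstruction arises for $a'\leq b$. When $\ell<a$, both capacities saturate to $k$ for large $k$ (since $a-\ell>0$ and $a'-\ell>0$ make the $\min$ with $k$ trivial), and the small-$k$ cases are handled by direct enumeration. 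The main obstacle throughout is the geometric computation in Steps~1--2: verifying $J=0$ and identifying $(a,1)$ as the maximizer of $||(1,\cdot)||_\Omega^*$ in the right range of $k$. Once these two facts are established the remainder is a direct algebraic comparison of two explicit formulas.
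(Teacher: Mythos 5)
Your proposal is correct and follows essentially the same route as the paper's proof: pin down $J=0$ using the computation theorem, then read off both capacities from the explicit formulas and compare. Two small points worth noting. First, the paper only carries out the obstruction computation for the specific value $\ell=\lceil a\rceil+1$ and $k\gg\ell$, whereas you argue uniformly for any $\ell$ with $\lceil a\rceil<\ell<\infty$; your version is slightly more complete relative to the theorem as stated, and your explicit observation that the strict inequality giving $J=0$ is exactly what $\epsilon>0$ buys (and fails at $\epsilon=0$, forcing $J\geq 1$ as in Example \ref{ex:sharp_obstruction}) is a useful clarification the paper leaves implicit. Second, the specific numerical threshold $2(b-a)/(1-2\epsilon)$ you give is only correct when $(a,1)$ is joined directly by an edge to $(b,\tfrac12+\epsilon)$; if $\partial\Omega$ has intermediate vertices, concavity of the boundary function makes the outgoing slope at $(a,1)$ shallower than the chord slope, so the true threshold $1/\sigma$ (where $-\sigma$ is the slope of that edge) can be strictly larger. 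This does not affect the argument, which only needs some finite threshold, but the stated bound should be understood as a lower bound rather than a sufficient one. The rest of your reasoning, including the non-obstruction analysis for $\ell=\infty$ and $\ell<a$, sharpness via inclusion, and stabilization, matches the paper.
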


\begin{proof}
    No obstruction can come from $\Tilde{\mathfrak g}_k^1$, since $\Tilde{\mathfrak g}_k^1(X_\Omega) = \Tilde{\mathfrak g}_k^1(P(a',1)) = k$ for all $a',k$.
    For the McDuff-Siegel capacity, we consider Theorem \ref{thm:comp_for_XOmega_unit_square_in_Omega} and see that $J = 0$.
    Then, similar to Example \ref{ex:sharp_obstruction}, the obstruction is at best
    $a + \lceil\tfrac{k-1}{2}\rceil \leq \tfrac{k-1}{2} + b$, resulting in
    $$a \leq b - 1/2,$$

    Consider $\ell < a$.
    If $k - 2\ell + 1 < 0$, then $k < 2\ell + 1 < 2a + 1.$
    By Theorem \ref{thm:comp_for_XOmega_unit_square_in_Omega}, the capacity is given by $\min\{k,\tfrac{k-1}{2} + b\}$, for $k$ odd, and $$\tfrac{k-1}{2} + b > \tfrac{k}{2} + a > \tfrac{k}{2} + \tfrac{k-1}{2} = k - 1/2.$$
    Thus, the capacity must be $k$.
    On the other hand, if $k - 2\ell + 1 \geq 0$, then the capacity is given by $\min\{k,\ell-1 + ||(1,k-2\ell+1)||_\Omega^*\}$, and $||(1,k-2\ell+1)||_\Omega \geq \langle(a,1),(1,k-2\ell+1)\rangle = a + k - 2\ell + 1$.
    Then since $k < \ell-1+a+k-2\ell+1 = k+a-\ell$, the capacity must be $k$ in this range too.
    Thus, $\Tilde{\mathfrak g}_k^\ell$ also do not see any obstruction for $\ell < a$.

    Now consider $\ell = \lceil a\rceil + 1$ and $k \gg \ell$.
    Then the capacity for $X_\Omega$ is $\min\{k, \lceil a\rceil + 1 - 1 + ||(1,k-2\ell+1)||_\Omega^*\}$.
    Fully rounding the domain for $x$ values larger than $a$ if necessary, $||(1,k-2\ell+1)||_\Omega^*$ will approach $\langle(a,1),(1,k-2\ell+1)\rangle = a + k - 2\ell + 1$ from above.
    Thus, $\lceil a\rceil + 1 - 1 + a + k - 2\ell + 1 = k + a - \lceil a \rceil - 1.$
    And
    $$\Tilde{\mathfrak g}_k^\ell(P(a+\epsilon,1)) = \min\{k, k+ a+\epsilon - \lceil a \rceil - 1\} = k + a - \lceil a\rceil - 1 + \epsilon,$$
    giving
    $$\Tilde{\mathfrak g}_k^\ell(P(a+\epsilon,1)) > \Tilde{\mathfrak g}_k^\ell(X_\Omega).$$
\end{proof}

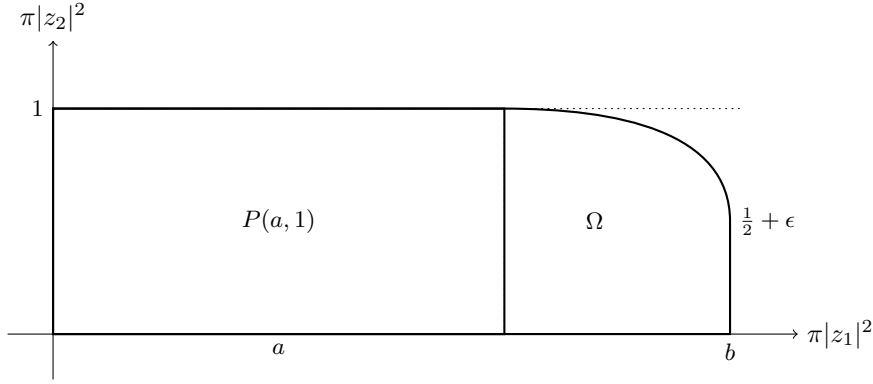
\begin{figure}
    \centering
    \begin{tikzpicture}[scale=3,cap=round]
    
        % Axes
        \draw[->] (-0.2,0) -- (3.3,0) node[right] {$\pi|z_1|^2$};
        \draw[->] (0,-0.2) -- (0,1.3) node[above] {$\pi|z_2|^2$};
    
        % P(1,1) square outline
        \draw[thick] 
            (0,0) -- (0,1) -- (2,1) -- (2,0) -- cycle;
        \node at (1,0.5) {\small $P(a,1)$};
    
        % Omega polygon outline
        \draw[thick] 
            (0,0) -- (0,1) -- (2,1)
            .. controls (2.5,1) and (3,.9) ..
            (3,0.5) -- (3,0) -- cycle;
        \node at (2.4,0.5) {\small $\Omega$};
    
        % Guide lines and labels
        \draw[dotted] (2,0) -- (2,1);
        \node[below] at (1,0) {\small $a$};
    
        \draw[dotted] (0,1) -- (3.05,1);
        \node[left] at (0,1) {\small $1$};
    
        \draw[dotted] (3,0) -- (3,0.5);
        \node[below] at (3,0) {\small $b$};
        \node[right] at (3,0.5) {\small $\frac{1}{2}+\epsilon$};
    
    \end{tikzpicture}
    \caption{The general picture for Theorem \ref{thm:sharp_obstructions}.}
    \label{fig:sharp_obstruction_long_domain}
\end{figure}

\begin{remark}
    Note that other capacities like ECH capacities may or may not see the sharp obstruction here in 4 dimensions, but ECH capacities do not stabilize since they allow genus in their moduli spaces.
    Thus, these sharp obstructions appear novel in the stabilized version of these embedding problems.
\end{remark}

\subsection{Ellipsoids}

Here we compute $\Tilde{\mathfrak g}_k^\ell$ in general for the ball, for the ellipsoid $E(2,1)$ for specific ranges of parameters $k,\ell$.
The capacity computation for ellipsoids is given by
$$\Tilde{\mathfrak g}_k^\ell(E(a,1)) = \min_{\{(i_s,j_s)\}_{s=1}^q \in \mathcal{P}_{k,\ell}} \sum_{s=1}^q ||(i_s,j_s)||_\Omega^*,$$
where $\Omega$ is the triangle intersecting the $y$-axis at $1$ and the $x$-axis at $a$.
We can simplify the norm to get
$$\Tilde{\mathfrak g}_k^\ell(E(a,1)) = \min_{\{(i_s,j_s)\}_{s=1}^q \in \mathcal{P}_{k,\ell}} \sum_{s=1}^q \max\{ai_s, j_s\}.$$
And the Gutt-Hutchings capacity gives us the following for $\ell = 1$ \cite[Lemma 2.1]{Gutt-Hutchings}:
$$\Tilde{\mathfrak g}_k^1(E(a,1)) = M_k(a,1),$$
where $M_k(a_1,...,a_n)$ denotes the sequence of positive integer multiples of $a_1,...,a_n$ arranged in nondecreasing order with repetitions.
Note that these values are the norm minimizers for their given index in our terminology.

\begin{example}
    We wish to compute $\Tilde{\mathfrak g}_k^\ell(E(1,1))$ for all $k, \ell$.
    The Gutt-Hutchings capacities are given by
    $$\Tilde{\mathfrak g}_k^1(E(1,1)) = 1,1,2,2,3,3,... = \lceil\tfrac{k}{2}\rceil,$$
    with minimizers
    $$(0,1),(1,1),(1,2),(2,2),(2,3),(3,3),...$$
    We consider what tradeoffs can be made that maintain index and don't increase action.
    By Lemma \ref{lem:upper_bound_rhs_minimizers}, minimizing sets for $\ell > 1$ will only contain these minimizers, and weak permissibility will not be an issue, since we can trade across the axes by symmetry.
    Accordingly, we write tradeoffs in terms of $\Tilde{\mathfrak g}_k^1$.
    \begin{enumerate}[label=(\alph*)]
        \item $\Tilde{\mathfrak g}_k^1 \rightsquigarrow \Tilde{\mathfrak g}_1^1 + \Tilde{\mathfrak g}_{k-2}^1$ for $k > 2$.
        \item $\Tilde{\mathfrak g}_{k_1}^1 + \Tilde{\mathfrak g}_{k_2}^1 \rightsquigarrow \Tilde{\mathfrak g}_{k_1 - 2}^1 + \Tilde{\mathfrak g}_{k_2+2}^1$ for $k_1 > 2$.
        \item $\Tilde{\mathfrak g}_{k_1}^1 + \Tilde{\mathfrak g}_{k_2}^1 \rightsquigarrow \Tilde{\mathfrak g}_{k_1 - 1}^1 + \Tilde{\mathfrak g}_{k_2+1}^1$ for $k_1 > 1, k_2$ odd.
    \end{enumerate}
    Consider the case where $k > 3(\ell - 1)$, and start with some minimizing set.
    Use (a) to trade for more orbits until you reach $\ell$.
    Then use (b) to pick one orbit and send it to as high an index as possible, trading other ends down to $\Tilde{\mathfrak g}_1^1$ or $\Tilde{\mathfrak g}_2^1$.
    Lastly, use (c) to send $\Tilde{\mathfrak g}_1^1$'s to $\Tilde{\mathfrak g}_2^1$.
    We're left with a minimizer of the form
    $$\{(\Tilde{\mathfrak g}_2^1)^{\times \ell - 1}, \Tilde{\mathfrak g}_{k - 3(\ell-1)}^1\},$$
    which has action
    $$\ell - 1 + \left\lceil\frac{k-3(\ell-1)}{2}\right\rceil.$$
    For $k \leq 3(\ell - 1)$, we claim that $\Tilde{\mathfrak g}_k^\ell = \Tilde{\mathfrak g}_k^\infty$.
    Indeed,
    $$\Tilde{\mathfrak g}_k^\infty(E(1,1)) = \begin{cases}
        1+i & k = 3i + 1\\
        1+i & k = 3i + 2\\
        2+i & k = 3i + 3,\\
    \end{cases}$$
    and if $k = 3i + 1$, then take the minimizer to be $\{(1,1)^i,(0,1)\}$, which has $i + 1 = \tfrac{k-1}{3} + 1 \leq \ell - 1 -\tfrac{1}{3} + 1 < \ell$ orbits.
    If $k = 3i + 2$, then take $\{(1,1)^{i+1}\}$, which has $i + 1 = \tfrac{k-2}{3} + 1 \leq \ell - 1 - \tfrac{2}{3} + 1 < \ell$ orbits.
    If $k = 3i + 3$, take $\{(1,1)^i,(0,1)^2\}$ which has $i + 2 = \tfrac{k-3}{3} + 2 \leq \ell - 1 - 1 + 2 = \ell$ orbits.
    The actions of these minimizing sets equal those of $\Tilde{\mathfrak g}_k^\infty$, and the claim follows by the capacity being nonincreasing in $\ell$ (Theorem \ref{thm:properties}(c)).
\end{example}

We summarize this in a Proposition
\begin{proposition}\label{prop:ball}
    The capacity of the ball is given by
    $$\Tilde{\mathfrak g}_k^\ell(B^4(1)) = \begin{cases}
        \ell - 1 + \lceil\tfrac{k-3(\ell - 1)}{2}\rceil & k > 3(\ell - 1)\\
        1 + i & k = 3i+1 \leq 3(\ell - 1)\\
        1 + i & k = 3i+2 \leq 3(\ell - 1)\\
        2 + i & k = 3i+3 \leq 3(\ell - 1)\\
    \end{cases}$$
\end{proposition}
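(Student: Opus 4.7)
The plan is to apply the computation formula from Theorem \ref{thm:properties}\ref{thm:properties_subitem_computation}. For $B^4(1) = X_\Omega$ with $\Omega$ the simplex spanned by $(0,0), (1,0), (0,1)$, the dual norm simplifies to $||(i,j)||_\Omega^* = \max\{i,j\}$, since the linear functional $\langle (i,j),\cdot\rangle$ is maximized over $\Omega$ at $(1,0)$ when $i \geq j$ and at $(0,1)$ otherwise. In particular, for a fixed half-index $k_s = i_s + j_s$, the minimum of $\max\{i_s,j_s\}$ is $\lceil k_s/2 \rceil$, attained at $(\lceil k_s/2\rceil, \lfloor k_s/2 \rfloor)$.

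I then invoke Lemma \ref{lem:upper_bound_rhs_minimizers} to restrict attention to minimizers in which all but possibly one entry is an individual-index minimizer. The symmetry of $\Omega$ across the diagonal means weak permissibility is never a binding constraint here, so in fact every entry may be taken to be of the form $(\lceil k_s/2\rceil, \lfloor k_s/2\rfloor)$. The problem reduces to a discrete optimization: choose $q \leq \ell$ and positive integers $k_1, \ldots, k_q$ with $k_1 + \cdots + k_q + q - 1 = k$, minimizing $\sum_s \lceil k_s/2\rceil$. Three index-preserving tradeoffs, as sketched in the example preceding the proposition, drive the minimization, all of which weakly decrease the action: (a) splitting one orbit into two to gain an extra $(0,1)$ end (i.e.\ $\Tilde{\mathfrak g}_k^1 \rightsquigarrow \Tilde{\mathfrak g}_1^1 + \Tilde{\mathfrak g}_{k-2}^1$), (b) shifting two units of half-index between two entries, and (c) shifting one unit from an odd-index entry to another entry, which is free because the ceiling absorbs it.

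For $k > 3(\ell-1)$, the $\ell$ constraint is binding. Starting from any minimizer, I apply (a) until $q = \ell$, then use (b) and (c) to concentrate the remaining half-index in a single entry while pushing the other $\ell - 1$ entries down to $(1,1)$. The resulting canonical configuration has $\ell - 1$ copies of $(1,1)$ together with one end of half-index $k - 3(\ell-1)$, yielding action $(\ell - 1) + \lceil (k - 3(\ell-1))/2\rceil$, matching the claim. For $k \leq 3(\ell-1)$, the $\ell$ constraint is slack. Here I would exhibit the explicit minimizers $\{(1,1)^i, (0,1)\}$ when $k = 3i+1$, $\{(1,1)^{i+1}\}$ when $k = 3i+2$, and $\{(1,1)^i, (0,1)^2\}$ when $k = 3i+3$, each having at most $\ell$ entries under the hypothesis $k \leq 3(\ell-1)$. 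These realize the McDuff--Siegel values $\Tilde{\mathfrak g}_k^\infty$, so by Theorem \ref{thm:properties}(c) (nonincreasing in $\ell$) together with $\Tilde{\mathfrak g}_k^\ell \geq \Tilde{\mathfrak g}_k^\infty$ forced in reverse by the formula minimization being over a smaller set when $\ell$ is smaller\textemdash wait, actually monotonicity goes $\Tilde{\mathfrak g}_k^\ell \geq \Tilde{\mathfrak g}_k^\infty$, and the explicit minimizer furnishes the matching upper bound, giving equality.

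The main obstacle is verifying optimality in the binding case: one must check that no other distribution of half-index among at most $\ell$ orbits beats the canonical configuration. The cleanest route is to observe that tradeoffs (a)--(c) all weakly decrease $\sum \lceil k_s/2\rceil$ and that the canonical configuration is a simultaneous fixed point of these moves, so any minimizer can be transformed into it. A short parity case analysis handles the behavior of the ceiling function under move (b), and a direct comparison confirms that increasing $q$ to $\ell$ is always at least as good as stopping early, since move (a) decreases action strictly whenever $k - 2 \geq 1$ and the new orbit count is still at most $\ell$.
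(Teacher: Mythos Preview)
Your proposal is correct and follows essentially the same approach as the paper: reduce via Lemma \ref{lem:upper_bound_rhs_minimizers} and the diagonal symmetry of $\Omega$ to minimizing $\sum_s \lceil k_s/2\rceil$, then use the three tradeoffs (a)--(c) to drive any configuration to the canonical one in the binding case $k > 3(\ell-1)$, and exhibit explicit $\ell$-admissible minimizers matching $\Tilde{\mathfrak g}_k^\infty$ in the slack case. One small slip: move (a), taking $\lceil k/2\rceil$ to $1 + \lceil (k-2)/2\rceil$, preserves action rather than strictly decreasing it, but weak decrease is all you need for the argument that pushing $q$ up to $\ell$ is optimal.
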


\begin{remark}
    Note that if the index of an orbit set $\{(i_1,j_1),...,(i_q,j_q)\}$ is $k$, then $k = \sum_s (i_s + j_s) + q - 1 \geq 2q - 1$, implying that the number of orbits $q \leq \tfrac{k+1}{2}.$
    Thus we make the following conclusion:
    $$\tfrac{k+1}{2} \leq \ell \quad\Rightarrow\quad \Tilde{\mathfrak g}_k^\ell = \Tilde{\mathfrak g}_k^\infty.$$
\end{remark}

\begin{example}
    One can use similar methods to those above to find $\Tilde{\mathfrak g}_k^\ell(E(2,1))$.
    Consider the following tradeoffs that can be made which maintain index and don't decrease action:
    \begin{enumerate}[label=(\alph*)]
        \item $\Tilde{\mathfrak g}_k^1 \rightsquigarrow \Tilde{\mathfrak g}_2^1 + \Tilde{\mathfrak g}_{k-3}^1$, for $k > 3$.
        \item $\Tilde{\mathfrak g}_{k_1}^1 + \Tilde{\mathfrak g}_{k_2}^1 \rightsquigarrow \Tilde{\mathfrak g}_{k_1-3}^1 + \Tilde{\mathfrak g}_{k_2+3}^1$, for $k_1 > 3$.
        \item $\Tilde{\mathfrak g}_{2}^1 + \Tilde{\mathfrak g}_{k_2}^1 \rightsquigarrow \Tilde{\mathfrak g}_{1}^1 + \Tilde{\mathfrak g}_{k_2+1}^1$.
        \item $\Tilde{\mathfrak g}_{3}^1 \rightsquigarrow (\Tilde{\mathfrak g}_{1}^1)^{\times 2}$ if the trade doesn't violate the $\ell$ bound on number of orbits.
    \end{enumerate}

    Assume that $k > 2\ell - 1$, and consider a minimizing set.
    Use (a) to add as many ends as possible, within the $\ell$ bound.
    Use (b) to move all but potentially one orbit to $\Tilde{\mathfrak g}_k^1$ for $k = 1,2,3$.
    Use (c) to move any $\Tilde{\mathfrak g}_2^1$ orbit to $\Tilde{\mathfrak g}_1^1$, pushing one orbit to higher index.
    Use (d) to move any $\Tilde{\mathfrak g}_3^1$ orbit to $\Tilde{\mathfrak g}_1^1$ if there is still no violation with the $\ell$ bound.
    Then we have a minimizer of the form
    $$\{(\Tilde{\mathfrak g}_1^1)^{\times r_1}, (\Tilde{\mathfrak g}_3^1)^{\times r_2}, \Tilde{\mathfrak g}_{k_3}^1\}$$
    with $r_1 + r_2 = \ell - 1$, half index $r_1 + 3r_2 + k_3 + \ell - 1 = k$, and action $r_1 + 2r_2 + \Tilde{\mathfrak g}_{k_3}^1$
    Then make the following trade as long as $r_1 \geq 3$ and $k_3 > 6$:
    $$(\Tilde{\mathfrak g}_{1}^1)^{\times 3} + \Tilde{\mathfrak g}_{k_3}^1 \rightsquigarrow (\Tilde{\mathfrak g}_{3}^1)^{\times 3} + \Tilde{\mathfrak g}_{k_3-6}^1.$$
    This further reduces our possible minimizing sets.
    We know that for $k > 4(\ell - 1)$, we have enough index to write a minimizer of the form $\{(\Tilde{\mathfrak g}_3^1)^{\times (\ell - 1)},\Tilde{\mathfrak g}_{k-4(\ell - 1)}^1\}$, which has action $2(\ell - 1) + \Tilde{\mathfrak g}_{k-4(\ell - 1)}^1$.
    But it will take more work to see what $\Tilde{\mathfrak g}_k^\ell$ is for $2(\ell - 1) < k \leq 4(\ell - 1)$.
    We summarize this work here:

    $$\Tilde{\mathfrak g}_k^\ell(E(2,1)) = \begin{cases}
        2(\ell - 1) + M_{k - 4(\ell - 1)}(2,1) & k > 4(\ell -1)\\
        ? & 2(\ell - 1) < k \leq 4(\ell - 1)\\
        k & k = 1,2\\
        2 + i & k = 2 + 2i \leq 2(\ell - 1), i\geq 0\\
        2 + i & k = 2 + 2i + 1 \leq 2(\ell - 1), i \geq 0.
    \end{cases}$$
\end{example}

For general $E(a,1)$ with $a \in \mathbb{R}_{\geq 1}$, we can attempt to analyze the optimization problem in a similar way as before, with tradeoffs.
We see that the following tradeoff is beneficial for $i,j \in \mathbb{Z}_{\geq 2}$
$$||(i,j)||_\Omega^* \geq ||(0,1)||_\Omega^* + ||(i-1,j-1)||_\Omega^*.$$
To see this, let $(x,y) \in \partial\Omega$ be the maximizer of $||(i-1,j-1)||_\Omega^*$, which gives
$$||(i,j)||_\Omega^* \geq ix + jy \geq ix + jy + (1-x-y) = ||(0,1)||_\Omega^* + ||(i-1,j-1)||_\Omega^*,$$
since $(x,y) \in \partial\Omega$ implies that $x + y \geq 1$.
Note that $(i-1,j-1)$ may not be a norm minimizer in its index, but we can subsequently make that trade with no issue.
For the case $\ell = \infty$, we can always make this tradeoff for $(0,1)$, thus reducing the possibilities of optimal solutions.
The computation formula for the McDuff-Siegel capacity is closed as a result of this tradeoff \cite[Theorem 1.3.2]{McDuff-Siegel_unperturbed_curves}:
\begin{enumerate}[label=(\alph*)]
    \item For $1 \leq a \leq 3/2$, we have
    $$\Tilde{\mathfrak g}_k^\infty(E(a,1)) = \begin{cases}
        1 + ia & \text{for } k = 1 + 3i \text{ with } i \geq 0\\
        a + ia & \text{for } k = 2 + 3i \text{ with } i \geq 0\\
        2 + ia & \text{for } k = 3 + 3i \text{ with } i \geq 0.
    \end{cases}$$
    \item For $a > 3/2$, we have
    $$\Tilde{\mathfrak g}_k^\infty(E(a,1)) = \begin{cases}
        k & \text{for } 1 \leq k \leq \lfloor a\rfloor\\
        a + i & \text{for } k = \lceil a\rceil + 2i \text{ with } i \geq 0\\
        \lceil a\rceil + i & \text{for } k = \lceil a\rceil + 2i + 1 \text{ with } i \geq 0.
    \end{cases}$$
\end{enumerate}
However, when $\ell < \infty$, we must stop making this tradeoff at some point, and we are still left with the optimization problem.
One would like to say something here about a tradeoff of the following form, where $s < t$ and $(i_r,j_r)$ denotes the norm minimizer for index $r$:
$$\{(i_s,j_s), (i_t,j_t)\} \quad\text{vs.}\quad \{(i_{s-1},j_{s-1}),(i_{t+1},j_{t+1})\}.$$
The issue is that $M_k(a,1)$ as a function of $k$ has irregular step sizes, so sometimes this trade should go left to right, and other times it should go right to left.
The irregularity of optimal solutions is best seen by an example.

\begin{example}
    Consider $E(\varphi,1)$ for the golden ratio $\varphi \approx 1.618$, chosen since it's an irrational number in between $1$ and $2$.
    We can look at the capacity when $\ell = 3$ and see that patterns are tough to discern after a certain point:

\begin{center}
\begin{tabular}{ccc|cccc|ccc}
\toprule
$k$ & $\Tilde{\mathfrak g}_k^3$ &&& $k$ & $\Tilde{\mathfrak g}_k^3$ &&& $k$ & $\Tilde{\mathfrak g}_k^3$ \\
\midrule
1 & $1$ &&& 11 & $6$ &&& 21 & $12$ \\
2 & $\varphi$ &&& 12 & $3\varphi+2$ &&& 22 & $8\varphi$ \\
3 & $2$ &&& 13 & $2\varphi+4$ &&& 23 & $2\varphi+10$ \\
4 & $\varphi+1$ &&& 14 & $8$ &&& 24 & $14$ \\
5 & $3$ &&& 15 & $4\varphi+2$ &&& 25 & $9\varphi$ \\
6 & $\varphi+3$ &&& 16 & $9$ &&& 26 & $15$ \\
7 & $4$ &&& 17 & $6\varphi$ &&& 27 & $16$ \\
8 & $\varphi + 4$ &&& 18 & $2\varphi+7$ &&& 28 & $7\varphi+5$ \\
9 & $5$ &&& 19 & $11$ &&& 29 & $17$ \\
10 & $\varphi + 5$ &&& 20 & $4\varphi+5$ &&& 30 & $11\varphi$ \\
\bottomrule
\end{tabular}
\end{center}
    
\end{example}

\section{Future Work}\label{sec:future_work}
There are still many computations to be done with $\Tilde{\mathfrak g}_k^\ell$: ellipsoids $E(a,1)$ for $a > 1$, and \textbf{short domains} that do not contain the cube.
One would also like to compute ECH capacities for long domains to see if any of the obstructions here are novel in four dimensions.
One could study the same questions for concave, weakly convex, and monotone toric domains (see \cite{gutt-hutchings-ramos2022examples} for the introduction of monotone toric domains).
One could ask what these capacities look like in general in higher dimensions rather than just in the stabilized setting.
Studying the asymptotics of these capacities is a natural follow up too and has applications to dynamics.
The \textit{reverse engineering} question is also interesting: given an array of values, does there exist a domain whose capacities agree with those values (this question is inspired by Theorem 1.3 in \cite{blind_spots} and the recognition question from \cite{cieliebak2007quantitative}).

We could extend the family of capacities further to $\Tilde{\mathfrak g}_k^{\ell, h}$ where $h$ is a restriction on the genus of curves.
A naive computation formula for this capacity is
$$\Tilde{\mathfrak g}_k^{\ell,h}(X_\Omega) = \min_{\{(i_s,j_s)\}_{s=1}^q \in \mathcal{P}_{k,\ell,h}} \sum_{s=1}^q ||(i_s,j_s)||_\Omega^*,$$
where $\{(i_s,j_s)\}_{s=1}^q \in \mathcal{P}_{k,\ell,h}$ if there exists an integer $g \leq h$ such that:
\begin{itemize}
    \item $\frac{1}{2}$\textit{index:} $\sum_{s=1}^q(i_s+j_s) + q + g - 1 = k$.
    \item \textit{weak permissibility:} if $q \geq 2$, then $(i_1,...,i_q) \neq (0,...,0)$ and $(j_1,...,j_q) \neq (0,...,0)$.
    \item \textit{positive ends:} $q \leq \ell$.
\end{itemize}
The only novelty here is the presence of $g$ in the index criteria.
It's uncertain if this is correct: one must check that the restraint coming from the relative adjunction formula is seen by these criteria.
After that, one must check that indeed all curves relevant to these capacities exist as long as they satisfy the index and relative adjunction formulas.
This is conjectural, as sometimes curves that would be permitted by index and adjunction in fact do not exist.

One could try to prove this formula, or assume it and do a purely computational project.
Such a project could be a good testing ground for reasoning language models and algorithmic discovery.
One could explore Google DeepMind's AlphaEvolve \cite{novikov2025alphaevolve} and try to search domains efficiently, creating sharper bounds in symplectic embedding questions; the agent has shown success in other geometric packing problems and could hopefully be adapted to this setting.

\printbibliography

@article {Schlenk_symp_embedding_probs_old_new,
    AUTHOR = {Schlenk, Felix},
     TITLE = {Symplectic embedding problems, old and new},
   JOURNAL = {Bull. Amer. Math. Soc. (N.S.)},
  FJOURNAL = {American Mathematical Society. Bulletin. New Series},
    VOLUME = {55},
      YEAR = {2018},
    NUMBER = {2},
     PAGES = {139--182},
      ISSN = {0273-0979,1088-9485},
   MRCLASS = {53D35 (37B40 53D40)},
  MRNUMBER = {3777016},
MRREVIEWER = {Richard\ Keith\ Hind},
       DOI = {10.1090/bull/1587},
       URL = {https://doi.org/10.1090/bull/1587},
}

@article {Gutt-Hutchings,
    AUTHOR = {Gutt, Jean and Hutchings, Michael},
     TITLE = {Symplectic capacities from positive {$S^1$}-equivariant
              symplectic homology},
   JOURNAL = {Algebr. Geom. Topol.},
  FJOURNAL = {Algebraic \& Geometric Topology},
    VOLUME = {18},
      YEAR = {2018},
    NUMBER = {6},
     PAGES = {3537--3600},
      ISSN = {1472-2747,1472-2739},
   MRCLASS = {53D40 (53D05 57R17)},
  MRNUMBER = {3868228},
MRREVIEWER = {Stephan\ Mescher},
       DOI = {10.2140/agt.2018.18.3537},
       URL = {https://doi.org/10.2140/agt.2018.18.3537},
}

@misc{McDuff-Siegel_unperturbed_curves,
      title={Symplectic capacities, unperturbed curves, and convex toric domains}, 
      author={Dusa McDuff and Kyler Siegel},
      year={2022},
      eprint={2111.00515},
      archivePrefix={arXiv},
      primaryClass={math.SG}
}

@misc{blind_spots,
      title={On symplectic capacities and their blind spots}, 
      author={Ely Kerman and Yuanpu Liang},
      year={2021},
      eprint={2109.01792},
      archivePrefix={arXiv},
      primaryClass={math.SG}
}

@article {McDuff_Schlenk_fibonacci,
    AUTHOR = {McDuff, Dusa and Schlenk, Felix},
     TITLE = {The embedding capacity of 4-dimensional symplectic ellipsoids},
   JOURNAL = {Ann. of Math. (2)},
  FJOURNAL = {Annals of Mathematics. Second Series},
    VOLUME = {175},
      YEAR = {2012},
    NUMBER = {3},
     PAGES = {1191--1282},
      ISSN = {0003-486X,1939-8980},
   MRCLASS = {53D42 (11B39 53D35)},
  MRNUMBER = {2912705},
MRREVIEWER = {Sheila\ Sandon},
       DOI = {10.4007/annals.2012.175.3.5},
       URL = {https://doi.org/10.4007/annals.2012.175.3.5},
}

@article {Frenkel_Muller_Pell_stairs,
    AUTHOR = {Frenkel, David and M\"{u}ller, Dorothee},
     TITLE = {Symplectic embeddings of 4-dim ellipsoids into cubes},
   JOURNAL = {J. Symplectic Geom.},
  FJOURNAL = {The Journal of Symplectic Geometry},
    VOLUME = {13},
      YEAR = {2015},
    NUMBER = {4},
     PAGES = {765--847},
      ISSN = {1527-5256,1540-2347},
   MRCLASS = {53D35},
  MRNUMBER = {3480057},
MRREVIEWER = {Emmanuel\ Opshtein},
       DOI = {10.4310/JSG.2015.v13.n4.a2},
       URL = {https://doi.org/10.4310/JSG.2015.v13.n4.a2},
}

@article{McDuff_Siegel_counting_curves,
   title={Counting curves with local tangency constraints},
   volume={14},
   ISSN={1753-8424},
   url={http://dx.doi.org/10.1112/topo.12204},
   DOI={10.1112/topo.12204},
   number={4},
   journal={Journal of Topology},
   publisher={Wiley},
   author={McDuff, Dusa and Siegel, Kyler},
   year={2021},
   month=oct, pages={1176–1242}
}

@article{CCGFHR_sympl_embs_4d_concave_toric_domains,
   title={Symplectic embeddings into four-dimensional concave toric domains},
   volume={7},
   ISSN={1753-8416},
   url={http://dx.doi.org/10.1112/jtopol/jtu008},
   DOI={10.1112/jtopol/jtu008},
   number={4},
   journal={Journal of Topology},
   publisher={Wiley},
   author={Choi, Keon and Hutchings, Michael and Cristofaro-Gardiner, Daniel and Frenkel, David and Ramos, Vinicius Gripp Barros},
   year={2014},
   month=may, pages={1054–1076}
}

@article{Bourgeois_a_Morse-Bott_approach,
author = {Bourgeois, Fr{\'e}d{\'e}ric},
year = {2002},
month = {},
pages = {},
title = {A Morse-Bott approach to contact homology}
}

@article{Hutchings_2016_beyond_ech,
   title={Beyond ECH capacities},
   volume={20},
   ISSN={1465-3060},
   url={http://dx.doi.org/10.2140/gt.2016.20.1085},
   DOI={10.2140/gt.2016.20.1085},
   number={2},
   journal={Geometry \& Topology},
   publisher={Mathematical Sciences Publishers},
   author={Hutchings, Michael},
   year={2016},
   month=apr, pages={1085–1126}
}

@misc{mcduff2024singularalgebraiccurvesinfinite,
      title={Singular algebraic curves and infinite symplectic staircases}, 
      author={Dusa McDuff and Kyler Siegel},
      year={2024},
      eprint={2404.14702},
      archivePrefix={arXiv},
      primaryClass={math.SG},
      url={https://arxiv.org/abs/2404.14702}, 
}

@misc{colin2023yao_appendix,
      title={Embedded contact homology and open book decompositions}, 
      author={Vincent Colin and Paolo Ghiggini and Ko Honda},
      year={2023},
      eprint={1008.2734},
      archivePrefix={arXiv},
      primaryClass={math.SG},
      url={https://arxiv.org/abs/1008.2734}, 
}

@misc{yao2022cascadesjholomorphiccurves,
      title={From Cascades to $J$-holomorphic Curves and Back}, 
      author={Yuan Yao},
      year={2022},
      eprint={2206.04334},
      archivePrefix={arXiv},
      primaryClass={math.SG},
      url={https://arxiv.org/abs/2206.04334}, 
}

@article {Siefring_intersection_theory_punctured,
    AUTHOR = {Siefring, Richard},
     TITLE = {Intersection theory of punctured pseudoholomorphic curves},
   JOURNAL = {Geom. Topol.},
  FJOURNAL = {Geometry \& Topology},
    VOLUME = {15},
      YEAR = {2011},
    NUMBER = {4},
     PAGES = {2351--2457},
      ISSN = {1465-3060,1364-0380},
   MRCLASS = {53D42 (32Q65 53D35 53D45)},
  MRNUMBER = {2862160},
MRREVIEWER = {Jonathan\ David\ Evans},
       DOI = {10.2140/gt.2011.15.2351},
       URL = {https://doi.org/10.2140/gt.2011.15.2351},
}

@article {Wendl_automatic_transversality,
    AUTHOR = {Wendl, Chris},
     TITLE = {Automatic transversality and orbifolds of punctured
              holomorphic curves in dimension four},
   JOURNAL = {Comment. Math. Helv.},
  FJOURNAL = {Commentarii Mathematici Helvetici. A Journal of the Swiss
              Mathematical Society},
    VOLUME = {85},
      YEAR = {2010},
    NUMBER = {2},
     PAGES = {347--407},
      ISSN = {0010-2571,1420-8946},
   MRCLASS = {32Q65 (53D45 57R17)},
  MRNUMBER = {2595183},
MRREVIEWER = {Umberto\ Leone\ Hryniewicz},
       DOI = {10.4171/CMH/199},
       URL = {https://doi.org/10.4171/CMH/199},
}

@misc{mikhalkin_siegel,
      title={Ellipsoidal superpotentials and stationary descendants}, 
      author={Grigory Mikhalkin and Kyler Siegel},
      year={2023},
      eprint={2307.13252},
      archivePrefix={arXiv},
      primaryClass={math.SG},
      url={https://arxiv.org/abs/2307.13252}, 
}

@misc{ganatra2024embeddingcomplexityliouvillemanifolds,
      title={On the embedding complexity of Liouville manifolds}, 
      author={Sheel Ganatra and Kyler Siegel},
      year={2024},
      eprint={2012.04627},
      archivePrefix={arXiv},
      primaryClass={math.SG},
      url={https://arxiv.org/abs/2012.04627}, 
}

@article{bourgeois2003compactness_BEH,
  title={Compactness results in symplectic field theory},
  author={Bourgeois, Fr{\'e}d{\'e}ric and Eliashberg, Yakov and Hofer, Helmut and Wysocki, Kris and Zehnder, Eduard},
  journal={Geometry \& Topology},
  volume={7},
  number={2},
  pages={799--888},
  year={2003},
  publisher={Mathematical Sciences Publishers}
}

@article{gromov1985pseudo,
  title={Pseudo holomorphic curves in symplectic manifolds},
  author={Gromov, Mikhael},
  journal={Inventiones mathematicae},
  volume={82},
  number={2},
  pages={307--347},
  year={1985},
  publisher={Springer-Verlag Berlin/Heidelberg}
}

@misc{mcduff2023ellipsoidalsuperpotentialssingularcurve,
      title={Ellipsoidal superpotentials and singular curve counts}, 
      author={Dusa McDuff and Kyler Siegel},
      year={2023},
      eprint={2308.07542},
      archivePrefix={arXiv},
      primaryClass={math.SG},
      url={https://arxiv.org/abs/2308.07542}, 
}

@article{cristofaro2019symplectic,
  title={Symplectic embeddings from concave toric domains into convex ones},
  author={Cristofaro-Gardiner, Dan},
  journal={Journal of Differential Geometry},
  volume={112},
  number={2},
  pages={199--232},
  year={2019},
  publisher={Lehigh University}
}

@article{hind2015symplectic,
  title={Symplectic embeddings of polydisks},
  author={Hind, Richard and Lisi, Samuel},
  journal={Selecta Mathematica},
  volume={21},
  number={3},
  pages={1099--1120},
  year={2015},
  publisher={Springer}
}

@article{christianson2018symplectic,
  title={Symplectic embeddings of four-dimensional polydisks into balls},
  author={Christianson, Katherine and Nelson, Jo},
  journal={Algebraic \& Geometric Topology},
  volume={18},
  number={4},
  pages={2151--2178},
  year={2018},
  publisher={Mathematical Sciences Publishers}
}

@book{Schlenk2005,
  author    = {Felix Schlenk},
  title     = {Embedding Problems in Symplectic Geometry},
  series    = {de Gruyter Expositions in Mathematics},
  volume    = {40},
  publisher = {Walter de Gruyter},
  address   = {Berlin},
  year      = {2005}
}

@article{cristofaro2018symplectic,
  title={Symplectic embeddings of products},
  author={Cristofaro-Gardiner, Daniel and Hind, Richard},
  journal={Commentarii Mathematici Helvetici},
  volume={93},
  number={1},
  pages={1--32},
  year={2018}
}

@article{cristofaro2018ghost,
  title={The ghost stairs stabilize to sharp symplectic embedding obstructions},
  author={Cristofaro-Gardiner, Daniel and Hind, Richard and McDuff, Dusa},
  journal={Journal of Topology},
  volume={11},
  number={2},
  pages={309--378},
  year={2018},
  publisher={Wiley Online Library}
}

@article{Mcduff2017ARO,
  title={A remark on the stabilized symplectic embedding problem for ellipsoids},
  author={Dusa Mcduff},
  journal={European Journal of Mathematics},
  year={2017},
  volume={4},
  pages={356 - 371},
  url={https://api.semanticscholar.org/CorpusID:119723414}
}

@article{digiosia2022symplectic,
  title={Symplectic embeddings of four-dimensional polydisks into half integer ellipsoids},
  author={Digiosia, Leo and Nelson, Jo and Ning, Haoming and Weiler, Morgan and Yang, Yirong},
  journal={Journal of Fixed Point Theory and Applications},
  volume={24},
  number={4},
  pages={69},
  year={2022},
  publisher={Springer}
}

@incollection{cristofaro2022higher,
  title={Higher symplectic capacities and the stabilized embedding problem for integral ellipsoids},
  author={Cristofaro-Gardiner, Daniel and Hind, Richard and Siegel, Kyler},
  booktitle={Symplectic Geometry: A Festschrift in Honour of Claude Viterbo’s 60th Birthday},
  pages={275--299},
  year={2022},
  publisher={Springer}
}

@article{mcduff2024sesquicuspidal,
  title={Sesquicuspidal curves, scattering diagrams, and symplectic nonsqueezing},
  author={McDuff, Dusa and Siegel, Kyler},
  journal={arXiv preprint arXiv:2412.00561},
  year={2024}
}

@article{pereira2022lagrangian,
  title={On the Lagrangian capacity of convex or concave toric domains},
  author={Pereira, Miguel},
  journal={arXiv preprint arXiv:2207.11022},
  year={2022}
}

@article{siegel2019higher,
  title={Higher symplectic capacities},
  author={Siegel, Kyler},
  journal={arXiv preprint arXiv:1902.01490},
  year={2019}
}

@article{hind-Kerman2014new,
  title={New obstructions to symplectic embeddings},
  author={Hind, Richard and Kerman, Ely},
  journal={Inventiones mathematicae},
  volume={196},
  number={2},
  pages={383--452},
  year={2014},
  publisher={Springer}
}

@misc{Schlenk2018Survey-old-and-new-long,
  author       = {Felix Schlenk},
  title        = {Symplectic embedding problems: old and new (Extended Survey)},
  year         = {2018},
  note         = {extended survey for CIMPA-NIMS Research School on symplectic embeddings, Daejeon},
  url          = {http://members.unine.ch/felix.schlenk/Daejeon18/Survey.embeddings.pdf},
}

@article{novikov2025alphaevolve,
  title={AlphaEvolve: A coding agent for scientific and algorithmic discovery},
  author={Novikov, Alexander and V{\~u}, Ng{\^a}n and Eisenberger, Marvin and Dupont, Emilien and Huang, Po-Sen and Wagner, Adam Zsolt and Shirobokov, Sergey and Kozlovskii, Borislav and Ruiz, Francisco JR and Mehrabian, Abbas and others},
  journal={arXiv preprint arXiv:2506.13131},
  year={2025}
}

@article{gutt-hutchings-ramos2022examples,
  title={Examples around the strong Viterbo conjecture},
  author={Gutt, Jean and Hutchings, Michael and Ramos, Vinicius GB},
  journal={Journal of Fixed Point Theory and Applications},
  volume={24},
  number={2},
  pages={41},
  year={2022},
  publisher={Springer}
}

@inproceedings{cieliebak2007quantitative,
  title={Quantitative symplectic geometry},
  author={Cieliebak, Kai and Hofer, Helmut and Latschev, Janko and Schlenk, Felix},
  booktitle={Dynamics, Ergodic Theory, and Geometry Dedicated to Anatole Katok},
  pages={1--44},
  year={2007},
  organization={Cambridge University Press}
}

\end{document}